\documentclass[11pt]{article}

\usepackage[margin=1in]{geometry} 
\usepackage{graphicx}

\usepackage{booktabs} 
\usepackage{array} 
\usepackage{paralist} 
\usepackage{verbatim}
\usepackage{mathrsfs}
\usepackage{amssymb}
\usepackage{amsthm}
\usepackage{amsmath,amsfonts,amssymb}
\usepackage{esint}
\usepackage{graphics}
\usepackage{enumerate}
\usepackage{mathtools}
\usepackage{xfrac}
\usepackage{bbm}
\usepackage{caption}
\usepackage{subcaption}
\usepackage{tikz-cd}
\usepackage{adjustbox}

 \usepackage[T1]{fontenc} 
\usepackage[utf8]{inputenc}  
\usepackage[english]{babel}

\usepackage[colorlinks=true, pdfstartview=FitV, linkcolor=blue, citecolor=blue, urlcolor=blue]{hyperref}

\numberwithin{equation}{section}
\numberwithin{figure}{section}

\newtheorem{theorem}{Theorem}[section]

\newtheorem{corollary}[theorem]{Corollary}
\newtheorem{proposition}[theorem]{Proposition}
\newtheorem{lemma}[theorem]{Lemma}
\newtheorem{OQ}{Open Question}

\theoremstyle{definition}
\newtheorem{definition}[theorem]{Definition}

\newtheorem{remark}{Remark}

\newcommand{\md}[1]{\ (\text{mod}\ #1)}

\newcommand*{\N}{\ensuremath{\mathbb{N}}}

\newcommand*{\Z}{\ensuremath{\mathbb{Z}}}

\newcommand*{\R}{\ensuremath{\mathbb{R}}}
\newcommand*{\Zd}{\ensuremath{\mathbb{Z}^d}}
\newcommand*{\Rd}{\ensuremath{\mathbb{R}^d}}

\newcommand{\eps}{\varepsilon}

\renewcommand*{\tilde}{\widetilde}
\renewcommand*{\hat}{\widehat}

\newcommand{\ep}{\eps}

\newcommand{\E}{\mathbb{E}}

\DeclareSymbolFont{boldoperators}{OT1}{cmr}{bx}{n}
\SetSymbolFont{boldoperators}{bold}{OT1}{cmr}{bx}{n}
\edef\bar{\unexpanded{\protect\mathaccentV{bar}}\number\symboldoperators16}

\definecolor{darkgreen}{rgb}{0,0.6,0.05}

\newcommand{\margin}[1]{\textcolor{magenta}{*}\marginpar[\textcolor{magenta} {  \raggedleft  \footnotesize  #1 }  ]{ \textcolor{magenta} { \raggedright  \footnotesize  #1 }  }}

\definecolor{labelkey}{rgb}{0,0,1}

\newcommand{\indc}{{\boldsymbol{1}}}

\newcommand{\var}{\mathrm{Var}}
\newcommand{\W}{W}

\renewcommand{\P}{\mathbb{P}}

\def\Bb#1#2{{\def\md{\bigm| }#1\bigl[#2\bigr]}}

\def\Pb{\Bb\P}

\def\<#1{\langle #1\rangle}

\def\bi{\begin{itemize}}  %USE\bi[WHATEVER]
\def\ei{\end{itemize}}
\def\bnum{\begin{enumerate}} % USE \bnum[i)] if want i), ii) .. OR \bnum[{\bf (a)}] etc ..!
\def\enum{\end{enumerate}}
\def\ni{\noindent}
\def\bf{\bfseries}

\usepackage{titlesec}

\newcommand{\addperiod}[1]{#1.}
\titleformat{\section}
   {\centering\normalfont\Large}{\thesection.}{0.5em}{}
\titleformat*{\subsection}{\bfseries}
\titleformat{\subsubsection}[runin]
  {\normalfont\bfseries}
  {\thesubsubsection.}
  {0.5em}
  {\addperiod}
\titleformat*{\subsubsection}{\bfseries}
\titleformat*{\paragraph}{\bfseries}
\titleformat*{\subparagraph}{\large\bfseries}

\title{The impact of disorder and non-convex interactions on delocalisation of height functions}
 
\author{Paul Dario
\thanks{Laboratoire AGM, CNRS UMR 8088, Cergy Paris Universit\'e, Cergy-Pontoise, France.
{\footnotesize paul.dario@cyu.fr.}
}
\and
Diederik van Engelenburg
\thanks{Technical University of Vienna, Austria
{\footnotesize diederik.engelenburg@tuwien.ac.at.}
}
\and 
Christophe Garban
\thanks{
Université Claude Bernard Lyon 1, CNRS UMR 5208, Institut Camille Jordan, 69622 Villeurbanne, France, and Courant Institute (NYU), New York, 
{\footnotesize christophe.garban@gmail.com.}
}
}
\date{ }

\usepackage[nottoc,notlot,notlof]{tocbibind}

\begin{document}

\maketitle

\begin{abstract}
We study the behaviour of four spins systems (the XY model, the Villain model, 
the XY height function and the integer-valued Gaussian free field) in the presence of a non-elliptic quenched disorder. 
In the article~\cite{DG25}, it was shown that the phase transitions of the XY model 
(the Berezinskii-Kosterlitz-Thouless phase transition in $d = 2$ and the order/disorder phase transition 
when $d \geq 3$) persist on the infinite cluster of a supercritical Bernoulli 
percolation. A first objective of this article is to extend these results to the Villain model.

Our second objective is to analyse, for $d=2$, 
how the corresponding dual integer-valued height function models behave in the presence of a dual quenched disorder. 
These dual models are respectively 
the XY height function and the integer-valued Gaussian free field. 
Without disorder, these models are known to  exhibit a phase transition in two dimensions called the 
roughening transition~\cite{frohlich1981kosterlitz, lammers2022height}. 
We show that this phase transition persists when the quenched disorder is given by enforcing $\varphi(x) = \varphi(y)$
independently with probability $\bar{p} < 1/2$ for neighboring sites $x, y$. 

Finally, we apply our methods to integer-valued height functions with 
annealed Gaussian interactions and prove the existence of a (quantified) rough phase.
This includes all potentials of the form $|\nabla h|^p$ for $p \in (0, 2]$, recovering recent results of 
\cite{ott2025quantitative}.

 \end{abstract}

\tableofcontents

\section{Introduction}

In this article, we are interested in the behaviour of four models of statistical physics in the presence of a quenched disorder on the underlying graph. We first present the XY and Villain models in Section~\ref{sec:presentationXYVil} as well as the results obtained on these spin systems. Section~\ref{sec:presentationheight} is devoted to the presentation of the integer-valued Gaussian free field and the XY height function and contains the statement of the results obtained on these models.

\subsection{The XY and Villain models} \label{sec:presentationXYVil}

\subsubsection{The models}

We consider the XY (or rotator) and the Villain models on the hypercubic lattice $\mathbb{Z}^d$ with $d \geq 2$. The models are formally defined as follows: let us fix a finite set $\Lambda \subseteq \Z^d$, $\beta \geq 0$, and define the XY model on the set $\Lambda$ with inverse temperature $\beta$ to be the probability measure $\mu_{\Lambda, \beta}$ on the space $[0 , 2\pi)^{\Lambda} := \left\{ \theta : \Lambda \to [0 , 2 \pi) \right\}$
\begin{equation} \label{eq:XYmodel}
    \mu_{\Lambda , \beta}^{\mathrm{XY}}(d \theta) := \frac{1}{Z_{\Lambda, \beta}^{\mathrm{XY}}} \exp \left( \beta \sum_{\substack{x , y \in \Lambda \\ x \sim y}} \cos(\theta_x - \theta_y )\right) \prod_{x \in \Lambda} d \theta_x,
\end{equation}
where $d \theta_x$ denotes the Lebesgue measure on the interval $[0 , 2\pi)$, $Z_{\Lambda, \beta}^{\mathrm{XY}}$ is the normalizing constant and the notation $x \sim y$ means that $x$ and $y$ are nearest neighbour in $\Zd$. We denote by $\left\langle \cdot \right\rangle_{\Lambda, \beta}^{\mathrm{XY}}$ the expectation with respect to the measure $\mu_{\Lambda , \beta}^{\mathrm{XY}}$.

The Villain model is a variant of the XY model defined, on the set $\Lambda \subseteq \Z^d$ (finite) and with inverse temperature $\beta \geq 0$, to be the probability distribution on the set of angle configurations $[0 , 2\pi)^{\Lambda}$
\begin{equation*}
     \mu_{\Lambda , \beta}^{\mathrm{Vil}}(d \theta) := \frac{1}{Z_{\Lambda, \beta}^{\mathrm{Vil}}} \prod_{\substack{x , y \in \Lambda \\ x \sim y}} v_\beta \left( \theta_x - \theta_y \right) \prod_{x \in \Lambda} d \theta_x,
\end{equation*}
where $v_\beta: [0 , 2 \pi) \to (0 , \infty)$ is the heat kernel on the circle defined according to the identity (N.B. we set $v_0 = 1$)
\begin{equation} \label{def:introdefVillain}
      v_\beta (\theta)  := \sqrt{2 \pi \beta} \sum_{k \in \Z} \exp \left( - \frac{\beta}{2} (\theta - 2\pi k)^2 \right) = \sum_{k \in \Z} e^{- \frac{k^2}{2 \beta}} e^{ik \theta}.
\end{equation}
We denote by $\left\langle \cdot \right\rangle_{\Lambda, \beta}^{\mathrm{Vil}}$ the expectation with respect to the measure $\mu_{\Lambda , \beta}^{\mathrm{Vil}}$.
By setting $(S_x)_{x \in \Lambda} := (e^{i \theta_x})_{x \in \Lambda}$, these model can be equivalently considered as spin systems taking values in the circle $\mathbb{S}^1$.

A compactness argument and the Ginibre correlation inequality~\cite{ginibre1970general} (together with the observation that the Villain model can be obtained as a metric graph limit of the XY model, see Theorem~\ref{theoremGinibreineq} and Proposition~\ref{prop:prop3.6}) imply that the two measures $\mu_{\Lambda , \beta}^{\mathrm{XY}}$ and $\mu_{\Lambda , \beta}^{\mathrm{Vil}}$ converge, as $\Lambda \uparrow \Z^d$, toward infinite-volume and translation-invariant measures denoted by $\mu_{\beta}^{\mathrm{XY}}$ and $\mu_{\beta}^{\mathrm{Vil}}$. We will use the notation $\left\langle \cdot \right\rangle_{\beta}^{\mathrm{XY}}$ and $\left\langle \cdot \right\rangle_{\beta}^{\mathrm{Vil}}$ for the corresponding expectations.

These models have been subject to extensive research and they are known to exhibit phase transitions (of different natures) in dimension 2 and in dimensions 3 and higher. These phase transitions are characterised by a different asymptotic behaviour of the two-point functions
\begin{equation} \label{eq:deftwopointfunction}
    x \mapsto \left\langle \cos(\theta_0 - \theta_x) \right\rangle_{\beta}^{\mathrm{XY}} ~~\mbox{and}~~x \mapsto \left\langle \cos(\theta_0 - \theta_x) \right\rangle_{\beta}^{\mathrm{Vil}}.
\end{equation}
In dimension $d \geq 3$, the XY and Villain models undergo an order/disorder phase transition characterised by long-range order at low temperature and exponential decay of the two-point function at high temperature, i.e.,
\begin{equation} \label{LRO}  \tag{LRO}
    \left\langle \cos(\theta_0 - \theta_x) \right\rangle_{\beta}^{\mathrm{XY}} \geq c > 0 ~~\mbox{for}~~ \beta \gg 1 ~~\mbox{and}~~ \left\langle \cos(\theta_0 - \theta_x) \right\rangle_{\beta}^{\mathrm{XY}} \leq \exp(-c |x |) ~~\mbox{for}~~\beta \ll 1
\end{equation}
(N.B. The same result holds for the Villain model). The existence of this phase transition was originally established by Fr\"{o}hlich, Simon and Spencer~\cite{frohlich1976infrared}, and we also mention alternative approaches by Fr\"{o}hlich and Spencer~\cite{frohlich1982massless}, Kennedy and King~\cite{kennedy1986spontaneous}, and the third author and Spencer~\cite{garban2022continuous}.

In two dimensions, Mermin and Wagner~\cite{mermin1966absence} proved that long-range order does not happen at any inverse temperature, i.e., $\left\langle \cos(\theta_0 - \theta_x) \right\rangle_{\beta}^{\mathrm{XY}} \underset{x \to \infty}{\longrightarrow} 0$ for any $\beta > 0$. This result was then refined by McBryan and Spencer~\cite{mcbryan1977decay} who showed that the two-point function decays at least polynomially fast at any positive temperature (see also \cite{shlosman1978decrease}). In this dimension, the phase transition is thus of different nature and is characterised by a different decay rate of the two-point function: it decays polynomially fast at low temperature and exponentially fast at high temperature, i.e.,
\begin{equation} \label{BKT} \tag{BKT}
    \frac{c}{|x|} \leq \left\langle \cos(\theta_0 - \theta_x) \right\rangle_{\beta}^{\mathrm{XY}} ~~\mbox{for}~~ \beta \gg 1 ~~\mbox{and}~~ \left\langle \cos(\theta_0 - \theta_x) \right\rangle_{\beta}^{\mathrm{XY}} \leq \exp(- c |x|)  ~~\mbox{for}~~\beta \ll 1
\end{equation}
(N.B. Once again, the same result holds for the Villain model). The existence of this phase transition, known as the Berezinskii-Kosterlitz-Thouless (BKT) phase transition, was originally established in the breakthrough work by Fr\"{o}hlich and Spencer~\cite{frohlich1981kosterlitz}, and has been the subject of recent new developments by Lammers~\cite{lammers2022height, lammers2022dichotomy, lammers2023bijecting}, van Engelenburg and Lis~\cite{van2023duality, van2023elementary}, and Aizenman, Harel, Peled and Shapiro~\cite{AHPS} (see also the survey \cite{kharash2017fr}).

\subsubsection{The result}
In the article~\cite{DG25}, we were interested in the existence of these phase transitions for the XY model in the presence of a random quenched disorder in the underlying graph. Specifically, we studied the XY model placed on a random graph which is either given by:
\begin{enumerate}
\item The infinite cluster of a supercritical i.i.d Bernoulli percolation (either site or edge) on the
lattice $\Z^d$,
\item The Poisson-Voronoi graph induced by a Poisson Point Process of intensity $1$ on $\R^d$,
\end{enumerate}
and showed that the BKT and the order/disorder phase transitions of the XY model are still observed in this setting. 
As pointed out in \cite{DG25}, the proof for XY did not extend to the Villain model as the implications of Wells'  inequality in this case vanishes. 
The first main result of this article is to extend the result of~\cite{DG25} to the case of the Villain model (for simplicity, we will only treat the case of the i.i.d. edge Bernoulli percolation).

Before stating the results, we introduce some definitions and notation. In two dimensions, we denote by $\beta_{\mathrm{KT}}^{\mathrm{Vil}} > 0$ the critical inverse temperature for the BKT phase transition of the Villain model, i.e.,
\begin{equation*}
    \beta_{\mathrm{KT}}^{\mathrm{Vil}} := \sup \left\{ \beta \geq 0 \, : \, \left\langle \cos(\theta_0 - \theta_x) \right\rangle_{\beta}^{\mathrm{Vil}} ~\mbox{decays exponentially fast}\right\}.
\end{equation*}
Let us remark that the results of~\cite{frohlich1981kosterlitz, AHPS, van2023elementary} imply that $\beta_{\mathrm{KT}}^{\mathrm{Vil}} \in ( 0, \infty)$ and an adaptation of the results of~\cite[Section 6]{van2023elementary} imply that the two-point function decays polynomially fast for any $\beta \geq \beta_{\mathrm{KT}}^{\mathrm{Vil}}$.

In dimension $d \geq 3$, we denote by $\beta^{\mathrm{Vil}}_c(d)$ the critical inverse temperature
\begin{equation*}
    \beta^{\mathrm{Vil}}_{c}(d) := \sup \left\{ \beta \geq 0 \, : \, \left\langle \cos(\theta_0 - \theta_x) \right\rangle_{\beta}^{\mathrm{Vil}} \underset{|x| \to \infty}{\longrightarrow}  0\right\}.
\end{equation*}
The results of~\cite{frohlich1976infrared, frohlich1982massless, kennedy1986spontaneous, garban2022continuous} imply that $\beta^{\mathrm{Vil}}_{c}(d) \in ( 0, \infty)$ and the Messager-Miracle-Sole inequality~\cite{messager1977correlation} implies that for any $\beta > \beta^{\mathrm{Vil}}_c(d)$, the two-point function remains bounded away from $0$.

We next introduce the disordered version of the Villain model studied in this article. 

\begin{definition}[Villain model on a percolation configuration]
Given an edge percolation configuration $\omega \in \{0 , 1\}^{E(\Zd)}$, a finite set $\Lambda \subseteq \Zd$ and an inverse temperature $\beta \geq 0$, we consider the Villain model on the percolation configuration $\omega$ to be the probability distribution on the set of configurations $[0 , 2\pi)^{\Lambda}$
\begin{equation*}
    \mu_{\Lambda , \beta , \omega}^{\mathrm{Vil}}(d \theta) := \frac{1}{Z_{\Lambda, \beta, \omega}^{\mathrm{Vil}}} \prod_{\substack{x , y \in \Lambda \\ x \sim y}} v_{\omega_{xy}\beta} \left( \theta_x - \theta_y \right) \prod_{x \in \Lambda} d \theta_x.
\end{equation*}
We denote by $\mu_{\beta,\omega}^{\mathrm{Vil}}$ the thermodynamic limit obtained by taking the limit $\Lambda \uparrow \Zd$ (whose existence
and uniqueness is once again guaranteed by compactness arguments and the Ginibre inequality), and denote by $\left\langle \cdot \right\rangle_{\beta, \omega}$ the corresponding expectation.
\end{definition}

\begin{remark}
The incorporation of this disorder reduces the value of the two-point function of the model and the Ginibre inequality implies that, for any $\omega \in \{0 , 1\}^{E(\Zd)}$ and any $x \in \Zd$,
\begin{equation} \label{eq:Ginibreanyperco}
    \left\langle \cos \left( \theta_0 - \theta_x \right) \right\rangle^{\mathrm{Vil}}_{\beta , \omega} \leq \left\langle \cos \left( \theta_0 - \theta_x \right) \right\rangle^{\mathrm{Vil}}_{\beta}.
\end{equation}
\end{remark}

We further assume that the percolation configuration $\omega \in \{0 , 1\}^{E(\Zd)}$ is i.i.d. Bernoulli of probability $p \in [0 , 1]$ and denote by $\E_p$ the expectation with respect to this percolation. The first main result of this article shows that, if the probability $p$ satisfies $p > p_{c, \mathrm{edge}}(d)$ where $p_{c, \mathrm{edge}}(d)$ is the critical probability for the Bernoulli edge percolation (N.B. we have $p_{c, \mathrm{edge}}(2) = 1/2$) and if $\beta$ is chosen sufficiently large, then the expectation (with respect to the underlying percolation configuration) of the two-point function of the Villain model decays polynomially fast in two dimensions and remains bounded away from $0$ in dimension $3$ and higher.

\begin{theorem}[Phase transitions for the Villain model on a supercritical Bernoulli percolation cluster] \label{proofKTsitegeneral}
For the Villain model in a random environment given by a Bernoulli edge percolation of parameter $p$, the following hold:
\begin{itemize}
        \item In dimension $d = 2$, for any $p > 1/2$, there exists an inverse temperature $\beta_{\mathrm{KT}}^{\mathrm{Vil}}(p) < \infty$, such that, for any $\beta \geq \beta_{\mathrm{KT}}^{\mathrm{Vil}}(p)$, the function $x \mapsto \mathbb{E}_p [  \left\langle \cos  ( \theta_0 - \theta_x ) \right\rangle_{\beta, \omega}^\mathrm{Vil}]$ decays polynomially fast in $|x|$.
        \item In dimension $d \geq  3$, for any $p > p_{c , \mathrm{edge}}(d)$, there exists an inverse temperature $\beta_{c}^{\mathrm{Vil}}(d, p) < \infty$, such that, for any $\beta \geq \beta_{c}^{\mathrm{Vil}}(d, p)$, the function $x \mapsto \mathbb{E}_p [  \left\langle \cos  ( \theta_0 - \theta_x ) \right\rangle_{ \beta, \omega}^{\mathrm{Vil}}]$ is bounded away from $0$.
    \end{itemize}
\end{theorem}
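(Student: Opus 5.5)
The plan is a comparison: we show that, for $\beta$ large, the Villain model on the percolation configuration $\omega$ dominates, in the sense of two-point functions, a homogeneous Villain model on $\Zd$ at a lower but still large inverse temperature $\beta' = \beta'(\beta,p,d)$ with $\beta'\to\infty$ as $\beta\to\infty$. Once this is available, the two statements follow from the known homogeneous results. In $d=2$ we would use the polynomial lower bound for $\beta'\ge\beta_{\mathrm{KT}}^{\mathrm{Vil}}$, and in $d\ge3$ long-range order for $\beta'>\beta_c^{\mathrm{Vil}}(d)$. The comparison is performed after averaging over $\omega$, or on a good event of $\omega$.

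The key tool is the metric-graph/Ginibre structure mentioned around Theorem~\ref{theoremGinibreineq} and Proposition~\ref{prop:prop3.6}. A Villain interaction $v_\beta$ on an edge is the heat kernel, so Villain models compose like resistances. Two edges in series with inverse temperatures $\beta_1,\beta_2$ integrate out to a Villain edge with $1/\beta=1/\beta_1+1/\beta_2$, because $v_{\beta_1}*v_{\beta_2}\propto v_{(\beta_1^{-1}+\beta_2^{-1})^{-1}}$. Parallel edges multiply, and the product $v_{\beta_1}v_{\beta_2}$ dominates $v_{\beta_1+\beta_2}$ in the Ginibre sense, or at least $v_{\max\beta_i}$.

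The steps are as follows.

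\textbf{(i) Coarse-graining.} Partition $\Zd$ into boxes of side $L$ and call a box good if it satisfies the standard Antal--Pisztora/Grimmett--Marstrand renormalisation events for supercritical edge percolation. These events require a unique crossing cluster, with all neighbouring crossing clusters connected within the union of boxes. For $L$ large the good boxes stochastically dominate a site percolation of density close to $1$, by Liggett--Schonmann--Stacey.

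\textbf{(ii) Reduction to a renormalised graph.} On the crossing clusters, select for each pair of adjacent good boxes a path of length at most $CL$ joining their cluster representatives. We then use Ginibre monotonicity, which says that decreasing coupling constants decreases $\langle\cos(\theta_0-\theta_x)\rangle$. Setting to zero all edges except these disjoint, or boundedly overlapping, paths gives a lower bound. Integrating out interior vertices of the paths via the series rule then produces a Villain model on the renormalised graph of good boxes with effective inverse temperature at least $\beta/(CL)$. Overlaps are handled by splitting $\beta$ into a bounded number of copies.

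\textbf{(iii) Filling holes.} The renormalised graph is itself a highly supercritical site percolation on $\Zd$, so the reduction does not end the argument: we must still pass to a homogeneous model. This is the main obstacle. The Villain model on a percolated graph is precisely the object we are trying to control, so the renormalisation only improves $p$ to near $1$ and does not remove the disorder.

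To finish I would try a direct Fr\"ohlich--Spencer/spin-wave argument adapted to the dilute lattice. For the Villain model the dual is an integer-valued Gaussian field on the dual graph, and on the infinite cluster at $p$ close to $1$ the random-walk Green's function and effective conductances are comparable to those of $\Zd$ by quenched homogenisation. In $d\ge3$ this gives LRO via the infrared bound on the cluster, using the Gaussian spin-wave lower bound plus a vortex-free Villain representation at large $\beta$. In $d=2$ we would use the dual height-function delocalisation (to be developed later in the paper for disordered height functions) to get the polynomial lower bound.

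Finally, the ingredients are combined. The event that $0$ and $x$ lie in good boxes connected in the renormalised infinite cluster has probability bounded below. Multiplying the quenched lower bound by this probability gives the annealed bounds on $\mathbb{E}_p\langle\cos(\theta_0-\theta_x)\rangle$, where the polynomial exponent in $d=2$ picks up extra factors from chemical-distance estimates.
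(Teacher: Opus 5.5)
Your steps (i)--(ii) are essentially the renormalisation step the paper takes over from \cite{DG25}. You also correctly diagnose that they only reduce the theorem to a high-density (site) percolation. The gap is step (iii): that is where the whole difficulty lies, and none of the mechanisms you list works there.

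\begin{itemize}
\item The infrared bound needs reflection positivity, which a percolation cluster does not have.
\item Quenched homogenisation controls the random-walk Green's function, i.e.\ the spin-wave part. It says nothing about vortices, which are present at every finite $\beta$; controlling them is exactly the hard part.
\item The Fr\"ohlich--Spencer Coulomb-gas analysis does not transfer easily to disordered environments either.
\item For $d=2$, invoking the paper's disordered height-function delocalisation has three problems. It outsources the key idea, since that result is proved by the same Wells-type argument. It is only an annealed statement $\E_p[\mathrm{Var}]\geq c_0\ln L$, whereas your last paragraph requires a quenched lower bound on a good event. And passing from delocalisation of the dual field to a power-law lower bound on the spin correlation is a further argument (done in the homogeneous case in \cite{van2023elementary}) that you would have to redo in a random environment.
\end{itemize}

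The missing idea is Wells' inequality. Since it is not known for the Villain model itself (Open Question~\ref{OQ.1}), the paper applies it to a hybrid XY/Villain model on $\Zd_3$. This model has XY couplings $\beta_1$ on the edges touching $\Zd$, where the site disorder $r$ is placed, and Villain weights $v_{\beta_2}$ on the middle edges. It is the $n\to\infty$ limit of an XY model on $\Zd_n$ (Proposition~\ref{prop:prop3.6}). The argument then runs as follows.

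\begin{itemize}
\item For each $n$, Wells' inequality (Theorem~\ref{prop.Wells}) shows that the Wells-disorder average of the disordered two-point function at $(\beta_1,\beta_2)$ is at least the homogeneous two-point function at $(\beta_1/4,\beta_2/4)$.
\item The Wells disorder is stochastically dominated by Bernoulli site percolation of parameter $p_0(\beta_1)=(1+e^{-2d\beta_1})^{-1}$, uniformly in $n$ and $\beta_2$ (Proposition~\ref{prop:stochdomWellsVillain}). Ginibre monotonicity in $r$ then gives the same lower bound for the average under $\P^{\mathrm{site}}_{p_0}$.
\item After letting $n\to\infty$, one sends $\beta_1\to\infty$ inside the expectation on the right, keeping $p_0=p_0(\beta_1)$ fixed. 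This only increases the right-hand side and yields the pure Villain model on the percolation (see~\eqref{ineq:beta1toinfty}).
\item The left-hand side is a homogeneous model with BKT/LRO (Proposition~\ref{prop.phasetransitionextended}).
\end{itemize}

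Everything is annealed from the outset, so no quenched estimate is ever needed.

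Finally, in (ii) your parallel-edge comparison is stated backwards. Splitting a Villain edge of conductance $J$ into $k$ parallel edges of conductance $J/k$ \emph{decreases} the two-point function (Lemma~\ref{cor:merging}). That is the direction you need to handle overlapping paths. It is also the one genuinely new input the paper needs to adapt the XY renormalisation to the Villain model.
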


\begin{remark} \label{remark1.4}
Let us make a few remarks about the previous theorem:
\begin{itemize}
    \item The proofs written below (based on the argument of~\cite{van2023elementary, garban2022continuous}) give the following lower bounds: there exists a constant $c := c(p) > 0$ such that
    \begin{equation*}
        \mathbb{E}_p [  \left\langle \cos(\theta_0 - \theta_x) \right\rangle_{\beta, \omega}^{\mathrm{Vil}}] \geq \frac{c}{|x|} ~\mbox{in}~ d=2 ~~\mbox{and}~~ \inf_{x \in \Zd} \mathbb{E}_p [  \left\langle \cos(\theta_0 - \theta_x) \right\rangle_{\beta, \omega}^{\mathrm{Vil}}] \underset{\beta \to \infty}{\longrightarrow} 1 ~\mbox{in}~ d\geq 3.
    \end{equation*}
    \item It is known that, for any $x \in \Zd$, the expected two-point function $\mathbb{E}_p [  \left\langle \cos  ( \theta_0 - \theta_x ) \right\rangle^{\mathrm{Vil}}_{ \beta, \omega}]$ is increasing in the probability $p$ and in the inverse temperature $\beta$ (N.B. These results are a consequence of the Ginibre inequality).
    Since the right-hand side of~\eqref{eq:Ginibreanyperco} decays exponentially fast in $|x|$ for sufficiently small $\beta$, the same result holds for the expected two-point function $\mathbb{E}_p [  \left\langle \cos  ( \theta_0 - \theta_x ) \right\rangle^{\mathrm{Vil}}_{ \beta, \omega}]$ for any $p \in [0,1]$.
    \item For $p < 1/2$, the percolation configuration has only finite clusters (almost surely) and it is relatively easy to show that the expectation of the two-point function decays exponentially fast for any value of $\beta \in (0 , \infty)$.
    \item It would be possible to extend the result to other type of disorders such as the supercritical Bernoulli site percolation or the Poisson-Voronoi environment (as in~\cite{DG25}).
\end{itemize}
\end{remark}

\subsubsection{The annealed Villain model} \label{sec:sec113}

As a byproduct of Theorem~\ref{proofKTsitegeneral}, we obtain the existence of phase transitions for a more general class of spin systems introduced in the following definition. 

\begin{definition}[Annealed Villain interaction]\label{d.AV}
    Let $\kappa$ be a probability measure on $(0 , \infty)$ and $\beta \in (0 , \infty)$ be an inverse temperature. We define the annealed Villain interaction associated with the measure $\kappa$ and with inverse temperature $\beta$ to be the function $F_{\kappa , \beta} : [0 , 2 \pi) \to \R$ given by the formula
    \begin{equation*}
        F_{\kappa, \beta}(\theta) = \int_0^\infty v_{\beta J}( \theta) \kappa(d J).
    \end{equation*}
    Given a finite set $\Lambda \subseteq \Zd$, we define the spin system with annealed Villain interaction associated with the measure $\kappa$ and at inverse temperature $\beta$ in the set $\Lambda$ to be the probability distribution on the set of configurations $[0 , 2\pi)^{\Lambda}$
    \begin{equation} \label{def.annealedVillainmodelintro}
        \mu^{\mathrm{Ann-Vil}}_{L, \kappa, \beta}(d \theta) := \frac{1}{Z^{\mathrm{Ann-Vil}}_{L, \kappa, \beta}} \prod_{\substack{x , y \in \Lambda \\ x \sim y}} F_{\kappa, \beta}(\theta_x - \theta_y) \prod_{x \in \Lambda} d\theta_x.
    \end{equation}
    We denote by $\mu^{\mathrm{Ann-Vil}}_{\kappa, \beta}$ the limit as $\Lambda \uparrow \Zd$ of the finite-volume measures $\mu^{\mathrm{Ann-Vil}}_{\Lambda, \kappa, \beta}$ and by $\left\langle \cdot \right\rangle_{\kappa, \beta}^{\mathrm{Ann-Vil}}$ the corresponding expectation.
\end{definition}

\begin{remark} 
    Let us point out that:
\begin{itemize}
\item Regarding the measure $\kappa$, we will always choose it so that the measure~\eqref{def.annealedVillainmodelintro} is well-defined for any $\Lambda \subseteq \Zd$ and $\beta \geq 0$ (i.e., so that the partition function is finite).

\item The existence and uniqueness of the thermodynamic limit is guaranteed by an extension of the Ginibre inequality 
to the annealed Villain models 
(which follows from an adaptation of the argument of~\cite[Lemma 1]{campbell1998isotropic} or~\cite[Appendix D]{AHPS}). 
\item By~\cite[(9.10)]{AHPS}, the function $\theta \mapsto e^{\beta \cos \theta}$ is an annealed Villain interaction, 
so that the $XY$ model in particular can be seen as an annealed Villain.
\end{itemize}
\end{remark}

The second main result of this article shows the existence of phase transitions for spin systems with annealed Villain potential.

\begin{theorem}[Phase transitions for spin systems with annealed Villain interaction]\label{Th:annealedVillain}
Let $\kappa$ be a probability measure on $(0 , \infty)$. Then the following hold:
    \begin{itemize}
        \item In dimension $d = 2$, there exists an inverse temperature $\beta_{\mathrm{KT}}^{\mathrm{Ann-Vil}}(\kappa) < \infty$, such that, for any $\beta \geq \beta_{\mathrm{KT}}^{\mathrm{Ann-Vil}}(\kappa)$, the function $x \mapsto  \left\langle \cos  ( \theta_0 - \theta_x ) \right\rangle^{\mathrm{Ann-Vil}}_{\kappa , \beta}$ decays polynomially fast in $|x|$.
        \item In dimension $d \geq  3$, there exists an inverse temperature $\beta_{c}^{\mathrm{Ann-Vil}}(d, \kappa) < \infty$, such that, for any $\beta \geq \beta_{c}^{\mathrm{Vil}}(d, \kappa)$, the function $x \mapsto  \left\langle \cos  ( \theta_0 - \theta_x ) \right\rangle^{\mathrm{Ann-Vil}}_{\kappa , \beta}$ is bounded away from $0$.
    \end{itemize}
\end{theorem}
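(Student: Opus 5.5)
The idea is to realise the annealed Villain model as an average of Villain models with i.i.d.\ random coupling constants, and then reduce to Theorem~\ref{proofKTsitegeneral}. By Definition~\ref{d.AV}, expanding the product over edges gives
\begin{equation*}
\prod_{x\sim y} F_{\kappa,\beta}(\theta_x-\theta_y) = \int \prod_{x \sim y} v_{\beta J_{xy}}(\theta_x-\theta_y) \prod_{x\sim y}\kappa(dJ_{xy}).
\end{equation*}
So $\mu^{\mathrm{Ann-Vil}}_{\Lambda,\kappa,\beta}$ is the marginal on $\theta$ of a joint law on $(J,\theta)$. In this joint law, $\theta$ given $J$ is a Villain model with edge couplings $\beta J_{xy}$. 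The law of $J$ is no longer $\kappa^{\otimes E}$: it is tilted by the partition function $Z^{\mathrm{Vil}}_{\Lambda,\beta,J}$. The two-point function is therefore an average, over this tilted law of $J$, of quenched Villain two-point functions with inhomogeneous couplings.

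Step one is to control the tilted law from below by a supercritical Bernoulli percolation. Choose a threshold $J_0>0$ such that $p:=\kappa([J_0,\infty))$ exceeds $p_{c,\mathrm{edge}}(d)$. This is possible since $\kappa$ is a probability measure on $(0,\infty)$, so $\kappa([J_0,\infty))\to 1$ as $J_0\to 0$. Set $\omega_{xy}:=\indc_{J_{xy}\ge J_0}$.

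Step two is monotonicity. The Ginibre inequality (in the form behind \eqref{eq:Ginibreanyperco} and Remark~\ref{remark1.4}) shows that the quenched two-point function is increasing in each coupling. Since $v_{\beta J}$ is increasing in $J$, this gives
\begin{equation*}
\left\langle \cos(\theta_0-\theta_x)\right\rangle^{\mathrm{Vil}}_{\beta J} \ge \left\langle \cos(\theta_0-\theta_x)\right\rangle^{\mathrm{Vil}}_{\beta J_0,\omega}.
\end{equation*}

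Step three is to compare the tilted law of $J$, or of $\omega$, with the product measure. I would show that the tilted law of $\omega$ stochastically dominates Bernoulli($p$). The tilt is $Z_{\Lambda,\beta,J}$, which is increasing in each $J_{xy}$, again by Ginibre, because $\partial_{J}\log Z$ is an expectation of an increasing positive-definite observable. Moreover, $\kappa^{\otimes E}$ is a product measure, so it satisfies FKG. Hence the law with density proportional to an increasing function dominates $\kappa^{\otimes E}$. The quantity $J\mapsto \langle\cos(\theta_0-\theta_x)\rangle_{\beta J}$ is increasing, so its tilted average is at least its untilted average. That untilted average is bounded below by $\E_p[\langle\cos(\theta_0-\theta_x)\rangle^{\mathrm{Vil}}_{\beta J_0,\omega}]$. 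Theorem~\ref{proofKTsitegeneral}, applied at inverse temperature $\beta J_0\ge \beta^{\mathrm{Vil}}_{\mathrm{KT}}(p)$ (respectively $\beta^{\mathrm{Vil}}_c(d,p)$), then gives the polynomial lower bound in $d=2$ and the uniform positive lower bound in $d\ge3$, uniformly in $\Lambda$. Passing to $\Lambda\uparrow\Zd$ finishes the argument.

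The upper polynomial decay in $d=2$ comes from the McBryan--Spencer/Mermin--Wagner bound, adapted to the annealed interaction. This should be standard since $F_{\kappa,\beta}$ is smooth and positive, though it needs a remark.

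The main obstacle is Step three: showing that $Z^{\mathrm{Vil}}_{\Lambda,\beta,J}$ is coordinatewise increasing in $J$, and handling the measurability of $v_{\beta J}$ at $J$ near $0$ and the finiteness of the tilt. I would try the Fourier expansion in \eqref{def:introdefVillain}: every coefficient $e^{-k^2/(2\beta J)}$ is increasing in $J$, and all correlations of characters are nonnegative. This yields monotonicity of $Z$ directly.
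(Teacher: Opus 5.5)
Your proposal is correct and is essentially the paper's proof. You write the annealed measure as a $Z^{\mathrm{Vil}}_{\Lambda,\beta J}$-tilted mixture of Villain models. You use Ginibre and the Fourier expansion of $v_{\beta J}$ to see that both the quenched two-point function and $Z^{\mathrm{Vil}}_{\Lambda,\beta J}$ are increasing in $J$, remove the tilt with FKG for $\kappa^{\otimes E}$, and threshold the conductances to reduce to Theorem~\ref{proofKTsitegeneral}. The paper fixes $p=3/4$ and tunes $\beta_0$, which is equivalent to your choice of $J_0$.

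Drop your aside on an upper bound. The paper only proves the lower bound, i.e. absence of exponential decay. Your justification is also false in general: for heavy-tailed $\kappa$, $F_{\kappa,\beta}$ need not be smooth or even bounded at $0$.
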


\begin{remark}
Let us make a few remarks about this theorem:
\begin{itemize}
    \item This BKT transition was established in \cite{AHPS} under the assumption that the Fourier coefficients of the function $F_{\beta, \kappa}$ are log-concave and that they satisfy a \emph{divisibility} condition (see~\cite[Definition 9.4]{AHPS}). This second condition can be removed if the spin system is considered on a planar graph which is dual to a degree-$3$ graph. 
    \item Our result would also hold if $\kappa$ were assumed to be a probability measure on $[0, \infty)$ under the additional condition $\kappa(\{ 0 \}) < p_{c,\mathrm{edge}}(d)$. 
    \item Even though this will not be formally proved in this article, it would be possible to show the exponential decay of the two-point functions for $\beta \ll 1$ by adapting the argument of Dobrushin~\cite{dobruschin1968description} (at least if the tail of the measure $\kappa$ decays sufficiently fast).
 We also refer to the recent work \cite{d2026free} which established analyticity of the pressure for XY and Villain in the entire high temperature region.  
    
    \item It would be possible to modify the proof of Theorem~\ref{Th:annealedVillain} so as to prove the the existence of the BKT and order/disorder phase transitions for spin systems with annealed Villain interaction on a supercritical Bernoulli percolation cluster.
\end{itemize}

\end{remark}

\subsection{Integer-valued height functions} \label{sec:presentationheight}

\subsubsection{The models}

The second class of models considered in this article are the integer-valued height functions on the two-dimensional hypercubic lattice $\Z^2$. One of the prototypical example of integer-valued height functions is the integer-valued Gaussian free field defined as follows. Let us fix a finite set $\Lambda \subseteq \Z^2$, an inverse temperature $\beta \in (0, \infty)$ and consider the space of integer-valued height functions with Dirichlet boundary condition $\Omega_{\Lambda}^{\Z} := \left\{ \varphi : \Z^2 \to \Z \, : \, \varphi \equiv 0 ~\mbox{on}~\Z^2 \setminus \Lambda \right\}$. The integer-valued Gaussian free field with inverse temperature $\beta$ on the set $\Lambda$ is then defined to be the probability distribution on the space $\Omega_{\Lambda}^{\Z}$ given by
\begin{equation}\label{e.ZGFF}
   \mu_{\Lambda, \beta}^{\Z-\mathrm{GFF}}( \left\{ \varphi \right\}) := \frac{1}{Z^{\Z-\mathrm{GFF}}_{\Lambda , \beta}} \exp \left( - \frac{1}{2\beta}  \sum_{x \sim y} \left( \varphi(x) - \varphi(y) \right)^2 \right).
\end{equation}
We will mostly make use of this definition when the set $\Lambda$ is the box $\Lambda_L := \left\{ - L , \cdots, L \right\}^2$ and denote by $\mathrm{Var}^{\Z-\mathrm{GFF}}_{L , \beta}$ the variance with respect to the measure $\mu_{\Lambda_L, \beta}^{\Z-\mathrm{GFF}}$.

\smallskip

The XY height function is a variant of the integer-valued Gaussian free field defined according to the identity, for any $\varphi \in \Omega_{\Lambda}^{\Z}$,
\begin{equation*}
    \mu_{\Lambda , \beta}^{\Z-\mathrm{XY}} (\left\{ \varphi \right\}) := \frac{1}{Z_{\Lambda,\beta}^{\Z-\mathrm{XY}}} \prod_{x \sim y} I_{\left(\varphi(x) - \varphi(y)\right)} \left( \beta \right),
\end{equation*}
where we used the modified Bessel function defined by the identity, for $k \in \Z$, $\beta \in [0 , \infty)$
\begin{equation*}
    I_k(\beta) := \sum_{j = 0}^\infty \frac{1}{j! (j + |k|)!} \left( \frac{\beta}{2} \right)^{2j + |k|} ~~\mbox{so that}~~ e^{\beta \cos \theta} = \sum_{k \in \Z} I_k(\beta) e^{i k \theta}.
\end{equation*}
We denote by $\mathrm{Var}_{L, \beta}^{\Z-\mathrm{XY}}$ the variance with respect to $ \mu_{\Lambda_L, \beta}^{\Z-\mathrm{XY}}$.

\smallskip

These two models of height functions exhibit a phase transition known as the roughening phase transition characterised by the following property (see~\cite{frohlich1981kosterlitz, lammers2022height, lammers2022dichotomy} and Figure~\ref{fig:figure1.1}):
\begin{itemize} 
    \item A localised regime at low temperature
    \begin{equation} \label{eq:eq1.5localisation}
        \sup_{L \in \N} \var^{\Z-\mathrm{GFF}}_{L , \beta} \left[ \varphi(0) \right] < \infty ~~\mbox{for}~ \beta \ll 1 
    \end{equation}
    \item A delocalised regime at high temperature 
    \begin{equation} \label{eq:delochightemp}
        \var^{\Z-\mathrm{GFF}}_{L , \beta} \left[ \varphi(0) \right] \geq c \ln L ~~\mbox{for}~ \beta \gg 1,
    \end{equation}
\end{itemize}
We remark that the same property holds for the XY height function and that, due to a correlation inequality from Regev and Stephens-Davidowitz~\cite{regev2017inequality} (and to~\cite{AHPS, van2023duality} for the extension of the result to the XY height function, see also \cite{frohlich1978correlation}), the variance of $\varphi(0)$ is increasing in the parameters $\beta$ and $L$ (see Theorem~\ref{prop:monotonicity}).
%\margin{C: Should we "dare" mentioning Frohlich-Park as praised by Roland or Marek $:-)$ here ? what do you think ? D: I don't know, we could. C: I just did :)! }
We further note that there is a (well-studied) duality between the integer-valued height functions and the spin systems introduced in the previous section (more specifically, the Villain model is dual to the integer-valued Gaussian free field and the XY model is dual to the XY height function) and all the known proofs of the BKT phase transition require to establish the roughening phase transition.

\subsubsection{The result}

As it was the case for the XY and Villain models, we will be interested in the existence of the roughening phase transition for the integer-valued Gaussian free field and the XY height function in the presence of a quenched random disorder. More specifically, we introduce the disordered versions of the integer-valued height functions studied in this article.

\begin{definition}[Disordered height function] \label{def:XYheightandIVGFFintro}
Let $L \in \N$ be an integer, $\beta \in (0 , \infty)$ be an inverse temperature and $\omega \in \{0 , 1 \}^{E(\Z^2)}$ be an edge percolation configuration. We define:
\begin{itemize}
    \item \underline{The disordered integer-valued Gaussian free field} to be the probability distribution on the set $\Omega_{\Lambda_L}$ given by the identity
\begin{equation} \label{def:IVGFFperc}
    \mu_{\Lambda_L , \beta, \omega}^{\Z-\mathrm{GFF}} (\left\{ \varphi \right\}) := \frac{1}{Z_{\Lambda_L,\beta , \omega}} \exp \left( - \frac{1}{2\beta} \sum_{x \sim y}  \frac{\left( \varphi(x) - \varphi(y) \right)^2}{\omega_{xy}} \right),
\end{equation}
    where we used the convention $\frac{k^2}{0} = \infty$ for $k \in \Z \setminus \{ 0 \}$ and $\frac{k^2}{0} = 0$ for $k=0$.
    We denote by $\mathrm{Var}_{L , \beta, \omega}^{\Z-\mathrm{GFF}}$ the variance with respect to this measure (and omit the index $\omega$ for the percolation configuration $\omega \equiv 1$).
    \item \underline{The disordered XY height function} to be the probability distribution on the set $\Omega_{\Lambda_L}$ given by the identity
\begin{equation} \label{def:XYheightperco}
    \mu_{\Lambda_L , \beta,\omega}^{\Z-\mathrm{XY}} (\left\{ \varphi \right\}) := \frac{1}{Z_{\Lambda_L,\beta , \omega}^{\Z-\mathrm{XY}}} \prod_{x \sim y} I_{\left(\varphi(x) - \varphi(y)\right)} \left( \omega_{xy} \beta \right).
\end{equation}
We denote by $\mathrm{Var}_{L , \beta, \omega}^{\Z-\mathrm{XY}}$ the variance with respect to this measure (and omit the index $\omega$ for the percolation configuration $\omega \equiv  1$).
%\margin{C: Here maybe we can discuss less the $XY$ dual as it will just be part of annealed Gaussian, right ? So maybe in that sense we may just focus on $\Z$ GFF -- We NEED BOTH -- to get all Z annealed Gaussian fields we need delocalisation of BOTH !!!}
\end{itemize}
\end{definition}

\begin{remark}
This way of encoding the disorder means that we enforce the constraint $\varphi(x) = \varphi(y)$ for any pair of neighbouring vertices $x , y \in \Lambda_L$ such that $\omega_{xy} = 0$. This constraint tends to reduce the fluctuations of the height function, and more specifically, the results of~\cite{regev2017inequality, AHPS, van2023duality} (see Theorem~\ref{prop:monotonicity}) imply that the functions $\omega \mapsto \mathrm{Var}_{L , \beta, \omega}^{\Z-\mathrm{GFF}} \left[ \varphi(0) \right]$ and $\omega \mapsto \mathrm{Var}_{L , \beta, \omega}^{\Z-\mathrm{GFF}} \left[ \varphi(0) \right]$ are increasing. In particular, for any $\omega \in \{0 , 1\}^{E(\Z^2)}$,
\begin{equation} \label{ineq:12220608}
    \mathrm{Var}_{L , \beta, \omega}^{\Z-\mathrm{GFF}} \left[ \varphi(0) \right] \leq  \mathrm{Var}_{L , \beta}^{\Z-\mathrm{GFF}} \left[ \varphi(0) \right].
\end{equation}
\end{remark}

%\margin{C: should we add a comment somewhere that one could also handle edge-percolation disorder ?}

\iffalse
We define the suitable critical percolation threshold for this model according of the identity
\margin{C: do we need to go through dual $1-$ ? (I find it slightly confusing).  Would it be simpler to directly consider an edge percolation disorder ?}
\begin{multline} \label{eq:defofpcsite}
    \mathfrak{p}_c := \sup \left\{ p \in [0,1]\, : \, \mathbb{P}_p ~\mathrm{almost-surely}, ~\mathrm{ there ~ exists ~ an ~ infinite ~ connected ~component~ in} \right. \\ 
   \left. ~\mathrm{the~ edge ~ percolation ~ configuration} ~ \tilde r \in \{0,1\}^{E(\Zd)} ~\mathrm{with}~ \tilde r_{xy} := 1 - r_x r_y \right\}.
\end{multline}
The second main result of this article shows the delocalisation of the integer-valued height functions defined above for sufficiently high $\beta$ and for $p > \mathfrak{p}_c$.
\fi

\begin{figure}
\begin{subfigure}{.5\linewidth}
\centering
\includegraphics[scale=0.6]{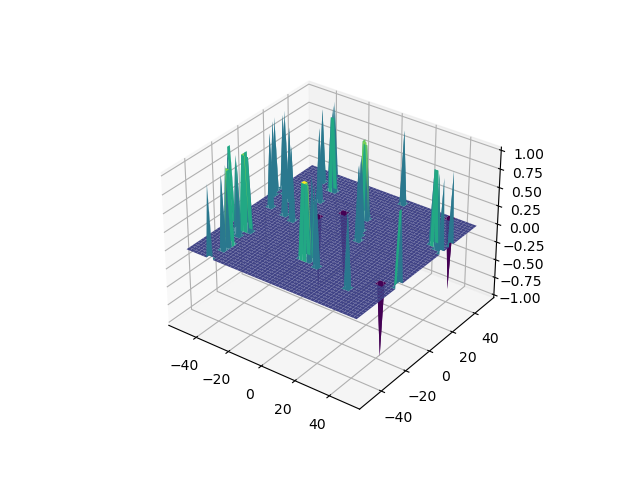}
\caption{Localised phase at low temperature.}
\end{subfigure}%
\begin{subfigure}{.5\linewidth}
\centering
\includegraphics[scale=0.6]{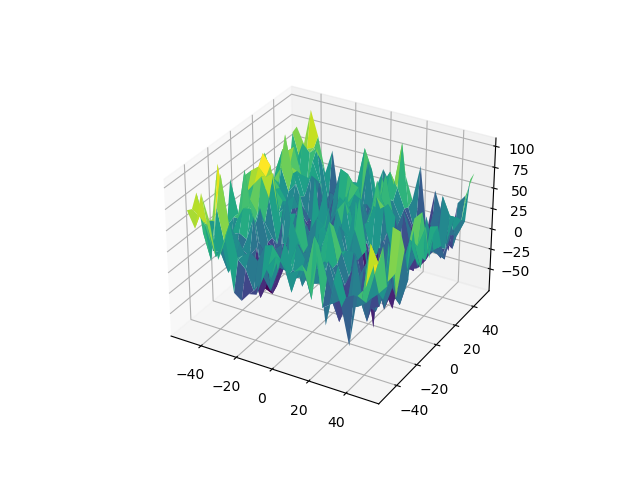}
\caption{Delocalised phase at high temperature.}
\end{subfigure}
\caption{An illustration of the phase transition for integer-valued height functions.} \label{fig:figure1.1}
\end{figure}

The third main result of this article is the existence of a roughening phase transition for the disordered integer-valued height functions when the disorder $\omega \in \{0 , 1\}^{E(\Z^2)}$ is distributed according to a supercritical i.i.d. edge percolation.% (N.B. We recall that $p_{c,\mathrm{edge}}(2) = 1/2$). 

\begin{theorem}[Delocalisation for disordered height functions] \label{Thm:delocheightsupercrit}
%\margin{C: The parameter $p$ here is not the same as in the abstract (in abstract it is $1-p$). I guess this is fine, as we do not want to see a $1-p$ in the abstract. Could be $\bar p$ possibly ? \textbf{I tried that in abstract, please change if you don't like.} A very minor issue in abstract "we assume $\phi(x)=\phi(y)$ independently on each edge" I guess this is not quite what we're doing right ?We are enforcing this condition independently on edges indeed but then there may be some other edges where the field $\phi$ will decide to agree on both enpoints. As this is an abstract sentence, maybe it is fine if it is slightly vague ? I did not see a better, yet simple, way to do.}
    For any $p > 1/2$, there exists an inverse temperature $\beta_{\mathrm{Deloc}}(p) \in (0 , \infty)$ and a constant $c_0 \in (0 , \infty)$ such that for any $\beta \geq \beta_{\mathrm{Deloc}}(p)$ and any $L \in \N^*$,
    \begin{equation*}
        \mathbb{E}_p \left[ \mathrm{Var}_{L , \beta, \omega}^{\Z-\mathrm{GFF}} \left[ \varphi(0) \right] \right] \geq c_0 \ln L ~~\mbox{and}~~ \mathbb{E}_p \left[  \mathrm{Var}_{L , \beta, \omega}^{\Z-\mathrm{XY}}  \left[ \varphi(0) \right] \right] \geq c_0 \ln L.
    \end{equation*}
\end{theorem}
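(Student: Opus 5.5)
Our plan is to deduce Theorem~\ref{Thm:delocheightsupercrit} from the spin-side results through duality, rather than working directly with height functions. The disordered integer-valued GFF \eqref{def:IVGFFperc} is the dual of a disordered Villain model living on the dual lattice $(\Z^2)^*$. An edge $xy$ with $\omega_{xy}=0$ forces $\varphi(x)=\varphi(y)$, which amounts to an infinite coupling ($\beta=\infty$) on the dual edge $e^*$ crossing it. Every other edge has coupling $\beta^{-1}$, up to the normalisation fixed by \eqref{def:introdefVillain}.

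So the dual spin model is a Villain model on $(\Z^2)^*$ in which the dual edges of closed primal edges are ``wired''. The wired clusters are the connected components of the dual percolation $\omega^*_{e^*}=1-\omega_e$, which is subcritical because its parameter is $1-p<1/2$. The same correspondence applies to the XY height function \eqref{def:XYheightperco}, with the XY model as dual, since $I_k(0)=\mathbbm{1}_{k=0}$. Delocalisation of the height function is equivalent, via the standard Fröhlich--Spencer / van~Engelenburg--Lis duality, to the \emph{absence} of exponential decay on the spin side; in particular polynomial decay at rate $c/|x|$ suffices. We will therefore use the quantitative mechanism of \cite{van2023elementary} rather than a pure dichotomy.

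The key steps, in order, are as follows.

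\textbf{Step (i): the quantitative duality bound.} Following \cite{van2023elementary}, a lower bound on the height variance $\mathrm{Var}[\varphi(0)]\ge c\ln L$ follows from a lower bound on the spin-side two-point function across annuli. Concretely, we need, uniformly in scales $r$, a lower bound on the probability that the dual spin model (equivalently, its random current or loop representation) connects the inner and outer boundaries of an annulus $A(r,2r)$. Summing such contributions over the $\log_2 L$ dyadic annuli around $0$ gives the factor $\ln L$.

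\textbf{Step (ii): comparison with the undisordered model.} Wiring the dual clusters can only help spin correlations, by the Ginibre inequality. On the height side this is the monotonicity of Theorem~\ref{prop:monotonicity}, but used in the reverse direction. The subtlety is that the wired clusters are \emph{subcritical}. Hence it is natural to coarse-grain: clusters of $\omega^*$ have exponential tails, so at large $\beta$ the effective model on scale $r$ should look like an ordinary Villain model with slightly renormalised coupling. I would instead argue more robustly in the style of the first theorem of the paper. Combining a percolation-style averaging with the dependence on $\beta\to\infty$, we aim to show that, under $\E_p$, the annulus-crossing estimate holds with a constant $c(p)>0$ once $\beta\ge\beta_{\mathrm{Deloc}}(p)$.

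\textbf{Step (iii): averaging over $\omega$.} Since $\E_p$ of a sum of annulus contributions is the sum of the expectations, it suffices that each dyadic annulus contributes at least $c$ in expectation. Ergodicity and translation invariance of $\mathbb{P}_p$ make this contribution uniform in the scale.

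The main obstacle is Step (ii). The duality of Step (i) is set up for homogeneous or elliptic couplings, and here the primal disorder produces degenerate, infinite couplings. One must check that the argument of \cite{van2023elementary} (the ``$\beta$ large implies non-exponential decay'' direction) survives an environment in which only a positive-density, but non-elliptic, subset of edges carries the original weight.

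To handle this, I would first try a comparison through \eqref{ineq:12220608}-type monotonicity on a coarse-grained lattice. Declare a block good if its primal edges contain a well-connected open crossing structure; this happens with probability close to $1$ at large block scale because $p>1/2$. Then compare the height function restricted to good blocks with an undisordered integer-valued GFF at a renormalised inverse temperature $\beta'(\beta)\to\infty$. The roughening result \eqref{eq:delochightemp}, applied to that comparison model, then yields the $c_0\ln L$ bound. Bad blocks form a subcritical Peierls-type structure whose effect is absorbed by taking $\beta$ large.
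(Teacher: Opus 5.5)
There is a genuine gap, and it starts with the duality itself. With the normalisation \eqref{def:introdefVillain}, the primal weight $e^{-k^2/(2\beta)}$ is the Fourier coefficient of $v_\beta$ on the dual edge. The frozen weight $\mathbf{1}_{\{k=0\}}$, produced by $\omega_{xy}=0$ in \eqref{def:IVGFFperc} (or by $I_k(0)$ in \eqref{def:XYheightperco}), is the Fourier coefficient of the constant $v_0\equiv 1=e^{0\cdot\cos\theta}$. So a closed primal edge \emph{deletes} the dual edge rather than wiring it, and open edges carry $v_\beta$, not coupling $\beta^{-1}$.

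The dual spin model is therefore a Villain (resp.\ XY) model on the dual edges $e^*$ with $\omega_e=1$, i.e.\ on a \emph{supercritical} Bernoulli($p$) percolation of $(\Z^2)^*$. This is the setting of Theorem~\ref{proofKTsitegeneral}, up to boundary conditions, and the disorder hurts, as \eqref{ineq:12220608} says. Consequently there is no favourable Ginibre comparison in your Step (ii), and Theorem~\ref{prop:monotonicity} has no ``reverse direction'': freezing edges only decreases $\mathrm{Var}[\varphi(0)]$.

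Step (i) is also unsupported. In this paper, \cite{van2023elementary} is used to go from height delocalisation to spin lower bounds (Proposition~\ref{prop.phasetransitionextended}). You need the converse, quantitatively and in a quenched non-elliptic environment. The spin-to-height bridge the paper actually uses is the exact ghost-vertex identity of Proposition~\ref{prop:propA3}, not an annulus-crossing estimate. Separately, translation invariance of $\P_p$ in Step (iii) gives no uniformity across scales.

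More fundamentally, nothing in your proposal handles a positive density of zero-conductance defects, which is the whole difficulty. Your block argument resembles the paper's renormalisation: Pisztora good boxes, keeping only the open edges $\mathcal{C}$ dual to closed-$\omega^*$ crossings, and setting all other conductances to $0$. But what it produces is an integer-valued GFF on a $1$-dependent site percolation of the coarse lattice of \emph{high density, not full}. Each bad block is constrained to the heights of its neighbours, whatever $\beta$ is. By monotonicity this model is only bounded \emph{above} by the undisordered field, so it cannot be compared with the roughening result at some $\beta'$. Bad blocks cannot be ``absorbed by taking $\beta$ large''.

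The missing ingredient is delocalisation on a high-density site percolation (Proposition~\ref{prop:prop4.19}, and its diagonal variant Proposition~\ref{prop:delocwithdiagonal}). The paper obtains it as follows:
\begin{itemize}
\item A Wells' inequality for height functions (Proposition~\ref{prop:Wellsforheight}). It writes $\mathrm{Var}[\varphi(0)]$ as $\langle \lambda\cos\theta_{0g}+\frac{\lambda^2}{2}\cos 2\theta_{0g}-\frac{\lambda^2}{2}\rangle$ in a generalised XY model, so that Wells' argument and Ginibre apply, and then lets $\lambda\to\infty$.
\item Domination of the Wells disorder by $\P^{\mathrm{site}}_{p_0(\beta_1)}$, uniformly in $n$ and $\beta_2$ (Proposition~\ref{prop:domstochdisorder}).
\item The mixed $\Z$-XY/GFF model with $n\to\infty$, which transfers the result to the $\Z$-GFF.
\end{itemize}
Only then does Liggett--Schonmann--Stacey close the renormalisation. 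The XY height function is afterwards deduced from the $\Z$-GFF via the annealed-Gaussian representation of $I_k(\beta)/I_0(\beta)$ and FKG.
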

\begin{remark}
We make four comments regarding this statement:
\begin{itemize}
\item In the abstract of this paper, where this statement is stated less precisely, note that $\bar p$ then corresponds to $1-p$.
\item In the case where $\beta$ is small, a combination of~\eqref{eq:eq1.5localisation} and~\eqref{ineq:12220608} shows that the variance of the height $\varphi(0)$ is bounded uniformly in $L$.
\item A logarithmic upper bound on the variance was proved (in a more general setting) in~\cite[Corollary 3.1]{van2023duality}.
\item For $p < 1/2$, there exists $\mathbb{P}_p$ almost-surely an infinite connected component of edges for which $\omega_{xy} = 0$. In particular, in the box $\Lambda_L$, there exists (with high probability) a macroscopic connected component on which the height function is constant and equal to $0$. From this observation, it can be deduced that the expectation of the variance of the height $\varphi(0)$ is bounded uniformly in $L$ for any $\beta \in (0 , \infty)$. 
\end{itemize}
\end{remark}

\subsubsection{Integer-valued height functions with annealed Gaussian potential}

Similarly to Section~\ref{sec:sec113}, we make use of Theorem~\ref{Thm:delocheightsupercrit} to show a delocalisation result for a broader class of integer-valued height functions, namely the ones with ``annealed Gaussian potentials" defined below. 

\begin{definition}[Annealed Gaussian potentials] 
A potential $V : \R \to \R$ is said to be an annealed Gaussian potential if the map $k \mapsto k^2 e^{-V(k)}$ is summable over $\Z$ and if there exists a Borel probability measure $\mu_V$ on $(0,\infty)$ such that, for any $x \in \R$,
%\margin{C: naive question : is it the same as requiring this constraint only on $k\in\Z$ ? (since we have integrability conditions, this may ?) maybe very naive! (but also clearly relevant!) }

\begin{equation} \label{eq:defannealedgaussian}
    e^{-V(x)} = \int_0^\infty e^{- \frac{x^2}{2J}} \mu_V(d J).
\end{equation}
\end{definition}

\begin{remark}
We remark about this definition that:
\begin{itemize}
    \item 
    %The summability assumption is necessary and sufficient this assumption is necessary and sufficient to ensure that the model introduced in~\eqref{def:IVGFFgeneral}
     The summability assumption ensures that the model introduced in~\eqref{def:IVGFFwithpotentialV}
     below is well-defined and that the variances of the heights are finite. In terms of the probability distribution~$\mu_V$, this is equivalent to requiring that $\int_0^\infty J \sqrt{J} \mu_V(dJ) < \infty$. 

%\margin{This condition may also be seen on the annealed Villain side. In the sense that at very high $J\equiv \beta$, we have that $\int_{\S^1} p_{1/J}(\theta)d\theta \asymp $ But we miss a $\beta$-normalisation here I guess to make a nice duality with $IV-GFF$ ? For GFF, the partition function at high $\beta$ should be like $1/\sqrt{2\pi \beta}^{edges}$ }

    %\item The function $V$ and the measure $\mu_V$ completely characterise each other. 
    \item There is no gain of generality if, instead of assuming that $\mu_V$ is a probability measure, we assume that $\mu_V$ is a non-negative finite measure on $(0,\infty)$. This convention is convenient because it ensures that $V(0) = 0$.
    \item Some examples of annealed Gaussian potentials include the functions $V(k) = |k|^{\alpha}$ for $\alpha \in (0,2]$ (see~\cite{penson2010exact} and the references therein) and the (normalised) modified Bessel function introduced in~\eqref{def:XYheightandIVGFFintro}  (see~\cite[Lemma 9.6]{AHPS}).
\end{itemize}

\end{remark}

\begin{definition}[Integer-valued height functions with annealed Gaussian potential]
Given an integer $L \in \N$ and an inverse temperature $\beta \geq 0$, we define the integer-valued height function with potential $V$ to be the probability distribution on the set $\Omega_{\Lambda_L}^\Z$ given by the identity

%\purple{\bf C: it is a nice way to prescribe a temperature in those systems! Note that this is not the way the temperature inters in the dual potential to $XY$. There, rather to each $\beta$, we have a $V_\beta^{\mathrm{XY}}$ potential. And it is only at high enough $\beta$ that we reach delocalisation ! But this view point is relevant only for $XY$ I guess, and for $e^{- |x|^\alpha}$, both viewpoints are equivalent! \textbf{N.B. One may argue that $\phi \mapsto \phi / \sqrt{\beta}$ would sound more natural, but this is not true as this scaling is specific to IV-GFF}.  As I mentioned further below, it would be nice to have such a rescaling for $\S^1$ but this looks less clear to me. So Probably, there is hope there to get some BKT just by saying IF THE POTENTIAL is ... enough... then ..... } 

%\margin{C: I changed here and further below in the text $\beta$ to $\sqrt{\beta}$ (indicated throughout in blue), as discussed with Paul, so that it matches with the other definition. Please check I did not forget places $:)$. It was a rather substantial change. You may check $\blue{\sqrt{\beta}}$, $\blue{\beta}$ or $\blue{\beta_V}$.  I also added below  a Remark that annealed Villain is the Fourier dual of the annealed Gaussian under the right definition. }
\begin{equation} \label{def:IVGFFwithpotentialV}
    \mu_{\Lambda_L , \beta}^{\Z-V} (\left\{ \varphi \right\}) := \frac{1}{Z_{\Lambda_L,\beta}^{\Z-V}} \exp \left( - \sum_{x \sim y}  V \left( \frac{ \varphi(x) - \varphi(y)}{\sqrt{\beta}} \right)\right).
\end{equation}
We denote by $\mathrm{Var}_{L , \beta}^{\Z-V}$ the variance with respect to the measure $\mu_{L , \beta}^{\Z-V}$.
\end{definition}

%\purple{C: I see so here $\beta$ is set to be inside the potential $V$ right ? Maybe outside is simpler for Ginibre ? Paul: No !! if $\beta$ is not an integer ! in the general case, it may no longer be an ANNEALED gaussian. $\beta=3$ is maybe a 3 convolution but otherwise ?? Fractional convolution. So conclusion $\beta$ is outside ! But $\beta$ inside is not wonderful either : only graph monotony, not quite ginibre !! Beta integer oustside, then Ginibre along integers by Graph monotony. }

The main result of this section shows that, on the lattice $\Z^2$, these height functions are delocalised at sufficiently large inverse temperature.

\begin{theorem}[Delocalisation for integer-valued height functions with annealed Gaussian potential] \label{Thm:delocheightanealedgauss}
Let $V : \Z \to \R$ be an annealed Gaussian potential. Then there exists an inverse temperature $\beta_V \in (0 , \infty)$ (depending on $V$) and a constant $c_0 > 0$ (universal) such that, for any $\beta \geq \beta_V$ and any $L \in \N^*$,
\begin{equation*}
     \mathrm{Var}_{L , \beta}^{\Z-V} \left[ \varphi(0) \right] \geq c_0 \ln L.
\end{equation*}
%\margin{C: also here a $\beta_c=\beta_c(V)$ I guess. \textbf{No for the moment we don't have Ginibre.}}
\end{theorem}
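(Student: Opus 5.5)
We plan to deduce Theorem~\ref{Thm:delocheightanealedgauss} from Theorem~\ref{Thm:delocheightsupercrit} by writing the annealed Gaussian measure as a mixture of disordered integer-valued Gaussian free fields, and then using monotonicity (Theorem~\ref{prop:monotonicity}) to reduce to the Bernoulli case.

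\emph{Disintegration.} By~\eqref{eq:defannealedgaussian}, for every edge we can write
\[
e^{-V((\varphi(x)-\varphi(y))/\sqrt{\beta})} = \int_0^\infty e^{-\frac{(\varphi(x)-\varphi(y))^2}{2\beta J_{xy}}}\,\mu_V(dJ_{xy}).
\]
Consequently, $\mu^{\Z-V}_{\Lambda_L,\beta}$ is the $\varphi$-marginal of a joint law on pairs $(\varphi,J)$ with $J\in(0,\infty)^{E}$. Under this joint law, the conditional law of $\varphi$ given $J$ is the integer-valued GFF with conductances $\beta J_{xy}$. The marginal law of $J$ is the product measure $\mu_V^{\otimes E}$ tilted by the partition function $Z_{\Lambda_L}(\beta J)$. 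By the law of total variance,
\[
\mathrm{Var}^{\Z-V}_{L,\beta}[\varphi(0)] \geq \E\big[\mathrm{Var}_{L,\beta J}[\varphi(0)]\big],
\]
where $\mathrm{Var}_{L,\beta J}$ denotes the variance under the IV-GFF with edge weights $\beta J_{xy}$. This reduces the problem to a lower bound on a quenched variance averaged over the tilted law of $J$.

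\emph{Comparison to Bernoulli disorder.} Fix a threshold $\delta>0$ such that $\mu_V([\delta,\infty))$ is close to $1$; such a $\delta$ exists because $\mu_V$ is a probability measure on $(0,\infty)$. Set $\omega_{xy}:=\mathbf 1_{J_{xy}\geq\delta}$. By the monotonicity of the variance in the edge weights (Theorem~\ref{prop:monotonicity}, via the Regev--Stephens-Davidowitz inequality), lowering $J_{xy}$ to $\delta\omega_{xy}$ only decreases the variance, where weight $0$ means the constraint $\varphi(x)=\varphi(y)$. Hence
\[
\mathrm{Var}_{L,\beta J}[\varphi(0)] \geq \mathrm{Var}^{\Z-\mathrm{GFF}}_{L,\beta\delta,\omega}[\varphi(0)].
\]
If $\omega$ were i.i.d.\ Bernoulli$(p)$ with $p>1/2$, Theorem~\ref{Thm:delocheightsupercrit} would give $c_0\ln L$ as soon as $\beta\delta\geq\beta_{\mathrm{Deloc}}(p)$. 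This determines $\beta_V=\beta_{\mathrm{Deloc}}(p)/\delta$, and the constant $c_0$ would be the one from Theorem~\ref{Thm:delocheightsupercrit} for a fixed choice such as $p=3/4$, which is consistent with universality.

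\emph{Main obstacle: the tilt.} The law of $J$ is not the product measure; it is tilted by $Z_{\Lambda_L}(\beta J)$, so $\omega$ is not i.i.d. My first approach is to show that the tilt favours large $J$, i.e.\ that the tilted law stochastically dominates $\mu_V^{\otimes E}$. This would hold if $Z_{\Lambda_L}(\beta J)$ is increasing in each $J_{xy}$, which it is, since each Gaussian factor increases in $J_{xy}$. One would then want FKG for the product measure with this increasing tilt, but FKG requires $\log Z$ to be supermodular in $J$, which is unclear.

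As a fallback, I would avoid FKG altogether: the Radon--Nikodym derivative is an increasing function divided by its mean. Since $\omega\mapsto \mathrm{Var}_{\omega}$ is itself increasing, Harris' inequality for the product measure gives
\[
\E_{\mu_V^{\otimes}}\big[Z\cdot \mathrm{Var}\big] \geq \E[Z]\,\E[\mathrm{Var}].
\]
This holds because both $Z(\beta J)$ and $\mathrm{Var}_{L,\beta\delta\omega(J)}$ are increasing functions of the independent variables $J_{xy}$, and Harris only needs product structure. Therefore
\[
\E_{\mathrm{tilted}}\big[\mathrm{Var}\big] \geq \E_{\mu_V^{\otimes}}\big[\mathrm{Var}^{\Z-\mathrm{GFF}}_{L,\beta\delta,\omega}[\varphi(0)]\big] = \E_p\big[\cdots\big],
\]
with $p=\mu_V([\delta,\infty))>1/2$. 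Applying Theorem~\ref{Thm:delocheightsupercrit} concludes the argument. The one point requiring care is that the integrability assumption makes all quantities finite, so that the disintegration and the law of total variance are justified.
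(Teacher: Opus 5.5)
Your proposal is correct and is essentially the paper's proof. The paper writes $\mathrm{Var}^{\Z-V}_{L,\beta}[\varphi(0)]$ as $\mathbf{E}\bigl[\mathrm{Var}^{\Z-\mathrm{GFF}}_{L,\beta J}[\varphi(0)]\, Z^{\Z-\mathrm{GFF}}_{\Lambda_L,\beta J}\bigr]/\mathbf{E}\bigl[Z^{\Z-\mathrm{GFF}}_{\Lambda_L,\beta J}\bigr]$; this is an exact identity by the symmetry $\varphi\mapsto-\varphi$, where you use the law of total variance. It then applies the Harris/FKG inequality of Proposition~\ref{prop.FKGinequality} to these two increasing functions of the i.i.d.\ $J$'s, and thresholds $J$ to compare with Bernoulli percolation before invoking Theorem~\ref{Thm:delocheightsupercrit}.

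The one detail to tidy is the universality of $c_0$. Choose $\delta$ with $\mu_V([\delta,\infty))\geq 3/4$ and set $\beta_V:=\beta_{\mathrm{Deloc}}(3/4)/\delta$. Then use that $\omega$ dominates Bernoulli$(3/4)$, together with monotonicity of the variance in $\omega$. Do not apply Theorem~\ref{Thm:delocheightsupercrit} at the $V$-dependent level $p=\mu_V([\delta,\infty))$, since its constant may depend on $p$.

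Incidentally, your ``fallback'' is exactly the stochastic domination you first sought. Harris with the increasing weight $Z$ says precisely that the tilted law dominates $\mu_V^{\otimes E}$.
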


\begin{remark} \label{remark4intro}
We make a few comments regarding this statement:
\begin{itemize}

\item Note that the above integer-valued height functions with annealed Gaussian potentials is precisely the Fourier dual of the annealed Villain model from Definition \ref{d.AV}. This can be seen from the fact that
\begin{align*}\label{}
\hat F_{\kappa,\beta}(k)  & = \int_0^{2\pi} F_{\kappa,\beta}(\theta) e^{i k \theta} d\theta \\
& = \int_0^{2\pi}  \left( \int_0^\infty v_{\beta J}(\theta) e^{i k \theta} \kappa(dJ)  \right)  d\theta 
 = \int_0^\infty e^{-\frac {k^2} {2 \beta J}} \kappa(dJ)
\end{align*}

\item In the case where $\beta$ is small, a Peierls argument can be used to show that the variance of the height $\varphi(0)$ is bounded uniformly in $L$ (at least when $V$ is convex). 
%\margin{C: Why $V$ convex here ? I see $:)$.}
\item A matching logarithmic upper bound on the variance was proved (in a more general setting) in~\cite[Corollary 3.1]{van2023duality}.
\item 
Under the additional conditions that the potential $V$ is convex, this delocalisation result is proved in \cite{AHPS} for degree-3 graphs, and on $\Z^2$ if the potential is furthermore assumed to be {\em divisible} (see Definition 9.4 in \cite{AHPS}). For example when the graph is $\Z^2$, they manage to handle $V(x)=|x|^\alpha$ with $\alpha\in(1,2]$ for the $\Z$-models. This result was recently extended to any exponent $\alpha \in (0, 2 ]$ by Ott and Schweiger~\cite{ott2025quantitative}, and our theorem extends the delocalisation to all annealed Gaussian potentials via a very different approach. N.B. The convexity assumption in \cite{AHPS} was important in order to rely on Lammers' delocalisation result \cite{lammers2022height}, where convexity is hard to remove as it ensures an essential FKG property.
\item The result can be extended to integer-valued height functions with annealed Gaussian potential on a percolation configuration (as in Theorem~\ref{Thm:delocheightsupercrit}).
\end{itemize}
\end{remark}

We finally extend the previous results to more general (and non-necessarily planar) two-dimensional graphs with finite-range interactions. The class of graphs considered is defined below (see Figure~\ref{fig:2dshiftinv}).

\begin{definition}[Two-dimensional shift-invariant graphs] A two-dimensional shift-invariant graph is a connected graph $G = (W, E)$ (with vertex set $W$ and edge set $E$) which is embedded into $\R^2$, locally finite and is invariant under the action of some full rank lattice $\mathcal{L}$ of $\R^2$. We will assume that the graph is embedded in $\R^2$ with $0 \in \R^2$ being a vertex of $G$.
\end{definition}

\begin{figure}
\centering
\includegraphics[scale=0.4]{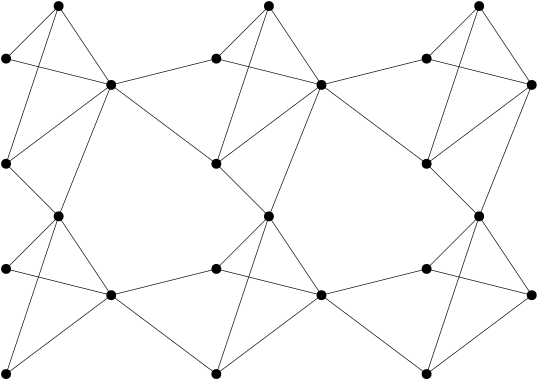}
\caption{A two-dimensional shift-invariant graph.} \label{fig:2dshiftinv}
\end{figure}

\begin{definition}[Integer-valued height function on a shift-invariant graph]
	Given a two-dimensional shift-invariant graph $G = (\W, E)$ embedded in $\R^2$ and an integer $L \in \N$, we consider the space of functions 
	$$\Omega_{L,\W} := \left\{ \varphi : \W \to \Z \, : \, \varphi(x) = 0 ~\mbox{for}~ x \notin [-L , L]^2 \right\}.$$ 
	Given an inverse temperature $\beta \geq 0$, we define the integer-valued height function with potential $V$ to be the probability distribution on the set $\Omega_{L,\W}^\Z$ given by the identity
\begin{equation*}
    \mu_{L , G , \beta}^{\Z-V} (\left\{ \varphi \right\}) := \frac{1}{Z_{L , G,\beta}^{\Z-V}} \exp \left( - \sum_{xy \in E}  V \left( \frac{ \varphi(x) - \varphi(y)}{\sqrt{\beta}} \right)\right).
\end{equation*}
We denote by $ \mathrm{Var}_{L , G , \beta}^{\Z-V}$ the variance with respect to the measure $\mu_{L , G , \beta}^{\Z-V}$.
\end{definition}

The final result of this article shows that these height functions are delocalised when the inverse temperature is sufficiently high.

\begin{theorem}[Delocalisation for integer-valued height functions with annealed Gaussian potential] \label{Thm:delocheightanealedgaussgeneralgraph}
Let $V : \Z \to \R$ be an annealed Gaussian potential and let $G := (W , E)$ be a two-dimensional shift-invariant graph. Then there exists an inverse temperature $\beta_{V,G} \in (0 , \infty)$ (depending on the potential $V$ and the graph $G$) and a constant $c_0 > 0$ (universal) such that, for any $\beta \geq \beta_{V,G}$ and any $L \in \N^*$,
\begin{equation*}
     \mathrm{Var}_{L , G , \beta}^{\Z-V} \left[ \varphi(0) \right] \geq c_0 \ln L.
\end{equation*}
\end{theorem}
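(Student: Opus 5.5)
We reduce Theorem~\ref{Thm:delocheightanealedgaussgeneralgraph} to a disordered integer-valued Gaussian free field on $G$. Write $e^{-V(x)} = \int_0^\infty e^{-x^2/(2J)}\mu_V(dJ)$. Then
\begin{equation*}
\exp\Big(-\sum_{xy\in E} V\big((\varphi(x)-\varphi(y))/\sqrt\beta\big)\Big) = \int \prod_{xy\in E} \exp\Big(-\frac{(\varphi(x)-\varphi(y))^2}{2\beta J_{xy}}\Big) \prod_{xy\in E}\mu_V(dJ_{xy}).
\end{equation*}
So $\mu^{\Z-V}_{L,G,\beta}$ is the $\varphi$-marginal of a joint law on $(\varphi, J)$. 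Under this law, conditionally on the conductances $J=(J_{xy})$, the field $\varphi$ is an integer-valued GFF on $G$ with edge weights $\beta J_{xy}$. By the law of total variance,
\begin{equation*}
\mathrm{Var}^{\Z-V}_{L,G,\beta}[\varphi(0)] \ge \mathbb{E}\big[\mathrm{Var}_{L,G,\beta J}^{\Z-\mathrm{GFF}}[\varphi(0)]\big],
\end{equation*}
where the outer expectation is over the (annealed, non-i.i.d.) marginal law of $J$. It is therefore enough to bound the conditional variance from below by $c_0\ln L$ on an event of the $J$-law with probability at least $1/2$.

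Next we bring in monotonicity. Choose $\delta>0$ with $\mu_V([\delta,\infty))\ge 1-\eta$, where $\eta$ is small and will be fixed below. Set $\omega_{xy}=\indc\{J_{xy}\ge\delta\}$. By the monotonicity of Theorem~\ref{prop:monotonicity} (Regev--Stephens-Davidowitz), the variance is increasing in each conductance. Lowering $J_{xy}$ to $\delta$ on open edges and to $0$ (the constraint $\varphi(x)=\varphi(y)$) on closed edges gives
\begin{equation*}
\mathrm{Var}_{\beta J}[\varphi(0)] \ge \mathrm{Var}^{\Z-\mathrm{GFF}}_{L,G,\beta\delta,\omega}[\varphi(0)].
\end{equation*}
This is the disordered $\Z$-GFF of Definition~\ref{def:XYheightandIVGFFintro}, transposed to $G$, at inverse temperature $\beta\delta$.

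The catch is that under the joint law the $J_{xy}$ are not i.i.d.: they are tilted by $\varphi$. We would control this by stochastic domination. Conditionally on $\varphi$, the $J_{xy}$ are independent with density proportional to $e^{-k^2/(2\beta J)}\mu_V(dJ)$, where $k=\varphi(x)-\varphi(y)$. This tilt favours large $J$ and makes $J_{xy}$ stochastically larger than $\mu_V$. Hence $\omega$ dominates an i.i.d.\ Bernoulli($1-\eta$) edge percolation. The quantity $\mathrm{Var}_{\beta\delta,\omega}$ is increasing in $\omega$, but it is not a bounded function, so we would pass through the event $\{\mathrm{Var}_{\beta\delta,\omega}\ge c\ln L\}$, which is an increasing event. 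Its probability under the annealed $\omega$-law is then at least its probability under $\P_{1-\eta}$.

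The main step, and the expected obstacle, is to prove the analogue of Theorem~\ref{Thm:delocheightsupercrit} on a general shift-invariant graph, in the stronger form of a high-probability statement. For $\eta$ small enough (depending on $G$) and $\beta\delta$ large, we need $\P_{1-\eta}(\mathrm{Var}_{\beta\delta,\omega}\ge c_0\ln L)\ge 1/2$. Since $G$ need not be planar, duality and the sharp threshold $p>1/2$ are unavailable; this is why we only aim at $p$ close to $1$.

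For this step I would follow the paper's planar strategy. Delocalisation of the $\Z$-GFF at high temperature is obtained via the dual Villain / BKT-type argument of \cite{van2023elementary}, or directly through a coarse-graining and renormalisation of good blocks. At $p$ close to $1$, good mesoscopic blocks of the $\mathcal{L}$-periodic graph percolate with density close to one. This lets us compare with a homogeneous delocalised field at a coarser scale, which gives $c_0\ln L$ with $c_0$ universal after rescaling $\beta$.

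Finally, choosing $\beta_{V,G}=\beta_{\mathrm{Deloc}}/\delta$ completes the proof.
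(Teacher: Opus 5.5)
Your reduction to a disordered $\Z$-GFF is fine. The bound $\mathrm{Var}^{\Z-V}_{L,G,\beta}[\varphi(0)]\ge \mathbb{E}[\mathrm{Var}^{\Z-\mathrm{GFF}}_{L,G,\beta J}[\varphi(0)]]$ under the tilted law of $J$ holds. So does your domination argument: given $\varphi$, the $J_{xy}$ are independent increasing tilts of $\mu_V$, and domination survives averaging over $\varphi$. Together these legitimately replace the paper's FKG step, which writes the variance as $\mathbf{E}[\mathrm{Var}_{\beta J}\,Z_{\beta J}]/\mathbf{E}[Z_{\beta J}]$ with both factors increasing in $J$.

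The detour through the event $\{\mathrm{Var}\ge c\ln L\}$ is unnecessary. On the finite space of edge configurations touching $[-L,L]^2$, the map $\omega\mapsto\mathrm{Var}_{\beta\delta,\omega}[\varphi(0)]$ is nonnegative and increasing, so domination transfers the expectation directly. The high-probability version you then require is not something the paper's results provide.

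The genuine gap is your ``main step'', which carries the whole content of the theorem. Already for $\mu_V=\delta_1$ it is delocalisation of the plain $\Z$-GFF on an arbitrary, possibly non-planar, shift-invariant $G$. The tools you invoke do not give this:
\begin{itemize}
\item The dual Villain/BKT argument of \cite{van2023elementary} is planar, as are the Fr\"ohlich--Spencer and Lammers inputs behind Proposition~\ref{prop:prop4.19}. You rule out planarity one sentence earlier.
\item ``Compare with a homogeneous delocalised field at a coarser scale'' needs an inequality showing that the variance on $G$ dominates the variance on some planar coarse graph. You never supply one.
\end{itemize}

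The missing idea is variance-decreasing graph surgery.
\begin{enumerate}
\item The paper identifies all vertices of $W$ in each cell $k+[-R_G/2,R_G/2]^2$, $k\in R_G\Z^2$. Identification lowers variances (Proposition~\ref{corollary.identification}), and this turns $G$ into $\Z^2$ with finite-range edges.
\item It subdivides each long edge $\{x,y\}$ into $|x-y|_1$ edges of conductance $\beta J_{xy}/|x-y|_1$ (Proposition~\ref{prop.prop2.9}). It then identifies the new vertices with a lattice path from $x$ to $y$.
\item The result is a nearest-neighbour $\Z$-GFF on $\Z^2$ whose conductances $\tilde J$ are $k$-dependent with finitely many marginal laws.
\item The site percolation $r_x=\indc\{\beta_V\tilde J_{xy}\ge\beta_0~\forall y\sim x\}$ dominates an i.i.d.\ site percolation of density $p_0$ by Theorem~\ref{th.Liggett}. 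Proposition~\ref{prop:prop4.19} on $\Z^2$ then finishes.
\end{enumerate}
Without this reduction, or an independent delocalisation proof on non-planar graphs, your argument does not close.
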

We complete this section by mentioning that the comments stated in Remark~\ref{remark4intro} apply in the setting of this theorem.

\subsection{Technique of proof and related results.} \label{sec:1.4techniquesofproof}

\ni
\textbf{Related results.} As mentioned above, for the models without disorder, all the known proofs of the BKT phase transition require to establish the existence of the roughening phase transition for the (dual) models of integer-valued height functions (see~\eqref{eq:delochightemp}). In order to prove the existence of the phase transition~\eqref{eq:delochightemp}, two distinct approaches can be used:
\begin{itemize}
    \item A first one based on Coulomb gas expansions due to Fr\"{o}hlich and Spencer~\cite{frohlich1981kosterlitz} 
    (see also~\cite{kharash2017fr}), which has led to several subsequent contributions~\cite{wirth2019maximum, garban2023quantitative, garban2023statistical, garban2023invisibility, ott2025quantitative}.
    \item A second one based on percolation arguments and loop expansions. 
    This approach was initiated by Lammers~\cite{lammers2022height} and relies on a non-coexistence theorem for planar percolation due to Sheffield~\cite{sheffield2005random} 
    (see~\cite{duminil2019sharp} for a simpler proof). 
    It has then been extended in various directions~\cite{lammers2022dichotomy, lammers2023delocalisation, van2023elementary, AHPS, lammers2023bijecting}.
\end{itemize}
None of these techniques seem to extend (easily) to the disordered setting studied in this article 
(we refer to~\cite[Section 1.3]{DG25} where this is discussed in more details).

More refined properties of these height functions can be studied, and in this direction, 
a fundamental question is the one of the identification of the scaling limit of these models. 
A longstanding conjecture asserts that this scaling limit should be a continuum two-dimensional 
Gaussian free field for a broad class of height functions, and was recently confirmed for the
(high-temperature) integer-valued Gaussian free field in~\cite{bauerschmidt2022discrete, bauerschmidt2022discreteII} 
and for a family of SOS models with nonzero slope in~\cite{laslier2024tilted}. 
Let us also mention the recent breakthrough \cite{duminil2026gaussian} 
which proves an invariance principle to Gaussian Free Field for a large class of six-vertex model height functions. 
%\margin{C: I added this reference $:)$}

We finally point out that there has been some recent progress in proving delocalisation 
for other models of height functions (such as the uniform Lipschitz functions and the height function associated with 
the six-vertex model) in~\cite{GM21, DCHLRG22, lis2021delocalization, DCKMO24, glazman2025delocalisation}. 
Also the fact the BKT transition of XY is robust to the presence of disorder (\cite{DG25}) is an essential step in the recent work \cite{yuan2025infinite}.
% \margin{C: I also added this one} 

\ni

\medskip

\textbf{Proof idea.} We start by briefly presenting the main ideas of~\cite{DG25} where the existence of BKT and the order/disorder phases transitions for the XY model is proved, and then show how the arguments of~\cite{DG25} are adapted to the Villain model and the integer-valued height-functions.

\medskip

\textbf{The XY model (from~\cite{DG25}).} In the case of the XY model, the argument is decomposed into two steps: the existence of phase transitions for the XY model on a percolation configuration is first proved in the case of a high-density percolation (i.e., for $p = 1 - \ep$ for some small $\ep \ll 1$), the result is then upgraded to any percolation probability $p > p_c(d)$.

The first step relies on a correlation inequality due to Wells~\cite{Wellsthesis} (see also~\cite{aizenman1980comparison, bricmont1981periodic, dunlop1985correlation, madrid2022comparison}), and a simplified version of his result is stated below. Let us fix a dimension $d \geq 2$, an integer $L \in \N$, an inverse temperature $\beta \geq 0$, and define a probability distribution (referred to as the Wells' disorder) on the set of percolation configurations $\omega \in \{0 , 1 \}^{E\left(\Lambda_L\right)}$,
%\margin{C: I guess I should know this $:)$ but just to make sure, if this Wells was stated for site percolation configurations, I would be on board without hesitation. Is it also fine for edge configuation stated like that?}

\begin{equation} \label{eq:17201108}
    \nu_{L , \beta} \left( \left\{  \omega \right\} \right) := \frac{Z^{\mathrm{XY}}_{L , \beta, \omega}}{\sum_{\omega' \in \{0 , 1 \}^{E\left(\Lambda_L\right)}} Z^{\mathrm{XY}}_{L , \beta, \omega'}}.
\end{equation}
If we denote by $\E_{\nu_{L , \beta}}$ the expectation with respect to the measure~\eqref{eq:17201108}, then Wells' inequality~\cite{Wellsthesis} states that, for any $x \in \Zd$,
\begin{equation} \label{eq:wellsineqpresentation}
    \E_{\nu_{L , \beta}} \left[ \left\langle \cos \left( \theta_0 - \theta_x \right) \right\rangle_{L , \beta , \omega}^{\mathrm{XY}} \right] \geq \left\langle \cos \left( \theta_0 - \theta_x \right) \right\rangle_{L , \beta/2}^{\mathrm{XY}}.
\end{equation}
This inequality is a powerful result which asserts that the expectation of the two-point function of a disordered XY model (with the specific law~\eqref{eq:17201108} for the disorder) is larger than the two-point function of an XY model without disorder but with an inverse temperature divided by $2$. This inequality can be combined with two additional results:
\begin{itemize}
    \item The probability distribution $\nu_{L , \beta}$ is stochastically dominated by an i.i.d. Bernoulli percolation measure with probability $p := p(\beta) < 1$.
    \item By the Ginibre inequality, the function $\omega \mapsto \left\langle \cos \left( \theta_0 - \theta_x \right) \right\rangle_{L , \beta , \omega}^{\mathrm{XY}}$ is increasing.
\end{itemize}
A combination of these two properties with the Wells' inequality~\eqref{eq:wellsineqpresentation} and~\eqref{BKT} (for $d = 2$) and~\eqref{LRO} (for $d \geq 3$) completes the proof in the case of a high-density percolation.

\medskip

To extend the result to all supercritical percolation probability 
\( p > p_{c,\mathrm{site}}(d) \), the strategy of~\cite{DG25} was to use a renormalisation argument based on the notion of \emph{good boxes} introduced by Penrose and Pisztora~\cite{penrose-pisztora-1996, pisztora-percolation}. 
These boxes satisfy the following properties:
\begin{itemize}
    \item The probability that a box of sidelength \(L\) is good tends to \(1\) as \(L \to \infty\).
    \item Any two intersecting good boxes of the same sidelength are connected by a cluster contained within their union.
\end{itemize}
One may choose an integer \(L_0\) sufficiently large so that the probability for a box of sidelength \(L_0\) to be good is larger than \(p(\beta)\).
Heuristically, partitioning \(\mathbb{Z}^d\) into boxes of sidelength \(L_0\) allows us to reduce the problem of the phase transitions for the XY model on a supercritical percolation cluster to the corresponding problem on a high-density percolation cluster.
The latter can be treated using Wells' inequality as described above.

\medskip

\textbf{The Villain model.} In order to extend the proof described in the previous paragraph to the Villain model, a first attempt would be to try to prove a version of the Wells' inequality for this model. Unfortunately, this approach faces some difficulties as the proof of the Wells' inequality uses in a crucial way the definition of the XY model and it is unclear to us if/how this result could be proved (see Open Question \ref{OQ.1}).
%\margin{C: added a pointer to that OQ} 
In order to bypass this difficulty, we consider a spin system which is a combination of the XY and Villain models and is defined as follows. We first add two spins on each edge of $\Zd$. This operation splits the edges into three edges and we assign to two of them the XY potential with inverse temperature $\beta_1$ and to the remaining edge the Villain potential with inverse temperature $\beta_2$ (see Figure~\ref{intro:XYVillainmodel}). This model will be referred to as \textbf{the XY/Villain model} and it is introduced formally in Section~\ref{sectiondef.XY} (N.B. The existence and uniqueness of a thermodynamic limit can be established using the same arguments as for the XY or the Villain model).

\smallskip

Given a percolation configuration $\omega \in \{0 , 1\}^{E(\Lambda_L)}$, we define the disordered version of the XY/Villain model by considering the model on the graph where the closed edges for the percolation configuration $\omega$ are deleted. We denote by $\left\langle \cdot \right\rangle_{\beta_1 , \beta_2,\omega}^{\mathrm{XY/Vil}}$ the expectation with respect to this measure (and omit the subscript $\omega$ when $\omega \equiv 1$). 

\begin{figure}
    \centering
    \includegraphics[scale=0.6]{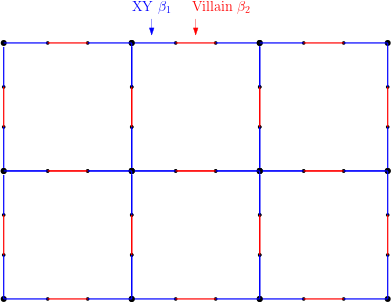}
    \caption{The XY/Villain model: two spins are added on each edge of $\Z^2$ (this operation splits each edge into three edges). The Villain potential with inverse temperature $\beta_2$ is assigned to the central edges (in red) and the XY potential with inverse temperature $\beta_1$ is assigned to the two other edges (in blue).}
    \label{intro:XYVillainmodel}
\end{figure}

\medskip

A number of properties can be established on the XY/Villain model:
\begin{enumerate}
    \item The Ginibre inequality applies to this model. In particular the two-point function is increasing in the parameters $\beta_1$ and $\beta_2$, and in the percolation configuration $\omega$.
    \item The two-point function of the XY/Villain model is smaller than the two-point function of the Villain model with inverse temperature $\beta_2$ (N.B. this result is obtained by sending the value $\beta_1$ to infinity and by applying the Ginibre inequality).
    \item The XY/Villain model has the same phase transitions as the XY (or the Villain) model: if the inverse temperatures $\beta_1$ and $\beta_2$ are chosen sufficiently large, then the two-point function decays polynomially fast in dimension $d = 2$ and remains bounded away from $0$ in dimensions $d \geq 3$.
    \item Contrary to the Villain model, it is relatively straightforward to adapt the argument of~\cite{Wellsthesis} to show that a version of Wells' inequality holds for the XY/Villain model. This is because the disorder can be incorporated in the model through the edges where the XY potential has been assigned (in which case one can apply the argument of~\cite{Wellsthesis}).
\end{enumerate}
These observations can then be combined with the argument of~\cite{DG25} presented above in order to show that, for $\beta_1 , \beta_2 \geq 0$, there exists $p := p(\beta_1 , \beta_2) < 1$ such that
%\margin{C: I get it for $\beta_1/2$ why $\beta_2/2$ ? \textbf{Not fully needed, but simplifies notations later. }}
\begin{equation*}
    \left\langle \cos \left( \theta_0 - \theta_x \right) \right\rangle_{\beta_1/2 , \beta_2/2}^{\mathrm{XY/Vil}}\leq \E_p\left[ \left\langle \cos \left( \theta_0 - \theta_x \right) \right\rangle_{\beta_1 , \beta_2 , \omega}^{\mathrm{XY/Vil}}  \right] \leq  \E_p\left[ \left\langle \cos \left( \theta_0 - \theta_x \right) \right\rangle_{\beta_2 , \omega}^{\mathrm{Vil}}  \right].
\end{equation*}
We may then conclude (in the case of a high-density percolation) by selecting $\beta_1$ and $\beta_2$ sufficiently large so that the left-hand side can be lower bounded using the third property listed above.

\smallskip

The extension from a high-density percolation to a general supercritical percolation can then be achieved by using the same renormalisation argument as the one presented above (N.B. The proof is essentially identical to that of the XY model in~\cite{DG25} and is similar to the one for the integer-valued height functions in Section~\ref{sec:renoarmargumentforheight}; we thus decided not to rewrite the entire argument and to only point out in Section~\ref{section5.2} below the main difference with the argument of~\cite{DG25}).

\medskip

\textbf{The XY height function.} Once again, we first prove the result in the case of a high-density percolation by relying on the strategy of~\cite{DG25} with some additional ideas from~\cite{van2023duality}.

More precisely, the main challenge is to establish the Wells' inequality for this model of height function. 
The proof of such a  Wells' inequality for height functions is given in Section~\ref{App.A}. It is one of the main novelties of this article.
%\margin{C: Almost copied/paste here a sentence you wrote Paul later in the text and that I liked! The sentence is still there, except I added "as mentioned earlier". }
This is achieved by using the results of the second author and Lis~\cite{van2023duality} on the duality between the XY model and the XY height function. Specifically, we rely on the identity~\cite[(2.2)]{van2023duality} which can be stated in our setting as follows: for any $L \in \N, \beta \geq 0$ and $\omega \in \{0,1\}^{E \left( \Lambda_L \right)}$,
\begin{equation} \label{intro:dualityXYheight}
     \mathrm{Var}_{L , \beta, \omega}^{\Z-\mathrm{XY}}  \left[ \varphi(0) \right] = \lim_{\lambda \to \infty} \left\langle F_\lambda (\theta)  \right\rangle_{L , \lambda, \beta, \omega}^{\mathrm{XY}}
\end{equation}
where $\lambda \in (0,\infty)$ is a non-negative real number, $\left\langle \cdot \right\rangle_{L , \beta,\omega, \lambda}^{\mathrm{XY}}$ is the expectation with respect to a certain variant of the XY model in which the parameter $\lambda$ has been incorporated and $F_\lambda$ is an explicit function of the angle configuration (N.B. we will not give more details on these definitions as it would require to introduce a few pieces of notation, but we refer to Section~\ref{App.A} for additional information). The key observation is that, for any $\lambda \in (0 , \infty)$, Wells' inequality applies to the right-hand side of~\eqref{intro:dualityXYheight}. As a result, we are able to show the following version of Wells' inequality for the XY height function: for any $L \in \N, \beta \geq 0$
\begin{equation} \label{eq:13421208}
    \E_{\bar{\nu}_{L , \beta}} \left[  \mathrm{Var}_{L , \beta, \omega}^{\Z-\mathrm{XY}}  \left[ \varphi(0) \right]  \right] \geq \mathrm{Var}_{L , \beta/2}^{\Z-\mathrm{XY}}  \left[ \varphi(0) \right] ~~\mbox{with}~~ \bar{\nu}_{L , \beta}(\{ \omega \}) := \frac{Z^{\Z-\mathrm{XY}}_{L , \beta, \omega}}{\sum_{\tilde \omega \in \{ 0 , 1 \}^{E\left(\Lambda_L\right)}} Z^{\Z-\mathrm{XY}}_{L , \beta, \tilde \omega}}.
\end{equation}
The rest of the proof follows from the same argument as in~\cite{DG25}: one can prove that the probability measure on the right-hand side of~\eqref{eq:13421208} is stochastically dominated by an i.i.d. measure with probability $p := p(\beta) < 1$ and that the variance of the height $\varphi(0)$ is increasing in the percolation configuration $\omega$ to deduce Theorem~\ref{Thm:delocheightsupercrit} for the XY height function on a high-density percolation.

The result can then be extended from high-density percolation to any supercritical percolation via a renormalisation argument. The argument for height functions is essentially the same as for the XY and Villain models, but is technically simpler in the case of the integer-valued Gaussian free field. We therefore present it for this latter model in Section~\ref{sec:renoarmargumentforheight}.

For the XY height function, although it would certainly be possible to carry out the renormalisation argument, there is an alternative, less technical approach based on the observation that this height function belongs to the general class of integer-valued height functions with annealed Gaussian potential (together with the delocalisation for the disordered integer-valued Gaussian free field). We present this argument in Section~\ref{sec:section4.7}.

\medskip

\textbf{The integer-valued Gaussian free field.} Finally the integer-valued Gaussian free field is treated by combining the ideas used in the case of the Villain model and the XY height function: we first introduce a height function which is defined as a combination of the XY height function and the integer-valued Gaussian free field (similarly to the XY/Villain model introduced above, see Section~\ref{sec:defXYGFFheightfunction} for a formal definition) and prove a version of Wells' inequality for this model. Equipped with this inequality, Theorem~\ref{Thm:delocheightsupercrit} can be established (for the integer-valued Gaussian free field and with a percolation probability $p$ close to $1$) using the same argument as for the Villain model.

The renormalisation argument, which allows us to upgrade the result from a high-density percolation to any supercritical percolation, is presented in Section~\ref{sec:renoarmargumentforheight} and is similar to that of the XY (or Villain) model.

\medskip

\textbf{The annealed Villain interaction and annealed Gaussian potentials.} The existence of phase transitions for the spin systems with annealed Villain interaction (resp. for the height functions with annealed Gaussian potential) can be obtained as a consequence of Theorem~\ref{proofKTsitegeneral} (resp. Theorem~\ref{Thm:delocheightsupercrit}). The proof relies on the observation that, by using the specific structure of the annealed Villain potential (resp. the annealed Gaussian potential), these spin systems (resp. height functions) can be written as a Villain model (resp. integer-valued Gaussian free field) with random conductances. The law of the conductances is not i.i.d., but one can show that it stochastically dominates an i.i.d. distribution (N.B. this argument goes back to
%\margin{C: I will add a remark on the fact we have stoch Domination BOTH WAYS by i.i.d disorder !! We only use the one Sellke does not use.}
to Sellke~\cite{sellke2024localization} who recently used it to study the localisation/delocalisation properties of a general class of \emph{real-valued} height functions). 
This observation can be combined with the Ginibre inequality 
(resp. the monotonicity of the variance in the conductances) 
and Theorem~\ref{proofKTsitegeneral} (resp. Theorem~\ref{Thm:delocheightsupercrit}) to deduce Theorem~\ref{Th:annealedVillain} 
(resp. Theorem~\ref{Thm:delocheightanealedgaussgeneralgraph}).
Interestingly, our proof of delocalisation for integer-valued height function with arbitrary annealed Gaussian potentials (Theorem~\ref{Th:annealedVillain}) relies on the delocalisation of {\em both} the $\Z$-GFF and the $\Z$-XY height function models ! (Even though the later is itself an annealed Gaussian).
%\margin{C: I added this comment as I had missed/forgotten this nice subtlelty! 
%D: It is not completely accurate though? We need delocalization of this mixed model, 
%but the corresponding potential will be convex and satisfy the absolute value property (the first suffices for delocalization, 
%of which we can thus give an easy proof, and the second is needed for logarithmic deloc in Piet's work). }

\subsection{Open questions}

We mention here some open questions left by this work.

\medskip

\begin{OQ}\label{OQ.1}
Prove or disprove the Wells' inequality for the Villain model, i.e., if we let
\begin{equation*}
    \nu_{L , \beta}^{\mathrm{Vil}} \left( \left\{  \omega \right\} \right) := \frac{Z^{\mathrm{Vil}}_{L , \beta, \omega}}{\sum_{\omega' \in \{0 , 1 \}^{E\left(\Lambda_L\right)}} Z^{\mathrm{Vil}}_{L , \beta, \omega'}}~~ \mbox{then}~~ \E_{ \nu_{L , \beta}^{\mathrm{Vil}}} \left[ \left\langle \cos \left( \theta_0 - \theta_x \right) \right\rangle_{L , \beta , \omega}^{\mathrm{Vil}} \right] \geq \left\langle \cos \left( \theta_0 - \theta_x \right) \right\rangle_{L , \beta/2}^{\mathrm{Vil}}.
\end{equation*}
\end{OQ}

A second interesting open question would be to show that if we select an inverse temperature slightly above the critical inverse temperatures for the BKT (resp. roughening) phase transition, then it is possible to find a probability $p \in (0 , 1)$ (but close to $1$) so that the spin systems (resp. the height functions) are still in the BKT phase (resp. delocalised phase) when they are placed on a percolation of probability $p$.

\medskip

\begin{OQ}
Prove that, for all $\beta > \beta_{KT}^{\mathrm{Vil}}$, there exists $p := p(\beta) < 1$ such that the BKT phase transition for the Villain model still holds on a supercritical percolation cluster of probability $p$.

Show the same result for the XY model (see also \cite[Open Question 4]{DG25}), for the XY height function and the integer-valued Gaussian free field.
\end{OQ}

More generally, the following question can be raised.

\begin{OQ}
Show that the function which associates to a probability $p$ the value of the critical inverse temperature for the BKT transition on a percolation of probability $p$, i.e.,
\begin{equation} \label{def:critpointGinibre}
p \mapsto \inf \left\{ \beta \geq 0 \, : \, x \mapsto \E_p \left[ \left\langle \cos(\theta_0 - \theta_x) \right\rangle_{\beta, \omega}^{\mathrm{Vil}} \right] ~\mbox{decays polynomially fast}\right\},
\end{equation}
is continuous on the interval $\left( \frac 12 , 1 \right]$. Show the same result for the XY model, the XY height function and the integer-valued Gaussian free field.
\end{OQ}

We note that the Ginibre inequality implies that the function~\eqref{def:critpointGinibre} is decreasing, so it must (at least) be continuous almost everywhere.

\medskip

\ni
{\bf Acknowledgments.}  We wish to thank Piet Lammers for useful discussions.
%\margin{C: anyone else we should thank here ? Also add any grant you any have here.}  
C.G.  acknowledges support from the  ERC grant VORTEX 101043450, the Institut Universitaire de France (IUF) and the French ANR grant ANR-21-CE40-0003.

\section{Definitions and preliminaries} \label{Section2}

We introduce in this section a few definitions, pieces of notation and preliminary results which will be used in this article.

\subsection{Lattice, extended lattice and multigraph} \label{sectiongeneraldef}

We consider the lattice $\Zd$ with $d \geq 2$, denote by $E(\Zd)$ and $\vec{E}(\Zd)$ the set of undirected and directed edges of $\Zd$. Given a subset $\Lambda \subseteq \Zd$, we denote by $E(\Lambda)$ and $\vec{E}\left( \Lambda \right)$ the set of undirected and directed edges of $\Lambda$ (i.e., the edges of $\Zd$ whose endpoints are both in $\Lambda$). An undirected edge will frequently be denoted by $e = \{x , y\}$ and a directed edge will be denoted by $\vec{e} = (x , y)$ (with $x$ being the first endpoint and $y$ being the second endpoint). In the case of a directed edge $\vec{e} = (y,x)$, we denote by $-\vec{e} = (x , y)$, the edge with reversed orientation. We say that two vertices $x , y \in \Zd$ are neighbours, and denote it by $x\sim y$, if $\{ x , y\} \in E(\Zd)$.  We denote by $\partial \Lambda$ the inner boundary of $\Lambda$, i.e.,
\begin{equation*}
	\partial \Lambda := \left\{ x \in \Lambda \, : \, \exists y \in \Z^d \setminus \Lambda ~\mbox{with} ~ x \sim y \right\}.
\end{equation*}
We denote by $|\cdot|$ and $|\cdot |_1$ the Euclidean and $1$-norm on $\Zd$ (or $\Rd$), i.e., $|x| := (\sum_{i=1}^d |x_i|^2)^{1/2}$ and $| x |_1 = \sum_{i=1}^d |x_i|$ for any $x = (x_1 , \ldots, x_d) \in \Zd$.

For $L \in \N$, we write $\Lambda_L := \{ - L , \ldots, L\}^d$. The cardinality of the box $\Lambda_L$ is denoted by $\left| \Lambda_L \right| = (2L+1)^d$. 

We next introduce two generalisations of the lattice $\Zd$, which we call the extended lattice and the multigraph (see Figures~\ref{fig:figure1.3} and~\ref{fig:figure1.4}).

\begin{definition}[Extended lattice $\mathbb{Z}^d_n$] \label{def.Zdn}
Given an integer $n \in \mathbb{N}$, we define the extended lattice $\mathbb{Z}^d_n$ to be the graph $\Zd$ to which $(n-1)$ vertices are added on each edge. This graph is represented on Figure~\ref{fig:figure1.3}.
\end{definition}

\begin{definition}[The multigraph $\mathbb{Z}^d_{n\mathrm{-mult}}$] \label{def.multigraph}
Given an integer $n \in \mathbb{N}$, we define  the multigraph $\mathbb{Z}^d_{n\mathrm{-mult}}$ to be the graph $\Zd$ on which each edge is replaced by $n$ parallel edges. This graph is represented on Figure~\ref{fig:figure1.4}. 

For each pair of neighbouring vertices $x , y \in \Lambda_L$, we index the edges connecting $x$ and $y$ with integers of $\{1 , \ldots, n\}$ and sometimes refer to an edge of this graph (either oriented or unoriented) by giving its two endpoints together with the index of the edge. %(see Figure~\ref{fig:labellingedges}).
\end{definition}

%\begin{figure}
%\centering
%\includegraphics[scale=0.6]{labellingedges.png}
%\caption{The edges of the multigraph are labelled using the two endpoints together with an index of $\{1 , \ldots, n\}$.} \label{fig:labellingedges}
%\end{figure}

We extend all the definitions of this section to the extended lattice $\Zd_n$ and the multigraph $\mathbb{Z}^d_{n\mathrm{-mult}}$. In particular, we define a box of $\Zd_n$ to be a box of $\Zd$ to which $n$ vertices have been added on each edge, and a box of $\mathbb{Z}^d_{n\mathrm{-mult}}$ to be a box of $\Zd$ on which each edge has been replaced by $n$ parallel edges. Figures~\ref{fig:figure1.3} and~\ref{fig:figure1.4} show a box on~$\Z^2$ next to a box on $\Z^2_4$ and a box on $\mathbb{Z}^d_{3\mathrm{-mult}}$. We denote by $\Lambda^n_L \subseteq \Zd_n$ the box $\Lambda_L \subseteq \Zd$ to which $(n-1)$ vertices have been added on each edge, and by $\Lambda_L(n)$ the box $\Lambda_L \subseteq \Zd$ on which each edge has been replaced by $n$ parallel edges.

\begin{figure}
\centering
 \includegraphics[scale=0.4]{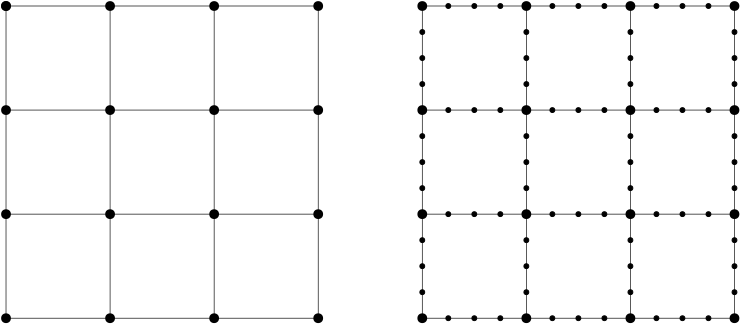}
 \caption{The graph $\Zd$, the extended graph $\Zd_n$ (with $n = 4$).} \label{fig:figure1.3}
\end{figure}

\begin{figure}
\centering
\includegraphics[scale=0.4]{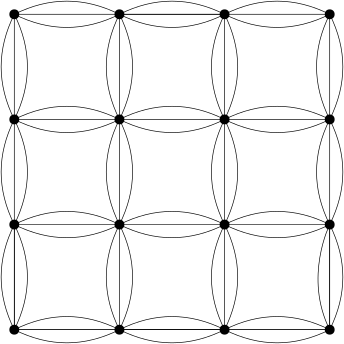}
\caption{The multigraph $\mathbb{Z}^d_{n\mathrm{-mult}}$ (with $n = 3$).} \label{fig:figure1.4}
\end{figure}

In the rest of this article, we will identify the vertices of $\Z^d$ with the corresponding vertices of the extended lattice $\Z^d_n$ and of the multigraph $\mathbb{Z}^d_{n\mathrm{-mult}}$. We emphasize that two vertices $x , y \in \mathbb{Z}^d$ which are neighbours on $\Zd$ are \emph{not} neighbour on $\Z^d_n$. We write $x \sim_n y$ to refer to pair of vertices which are neighbours in $\Zd_n$.

\subsection{Angles}

An angle is an element of $[0 , 2\pi)$ generally denoted by $\theta$. We identify the interval $[0 , 2\pi)$ with the torus $\R/(2\pi \Z)$. In particular, we may add two angles $\theta, \theta' \in [0 , 2\pi)$. Given a subset $\Lambda \subseteq \Zd$ (or $\Lambda \subseteq \Zd_n$), an angle configuration is a function $\theta : \Lambda \to [0, 2\pi]$ (i.e., $\theta \in [0,2\pi)^{\Lambda}$). Given a function $m \in \Z^{\Lambda}$ and an angle configuration $\theta \in [0,2\pi)^{\Lambda}$, we denote by 
\begin{equation*}
    m \cdot \theta = \sum_{x \in \Lambda} m_x \theta_x.
\end{equation*}

\subsection{Percolation} \label{subsecpercolation}

In this section, we introduce some definitions and notation about percolation configurations and stochastic domination which are used in the proofs of the main theorems.

 \begin{definition}[Percolation configuration on $\Zd$]
    A site percolation configuration on a subset $\Lambda \subseteq \Z^d$ is a function $r \in \{ 0 , 1\}^\Lambda$. We say that a site is open in the percolation configuration $r$ if $r_x = 1$ and closed if $r_x = 0$. We implicitly extend all the site percolation configurations defined on $\Zd$ (or on a subset of $\Zd$) to the extended graph $\Zd_n$ by setting $r_x = 1$ for $x \in \Zd_n \setminus \Zd$ (see Figure~\ref{fig:extrendedperco}).
    
    An edge percolation configuration on a subset $\Lambda \subseteq \Z^d$ is a function $\omega \in \{ 0 , 1\}^{E(\Lambda)}$. We extend this definition to the multigraph $\mathbb{Z}^d_{n\mathrm{-mult}}$ (N.B. in this case, any edge of the multigraph is allowed to be open or closed).
\end{definition}

\begin{remark}
We will use the notation $r$ to refer to site percolation configurations and $\omega$ to refer to edge percolation configurations. 
\end{remark}

\begin{definition}[Bernoulli site percolation measure] \label{DEf.bernoulliperc}
Given a set $\Lambda \subseteq \Zd$ and a probability $p \in [0 , 1]$, we denote by $\P_{\Lambda, p}$ (resp. $\P_{\Lambda, p}^{\mathrm{site}}$) the i.i.d. edge (resp. site) percolation measure of probability $p$ on the set of percolation configurations $\{ 0 , 1 \}^{E(\Lambda)}$ (resp. $\{ 0 , 1 \}^{\Lambda}$). In the case when $\Lambda = \Zd$, we write $\P_{p} := \P_{\Zd , p}$ (resp. $\P_{p}^{\mathrm{site}} := \P_{\Zd , p}^{\mathrm{site}}$).
\end{definition}

We next introduce the notion of stochastic domination, and will make use of the following notation: given a subset $\Lambda \subseteq \Zd$ and a probability measure $\nu$ on $\{ 0 , 1 \}^{\Lambda}$, we denote by $\E_\nu$ the expectation with respect to $\nu$. We simply write $\E_p$ (resp. $\E_p^{\mathrm{site}}$) instead of $\E_{\P_{\Lambda,p}}$ or $\E_{\P_p}$ (resp. $\E_{\P_{\Lambda,p}^{\mathrm{site}}}$ or $\E_{\P_p^{\mathrm{site}}}$).

\begin{definition}[Partial order, increasing functions and stochastic domination]
We introduce the three following definitions:
\begin{itemize}
\item[(i)] \textit{Partial order:} Given a subset $\Lambda \subseteq \Zd$, we define a partial order on the space $\{ 0 , 1 \}^{\Lambda}$ by writing, for any pair $r , r' \in \{ 0 , 1 \}^{\Lambda}$, $r \preceq r'$ if and only if, for all $x \in \Lambda, \, r_x \leq r_x'.$
\item[(ii)] \textit{Increasing function:} A function $f : \{ 0 , 1 \}^\Lambda \to \R$ is called increasing if $f(r) \leq f(r')$ for all pairs of configurations $r , r' \in \{ 0 , 1 \}^{\Lambda}$ with $r \preceq r'$.
\item[(iii)] \textit{Stochastic domination:} Given two probability measures $\nu , \nu'$ on $\{ 0 , 1 \}^{\Lambda}$, we say that $\nu$ stochastically dominates $\nu'$, and denote it by $\nu \preceq \nu'$, if $\E_\nu \left[  f \right] \leq \E_{\nu'} \left[  f \right]$ for any increasing function $f : \{ 0 , 1\}^{\Lambda} \to [0, \infty)$.
\end{itemize}
\end{definition}

\begin{figure}
\centering
\includegraphics[scale=0.4]{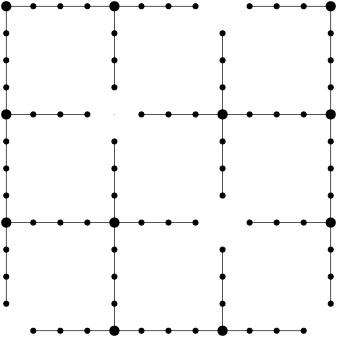}
\caption{A site percolation configuration on the extended lattice.} \label{fig:extrendedperco}
\end{figure}

\begin{remark} \label{rem:remark4}
    Let us make a few remarks about the previous definition:
\begin{itemize}
    \item For any $p , p' \in (0,1)$ with $p \leq p'$, the measure $\P_p^{\mathrm{site}}$ is stochastically dominated by the measure $\P_{p'}^{\mathrm{site}}$.
    \item We may extend all these definitions and properties to edge percolation instead of site percolation by replacing the set $\Lambda$ by the edges of $\Lambda$.
\end{itemize} 
\end{remark}

The following lemma provides a convenient criterion to prove stochastic domination. The statement and proof of the result can be found in~\cite[Lemma 1.1]{liggett1997domination} or in~\cite[Lemma 1.5]{duminil2019lectures} (specified to the case where the measure $\nu$ is the i.i.d. Bernoulli percolation measure $\P_{\Lambda,p}$ in the latter case). We say that a probability measure $\nu$ on $\{ 0,1\}^\Lambda$ is strictly positive if $\nu( \{ r \}) > 0$ for any $r \in \{0,1\}^\Lambda$.

\begin{lemma}[Lemma 1.1 of~\cite{liggett1997domination} or Lemma 1.5 of~\cite{duminil2019lectures}] \label{Lemma1.5}
        Let $\Lambda \subseteq \Zd$ be a finite set and let $\nu$ be a strictly positive probability measure on $\left\{ 0 , 1 \right\}^{\Lambda}$. We assume that for any vertex $x \in \Lambda$ and any configuration $r_1 \in \Lambda \setminus \{ x \}$, one has the inequality
        \begin{equation*}
            \nu \left( r_x = 1 \, | \, r = r_1 ~ \mbox{in} ~ \Lambda \setminus \{ x \} \right) \leq p.
        \end{equation*}
        Then $\P_{\Lambda,p}^{\mathrm{site}}$ stochastically dominates $\nu$.
\end{lemma}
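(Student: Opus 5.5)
The plan is to prove the lemma (Liggett–Schonmann–Stacey / Holley-type criterion) by constructing an explicit monotone coupling of $\nu$ and $\P_{\Lambda,p}^{\mathrm{site}}$, revealing the sites one at a time. Fix an enumeration $x_1,\ldots,x_N$ of the finite set $\Lambda$, and let $U_1,\ldots,U_N$ be i.i.d. uniform random variables on $[0,1]$. Define the Bernoulli configuration by $\eta_{x_k} := \indc_{\{U_k \leq p\}}$, so that $\eta$ has law $\P_{\Lambda,p}^{\mathrm{site}}$.

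Define $r$ sequentially by
\begin{equation*}
r_{x_k} := \indc_{\{U_k \leq q_k\}}, \qquad q_k := \nu\left( r_{x_k} = 1 \,\middle|\, r_{x_1},\ldots,r_{x_{k-1}} \right).
\end{equation*}
Here $q_k$ is evaluated at the already constructed values; it is well defined because $\nu$ is strictly positive. By the chain rule for conditional probabilities, $r$ has law $\nu$.

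The key step is to check that $q_k \leq p$. The quantity $q_k$ conditions only on the first $k-1$ sites, whereas the hypothesis concerns conditioning on all of $\Lambda\setminus\{x_k\}$. Write $q_k$ as an average, over the conditional law of $(r_{x_{k+1}},\ldots,r_{x_N})$ given $(r_{x_1},\ldots,r_{x_{k-1}})$, of the fully conditioned probability
\begin{equation*}
\nu\left( r_{x_k}=1 \,\middle|\, r = r_1 \text{ on } \Lambda\setminus\{x_k\} \right).
\end{equation*}
Concretely,
\begin{equation*}
\nu(r_{x_k}=1, A) = \sum_{B} \nu(r_{x_k}=1 \mid A\cap B)\,\nu(A\cap B) \leq p\,\nu(A),
\end{equation*}
where $A$ is the event fixing the first $k-1$ coordinates and $B$ ranges over the configurations on the remaining sites. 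Each fully conditioned probability is at most $p$ by assumption, so the average satisfies $q_k \leq p$. It follows that $r_{x_k} \leq \eta_{x_k}$ for every $k$, that is, $r \preceq \eta$ almost surely.

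Finally, for any increasing $f \geq 0$ we have $f(r) \leq f(\eta)$ pointwise, and taking expectations gives $\E_\nu[f] \leq \E_p^{\mathrm{site}}[f]$. The only delicate point is the averaging step in the previous paragraph, which is exactly where strict positivity is needed, so that every conditional probability appearing there is well defined.
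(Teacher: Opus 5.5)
Your proof is correct. The paper gives no proof of its own and only cites the lemma; your argument is the standard coupling proof of this criterion (sequential sampling with i.i.d.\ uniforms, as in the cited Liggett--Schonmann--Stacey lemma), with the averaging step converting the full-conditional hypothesis into the sequential bound $\nu(r_{x_k}=1 \mid r_{x_1},\ldots,r_{x_{k-1}}) \le p$.

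One small correction to your closing remark: strict positivity is not really what the averaging relies on, since terms with $\nu(A\cap B)=0$ simply drop out of the sum. Its role is to make the hypothesis, and each $q_k$, well defined for every configuration.
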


We will need the following key result which is an important particular case of Theorem 0.0 in \cite{liggett1997domination}.
%\margin{C: The number of their main theorem is $0.0$ ? YES. apparently I keep asking the same question, already back in 2023 $:)$}
For any $k\geq 1$, we will say that a field of random variables $\{X_i\}_{i\in \Lambda}$ indexed by a subset $\Lambda \subseteq \Z^d$ is \textbf{$k$-dependent} if and only if for any subsets $A,B \subset \Lambda$ s.t. $\mathrm{dist}(A,B)>k$, the $\sigma$-algebras $\sigma(X_i, i\in A)$ and $\sigma(X_j, j\in B)$ are independent (where $\mathrm{dist}$ is the graph distance on $\Z^d$).
 
\begin{theorem}[Theorem 0.0 from \cite{liggett1997domination}]\label{th.Liggett}
For any $d\geq 1$, any $k\geq 1$ and any $p\in (0,1)$, there exists 
$\bar p=\bar p(d,k, p) < 1$ such that the following hold. For any subset $\Lambda \subseteq \Zd$, if $\{X_i\}_{i\in \Lambda}$ is a $k$-dependent field of Bernoulli variables satisfying $\Pb{X_i=1}\geq \bar p$, for each $ i \in \Lambda$, then if $\{Y_i\}_{i\in \Lambda}$ is an i.i.d field of Bernoulli variables with parameter $p$, one has 
\begin{align*}\label{}
\{X_i\}_{i\in \Lambda}  \text{ Stoch. dominates } \{Y_i\}_{i\in \Lambda}\,.
\end{align*}
\end{theorem}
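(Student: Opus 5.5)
The statement immediately above is Theorem~\ref{th.Liggett}, the Liggett--Schonmann--Stacey domination theorem for $k$-dependent fields. I would not reprove it in full, but the plan is the standard one. Stochastic domination of an i.i.d.\ Bernoulli($p$) field is achieved by showing that, for some ordering $i_1, i_2, \ldots$ of $\Lambda$, the conditional probabilities satisfy
\begin{equation*}
\P\left( X_{i_n} = 1 \,\middle|\, X_{i_1}, \ldots, X_{i_{n-1}} \right) \geq p
\end{equation*}
almost surely. Once this holds, a sequential coupling (sampling $Y_{i_n} \leq X_{i_n}$ via a uniform variable at each step) gives the domination; this is the same mechanism as Lemma~\ref{Lemma1.5}, but now with a lower bound on conditional probabilities. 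By compactness it suffices to treat finite $\Lambda$.

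\textbf{Key step.} The key step is an inductive bound on conditional probabilities of the form $\P(X_i = 1 \mid \mathcal{F})$, where $\mathcal{F}$ is generated by the values of $X$ on an arbitrary finite set $S \not\ni i$. Split $S = S_{\mathrm{near}} \cup S_{\mathrm{far}}$, where $S_{\mathrm{near}}$ consists of the sites within distance $k$ of $i$; this set has cardinality at most $D := (2k+1)^d$. Write $A := \{X_j = 1 \text{ for all } j \in S_{\mathrm{near}}\}$ and let $B$ be any event measurable with respect to $\sigma(X_j : j \in S_{\mathrm{far}})$. By $k$-dependence,
\begin{equation*}
\P(X_i = 0 \mid A \cap B) \leq \frac{\P(X_i = 0, B)}{\P(A \cap B)} = \frac{(1-\bar p)\,\P(B)}{\P(A \mid B)\,\P(B)}.
\end{equation*}
This reduces the problem to showing $\P(A \mid B) \geq \alpha > 0$ uniformly. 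I would obtain that by chaining conditional bounds over the at most $D$ near sites, proving by induction on $|S|$ the claim that $\P(X_i = 1 \mid \text{conditioning}) \geq r$ for a fixed $r$ close to $1$. The induction closes provided $1 - \bar p \leq (1-r)\, r^{D}$, which is achievable by taking $\bar p$ sufficiently close to $1$.

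\textbf{Main obstacle.} Conditioning on the near sites being $0$ (rather than $1$) is not covered by the event $A$, and zeros can only lower the probabilities of interest. I would handle this with the standard trick from Liggett--Schonmann--Stacey: introduce auxiliary independent Bernoulli variables that thin the field, so that the relevant conditionings always involve ones. Equivalently, one dominates a thinned field $X_i Z_i$ instead of $X$ itself, and only needs $r \geq p$ at the end. Choosing the thinning parameter and $\bar p = \bar p(d, k, p)$ consistently is the delicate bookkeeping. Alternatively, since the theorem is quoted from~\cite{liggett1997domination}, within the paper it is simply invoked as stated.
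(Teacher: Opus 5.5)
Your proposal is correct and consistent with the paper, which does not reprove this statement but imports it from \cite{liggett1997domination}. What you sketch is the Liggett--Schonmann--Stacey argument itself: sequential lower bounds on conditional probabilities, with independent Bernoulli$(\alpha)$ thinning to neutralise zeros among the near sites. The bookkeeping needs tightening. The induction should be run from the start for $\P(X_i=1\mid W_S)\geq r$ with $W:=XZ$, where each near zero costs a factor $1-\alpha$ and each near one a factor $\alpha r$. It then closes under $1-\bar p\leq(1-r)\min(1-\alpha,\alpha r)^{D}$ and gives domination of a product measure of density $\alpha r$ (not $r$), so it is $\alpha r$ that must be at least $p$.
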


\subsection{The FKG inequality}

% In this section, we state the standard FKG inequality for independent random variables. 
In the definition below, we say that a function $f: \R^n \to [0 , \infty)$ is increasing if it is increasing in each of its variables.

\begin{proposition}[Fortuin-Kasteleyn-Ginibre inequality] \label{prop.FKGinequality}
For $n \in \N^*$, let $J_1 , \ldots, J_n$ be independent random variables, and let $f , g : \R^n \to [0 , \infty)$ 
be increasing functions. One has the inequality:
\begin{equation} \label{eq:ineqFKG}
    \mathbf{E} \left[ f(J_1 , \ldots, J_n ) g(J_1 , \ldots, J_n ) \right] \geq  \mathbf{E} \left[ f(J_1 , \ldots, J_n ) \right]  \mathbf{E} \left[ g(J_1 , \ldots, J_n ) \right].
\end{equation}
\end{proposition}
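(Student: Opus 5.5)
We would prove the FKG inequality (Proposition~\ref{prop.FKGinequality}) by induction on $n$, the number of independent variables, following the classical Harris argument. Throughout we may assume the expectations in~\eqref{eq:ineqFKG} are finite (otherwise, if $\mathbf{E}[fg] = \infty$ there is nothing to prove; a truncation $f \wedge M$, $g \wedge M$, which preserves monotonicity and non-negativity, together with monotone convergence reduces to bounded functions). Measurability of the sections below is automatic for Borel $f,g$.

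\emph{Base case $n=1$.} Let $J$ be a real random variable and $J'$ an independent copy. Since $f$ and $g$ are both increasing on $\R$, for any real $s,t$ we have $(f(s)-f(t))(g(s)-g(t)) \geq 0$, because both factors have the sign of $s-t$ (or vanish). Applying this with $s=J$, $t=J'$ and taking expectations gives
\begin{equation*}
0 \leq \mathbf{E}\bigl[(f(J)-f(J'))(g(J)-g(J'))\bigr] = 2\,\mathbf{E}[f(J)g(J)] - 2\,\mathbf{E}[f(J)]\,\mathbf{E}[g(J)],
\end{equation*}
using independence and equality in law of $J$ and $J'$. This is exactly~\eqref{eq:ineqFKG} for $n=1$.

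\emph{Induction step.} Assume the result for $n-1$ variables. Condition on $J_n$: for fixed $t \in \R$, the maps $(x_1,\ldots,x_{n-1}) \mapsto f(x_1,\ldots,x_{n-1},t)$ and the analogous map for $g$ are non-negative and increasing. By independence, the conditional law of $(J_1,\ldots,J_{n-1})$ given $J_n = t$ is still the product law. The induction hypothesis therefore yields
\begin{equation*}
\mathbf{E}[fg \mid J_n] \geq F(J_n)\,G(J_n),
\end{equation*}
where $F(t) := \mathbf{E}[f(J_1,\ldots,J_{n-1},t)]$ and $G(t)$ is defined similarly (Fubini). Since $f$ and $g$ are increasing in their last variable, $F$ and $G$ are non-negative and increasing functions of one variable. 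Taking expectations and applying the base case to $F(J_n)$ and $G(J_n)$ gives
\begin{equation*}
\mathbf{E}[fg] \geq \mathbf{E}[F(J_n)G(J_n)] \geq \mathbf{E}[F(J_n)]\,\mathbf{E}[G(J_n)] = \mathbf{E}[f]\,\mathbf{E}[g].
\end{equation*}

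The only delicate point is integrability, which is handled by the truncation mentioned at the start. The genuinely essential input is the independence of the $J_i$. It is what lets the conditional law factorise, and hence lets the induction hypothesis apply to the sections of $f$ and $g$.
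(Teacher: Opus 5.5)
Your proof is correct and complete. It is the classical Harris argument: for $n=1$ you integrate the pointwise inequality $(f(s)-f(t))(g(s)-g(t))\geq 0$ against an independent copy, then you use Fubini over $J_n$ and apply the one-variable case to the increasing sections $F,G$. Your truncation by $f\wedge M$, $g\wedge M$ with monotone convergence correctly handles integrability.

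The paper states this proposition as a standard fact without giving a proof, so there is no different route to compare. The only assumption you share with the paper is that $f,g$ are Borel. This is needed for the expectations to make sense, since coordinatewise increasing functions on $\R^n$, $n\geq 2$, need not be Borel.
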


\section{The Villain models on a percolation cluster} \label{sectiondef.XY}

This section is devoted to the proof of Theorem~\ref{proofKTsitegeneral} and Theorem~\ref{Th:annealedVillain} on the existence of phase transitions for the Villain model on a percolation cluster and the annealed Villain model. 

Regarding the proof of Theorem~\ref{proofKTsitegeneral}, 
we will only write the details of the first part of the argument 
(i.e., we will only prove the result in the case of a high-density percolation), 
as the second part of the proof 
(i.e., the renormalisation step which extends the result from the high-density percolation to any supercritical percolation) 
is almost identical to the proof of~\cite[Section 5.2]{DG25} and is similar to the proof of Section~\ref{sec:renoarmargumentforheight} (written for the height functions).

We also mention that we will prove the existence of phase transitions for the Villain model on a high-density \emph{site} percolation 
(instead of edge percolation as stated in Theorem~\ref{proofKTsitegeneral}). 
This is due to a technical reason: it is easier to implement the renormalisation argument once the result is established for a high-density site percolation. 
We mention that it would be easy to deduce the result for a high-density edge percolation from the proof written below 
(by either adapting the proof or by using Theorem~\ref{th.Liggett} to stochastically dominate the site percolation by an edge percolation).

\subsection{Preliminaries on the XY and Villain models} 

In this section, we introduce the general versions of the XY and Villain models used in this article, and collect some important properties of these models which are used in the proofs of Theorem~\ref{proofKTsitegeneral} in Sections~\ref{Section5} and~\ref{section5.2} below.

\subsubsection{The XY and Villain models} \label{sec:sec3spinsystems}

We first introduce some general versions of the XY and Villain models and state the Ginibre inequality in Theorem~\ref{theoremGinibreineq} below.

\begin{definition}[XY and Villain models with general conductances] \label{generalXY}
Let $n \in \N$ be an integer, $\Lambda \subseteq \Zd_n$ be a finite set, and $(J_{xy})_{\{x , y \} \in E(\Lambda)} \subseteq [0, \infty)^{E(\Lambda)} $ be a collection of non-negative conductances. We define the XY and Villain models to be the probability distributions on the space $[0 , 2\pi)^{\Lambda}$ given by the formulae
    \begin{equation} \label{eq:17131905}
        \mu_{\Lambda, J}^{\mathrm{XY}} (d\theta) := \frac{1}{Z_{\Lambda, J}^{\mathrm{XY}}}\exp \left( \sum_{x \sim y} J_{xy} \cos ( \theta_x - \theta_y ) \right) \prod_{x \in \Lambda} d \theta_x
    \end{equation}
    and
\begin{equation} \label{def:defVillaingeneralconduc}
    \mu_{\Lambda, J}^{\mathrm{Vil}} (d\theta) := \frac{1}{Z_{\Lambda, J}^{\mathrm{Vil}}} \prod_{x \sim y} v_{J_{xy}} \left( \theta_x - \theta_y \right) \prod_{x \in \Lambda} d \theta_x,
\end{equation}
where $Z_{\Lambda, J}^{\mathrm{XY}}$ and $Z_{\Lambda, J}^{\mathrm{Vil}}$ are the normalizing constants (N.B. Recall the definition of the heat kernel $v_J(\cdot)$ (or $v_\beta(\cdot)$) from~\eqref{def:introdefVillain}).
%\margin{C: I added a pointer to $v_J$ as it is defined much earlier and I was lost for a moment $:)$}
 We denote by $\left\langle \cdot \right\rangle_{\Lambda, J}^{\mathrm{XY}}$ and $\left\langle \cdot \right\rangle_{\Lambda, J}^{\mathrm{Vil}}$ the expectations with respect to the measures~$\mu_{\Lambda, J}^{\mathrm{XY}}$ and~$\mu_{\Lambda, J}^{\mathrm{Vil}}$.
\end{definition}

\begin{remark} \label{rem:remarkJ=infty}
We may take the limit $J_{xy} \to \infty$ for an edge $\{x , y\} \in E(\Lambda)$. This has the effect of contraining the angles $\theta_x$ and $\theta_y$ to be equal.
\end{remark}

We next introduce three spin systems which play a key role in the proof of Theorem~\ref{proofKTsitegeneral} below (see Figures~\ref{fig:3spinsystems}).

\begin{definition}[Three spin systems on the extended lattice] \label{def:threespinsystems}
We fix an integer $L \in \N$, an integer $n \in \N^*$, three inverse temperatures $\beta, \beta_1, \beta_2 \geq 0$ and introduce three spin systems on the extended lattice:
\begin{itemize}
    \item The first one is the XY model on the extended lattice $\Zd_n$ with inverse temperature $n \beta$, i.e.,
    \begin{equation} \label{eq:model1XY}
        \mu_{\Lambda_L^n, n,  \beta}^{\mathrm{XY}}(d \theta)  := \frac{1}{Z_{\Lambda_L^n, n,  \beta}^{\mathrm{XY}}} \exp \left( n \beta \sum_{x \sim_n y}  \cos ( \theta_x - \theta_y ) \right) \prod_{x \in \Lambda_L^n} d \theta_x.
    \end{equation}
    We denote by $\left\langle \cdot \right\rangle_{L, n  ,\beta}^{\mathrm{XY}}$ the expectation with respect to this measure.
    \item The second one is an XY model on the extended lattice $\Zd_n$ with heterogenous temperatures defined as follows. For any $n \geq 2$ and any pair of inverse temperatures $\beta_1 , \beta_2 > 0$, we set
\begin{equation} \label{eq:model2XY}
        \mu_{\Lambda_L^n, n, \beta_1 , \beta_2}^{\mathrm{XY}}(d\theta) := \frac{1}{Z_{\Lambda_L^n, n, \beta_1,  \beta_2 }^{\mathrm{XY}}}\exp \left( \beta_1 \sum_{\substack{x \sim_n y \\ x \in \Zd }} \cos (\theta_x - \theta_y) + n \beta_2 \sum_{\substack{x \sim_n y \\ x , y \notin \Zd}} \cos (\theta_x - \theta_y) \right) \prod_{x \in \Lambda_L^n} d \theta_x.
\end{equation}
 We denote by $\left\langle \cdot \right\rangle_{L, n  ,\beta_1 , \beta_2}^{\mathrm{XY}}$ the expectation with respect to this measure.
    \item The third one is a spin system defined on the extended graph $\Zd_{3}$ where the interaction on the edges connected to $\Zd$ is the one of an XY model with inverse temperature $\beta_1$ and the interaction along the other edges is the one of a Villain model with inverse temperature $\beta_2$, i.e.,
    \begin{equation} \label{eq:model3XY}
        \mu_{\Lambda_L^3, \beta_1 , \beta_2}^{\mathrm{XY/Vil}}(d\theta) := \frac{1}{Z_{\Lambda_L^3, \beta_1 , \beta_2 }^{\mathrm{XY}}}\exp \left( \beta_1 \sum_{\substack{x \sim_3 y \\ \{ x , y \} \cap \Zd \neq \emptyset}} \cos (\theta_x - \theta_y) \right) \prod_{\substack{x \sim_3 y \\ x , y \notin \Zd}} v_{\beta_2}(\theta_x - \theta_y) \prod_{x \in \Lambda_L^3} d \theta_x.
    \end{equation}
    We refer to this model as \textbf{the XY/Villain model} and denote by $\left\langle \cdot \right\rangle_{L  ,\beta_1, \beta_2}^{\mathrm{XY/Vil}}$ the expectation with respect to this measure.
\end{itemize}
\end{definition}

\begin{figure}
\centering
\includegraphics[scale=0.65]{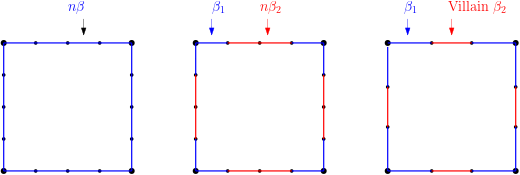}
\caption{The three spin systems of Section~\ref{sec:sec3spinsystems}: the XY models on the extended lattice $\Zd_n$ with $n = 4$ (on the left) and the XY/Villain model (on the right).} \label{fig:3spinsystems}
\end{figure}

\subsubsection{The Ginibre inequality}

We next state the Ginibre correlation inequality~\cite{ginibre1970general} which implies the monotonicity of the two-point function in the conductances. We specifically state two results from~\cite{ginibre1970general}, the first inequality~\eqref{ineq:Ginibre1} is an important ingredient in the proof of the properties~\eqref{eq:Ginibreineq}, and will also be used to establish a (generalised version of) Wells' inequality in Section~\ref{App.A}.

\begin{theorem}[Ginibre inequality~\cite{ginibre1970general}] \label{theoremGinibreineq}
 One has the following inequalities:
 \begin{itemize}
  \item For any pair of integers $k, l \in \mathbb{N}$, any collection $m_1 , \ldots, m_k \in \mathbb{Z}^l$ and any sequence of plus and minus signs,
 \begin{equation} \label{ineq:Ginibre1}
     \int_{[0, 2\pi]^l \times [0, 2\pi]^l} \prod_{i = 1}^k \left( \cos ( m_i \cdot \theta) \pm  \cos ( m_i \cdot \theta') \right) \prod_{j =1}^l d\theta_j d \theta'_j \geq 0.
 \end{equation}
 \item For any pair of vertices $x , y \in \Lambda$,
\begin{equation} \label{eq:Ginibreineq}
    \left\langle \cos(\theta_x - \theta_y) \right\rangle_{\Lambda, J}^{\mathrm{XY}} \geq 0 \hspace{5mm} \mbox{and} \hspace{5mm} J = (J_{uv}) \mapsto \left\langle \cos(\theta_x - \theta_y ) \right\rangle_{\Lambda, J}^{\mathrm{XY}} ~\mbox{is increasing}.
\end{equation}
\end{itemize}
\end{theorem}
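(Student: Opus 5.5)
The plan is to prove the integral inequality~\eqref{ineq:Ginibre1} first by a change of variables that turns the integrand into a sum of perfect squares. The two statements in~\eqref{eq:Ginibreineq} then follow by expanding the Boltzmann weights in power series and applying~\eqref{ineq:Ginibre1} term by term.

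\textbf{Step 1: the integral inequality~\eqref{ineq:Ginibre1}.} Set $\phi := \theta + \theta'$ and $\psi := \theta' - \theta$, and write $a_i := m_i\cdot\theta$, $b_i := m_i \cdot \theta'$. The elementary identities
\[
\cos a + \cos b = 2\cos\tfrac{a+b}{2}\cos\tfrac{b-a}{2}, \qquad \cos a - \cos b = 2 \sin\tfrac{a+b}{2}\sin\tfrac{b-a}{2}
\]
give, for each $i$, a factor $c_i(m_i\cdot\phi/2)\,c_i(m_i\cdot\psi/2)$ with the \emph{same} function $c_i \in \{\cos,\sin\}$ in both variables. The choice $\psi=\theta'-\theta$ rather than $\theta-\theta'$ is exactly what removes the minus sign in the second identity. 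Hence the integrand equals $2^k F(\phi)F(\psi)$ with $F(x) := \prod_i c_i(m_i\cdot x/2)$.

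The main obstacle is the domain of integration, because half-angles are involved. The linear map $(\theta,\theta')\mapsto(\phi,\psi)$ has constant Jacobian. Using periodicity, it maps the torus onto a fundamental domain for the lattice $\{(\phi,\psi) : \phi_j \equiv \psi_j \bmod 4\pi,\ \phi_j,\psi_j \in 2\pi\Z\}$. I would decompose this domain coordinatewise according to whether $\phi_j$ and $\psi_j$ are shifted by $0$ or by $2\pi$ together. Shifting $x_j \mapsto x_j+2\pi$ multiplies $F$ by a sign $\epsilon_j$ depending only on $j$ and the $m_i$. Because the shift is applied to $\phi$ and $\psi$ simultaneously, these signs appear squared. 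The integral therefore becomes a positive constant times $\bigl(\int_{[0,2\pi]^l} F\bigr)^2$, possibly summed over the $2^l$ parity classes. In every case it is a sum of squares, hence $\geq 0$.

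\textbf{Step 2: positivity in~\eqref{eq:Ginibreineq}.} Expand $\exp(\sum J_{uv}\cos(\theta_u-\theta_v))$ as a power series with nonnegative coefficients in the quantities $J_{uv}$. The numerator of $\langle\cos(\theta_x-\theta_y)\rangle$ becomes a nonnegative combination of integrals $\int \prod_i \cos(m_i\cdot\theta)\,d\theta$ with integer vectors $m_i$. Each such integral is nonnegative: multiply by $\int d\theta' = (2\pi)^l$ and apply~\eqref{ineq:Ginibre1} with all plus signs. The expansion of $\prod_i(\cos(m_i\cdot\theta)+\cos(m_i\cdot\theta'))$ then consists of this term, its $\theta'$-analogue, and mixed terms. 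I would arrange the argument by induction, or simply expand $\cos$ into exponentials, whose integrals are nonnegative counts of vanishing frequency sums. Either route gives nonnegativity.

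\textbf{Step 3: monotonicity in~\eqref{eq:Ginibreineq}.} Differentiate in $J_{uv}$ and use a duplicated copy $\theta'$ of the configuration:
\[
\partial_{J_{uv}}\langle f\rangle = \frac{1}{2Z^2}\int\!\!\int (f(\theta)-f(\theta'))\,(g(\theta)-g(\theta'))\, e^{H(\theta)+H(\theta')}\,d\theta\, d\theta',
\]
where $f=\cos(\theta_x-\theta_y)$ and $g=\cos(\theta_u-\theta_v)$. For each edge, write $e^{J(\cos a+\cos b)}=\sum_k J^k(\cos a+\cos b)^k/k!$. The integrand is then a nonnegative combination of products of factors of the form $\cos(m\cdot\theta)\pm\cos(m\cdot\theta')$, each with the same sign convention as in~\eqref{ineq:Ginibre1}. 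Step 1 makes every term nonnegative, so the derivative is nonnegative. This gives monotonicity on finite $\Lambda$, and the statements for general conductances follow by continuity.
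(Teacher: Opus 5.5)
Your architecture (duplicated variables, a sum/difference change of variables, and power-series expansion of the weights) is the standard Ginibre argument, which the paper only cites without proof. Step~3 is correct as written. There is, however, an error in Step~1, exactly at the point you flagged.

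\textbf{The gap in Step~1.} The simultaneous shift $(\phi_j,\psi_j)\mapsto(\phi_j+2\pi,\psi_j+2\pi)$ is the image of $\theta'_j\mapsto\theta'_j+2\pi$, so it lies in the lattice $\Gamma$ itself. Translates of $[0,2\pi]^{2l}$ by such joint shifts therefore all represent the same piece of $\R^{2l}/\Gamma$, and they cannot tile a fundamental domain.

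A genuine fundamental domain is $[0,4\pi]^l\times[0,2\pi]^l$ in the $(\phi,\psi)$ variables. On it the integral factorises as $\int_{[0,4\pi]^l}F\cdot\int_{[0,2\pi]^l}F$. Shifting $\phi_j$ alone by $2\pi$ multiplies $F$ by $\epsilon_j$ to the first power, so
\[
\int_{[0,2\pi]^{2l}}\prod_{i=1}^k\bigl(\cos(m_i\cdot\theta)\pm\cos(m_i\cdot\theta')\bigr)\,d\theta\,d\theta'=2^{k-l}\prod_{j=1}^l(1+\epsilon_j)\Bigl(\int_{[0,2\pi]^l}F\Bigr)^2 .
\]
This is not a positive multiple of $(\int_{[0,2\pi]^l}F)^2$, as you claimed. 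For $k=l=1$, $m_1=1$ and the minus sign, the left side is $0$, while $\int_0^{2\pi}\sin(x/2)\,dx=4$.

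Nonnegativity survives only because $1+\epsilon_j\in\{0,2\}$. Your parity classes contribute with signs $\prod_j\epsilon_j^{\eta_j}$, not squared signs, so ``sum of squares'' is not the reason the integral is nonnegative.

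\textbf{A clean repair.} Avoid half-angles by parametrising $\theta=\alpha-\beta$, $\theta'=\alpha+\beta$.
\begin{itemize}
\item The map $(\alpha,\beta)\mapsto(\alpha-\beta,\alpha+\beta)$ is a surjective endomorphism of $(\R/2\pi\Z)^{2l}$, hence preserves Haar measure.
\item The identities $\cos(a-b)+\cos(a+b)=2\cos a\cos b$ and $\cos(a-b)-\cos(a+b)=2\sin a\sin b$ turn the integrand into $2^kK(\alpha)K(\beta)$, where $K(x)=\prod_i c_i(m_i\cdot x)$ is genuinely $2\pi$-periodic.
\item The integral therefore equals $2^k\bigl(\int_{[0,2\pi]^l}K\bigr)^2\ge 0$.
\end{itemize}

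\textbf{A smaller issue in Step~2.} The first route you mention does not close. Let $X=\int\prod_i\cos(m_i\cdot\theta)\,d\theta$. Expanding \eqref{ineq:Ginibre1} with all plus signs gives $2(2\pi)^lX+M\ge0$, where $M\ge0$ collects the mixed terms. This only bounds $X$ below by $-M/(2(2\pi)^l)$, not by $0$.

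Either of the following works instead:
\begin{itemize}
\item The exponential expansion you also propose: $X=(2\pi)^l2^{-k}\,\#\{\sigma\in\{\pm1\}^k:\sum_i\sigma_im_i=0\}$.
\item The duplication trick applied directly: $2Z^2\langle f\rangle=\int\!\!\int(f(\theta)+f(\theta'))e^{H(\theta)+H(\theta')}\,d\theta\,d\theta'$ expands into integrals of the form \eqref{ineq:Ginibre1} with only plus signs.
\end{itemize}
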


\begin{remark}
Let us make three remarks about these results:
\begin{itemize}
\item The properties~\eqref{eq:Ginibreineq} hold when the XY model is defined on a general finite graph.
\item We may deduce from the inequality~\eqref{eq:Ginibreineq} the domain monotonicity of the two-point function: if we consider two finite sets $\Lambda' \subseteq \Lambda \subseteq \Zd_n$ and let $J$ be a collection of conductances defined on the edges of $\Lambda$, then we have the inequality (N.B. on the left-hand side, we denote by $J'$ the restriction of $J$ to the edges of $\Lambda'$)
\begin{equation*}
    \left\langle \cos(\theta_x - \theta_y) \right\rangle_{\Lambda', J'}^{\mathrm{XY}}  \leq \left\langle \cos(\theta_x - \theta_y) \right\rangle_{\Lambda, J}^{\mathrm{XY}}.
\end{equation*}
\item We will see in Proposition~\ref{prop:prop3.6} below that the same monotonicity property holds for the Villain model (and would hold for the XY/Villain model with general conductances).
\end{itemize}
\end{remark}

\subsubsection{Wells' inequality}

This section is devoted to the statement of Wells' inequality for the XY model. The inequality was originally established by Wells~\cite{Wellsthesis}, the proof of the result can be found in~\cite[Appendix]{bricmont1981periodic} in the case of the spin-1/2 Ising model and in~\cite[Section 3]{DG25} in the case of the XY model (N.B. the proof there is an adaptation of the ones of~\cite{Wellsthesis, bricmont1981periodic}). In all this section, we fix an integer $n \in \N$ and first introduce a definition for the XY and Villain models with general conductances on a site percolation configuration.

\begin{definition}[Disordered XY and Villain models] 
Let $\Lambda \subseteq \Zd_n$ be a finite set, and let $(J_{xy})_{\{x , y \} \in E(\Lambda)} \in [0, \infty)^{E(\Lambda)} $ be a collection of non-negative conductances. Given a percolation configuration $r \in \{0,1\}^{\Lambda \cap \Zd}$, we define the disordered XY and Villain models
\begin{equation*}
    \mu_{\Lambda, J,r}^{\mathrm{XY}} (d\theta) := \frac{1}{Z_{\Lambda, J,r}^{\mathrm{XY}}}\exp \left( \sum_{x \sim y} r_x r_y J_{xy} \cos ( \theta_x - \theta_y ) \right) \prod_{x \in \Lambda} d \theta_x,
    \end{equation*}
and
\begin{equation*}
    \mu_{\Lambda, J,r}^{\mathrm{Vil}} (d\theta) := \frac{1}{Z_{\Lambda, J,r}^{\mathrm{Vil}}} \prod_{x \sim y} v_{r_x r_y J_{xy}} \left( \theta_x - \theta_y \right) \prod_{x \in \Lambda} d \theta_x,
\end{equation*}
where $Z_{\Lambda, J, r}^{\mathrm{XY}}$ and $Z_{\Lambda, J,r}^{\mathrm{Vil}}$ are the normalizing constants. We denote by $\left\langle \cdot \right\rangle_{\Lambda, J,r}^{\mathrm{XY}}$ and $\left\langle \cdot \right\rangle_{\Lambda, J,r}^{\mathrm{Vil}}$ the expectations with respect to these measures (N.B. We use here and below the convention that the percolation configurations are extended by $1$ for the vertices of $\Zd_n \setminus \Zd$).
\end{definition}

\begin{remark}
    We may deduce from the Ginibre inequality (Theorem~\ref{theoremGinibreineq}) the following monotonicity properties:
    \begin{itemize}
        \item For any $x , y \in \Lambda$, the function $r \in \{0,1\}^{\Lambda \cap \Zd} \mapsto \left\langle \cos \left( \theta_x - \theta_y \right) \right\rangle_{\Lambda, J , r}^{\mathrm{XY}}$ is increasing (and we will see in Proposition~\ref{prop:prop3.6} below that the same property holds for the Villain model and would hold for the XY/Villain model with general conductances).
        \item If $\Lambda , \Lambda'$ are two sets such that $\Lambda' \subseteq \Lambda \subseteq \Zd_n$, $J$ is a collection of conductances defined on the edges of $\Lambda$ and $r \in \{0,1\}^{\Lambda \cap \Zd}$ is a site percolation configuration, then we have the inequality (N.B. On the left-hand side, we denote by $J'$ and $r'$ the restrictions of $J$ and $r$ to the edges and sites of $\Lambda'$)
        \begin{equation*}
         \left\langle \cos(\theta_x - \theta_y) \right\rangle_{\Lambda', J',r'}^{\mathrm{XY}}  \leq \left\langle \cos(\theta_x - \theta_y) \right\rangle_{\Lambda, J, r}^{\mathrm{XY}}.
         \end{equation*}
         We will see in Proposition~\ref{prop:prop3.6} below that the same inequality holds for the Villain model (and would hold for the XY/Villain model with general conductances).
    \end{itemize}
\end{remark}

We next proceed by introducing a probability distribution on the space of percolation configurations which we call the Wells' disorder (following~\cite{Wellsthesis}).

\begin{definition}[Wells' disorder] \label{def.finite-volHamiltonian}
Let $\Lambda \subseteq \Zd_n$ be a finite set, and let $(J_{xy})_{ \{ x, y \} \in E(\Lambda)} \in [0, \infty)^{E(\Lambda)} $ be a collection of non-negative conductances. We define the Wells' disorder to be the probability distribution on the space of site percolation configurations $\{0 , 1\}^{\Lambda \cap \Zd}$ given by the identity: for any $r \in \{0 , 1\}^{\Lambda \cap \Zd}$
    \begin{equation} \label{eq:17141905}
        \nu_{\Lambda, J} (\{ r \}) := \frac{Z_{\Lambda, J,r}^{\mathrm{XY}}}{\bar Z_{\Lambda, J}^{\mathrm{XY}}} ~~\mbox{where} ~~ \bar Z_{\Lambda, J}^{\mathrm{XY}} = \sum_{ r \in \{0 , 1\}^{\Lambda\cap \Zd}} Z_{\Lambda, J,r}^{\mathrm{XY}}.
    \end{equation}
When the set $\Lambda$ is the box $\Lambda_L^n,$ we denote this measure by $\nu_{L, J}$.
\end{definition}

Wells' inequality asserts that the expectation of the two-point function of a disordered XY model, when the law of the disorder is the one of Definition~\ref{def.finite-volHamiltonian}, is bounded from below by the one of an XY model without disorder at a higher temperature.

\begin{theorem}[Wells' inequality for the XY model~\cite{Wellsthesis, bricmont1981periodic}] \label{prop.Wells}
    Let $\Lambda \subseteq \Zd_n$ be a finite set, then for any collection of nonnegative conductances $(J_{xy})_{ \{ x, y \} \in E(\Lambda)}$ and any pair of vertices $x , y \in \Lambda$,
    \begin{equation} \label{eq:Wellsineq}
         \left\langle \cos ( \theta_x - \theta_y ) \right\rangle_{\Lambda, J/4}^{\mathrm{XY}} \leq \E_{\nu_{\Lambda , J}} \left[ \left\langle \cos ( \theta_x - \theta_y ) \right\rangle_{\Lambda, J, r}^{\mathrm{XY}} \right].
    \end{equation}
\end{theorem}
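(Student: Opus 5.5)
We follow the duplicate-system strategy of Wells and of Bricmont–Fontaine–Landau, as adapted in \cite[Section 3]{DG25}. For each site $z \in \Lambda \cap \Zd$ write $r_z \in \{0,1\}$ and note that the weight $\exp(\sum_{u \sim v} r_u r_v J_{uv} \cos(\theta_u - \theta_v))$ only involves edges incident to $\Zd$ through the factors $r_u r_v$. The first step is to write the numerator of the right-hand side of~\eqref{eq:Wellsineq} as
\begin{equation*}
\bar Z_{\Lambda,J}^{\mathrm{XY}}\, \E_{\nu_{\Lambda,J}}\left[\left\langle \cos(\theta_x-\theta_y)\right\rangle_{\Lambda,J,r}^{\mathrm{XY}}\right] = \sum_{r} \int \cos(\theta_x - \theta_y)\, e^{\sum_{u\sim v} r_ur_vJ_{uv}\cos(\theta_u-\theta_v)} \prod_u d\theta_u ,
\end{equation*}
so that the averaged two-point function is the two-point function of a single annealed system in the joint variables $(\theta, r)$. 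The sum over each $r_z \in \{0,1\}$ is then realised by an auxiliary angle: we introduce a second copy $\theta'$ and express, via the identity $\sum_{r_z}$ as a sum over the two values of a ``switch'', the annealed weight as an integral over $(\theta,\theta')$ of a product of terms of the form $\exp(\tfrac{J}{4}\,[\cos(\cdot)+\cos(\cdot)]^{\ }\dots)$. Concretely, as in the Ising case, one substitutes $\theta_u = \alpha_u+\alpha'_u$, $\theta'_u=\alpha_u-\alpha'_u$ type change of variables so that the product of the disordered weight and a decoupled copy becomes $\exp\big(\tfrac{J}{2}\sum(\cos(\cdot)+\cos(\cdot))\big)$ up to an explicit positive factor, which identifies the comparison model at conductance $J/4$ (the factor $1/4$ arising from the halving in the change of variables applied twice, once for the angles and once for the sum over $r$).

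The second step is to compare: after expanding $e^{\sum \cdots}$ in power series, the difference between the right- and left-hand sides of~\eqref{eq:Wellsineq}, multiplied by the (positive) product of the two partition functions, becomes a sum with nonnegative coefficients of integrals of the form
\begin{equation*}
\int \prod_{i} \left( \cos(m_i\cdot\theta) \pm \cos(m_i\cdot \theta')\right) \prod_j d\theta_j\, d\theta'_j ,
\end{equation*}
which are nonnegative by the first Ginibre inequality~\eqref{ineq:Ginibre1}. Here the key algebraic input is the elementary identity $\cos a \cos b$-products and $\cos(a)+\cos(b) = 2\cos\frac{a+b}{2}\cos\frac{a-b}{2}$, used to rewrite the $r_z=0$ versus $r_z=1$ contributions at each site of $\Zd$ as products of factors $\cos(m\cdot\theta)\pm\cos(m\cdot\theta')$ with integer vectors $m$. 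Since the vertices of $\Zd_n\setminus \Zd$ carry no disorder ($r\equiv 1$ there), the edges not touching $\Zd$ simply contribute positive Ginibre-type factors and pass through unchanged; this is why the statement holds on the extended lattice with arbitrary $J$.

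The main obstacle is the bookkeeping in the first step: producing a duplicated representation in which the disordered annealed weight and the homogeneous weight at $J/4$ appear as the two ``$\pm$'' components, with integer (rather than half-integer) frequency vectors $m_i$ so that~\eqref{ineq:Ginibre1} applies. I would first try to follow verbatim the computation of \cite[Section 3]{DG25}, which handles exactly this: since the theorem is stated there for the XY model with general conductances, and passing from $\Zd$ to $\Zd_n$ only changes the graph (Ginibre's inequality holds on any finite graph), the proof transfers directly, and the only check needed is that the half-angle substitution is compatible with the periodicity, which is ensured by working with the doubled variables on $[0,4\pi)$ and noting the integrand is $2\pi$-periodic in each original angle.
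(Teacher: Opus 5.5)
There is a genuine gap. Your outer shell is right: duplicate the system, expand the exponentials, and finish with the first Ginibre inequality~\eqref{ineq:Ginibre1}. What is missing is the actual content of Wells' inequality: how the sum over the disorder $r$ is disposed of, and where the constant $1/4$ comes from. You replace this with a mechanism that does not work.

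The second copy $\theta'$ is not an auxiliary angle encoding $\sum_r$. It is simply a replica of the homogeneous system at conductance $J/4$. After multiplying by $\bar Z^{\mathrm{XY}}_{\Lambda,J}\, Z^{\mathrm{XY}}_{\Lambda,J/4}$, what must be shown is
\begin{equation*}
\sum_{r} \iint \bigl(\cos(\theta_x-\theta_y)-\cos(\theta'_x-\theta'_y)\bigr)\exp\Bigl(\sum_{u\sim v}J_{uv}\bigl[r_ur_v\cos(\theta_u-\theta_v)+\tfrac14\cos(\theta'_u-\theta'_v)\bigr]\Bigr)\,d\theta\,d\theta' \geq 0 .
\end{equation*}
\begin{itemize}
\item No substitution $\theta=\alpha+\alpha'$, $\theta'=\alpha-\alpha'$ turns $\sum_r e^{H_r(\theta)}e^{H_{J/4}(\theta')}$ into $\exp(\frac J2\sum(\cos+\cos))$ times a positive factor. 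The $r$-dependence is genuine: for $r_z=0$ the edges at $z$ are absent.
\item That substitution is the internal proof of~\eqref{ineq:Ginibre1} itself, which you may use as a black box.
\item The identity $\cos a+\cos b=2\cos\frac{a+b}2\cos\frac{a-b}2$ cannot turn the ``$r_z=0$ versus $r_z=1$'' contributions into Ginibre factors. The $r$-dependence has to be separated from the angles, not absorbed into them.
\item Your claim in step two, that the difference is a nonnegative combination of Ginibre integrals, is true. But the coefficients are sums over $r$ that are not termwise nonnegative, and you never justify them.
\end{itemize}

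The missing idea is the edge-by-edge linear decomposition, with $r\equiv 1$ off $\Zd$:
\begin{equation*}
r_ur_v\cos a+\tfrac14\cos b=\tfrac12\bigl(r_ur_v+\tfrac14\bigr)(\cos a+\cos b)+\tfrac12\bigl(r_ur_v-\tfrac14\bigr)(\cos a-\cos b).
\end{equation*}
This is the analogue of the identity used in the proof of Proposition~\ref{prop:generalisedWells}. Since $r_ur_v-\frac14<0$ when $r_ur_v=0$, you cannot apply Ginibre for fixed $r$. Instead, expand the exponential; each monomial factorizes as
\begin{equation*}
\Bigl[\sum_r\prod_k\bigl(r_{u_k}r_{v_k}\pm\tfrac14\bigr)\Bigr]\times\Bigl[\iint\prod_i\bigl(\cos(m_i\cdot\theta)\pm\cos(m_i\cdot\theta')\bigr)\,d\theta\,d\theta'\Bigr].
\end{equation*}
The angular factor is nonnegative by~\eqref{ineq:Ginibre1}. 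The disorder factor needs its own positivity argument. With $\sigma_z=2r_z-1\in\{\pm1\}$:
\begin{itemize}
\item $r_ur_v-\frac14=\frac14(\sigma_u\sigma_v+\sigma_u+\sigma_v)$ and $r_ur_v+\frac14=\frac14(\sigma_u\sigma_v+\sigma_u+\sigma_v+2)$;
\item when $v\notin\Zd$, $r_u\pm\frac14=\frac{\sigma_u}{2}+\frac12\pm\frac14$;
\item edges off $\Zd$ give the positive constants $1\pm\frac14$, so they do not ``pass through unchanged'', but they are harmless.
\end{itemize}
Hence each product is a polynomial in $\sigma$ with nonnegative coefficients, and $\sum_{\sigma=\pm1}\sigma^k\geq 0$.

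This is exactly where the constant $1/4$ comes from. It is the largest $c$ with $\sum_{r_u,r_v\in\{0,1\}}(r_ur_v-c)\geq 0$, namely the uniform average of $r_ur_v$. It does not come from a ``halving applied twice'' in a change of variables. Following \cite[Section 3]{DG25} verbatim would of course work, since that is the argument above, but your proposal as written does not contain the disorder-positivity step.
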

We do not give the proof of this theorem here but refer to~\cite{Wellsthesis, bricmont1981periodic,DG25} for the details of the argument, or to Section~\ref{App.A} where the result is proved for a generalised version of the XY model and then applied to the integer-valued height functions (N.B. In all these references, the strategy of the proof is the same).

\iffalse
\subsubsection{The Villain model}

\begin{definition}[The Villain model with general conductances] \label{generalXY}
Let $\Lambda \subseteq \Zd_n$ be a finite set, and let $\left\{ J_{xy} \, : \{x , y \} \in E(\Lambda) \right\} \subseteq [0, \infty)^{E(\Lambda)} $ be a collection of non-negative conductances. We define the finite-volume Gibbs measure on the space $[0 , 2\pi)^{\Lambda}$ according to the formula
    \begin{equation} \label{eq:17131905}
        \mu_{\Lambda, J}^{\mathrm{Vil}} (d\theta) := \frac{1}{Z_{\Lambda, J}} \prod_{x \sim y} v_{J_{xy}} \left(  \theta_x - \theta_y \right) \prod_{x \in \Lambda} d \theta_x,
    \end{equation}
where $Z_{\Lambda, J}$ is the normalizing constant. We denote by $\left\langle \cdot \right\rangle_{\mu_{\Lambda, J}}$ the expectation with respect to the measure~$\mu_{\Lambda, J}$.
\end{definition}

\begin{remark}
***This is the heat kernel on the circle (i.e., the law of the Brownian motion on the circle); this implies that $2$ Villain models one after the other gives a Villain model with smaller conductance***
\end{remark}
\fi

\subsubsection{The Villain model as a metric graph limit of the XY model} \label{sec:sectionwith3models}

In this section, we show that the Villain and XY/Villain models can be obtained as the limit of XY models defined on the extended lattice (see Proposition~\ref{prop:prop3.6}). A useful implication of this observation is that properties proved for the XY model, such as the Ginibre inequality, can be extended to the Villain and XY/Villain models via an approximation procedure.

We will state the result in the level of generality which is needed for the proof of Theorem~\ref{proofKTsitegeneral} and first introduce a disordered version of two of the three spin systems of Definition~\ref{def:threespinsystems}.

\begin{definition}[XY and XY/Villain models on a site percolation configuration]
We fix an integer $L \in \N$, an integer $n \geq 2$, a pair of inverse temperatures $\beta_1 , \beta_2 > 0$, a percolation configuration $r \in \{0,1\}^{\Lambda_L}$, and introduce:
\begin{itemize}
    \item The XY model on the extended lattice $\Zd_n$ on a percolation configuration:
\begin{multline*}
    \mu_{\Lambda_L^n, n, \beta_1 , \beta_2, r}^{\mathrm{XY}}(d\theta) := \frac{1}{Z_{\Lambda_L^n, n, \beta_1,  \beta_2, r }^{\mathrm{XY}}}\exp \left( \beta_1  \sum_{\substack{x \sim_n y \\ x \in  \Zd}} r_x \cos (\theta_x - \theta_y)\right)\\  \times \exp \left( n \beta_2 \sum_{\substack{x \sim_n y \\ x , y \notin \Zd}} \cos (\theta_x - \theta_y) \right) \prod_{x \in \Lambda_L^n} d \theta_x.
\end{multline*}
    \item The XY/Villain model on a percolation configuration:
    \begin{equation*}
        \mu_{\Lambda_L^3, \beta_1 , \beta_2, r}^{\mathrm{XY/Vil}}(d\theta) := \frac{1}{Z_{\Lambda_L^3, \beta_1 , \beta_2, r }^{\mathrm{XY/Vil}}}\exp \left( \beta_1  \sum_{\substack{x \sim_3 y \\ x \in \Zd}} r_x \cos (\theta_x - \theta_y) \right) \prod_{\substack{x \sim_3 y \\ x , y \notin \Zd}} v_{\beta_2}(\theta_x - \theta_y) \prod_{x \in \Lambda_L^3} d \theta_x.
    \end{equation*}
\end{itemize}
\end{definition}

The next proposition asserts that the Villain and XY/Villain model on a percolation configuration can be obtained as the limit of XY models defined on the extended lattice. We refer to Figure~\ref{fig:convergencetoVillain} for an illustration of the result and to~\cite[Section 4]{NW} and~\cite[Appendix A.2]{AHPS} for a proof.

\begin{figure}
\centering
\includegraphics[scale=0.4]{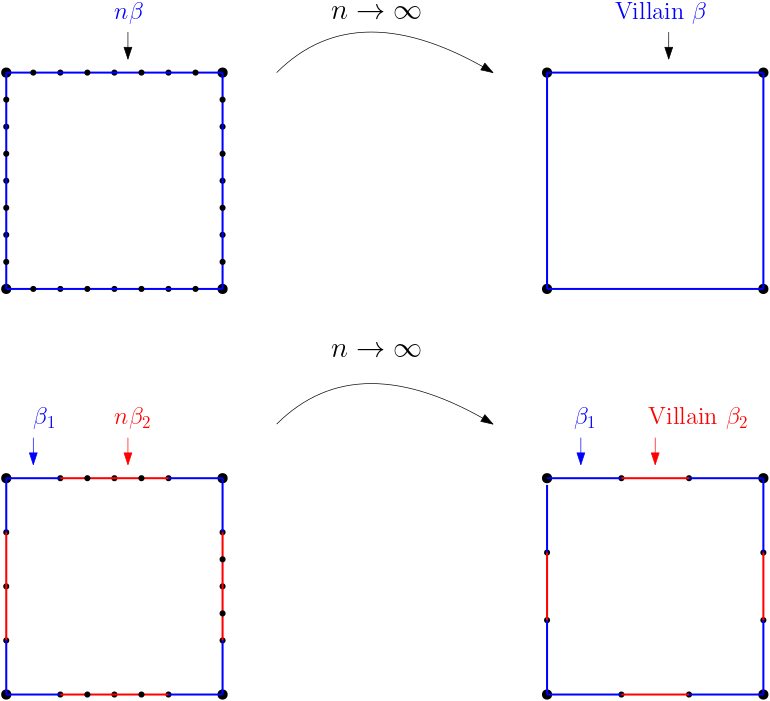}
\caption{The Villain and XY/Villain models as a limit of XY models.} \label{fig:convergencetoVillain}
\end{figure}

\begin{proposition}[\cite{NW, AHPS}] \label{prop:prop3.6}
    For any integer $L \in \N$, any triplet of inverse temperatures $\beta, \beta_1 , \beta_2 \in (0 , \infty)$, any percolation configuration $r \in \{0 , 1 \}^{\Lambda_L}$, and any pair of vertices $x , y \in \Lambda_L$, one has the convergences: 
    \begin{equation*}
        \lim_{n \to \infty} \left\langle \cos \left( \theta_x - \theta_y \right)  \right\rangle_{L, n, \beta, r}^{\mathrm{XY}} =  \left\langle \cos \left( \theta_x - \theta_y \right)  \right\rangle_{L, \beta,r}^{\mathrm{Vil}}
    \end{equation*}
    and
    \begin{equation*}
        \lim_{n \to \infty} \left\langle \cos \left( \theta_x - \theta_y \right)  \right\rangle_{L, n, \beta_1 , \beta_2, r}^{\mathrm{XY}} =  \left\langle \cos \left( \theta_x - \theta_y \right)  \right\rangle_{L, \beta_1 , \beta_2, r}^{\mathrm{XY/Vil}}.
    \end{equation*}
\end{proposition}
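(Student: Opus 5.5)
The plan is to integrate out the spins that sit in the interior of each subdivided edge, and to show that the resulting effective two-body kernel converges uniformly to the heat kernel $v_\beta$ after normalisation. Both statements then follow because expectations of bounded continuous observables depend continuously on the density.

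\textbf{Step 1 (reduction to one edge).} Fix an edge $\{x,y\}$ of $\Lambda_L$ with $r_x=r_y=1$. The vertices added on this edge have no neighbours other than their two path neighbours. By Fubini, the marginal of $\mu^{\mathrm{XY}}_{\Lambda_L^n,n,\beta,r}$ on $[0,2\pi)^{\Lambda_L}$ therefore has density proportional to $\prod_{x\sim y} K_n(\theta_x-\theta_y)$. Here $K_n$ is the $n$-fold circular convolution of $g_n(\theta):=e^{n\beta\cos\theta}$.

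If $r_x=0$ (or $r_y=0$), one edge of the path carries zero coupling. The path then splits, the intermediate spins integrate to a constant, and the effective kernel is constant. This matches $v_{r_xr_y\beta}=v_0=1$.

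Multiplicative constants per edge cancel against the partition function. So it suffices to show that $K_n/\hat K_n(0)\to v_\beta/(2\pi)$ uniformly on the circle, with $v_\beta$ as in~\eqref{def:introdefVillain}.

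\textbf{Step 2 (Fourier computation).} The Fourier coefficients of $g_n$ are $2\pi I_k(n\beta)$. Hence
\[
\frac{\hat K_n(k)}{\hat K_n(0)}=\left(\frac{I_k(n\beta)}{I_0(n\beta)}\right)^{n}.
\]
The standard asymptotics $I_k(s)/I_0(s)=\exp(-k^2/(2s)+O(k^4/s^2+k^2/s^2))$ hold for fixed $k$ as $s\to\infty$. They give pointwise convergence of these coefficients to $e^{-k^2/(2\beta)}$, which are exactly the Fourier coefficients of $v_\beta$.

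To upgrade this to uniform convergence of the kernels, I need a summable dominating bound that is uniform in $n$, for instance
\[
\left(I_k(n\beta)/I_0(n\beta)\right)^n\le e^{-c\min(k^2,\,nk)/\beta}.
\]
I would derive it from the product representation $I_k(s)/I_0(s)=\prod_{j=1}^{|k|} I_j(s)/I_{j-1}(s)$ together with the elementary bound $I_j(s)/I_{j-1}(s)\le s/(j+\sqrt{j^2+s^2})$ (Amos-type inequalities). This domination is the main technical point of the whole proof; everything else is bookkeeping. Dominated convergence in $\ell^1(\mathbb Z)$ then gives uniform convergence of the normalised kernels.

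\textbf{Step 3 (conclusion).} The limiting densities are bounded above and bounded away from zero. Uniform convergence of the product density on the compact space $[0,2\pi)^{\Lambda_L}$ therefore yields convergence of the normalising constants and of $\langle\cos(\theta_x-\theta_y)\rangle$. This proves the first limit.

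For the XY/Villain statement, the same integration is performed only over the $n-2$ interior edges of each path, which carry coupling $n\beta_2$. The two endpoint edges with coupling $\beta_1$ (weighted by $r_x$) are kept, together with the vertices adjacent to $\mathbb Z^d$; these play the role of the two added vertices of $\mathbb Z^d_3$. The interior kernel now has normalised coefficients $(I_k(n\beta_2)/I_0(n\beta_2))^{n-2}\to e^{-k^2/(2\beta_2)}$, with the same domination as in Step 2. So the effective middle kernel converges uniformly to $v_{\beta_2}$, and the argument of Step 3 applies verbatim to give the second limit.
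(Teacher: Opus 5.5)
Your overall route is sound, and it differs from what the paper relies on. The paper simply cites \cite{NW, AHPS}, where the convergence is obtained from a local CLT for the chain of XY increments. You instead integrate out the chain and control the Fourier coefficients $(I_k(n\beta)/I_0(n\beta))^n$. This is the ``CLT-style'' alternative mentioned in the remark following Remark~\ref{rem:remark13}, and it is the same mechanism the paper sketches for Proposition~\ref{prop:approxIVGFFbyXY}. Steps 1 and 3 are fine, including the treatment of closed sites and of the $n-2$ interior edges in the XY/Villain case.

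\textbf{The gap.} The step you single out as the crux, a summable bound on $(I_k(n\beta)/I_0(n\beta))^n$ uniform in $n$, rests on a false inequality. The bound $I_j(s)/I_{j-1}(s)\le s/(j+\sqrt{j^2+s^2})$ is Amos's \emph{lower} bound, not an upper one:
\begin{itemize}
\item for $s=j=1$ one has $I_1(1)/I_0(1)\approx 0.446>1/(1+\sqrt2)\approx 0.414$;
\item for large $s$ the left side is $1-\frac{2j-1}{2s}+\dots$, while the right side is $1-\frac{j}{s}+\dots$, which is smaller.
\end{itemize}
This domination is not optional. Pointwise convergence of the coefficients only gives weak convergence of each edge kernel, and that does not pass to the Gibbs measure on a graph with cycles.

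\textbf{The fix.} Use the correct Amos upper bound, valid for $j\ge 1$ and $s>0$:
\[
\frac{I_j(s)}{I_{j-1}(s)}\le \frac{s}{j-\frac12+\sqrt{s^2+(j-\frac12)^2}}\le \frac{s}{s+j-\frac12}.
\]
For $1\le j\le s$ this gives $\log(I_j(s)/I_{j-1}(s))\le -j/(4s)$. Hence $(I_k(n\beta)/I_0(n\beta))^n\le e^{-k^2/(8\beta)}$ for $|k|\le n\beta$.

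For $j>s\ge 1$ it gives $I_j(s)/I_{j-1}(s)\le 2/3$. Combining the two ranges yields $(I_k(n\beta)/I_0(n\beta))^n\le e^{-n|k|/32}$ for $|k|>n\beta\ge 2$. Thus
\[
\sup_{n\ge 2/\beta}\bigl(I_k(n\beta)/I_0(n\beta)\bigr)^n\le e^{-\min(k^2/(8\beta),\,|k|/32)}.
\]

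The finitely many $n<2/\beta$ are handled by the elementary series bound $I_j(s)\le \frac{s}{2j}\,I_{j-1}(s)$.

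For the XY/Villain case, the exponent $n-2$ is covered by $(\cdot)^{n-2}\le(\cdot)^{n/3}$ for $n\ge 3$, since the base lies in $(0,1]$. With this domination, your dominated-convergence argument goes through as written.
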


\begin{remark}
    We make two remarks about this proposition:
    \begin{itemize}
        \item It can be proved that the restriction to the set $\Lambda_L$ of the measures $\mu_{\Lambda_L^n, n,  \beta,r}^{\mathrm{XY}}$ (resp. $\mu_{\Lambda_L^n, n, \beta_1 , \beta_2,r}^{\mathrm{XY}}$) converges toward $\mu_{\Lambda_L , \beta,r}^{\mathrm{Vil}}$ (resp. $\mu_{\Lambda_L^3, \beta_1 , \beta_2,r}^{\mathrm{XY/Vil}}$).
        \item The result can be extended to more general graphs and to more general conductances.
    \end{itemize}
\end{remark}
Proposition~\ref{prop:prop3.6} can then be combined with the Ginibre inequality (Theorem~\ref{theoremGinibreineq}) to extend the range of application of this inequality from the XY model to the Villain model and the XY/Villain model defined above. In particular, we obtain the following corollary.

\begin{corollary} \label{corollary:ginibreforVillain}
The two-point functions $\left\langle \cos \left( \theta_x - \theta_y \right)  \right\rangle_{L, \beta, r}^{\mathrm{XY}}$,
$\left\langle \cos \left( \theta_x - \theta_y \right)  \right\rangle_{L, \beta, r}^{\mathrm{Vil}}$ and $\left\langle \cos \left( \theta_x - \theta_y \right) \right\rangle_{L, \beta_1 , \beta_2,r}^{\mathrm{XY/Vil}}$ are increasing functions of:
\begin{itemize}
    \item the sidelength $L \in \N$,
    \item the inverse temperatures $\beta, \beta_1, \beta_2 \in (0 , \infty)$,
    \item the percolation configuration $r \in \{0 , 1 \}^{\Lambda_L}$.
\end{itemize}
%For any integer $L \in \N$, any pair of vertices $x , y \in \Lambda_L$ and any percolation configuration $r \in \{0 , 1\}^{\Lambda_L}$,
%\begin{equation*}
%    \beta_1 \mapsto \left\langle \cos \left( \theta_x - \theta_y \right) \right\rangle_{L, \beta_1 , \beta_2,r}^{\mathrm{XY/Vil}}  ~\mbox{is increasing}
%\end{equation*}
%and 
%\begin{equation*}
%      \beta_2 \mapsto \left\langle \cos \left( \theta_x - \theta_y \right) \right\rangle_{L, \beta_1 , \beta_2,r}^{\mathrm{XY/Vil}} ~\mbox{is increasing}.
%\end{equation*}
\end{corollary}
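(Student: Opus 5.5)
The plan is to reduce each monotonicity statement to the Ginibre inequality for the XY model with general conductances, and then pass to the limit $n \to \infty$ using Proposition~\ref{prop:prop3.6}. For the XY model itself, nothing beyond Theorem~\ref{theoremGinibreineq} and the subsequent remarks is needed. Indeed, increasing $\beta$ or switching a site $r_x$ from $0$ to $1$ increases some conductances $J_{uv}$, since the disordered model is the XY model with conductances $r_u r_v \beta$. Monotonicity in $L$ is the domain monotonicity: embed $\Lambda_L \subseteq \Lambda_{L+1}$, where the smaller box corresponds to setting the conductances of all edges leaving $\Lambda_L$ to $0$, which produces independent uniform spins outside that do not affect the two-point function inside.

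For the Villain model, I will take the approximating XY models $\mu^{\mathrm{XY}}_{\Lambda_L^n, n, \beta, r}$ on the extended lattice. These are XY models with general conductances on the finite graph $\Lambda_L^n$: each conductance equals $n\beta$ times the site indicators, with the convention that percolation configurations are extended by $1$ on $\Zd_n \setminus \Zd$. For fixed $n$, the two-point function $\langle \cos(\theta_x - \theta_y)\rangle_{L,n,\beta,r}^{\mathrm{XY}}$ is increasing in $\beta$ and in $r$ by~\eqref{eq:Ginibreineq}, because every conductance is a nondecreasing function of $(\beta, r)$. It is increasing in $L$ by domain monotonicity, since $\Lambda_L^n \subseteq \Lambda_{L+1}^n$ with the same conductances on common edges. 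Pointwise limits of increasing functions are nondecreasing, so Proposition~\ref{prop:prop3.6} transfers all three monotonicities to $\langle\cos(\theta_x - \theta_y)\rangle^{\mathrm{Vil}}_{L,\beta,r}$ for $x,y \in \Lambda_L$. The XY/Villain model is handled identically using $\mu^{\mathrm{XY}}_{\Lambda_L^n,n,\beta_1,\beta_2,r}$. There the conductances are $\beta_1 r_x$ on edges incident to $\Zd$ and $n\beta_2$ on the interior edges, so they are separately increasing in $\beta_1$, $\beta_2$ and $r$, and the second limit of Proposition~\ref{prop:prop3.6} concludes.

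I expect the only delicate point to be monotonicity in $L$. There one must check that the identification of vertices of $\Zd$ inside $\Zd_n$ is compatible across $L$, which is immediate since $\Lambda_L^n$ is an induced subgraph of $\Lambda_{L+1}^n$. One must also check that the Proposition~\ref{prop:prop3.6} limits are taken at fixed $L$ before comparing different $L$. This is fine: for fixed $L < L'$ the inequality between the two approximating two-point functions holds for every $n$, and it survives the limit $n \to \infty$.
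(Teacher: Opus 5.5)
Your proposal is correct and is essentially the paper's argument. The paper obtains this corollary by applying the Ginibre inequality (Theorem~\ref{theoremGinibreineq}, together with its domain-monotonicity consequence) to the approximating XY models on the extended lattice $\Lambda_L^n$, and then passing to the limit $n \to \infty$ via Proposition~\ref{prop:prop3.6}; you simply make explicit the details the paper leaves implicit, namely that each conductance is nondecreasing in $\beta,\beta_1,\beta_2,r$ and that the $n$-limit is taken at fixed $L$ before comparing boxes.
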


\begin{remark} \label{rem:remark10}
Let us make three remarks about the previous statement:
\begin{itemize}
    \item Similar results hold in a (much more) general setting: they hold for spin systems valued in the circle~$\mathbb{S}^1$ defined on more general graphs where the weights are given by mixtures of the XY and Villain potentials.
    \item For a percolation configuration $r \in \{0,1\}^{\Zd}$, we denote by 
    $$\left\langle \cos \left( \theta_x - \theta_y \right)  \right\rangle_{\beta, r}^{\mathrm{XY}} := \lim_{L \to \infty} \left\langle \cos \left( \theta_x - \theta_y \right)  \right\rangle_{L , \beta, r}^{\mathrm{XY}}, ~ \left\langle \cos \left( \theta_x - \theta_y \right)  \right\rangle_{\beta, r}^{\mathrm{Vil}} := \lim_{L \to \infty} \left\langle \cos \left( \theta_x - \theta_y \right)  \right\rangle_{L, \beta, r}^{\mathrm{Vil}}$$ and 
    $$\left\langle \cos \left( \theta_x - \theta_y \right) \right\rangle_{ \beta_1 , \beta_2,r}^{\mathrm{XY/Vil}} := \lim_{L \to \infty} \left\langle \cos \left( \theta_x - \theta_y \right) \right\rangle_{L, \beta_1 , \beta_2,r}^{\mathrm{XY/Vil}}.$$
    In the case $r \equiv 1$, we omit this subscript from the notation.
    \item Taking the limit $\beta_1 \to \infty$, we have the convergence, for any $L \in \N$, $\beta_2 \geq 0$ and $r \in \{0,1\}^{\Lambda_L}$,
    \begin{equation*}
        \lim_{\beta_1 \to \infty} \left\langle \cos \left( \theta_x - \theta_y \right)  \right\rangle_{L, \beta_1 , \beta_2, r}^{\mathrm{XY/Vil}} = \left\langle \cos \left( \theta_x - \theta_y \right)  \right\rangle_{L, \beta_2, r}^{\mathrm{Vil}}.
    \end{equation*}
    Combining this observation with Corollary~\ref{corollary:ginibreforVillain}, we obtain the inequality, for any $L \in \N$, $\beta_2 \geq 0$ and $r \in \{0,1\}^{\Lambda_L}$,
\begin{equation} \label{ineq:beta1toinfty}
    \left\langle \cos \left( \theta_x - \theta_y \right)  \right\rangle_{L, \beta_1 , \beta_2, r}^{\mathrm{XY/Vil}} \leq \left\langle \cos \left( \theta_x - \theta_y \right)  \right\rangle_{L, \beta_2, r}^{\mathrm{Vil}}.
\end{equation}
\end{itemize}
\end{remark}

\subsubsection{Phase transitions for the XY/Villain model}

In this section, we collect two results regarding the phase transitions of the XY/Villain model introduced in~\eqref{eq:model3XY}. These results are usually stated for the XY model or the Villain model on the lattice $\Zd$. The proof in the two-dimensional case can be found in~\cite{van2023elementary} and in dimension $d \geq 3$ in~\cite{garban2022continuous}, and can be extended without major difficulties to the XY/Villain model. 
%\margin{\blue{Paul: Citation here} C: you mean we should cite more things ? I guess you are right. I am not sure what you had in mind ? I guess \cite{AHPS} could also be cited for this mixture ?}

\begin{proposition}[Phase transitions for the XY/Villain model~\cite{van2023elementary, garban2022continuous}] \label{prop.phasetransitionextended}
The following hold true:
\begin{itemize}
    \item In dimension $d = 2$, there exists an inverse temperature $\beta_{\mathrm{KT}}^{\mathrm{XY/Vil}} < \infty$ such that, for any $\beta_1 , \beta_2 \geq \beta_{\mathrm{KT}}^{\mathrm{XY/Vil}}$,
    \begin{equation*}
        \left\langle \cos(\theta_0 - \theta_x) \right\rangle_{\beta_1 , \beta_2}^{\mathrm{XY/Vil}} ~\mbox{decays polynomially fast as } |x| \to \infty. 
    \end{equation*}
    \item In dimension $d \geq 3$, there exists an inverse temperature $\beta_{ c}^{\mathrm{XY/Vil}}(d) < \infty$ such that, for any $\beta_1 , \beta_2 \geq \beta_{c}^{\mathrm{XY/Vil}}$,
    \begin{equation*}
        \left\langle \cos(\theta_0 - \theta_x) \right\rangle_{\beta_1 , \beta_2}^{\mathrm{XY/Vil}} ~\mbox{remains bounded away from } 0 \mbox{ as } |x| \to \infty.
    \end{equation*}
\end{itemize}
\end{proposition}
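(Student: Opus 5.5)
Since the statement concerns the XY/Villain model without disorder, the plan is to reduce it to a model whose phase transitions are already known, using the Ginibre monotonicity of Corollary~\ref{corollary:ginibreforVillain}.

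\textbf{Dimension $d \geq 3$.} Comparison suffices. By Proposition~\ref{prop:prop3.6}, the XY/Villain model is the $n \to \infty$ limit of the heterogeneous XY model~\eqref{eq:model2XY} on $\Zd_n$. The idea is to rewrite the two endpoint XY edges (inverse temperature $\beta_1$) and the central Villain edge (inverse temperature $\beta_2$) as a single series of Villain-type couplings along each original edge of $\Zd$. Two facts are used:
\begin{itemize}
\item the XY weight $e^{\beta_1\cos\theta}$ is an annealed Villain interaction (by~\cite[(9.10)]{AHPS});
\item Villain kernels compose as heat kernels, $v_{a}*v_{b}\propto v_{(a^{-1}+b^{-1})^{-1}}$.
\end{itemize}
Hence, after integrating out the two inserted spins, each edge of $\Zd$ carries an effective interaction $F(\theta_x-\theta_y)$, given by a mixture of Villain kernels with effective conductances $J_{\mathrm{eff}}=(2/J+1/\beta_2)^{-1}$, where $J$ is distributed according to the annealed measure of the XY weight at $\beta_1$. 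Since the two XY edges contribute independent conductances, the precise form is $J_{\mathrm{eff}} = (1/J_1 + 1/J_2 + 1/\beta_2)^{-1}$ with $J_1, J_2$ i.i.d.; this refinement does not affect the argument. The model on $\Zd$ thus has a reflection-positive, nearest-neighbour, positive-definite pair interaction. The proof of~\cite{garban2022continuous}, or equivalently the infrared bound of~\cite{frohlich1976infrared}, applies to such interactions: the spin-wave stiffness grows with $\min(\beta_1,\beta_2)$, and the Gaussian domination bound gives $\langle\cos(\theta_0-\theta_x)\rangle\ge 1-C/\min(\beta_1,\beta_2)$ uniformly in $x$. Alternatively, one can work directly on the graph $\Zd_3$, which is itself reflection positive for reflections through planes of sites, after symmetrising the model.

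\textbf{Dimension $d = 2$.} Here I follow~\cite{van2023elementary}. Duality maps the XY/Villain model to an integer-valued height function on the dual lattice. On each dual edge, the weight is the Fourier transform of the effective edge interaction $F$, namely a product of $I_k(\beta_1)^2$-type factors with $e^{-k^2/(2\beta_2)}$, or after the composition above, $\widehat F(k)=I_k(\beta_1)^2 e^{-k^2/(2\beta_2)}/I_0(\beta_1)^2$. This weight is log-concave in $k$, because products of log-concave sequences are log-concave and $k\mapsto I_k(\beta)$ is log-concave. The argument of~\cite{van2023elementary} deduces polynomial decay of the spin two-point function from delocalisation of the dual height function. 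That delocalisation in turn follows from Lammers' dichotomy~\cite{lammers2022height}, which requires convexity of the potential $-\log\widehat F$ together with a flatness condition near $0$ at high temperature. The remaining task is to check this flatness condition, namely
\[
\widehat F(1)/\widehat F(0)\to1 \quad\text{as } \beta_1,\beta_2\to\infty.
\]
The only step needing care is to verify that the loop-representation argument of~\cite{van2023elementary}, written for $\Z^2$ with a single potential, accepts this composite edge weight. It does, because after integrating out the inserted spins we are back on $\Z^2$ with one convex potential.

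The \textbf{main obstacle} is the reduction step: justifying that integrating out the inserted spins yields a single nearest-neighbour interaction with the required positivity and log-concavity properties. I would settle this by a direct Fourier computation on the path $x-u-w-y$. This gives $\widehat F(k)=\widehat{e^{\beta_1\cos}}(k)^2\,\widehat{v_{\beta_2}}(k)$ up to normalisation, which is positive and log-concave, and that is all both cited arguments need.
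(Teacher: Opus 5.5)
This is sound at the same level of rigour as the paper, which gives no argument beyond citing \cite{van2023elementary} for $d=2$ and \cite{garban2022continuous} for $d\ge3$, and asserting that both extend to the decorated lattice $\Zd_3$.

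Your genuinely new step is the decimation. The two inserted spins have degree $2$, so integrating them out turns the XY/Villain model into a nearest-neighbour model on $\Zd$ with the single positive-definite edge weight $F=g*v_{\beta_2}*g$, where $g(\theta)=e^{\beta_1\cos\theta}$ and $\hat F(k)\propto I_k(\beta_1)^2e^{-k^2/(2\beta_2)}$. This makes precise what ``extending'' the cited proofs means: only the edge weight changes, not the geometry. In $d=2$ it also identifies the dual height function as exactly the $\Z$-XY/GFF model~\eqref{eq:defXYGFF}, whose delocalisation the paper records in Theorem~\ref{prop:delocXY/IVGFFheight}. Your $d=2$ route is therefore presumably the one the paper has in mind. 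You still have to take on faith, as the paper does, that the argument of \cite{van2023elementary} accepts this composite weight.

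One claim should be dropped. In $d\ge3$, reflection positivity plus positive-definiteness of $F$ does not give the infrared bound of \cite{frohlich1976infrared}. Gaussian domination needs the weight to contain a coupling that is quadratic in $S_x=e^{i\theta_x}$. For instance, $F=1+\varepsilon\cos 2\theta$ is reflection positive and positive-definite, yet $\langle\cos(\theta_0-\theta_x)\rangle=0$ by the local symmetry $\theta_x\mapsto\theta_x+\pi$. To use that route you would need a factorisation $F=e^{c\cos}\,G$ with $G$ positive-definite and $c\to\infty$ as $\beta_1,\beta_2\to\infty$, and you have not shown this. So your bound $1-C/\min(\beta_1,\beta_2)$ is unsupported. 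The $d\ge3$ case should rest on \cite{garban2022continuous}, as in the paper, which yields the weaker estimate of Remark~\ref{remark2.24}.
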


\begin{remark} \label{remark2.24}
The following lower bounds can be deduced from the arguments of~\cite{van2023elementary,garban2022continuous}: in dimension $d = 2$ and for $\beta_1 , \beta_2$ sufficiently large, there exists $c > 0$ such that
\begin{equation*}
    \langle \cos \left( \theta_0 - \theta_x \right) \rangle_{\beta_1 , \beta_2}^{\mathrm{XY/Vil}} \geq \frac{c}{|x|}.
\end{equation*}
In dimension $d \geq 3,$ there exists a constant $C := C(d) < \infty$ such that
    \begin{equation*}
    \langle \cos \left( \theta_0 - \theta_x \right) \rangle_{\beta_1 , \beta_2}^{\mathrm{XY/Vil}} \geq 1 - C \sqrt{ \frac{1}{\beta_1} + \frac{1}{\beta_2}}.
\end{equation*}
\end{remark}

\subsection{Stochastic domination of the Wells' disorder} \label{subsection4.1}
In this section, we introduce the Wells' disorder associated with the model~\eqref{eq:model2XY} 
and show that it is stochastically dominated from above by an i.i.d. Bernoulli percolation with sufficiently 
high probability. Specifically, given an integer $L \in \N$, 
an integer $n \in \N$ and two inverse temperatures $\beta_1 , \beta_2 \in (0,\infty)$, 
we define the Wells' disorder associated with the model~\eqref{eq:model2XY}: 
for any percolation configuration $r \in \{0,1\}^{\Lambda_L}$,
$$
\nu_{L , n , \beta_1 , \beta_2}(\{ r \}) := \frac{Z_{\Lambda_L^n, n, \beta_1,  \beta_2, r }}{\bar Z_{\Lambda_L^n, n, \beta_1,  \beta_2}} ~~\mbox{with}~~ \bar Z_{\Lambda_L^n, n, \beta_1,  \beta_2} = \sum_{r \in \{0,1\}^{\Lambda_L^n}} Z_{\Lambda_L, n, \beta_1,  \beta_2, r}.
$$
We note that this measure is obtained from the general Definition~\ref{def.finite-volHamiltonian} by considering $\Lambda = \Lambda_{L}^n$, $J_{xy} = \beta_1$ if $\{x , y\} \cap \Zd \neq \emptyset$ and $J_{xy} = n \beta_2$ otherwise. The main result of this section is the following stochastic domination for the Wells' disorder.

\begin{proposition}[Stochastic domination for the Wells' disorder]\label{prop:stochdomWellsVillain}
    Fix $\beta_1 \in (0 , \infty)$ and set $p_0(\beta_1) := \frac{1}{1 + \exp \left( - 2d \beta_1 \right)} \in (0 , 1)$. Then, for any integer $L \in \N$, any integer $n \in \N$ and any inverse temperature $\beta_2 \in (0 , \infty)$, one has the stochastic domination
    \begin{equation*}
        \nu_{L , n , \beta_1 , \beta_2} \preceq \P_{\Lambda_L, p_0(\beta_1)}^{\mathrm{site}}.
    \end{equation*}
\end{proposition}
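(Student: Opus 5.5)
We apply Lemma~\ref{Lemma1.5}. The measure $\nu_{L,n,\beta_1,\beta_2}$ is strictly positive, since every partition function $Z_{\Lambda_L^n,n,\beta_1,\beta_2,r}$ is the integral of a positive continuous function. It therefore suffices to show the following: for every $x\in\Lambda_L$ and every configuration $r_1$ on $\Lambda_L\setminus\{x\}$,
\begin{equation*}
\nu_{L,n,\beta_1,\beta_2}\left(r_x=1 \,\middle|\, r=r_1 \mbox{ on } \Lambda_L\setminus\{x\}\right)\le p_0(\beta_1).
\end{equation*}
Write $r^{1}$ and $r^{0}$ for the configurations equal to $r_1$ off $x$ and equal to $1$ and $0$ at $x$. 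The conditional probability is then
\begin{equation*}
\frac{Z_{r^1}}{Z_{r^1}+Z_{r^0}}=\frac{1}{1+Z_{r^0}/Z_{r^1}}.
\end{equation*}
Since $t\mapsto 1/(1+t)$ is decreasing, the whole statement reduces to the single ratio bound
\begin{equation*}
\frac{Z_{r^1}}{Z_{r^0}}\le e^{2d\beta_1}.
\end{equation*}

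To prove this bound, compare the two integrands pointwise in $\theta$. They differ only in the factor coming from the edges of $\Zd_n$ incident to $x$ with $x\in\Zd$. In $Z_{r^1}$ this factor is $\exp\bigl(\beta_1\sum_{y\sim_n x}\cos(\theta_x-\theta_y)\bigr)$, while in $Z_{r^0}$ it equals $1$. The vertex $x$ has exactly $2d$ neighbours in $\Zd_n$, each of which lies in $\Zd_n\setminus\Zd$ when $n\ge2$. Each cosine is at most $1$, so the extra factor is at most $e^{2d\beta_1}$ pointwise. Integrating gives $Z_{r^1}\le e^{2d\beta_1}Z_{r^0}$, and hence
\begin{equation*}
\frac{Z_{r^1}}{Z_{r^1}+Z_{r^0}}\le\frac{1}{1+e^{-2d\beta_1}}=p_0(\beta_1).
\end{equation*}
The $\beta_2$-edges and the factors involving the other sites of $\Lambda_L$ are common to both integrands and play no role. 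This is why the bound is uniform in $L$, $n$ and $\beta_2$.

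The only point requiring care is bookkeeping rather than analysis. One must check, from the definition of the disordered model, which edges carry the factor $r_x$. The sum $\sum_{x\sim_n y,\,x\in\Zd} r_x\cos(\cdot)$ counts each edge incident to a lattice site exactly once with weight $r_x$, because for $n\ge 2$ both endpoints of such an edge cannot lie in $\Zd$. Hence flipping $r_x$ affects exactly $2d$ terms. The degenerate case $n=1$ should also be checked: there the edges join two lattice sites, and the weight is read as $r_xr_y$, so at most $2d$ terms are still affected. If instead one used the literal convention that an edge between two lattice sites is counted once from each endpoint, $x$ could appear in up to $4d$ terms and the constant would deteriorate. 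For this reason the proposition should be read with $n\ge2$, or with the $r_xr_y$ convention. Once the ratio bound is in hand, Lemma~\ref{Lemma1.5} yields $\nu_{L,n,\beta_1,\beta_2}\preceq\P^{\mathrm{site}}_{\Lambda_L,p_0(\beta_1)}$.
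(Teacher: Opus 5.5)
Your proof is correct and is essentially the paper's argument. You reduce via Lemma~\ref{Lemma1.5} to the single-site conditional probability, write it as $Z_{r^1}/(Z_{r^1}+Z_{r^0})$, and bound the integrand pointwise by $e^{2d\beta_1}$, since only the edges incident to $x$ carry the factor $r_x$. Two of your points are not in the paper's proof but are sound: the strict-positivity check, and the observation that for $n\ge 2$ each such edge is counted once. One small correction: boundary sites of $\Lambda_L$ have fewer than $2d$ neighbours rather than exactly $2d$, but this only improves the bound.
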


\begin{remark}
The main feature of this result is that the probability $p_0$ does not depend on the integer $n$; 
this property will allow us to take the limit $n \to \infty$ and apply Proposition~\ref{prop:prop3.6} 
in Section~\ref{Section5} below. (N.B. this stochastic domination only uses Lemma \ref{Lemma1.5},
which is the very easy part of \cite{liggett1997domination}).
%\margin{C: added this as I always get confused.}
\end{remark}

\begin{proof}
    The proof is very similar to the one of~\cite[Section 4.1]{DG25}, and we present the details below for completeness.
    We first recall the notation for the partition function: for any $r \in \{ 0 , 1 \}^{\Lambda_L}$,
    \begin{equation} \label{def.Zlambdabetar}
        Z_{\Lambda_L^n, n, \beta_1,  \beta_2, r } := \int \exp \left( \beta_1 \sum_{\substack{x \sim_n y \\  x \in \Zd \cap \Lambda_L^n}} r_x \cos (\theta_x - \theta_y) + n \beta_2 \sum_{\substack{x \sim_n y \\ x , y \in \Lambda_L^n \setminus \Zd }} \cos (\theta_x - \theta_y) \right) \prod_{x \in \Lambda_L^n} d \theta_x.
    \end{equation}
   We first use Lemma~\ref{Lemma1.5} to reduce the problem to the following inequality: for any vertex $x \in \Lambda_L$ and any percolation configuration $r_1 \in \{0,1\}^{\Lambda_L \setminus \{ x \}}$,
    \begin{equation} \label{eqnunu'DC}
        \nu \left( r_x = 1 \, | \, r = r_1 ~ \mbox{in} ~ \Lambda_L \setminus \{ x \} \right) \leq p_0(\beta_1).
    \end{equation}
    To show the inequality~\eqref{eqnunu'DC}, let us fix $r_1 \in \{0,1\}^{\Lambda_L \setminus \{ x \}}$ and let $r_1^+, r_1^- \in \left\{ 0,1 \right\}^{\Lambda_L}$ be defined by $r_1^+ = r_1^- = r_1, $ in $\Lambda_L \setminus \{ x \}$ and $r_{1,x}^+ = 1$, $r_{1,x}^- = 0$. Using these definitions, we deduce that
    \begin{equation} \label{eq:1635}
        \nu \left( r_z = 1 \, | \, r = r_1 ~ \mbox{in} ~ \Lambda \setminus \{ x \} \right) = \frac{ Z_{\Lambda_L^n, \beta, r_1^+}}{ Z_{\Lambda_L^n, \beta, r_1^+} + Z_{\Lambda_L^n, \beta, r_1^-} }.
    \end{equation}
    Using the identity~\eqref{def.Zlambdabetar}, together with the inequality, for any angle configuration $\theta \in [0 , 2\pi)^{\Lambda_L^n}$,
    \begin{equation*}
        \exp \left( \beta_1 \sum_{\substack{y \sim_n x}} \cos (\theta_x - \theta_y) \right) \leq  \exp \left( 2d \beta_1 \right),
    \end{equation*}
    we deduce that
    \begin{align}
         Z_{\Lambda_L^n, n , \beta_1 , \beta_2 , r_1^+} & = \int \exp \left( \beta_1 \sum_{\substack{x \sim_n y \\  x \in \Zd \cap \Lambda_L^n}} r_{1,x}^+ \cos (\theta_x - \theta_y) + n \beta_2 \sum_{\substack{x \sim_n y \\ x , y \in \Lambda_L^n \setminus \Zd}} \cos (\theta_x - \theta_y) \right) \prod_{x \in \Lambda_L^n} d \theta_x \notag \\
         & \leq \exp \left( 2d \beta_1 \right)  \int \exp \left( \beta_1 \sum_{\substack{x \sim_n y \\  x \in \Zd \cap \Lambda_L^n}} r_{1,x}^- \cos (\theta_x - \theta_y) + n \beta_2 \sum_{\substack{x \sim_n y \\ x , y \in \Lambda_L^n \setminus \Zd}} \cos (\theta_x - \theta_y) \right) \prod_{x \in \Lambda_L^n} d \theta_x \notag \\
         & = \exp \left( 2d \beta \right) Z_{\Lambda_L^n, n , \beta_1 , \beta_2 , r_1^-}. \notag
    \end{align}
    Combining the previous inequality with~\eqref{eq:1635}, we deduce that
    \begin{equation*}
        \nu \left( r_z = 1 \, | \, r = r_1 ~ \mbox{in} ~ \Lambda \setminus \{ z \} \right) \leq \frac{1}{1 + \exp \left( - 2d \beta_1 \right)} = p_0(\beta_1).
    \end{equation*}
   The proof of the inequality~\eqref{eqnunu'DC} is complete.
\end{proof}

\subsection{Phase transition for the Villain model on a high-density Bernoulli percolation} \label{Section5}

This section is devoted to the proof of Theorem~\ref{proofKTsitegeneral} in the case of a high-density Bernoulli site percolation. We combine the stochastic domination of Proposition~\ref{prop:stochdomWellsVillain} with the Ginibre inequality (Theorem~\ref{theoremGinibreineq}), the Wells' inequality (Theorem~\ref{prop.Wells}) and Propositions~\ref{prop:prop3.6} and~\ref{prop.phasetransitionextended} to show the existence of a phase transition for the Villain model on a high-density i.i.d. Bernoulli site percolation. Specifically, we will prove the following statement.

\begin{proposition} \label{prop3.12}
For any dimension $d \geq 2$, there exist a probability $p_0 := p_0(d) < 1$ and an inverse temperature $\beta_0 := \beta_0(d) \in (0 , \infty)$ such that for any $p \geq p_0$ and any $\beta \geq \beta_0$:
\begin{itemize}
        \item In dimension $d = 2$, the function $x \mapsto \mathbb{E}_p^{\mathrm{site}} [  \left\langle \cos  ( \theta_0 - \theta_x ) \right\rangle_{\beta, r}^\mathrm{Vil}]$ decays polynomially fast in $|x|$.
        \item In dimension $d \geq  3$, the function $x \mapsto \mathbb{E}_p^{\mathrm{site}} [  \left\langle \cos  ( \theta_0 - \theta_x ) \right\rangle_{ \beta, r}^{\mathrm{Vil}}]$ is bounded away from $0$.
    \end{itemize}
\end{proposition}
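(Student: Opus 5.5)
The plan is to chain, at finite volume and finite $n$, the Wells' inequality, the stochastic domination of Proposition~\ref{prop:stochdomWellsVillain} and the Ginibre monotonicity, then pass to $n\to\infty$ via Proposition~\ref{prop:prop3.6}, then to $L\to\infty$, and finally invoke Proposition~\ref{prop.phasetransitionextended}.

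First I fix $\beta_1,\beta_2$ large, to be chosen later. I apply Theorem~\ref{prop.Wells} with $\Lambda=\Lambda_L^n$ and conductances $J_{xy}=\beta_1$ on edges touching $\Zd$ and $J_{xy}=n\beta_2$ otherwise. Since $J/4$ corresponds to the parameters $(\beta_1/4,\beta_2/4)$ in the heterogeneous model~\eqref{eq:model2XY}, this gives
\begin{equation*}
\left\langle \cos(\theta_0-\theta_x)\right\rangle^{\mathrm{XY}}_{L,n,\beta_1/4,\beta_2/4} \leq \E_{\nu_{L,n,\beta_1,\beta_2}}\left[\left\langle \cos(\theta_0-\theta_x)\right\rangle^{\mathrm{XY}}_{L,n,\beta_1,\beta_2,r}\right].
\end{equation*}
By Ginibre (Theorem~\ref{theoremGinibreineq}), the map $r\mapsto\langle\cos(\theta_0-\theta_x)\rangle_{L,n,\beta_1,\beta_2,r}$ is increasing and nonnegative. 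Proposition~\ref{prop:stochdomWellsVillain} therefore lets me replace $\nu_{L,n,\beta_1,\beta_2}$ by $\P^{\mathrm{site}}_{\Lambda_L,p_0(\beta_1)}$ on the right-hand side, with $p_0(\beta_1)$ independent of $n$ and $\beta_2$. By monotonicity in $r$ and in $p$, the same bound holds for $\P^{\mathrm{site}}_p$ for any $p\geq p_0(\beta_1)$.

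Next I let $n\to\infty$. The expectation is a finite sum over $r\in\{0,1\}^{\Lambda_L}$, so Proposition~\ref{prop:prop3.6} applies termwise and yields
\begin{equation*}
\langle\cos(\theta_0-\theta_x)\rangle^{\mathrm{XY/Vil}}_{L,\beta_1/4,\beta_2/4}\leq \E^{\mathrm{site}}_p\left[\langle\cos(\theta_0-\theta_x)\rangle^{\mathrm{XY/Vil}}_{L,\beta_1,\beta_2,r}\right].
\end{equation*}
One caveat: the XY/Villain model in Proposition~\ref{prop:prop3.6} carries the $r_x$ weights only on the edges adjacent to $\Zd$. This matches the site disorder in the $n$-extended XY model, so the two are consistent. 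I then apply~\eqref{ineq:beta1toinfty} to bound the integrand by $\langle\cos(\theta_0-\theta_x)\rangle^{\mathrm{Vil}}_{L,\beta_2,r}$. Here one must check that deleting the XY edges at a closed site $x$ in the XY/Villain model corresponds, after $\beta_1\to\infty$, to site-disorder in the Villain model, where all edges at $x$ get conductance $0$. This is true because the middle Villain edges become attached to a free spin and effectively disconnect. Letting $L\to\infty$ by monotone convergence (Corollary~\ref{corollary:ginibreforVillain}), I obtain
\begin{equation*}
\langle\cos(\theta_0-\theta_x)\rangle^{\mathrm{XY/Vil}}_{\beta_1/4,\beta_2/4}\leq \E^{\mathrm{site}}_p\left[\langle\cos(\theta_0-\theta_x)\rangle^{\mathrm{Vil}}_{\beta_2,r}\right].
\end{equation*}

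Finally I choose $\beta_1=\beta_2=4\max(\beta_{\mathrm{KT}}^{\mathrm{XY/Vil}},\beta_c^{\mathrm{XY/Vil}})$ and set $\beta_0:=\beta_2$ and $p_0(d):=p_0(\beta_1)<1$. Proposition~\ref{prop.phasetransitionextended} then gives a lower bound: polynomial, $c/|x|$ by Remark~\ref{remark2.24}, when $d=2$, and bounded away from $0$ when $d\geq3$. For $\beta\geq\beta_0$, monotonicity in $\beta$ (Corollary~\ref{corollary:ginibreforVillain}) transfers the lower bound. The upper bound in $d=2$, namely decay to zero at least polynomially, follows from~\eqref{eq:Ginibreanyperco} in its site version together with the McBryan–Spencer bound for the Villain model, so that ``decays polynomially fast'' holds in the two-sided sense. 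The main obstacle I expect is bookkeeping: matching the site-disorder conventions across the $n$-extended XY model, the XY/Villain model and the Villain model, and justifying the interchange of the limits $n\to\infty$, $\beta_1\to\infty$ and $L\to\infty$. Each step is monotone or over a finite sum, so the interchange should be harmless.
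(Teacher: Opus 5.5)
Your proposal is correct and follows the paper's proof essentially step for step. The shared chain is: Wells' inequality plus stochastic domination and Ginibre at finite $n$ and $L$; then $n\to\infty$ via Proposition~\ref{prop:prop3.6}; then the bound~\eqref{ineq:beta1toinfty} (the $\beta_1\to\infty$ comparison); then $L\to\infty$ by monotonicity; then Proposition~\ref{prop.phasetransitionextended} and monotonicity in $p$ and $\beta$. Your extra checks (that $r_x=0$ leaves the adjacent Villain edge dangling so the $\beta_1\to\infty$ limit is the site-disordered Villain model, and the McBryan--Spencer upper bound for two-sided polynomial decay) are details the paper leaves implicit or omits, and they do not change the argument.
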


\begin{proof}
    Let $\beta_{\mathrm{KT}}^{\mathrm{XY/Vil}}$ and $\beta_{c}^{\mathrm{XY/Vil}}$ be the two inverse temperatures provided by Proposition~\ref{prop.phasetransitionextended}. In dimension $d = 2$, choose $\beta_1 = \beta_2 = 4\beta_{\mathrm{KT}}^{\mathrm{XY/Vil}}$ and $p_0 := p_0(4\beta_{\mathrm{KT}}^{\mathrm{XY/Vil}}) \in (0 , 1),$ and in dimension $d \geq 3$, select $\beta_1 = \beta_2 = 4\beta_{c}^{\mathrm{XY/Vil}}$ and $p_0 := p_0(4\beta_{c}^{\mathrm{XY/Vil}}) \in (0 , 1).$
    \medskip
    
    Let us fix a vertex $x \in \Zd$, a (large) integer $L \in \N$ (such that $x \in \Lambda_L$) and an integer $n \in \N$. We first combine the stochastic domination of Proposition~\ref{prop:stochdomWellsVillain} with the Ginibre inequality (Theorem~\ref{theoremGinibreineq}) to deduce that
    \begin{equation*}
        \E_{\nu_{L , n , \beta_1 , \beta_2}} \left[ \left\langle \cos(\theta_0 - \theta_x) \right\rangle_{L, n, \beta_1, \beta_2, r}^{\mathrm{XY}} \right] \leq \E_{p_0}^{\mathrm{site}} \left[ \left\langle \cos(\theta_0 - \theta_x) \right\rangle_{L , n , \beta_1 , \beta_2, r}^{\mathrm{XY}} \right].
    \end{equation*}
    We next apply Wells' inequality (Proposition~\ref{prop.Wells}) to obtain the bound
    \begin{equation} \label{eq:17231003}
        \left\langle \cos(\theta_0 - \theta_x) \right\rangle_{L, n, \beta_1/4, \beta_2/4}^{\mathrm{XY}} \leq \mathbb{E}_{p_0}^{\mathrm{site}} \left[ \left\langle \cos(\theta_0 - \theta_x) \right\rangle_{L , n , \beta_1 , \beta_2, r}^{\mathrm{XY}} \right].
    \end{equation}
    We then take the limit $n \to \infty$ and apply Proposition~\ref{prop:prop3.6} (specifically, the second convergence in the statement of this proposition) to deduce using dominated convergence theorem that
    \begin{equation*}
        \lim_{n \to \infty} \mathbb{E}_{p_0}^{\mathrm{site}} \left[ \left\langle \cos(\theta_0 - \theta_x) \right\rangle_{L , n , \beta_1 , \beta_2, r}^{\mathrm{XY}} \right] = \mathbb{E}_{p_0}^{\mathrm{site}} \left[ \left\langle \cos \left( \theta_0 - \theta_x \right)  \right\rangle_{L, \beta_1 , \beta_2, r}^{\mathrm{XY/Vil}} \right]
    \end{equation*}
    and
    \begin{equation*}
        \lim_{n \to \infty} \left\langle \cos(\theta_0 - \theta_x) \right\rangle_{L, n, \beta_1/4, \beta_2/4}^{\mathrm{XY}} = \left\langle \cos \left( \theta_0 - \theta_x \right)  \right\rangle_{L, \beta_1/4 , \beta_2/4}^{\mathrm{XY/Vil}}.
    \end{equation*}
    We may then combine the two previous convergences with the inequality~\eqref{eq:17231003} to deduce that
    \begin{equation*}
        \left\langle \cos \left( \theta_0 - \theta_x \right)  \right\rangle_{L, \beta_1/4 , \beta_2/4}^{\mathrm{XY/Vil}} \leq \mathbb{E}_{p_0}^{\mathrm{site}} \left[ \left\langle \cos \left( \theta_0 - \theta_x \right)  \right\rangle_{L, \beta_1 , \beta_2, r}^{\mathrm{XY/Vil}} \right].
    \end{equation*}
    We next apply the inequality~\eqref{ineq:beta1toinfty} of Remark~\ref{rem:remark10} (i.e., we take the limit $\beta_1 \to \infty$ on the right-hand side) and obtain
    \begin{equation*}
        \left\langle \cos \left( \theta_0 - \theta_x \right)  \right\rangle_{L, \beta_1/4 , \beta_2/4}^{\mathrm{XY/Vil}} \leq \mathbb{E}_{p_0}^{\mathrm{site}} \left[ \left\langle \cos \left( \theta_0 - \theta_x \right)  \right\rangle_{L, \beta_2, r}^{\mathrm{Vil}} \right].
    \end{equation*}
    Taking the limit $L \to \infty$ on both sides (N.B. Due Corollary~\ref{corollary:ginibreforVillain} both the right and left-hand sides are increasing in $L$), we deduce that
    \begin{equation} \label{eq:17451003}
        \left\langle \cos \left( \theta_0 - \theta_x \right)  \right\rangle_{ \beta_1/4 , \beta_2/4}^{\mathrm{XY/Vil}} \leq \mathbb{E}_{p_0}^{\mathrm{site}} \left[ \left\langle \cos \left( \theta_0 - \theta_x \right)  \right\rangle_{ \beta_2, r}^{\mathrm{Vil}} \right].
    \end{equation}
    Using that the inverse temperatures $\beta_1, \beta_2$ 
%\margin{C: it seems at this point we can also make $\beta_1$ goes to infinity here right ? as the RHS does not depend on $\beta_1$ ? Does it simplify things a bit ? \textbf{NO !!  $p_0 =p_0(\beta_1) $ !!! }}
have been chosen so that the term on the left-hand side decays polynomially fast in two-dimensions and remains bounded away from $0$ in dimension $3$ and higher, we deduce that the same result holds for the right-hand side of~\eqref{eq:17451003}. 
    
    Finally, from Remark~\ref{rem:remark4} and the Ginibre inequality (Theorem~\ref{theoremGinibreineq}), we see that the right-hand side of~\eqref{eq:17451003} is increasing in the probability $p$ and in the inverse temperature $\beta_2$. We thus deduce that, for any probability $p \geq p_0$ and any inverse temperature $\beta \geq \beta_2$, the function $x \mapsto \mathbb{E}_{p}^{\mathrm{site}} \left[ \left\langle \cos \left( \theta_0 - \theta_x \right)  \right\rangle_{ \beta, r}^{\mathrm{Vil}} \right]$ decays polynomially fast in dimension $d = 2$ and remains bounded away from $0$ in dimension $d \geq 3$.
\end{proof}

\subsection{Phase transitions for the Villain model on a supercritical percolation cluster} \label{section5.2}

\begin{figure}
\centering
\includegraphics[scale=0.4]{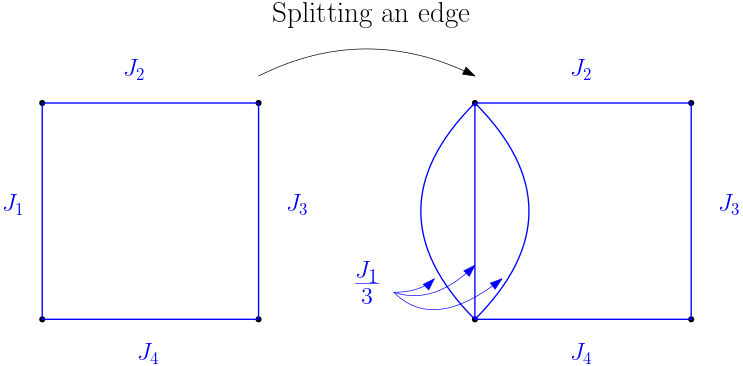}
\caption{Splitting an edge in the Villain model: an edge with conductance $J$ is replaced by two parallel edges with conductances $J/2$. This operation reduces the value of the two-point functions.} \label{fig:splitVillain}
\end{figure}

The proof of the existence of the BKT phase transition in dimension $2$ and of the order/disorder phase transition in $d\geq 3$ on a supercritical Bernoulli (site or edge) percolation is then deduced from the existence of these phase transitions on a high-density Bernoulli site percolation (i.e., the result of Section~\ref{Section5}) by using a renormalisation argument. The proof is essentially identical to the one written in~\cite[Section 5.2]{DG25} in the case of the XY model (up to one difference presented in Lemma~\ref{cor:merging} below) and is similar to the one written for the integer-valued height functions in Section~\ref{sec:renoarmargumentforheight}.  We thus decided not to rewrite the entire argument but to state and prove the property which differs in the cases of the XY and Villain models.

\begin{lemma}[Splitting an edge of the Villain model] \label{cor:merging}
Fix a subset $\Lambda \subseteq \Zd$, a collection of non-negative conductances $J := (J_{xy})_{\{x , y \} \in E \left( \Lambda \right)}$ and consider the Villain model introduced in Definition~\ref{generalXY} (with $n=1$). Then perform the following operation: split one edge $\{x_0 , x_1 \} \in E \left( \Lambda \right)$ into $k$ parallel edges and assign to each of these edges the conductance $J_{x_0 x_1}/k$ (see Figure~\ref{fig:splitVillain}). Denoting by $\tilde J$ the new collection of conductances, we have the inequality, for any $x , y \in \Lambda$,
	\begin{equation*}
		\left\langle \cos \left( \theta_x - \theta_y \right) \right\rangle^{\mathrm{Vil}}_{\Lambda , J} \geq \left\langle \cos \left( \theta_x - \theta_y \right) \right\rangle^{\mathrm{Vil}}_{\Lambda , \tilde J}.
	\end{equation*}
\end{lemma}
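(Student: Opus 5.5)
Our plan is to compare the single Villain weight $v_J$ with the product of $k$ copies of $v_{J/k}$ through their Fourier expansions, and then to use the Ginibre-type positivity~\eqref{ineq:Ginibre1} to show that this comparison decreases the two-point function. Recall from~\eqref{def:introdefVillain} that
\begin{equation*}
v_J(\theta) = \sum_{m \in \Z} e^{-\frac{m^2}{2J}} e^{i m \theta}.
\end{equation*}
Hence
\begin{equation*}
\left(v_{J/k}(\theta)\right)^k = \sum_{m \in \Z} a_m e^{i m \theta},
\qquad
a_m = \sum_{m_1 + \cdots + m_k = m} \exp\left(-\frac{k}{2J}\sum_{j} m_j^2\right).
\end{equation*}
By Cauchy--Schwarz, $\sum_j m_j^2 \geq m^2/k$, so every term in $a_m$ is at most $e^{-m^2/(2J)}$. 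The key structural claim is that the ratio $a_m / a_0$ is at most $e^{-m^2/(2J)}$, which is the Fourier coefficient of the normalized $v_J$. To check it, I would write $a_m = e^{-m^2/(2J)} \sum_{y} e^{-\frac{k}{2J}|y|^2}$, where the sum runs over $y = (m_1 - m/k, \ldots)$ in a shifted lattice coset. A Poisson summation or theta-function argument (Gaussian sums over a coset are maximized at the centred coset) then gives $a_m \leq e^{-m^2/(2J)} a_0$.

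Next we write $(v_{J/k})^k/a_0 = \sum_m b_m e^{im\theta}$ with $b_m \le c_m := e^{-m^2/(2J)}$, and $b_m, c_m \ge 0$, $b_m=b_{-m}$, $b_0=c_0=1$. This gives two pair interactions $\sum_m b_m \cos(m\theta)$ and $\sum_m c_m\cos(m\theta)$ with nonnegative coefficients that are ordered coefficientwise. Interpolating, $c^t = b + t(c-b)$, it suffices to show $\frac{d}{dt}\langle \cos(\theta_x-\theta_y)\rangle \geq 0$. The derivative equals a sum over $m$ of $(c_m-b_m)$ times
\begin{equation*}
\frac{\langle \cos(\theta_x-\theta_y)\cos(m(\theta_{x_0}-\theta_{x_1}))\rangle - \langle \cos(\theta_x-\theta_y)\rangle\langle\cos(m(\theta_{x_0}-\theta_{x_1}))\rangle}{\text{edge weight}},
\end{equation*}
where the edge weight is the interpolated weight at $t$. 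This is not literally a covariance because the edge weight is divided out. So we would rather argue as in the standard Ginibre proof: remove the edge $\{x_0,x_1\}$, expand all other Villain weights in Fourier series with nonnegative coefficients, and express both sides as integrals of products of cosines with nonnegative coefficients.

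The difference of numerators times denominators then becomes a nonnegative combination of integrals of the form~\eqref{ineq:Ginibre1} on the duplicated system $(\theta,\theta')$. Concretely, $Z_c \langle \cos\rangle_c Z_b - Z_b\langle\cos\rangle_b Z_c$ expands to a sum with coefficients $(c_m b_{m'} - b_m c_{m'})$, symmetrized over $m\leftrightarrow m'$. These terms pair as $\cos m u\cos m'u' - \cos m'u \cos m u'$, where $u$ and $u'$ are the edge increments in the two copies. This is where positivity is delicate, since $c_m b_{m'} - b_m c_{m'}$ has no sign in general. To handle it, I would instead use the derivative form directly: the derivative in $t$ of the unnormalized quantities is a difference of products $(\cos(\theta_x-\theta_y)-\cos(\theta'_x-\theta'_y))(\cos m u - \cos m u')$ integrated against a positive Fourier-coefficient weight. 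By~\eqref{ineq:Ginibre1} this is nonnegative.

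The main obstacle is therefore twofold. The first part is the coefficient bound $a_m\le e^{-m^2/(2J)}a_0$; an alternative route would exhibit $(v_{J/k})^k/a_0$ as $v_J$ multiplied by a function with nonnegative Fourier coefficients, reducing the matter to adding a ferromagnetic interaction. The second part is making sure the Ginibre derivative argument applies to general nonnegative cosine series. As a fallback, we would approximate everything by the metric-graph XY limit of Proposition~\ref{prop:prop3.6}, in which the split edge corresponds to $k$ parallel chains, and invoke Theorem~\ref{theoremGinibreineq} there.
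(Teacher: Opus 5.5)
Your first step is correct: writing $m_j = m/k + y_j$ with $\sum_j y_j = 0$ and applying Poisson summation over the coset gives $a_m \le e^{-m^2/(2J)} a_0$. So the normalised split weight is coefficientwise below $v_J$.

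\textbf{The gap is the second step.} Coefficientwise domination of an edge \emph{weight} does not imply domination of the two-point function, even when all weights are Villain weights. Take two vertices joined by two parallel edges with weights $\sum_j c_j e^{iju}$ and $v_{10}$, so $d_j = e^{-j^2/20}$, and start from $c_j = e^{-j^2/20}$. Then $\langle \cos u\rangle = \sum_j c_j d_{j-1} / \sum_j c_j d_j \approx e^{-1/40} \approx 0.975$. Raise $c_{\pm 1}$ by a small $\eta>0$; the weight stays positive with nonnegative Fourier coefficients. This adds $\eta(d_0+d_2)$ to the numerator and $2\eta d_1$ to the denominator. Since $(d_0+d_2)/(2d_1) = (1+e^{-1/5})/(2e^{-1/20}) \approx 0.956 < 0.975$, the two-point function strictly \emph{decreases}.

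This is why your derivative computation cannot close. When a parameter enters the weight linearly, the duplicated-variable expression is $\tfrac12\iint (\cos A-\cos A')\,[\cos(mu)\,w_t(u')-w_t(u)\cos(mu')]\,W(\theta)W(\theta')$. The product $(\cos A-\cos A')(\cos mu-\cos mu')$ appears only when the parameter sits in the exponent.

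Even then, \eqref{ineq:Ginibre1} is an integral against Haar measure. You would still need $W(\theta)W(\theta')$ to expand as a nonnegative combination of products of $\cos(\cdot)\pm\cos(\cdot')$. This holds for $e^{J\cos u}e^{J\cos u'} = e^{J(\cos u+\cos u')}$, but not for a weight that merely has nonnegative Fourier coefficients.

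Your alternative factorisation $(v_{J/k})^k \propto v_J\cdot g$ with $g$ ferromagnetic points the wrong way: it would give the reverse inequality. The factorisation in the right direction also fails. For $k=2$ and large $J$, the ratio $v_J/(v_{J/2})^2$ is essentially constant except for a dip to half its value in a window of width $O(1/J)$ around $\theta=\pi$. Its nonzero even Fourier coefficients are therefore negative.

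Your fallback is the right route, and it is the paper's proof. As written, though, it omits the idea that makes Theorem~\ref{theoremGinibreineq} applicable, namely which two XY systems are being compared. The steps are:
\begin{enumerate}
\item Replace the Villain edge $\{x_0,x_1\}$ by a chain of $n$ XY edges of conductance $nJ$, using Proposition~\ref{prop:prop3.6}.
\item Since XY weights are exponential in the cosine, each chain edge equals $k$ parallel edges of conductance $nJ/k$.
\item This is exactly $k$ parallel chains of conductance $nJ/k$ whose corresponding interior vertices are glued by infinite couplings.
\item Lowering these gluing couplings from $\infty$ to $0$ decreases $\langle\cos(\theta_x-\theta_y)\rangle$ by Ginibre.
\item As $n\to\infty$, the $k$ now-separate chains converge to $k$ parallel edges with weights $v_{J/k}$.
\end{enumerate}
The argument uses precisely the merging property of parallel XY edges that Villain weights lack. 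That is why a direct Fourier comparison of $v_J$ with $(v_{J/k})^k$ cannot capture it.
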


\begin{remark}
	In the case of the XY model, this operation does not affect the distribution of the spin system, there is thus equality of the two-point functions.
\end{remark}

\begin{figure}
\centering
\includegraphics[scale=0.4]{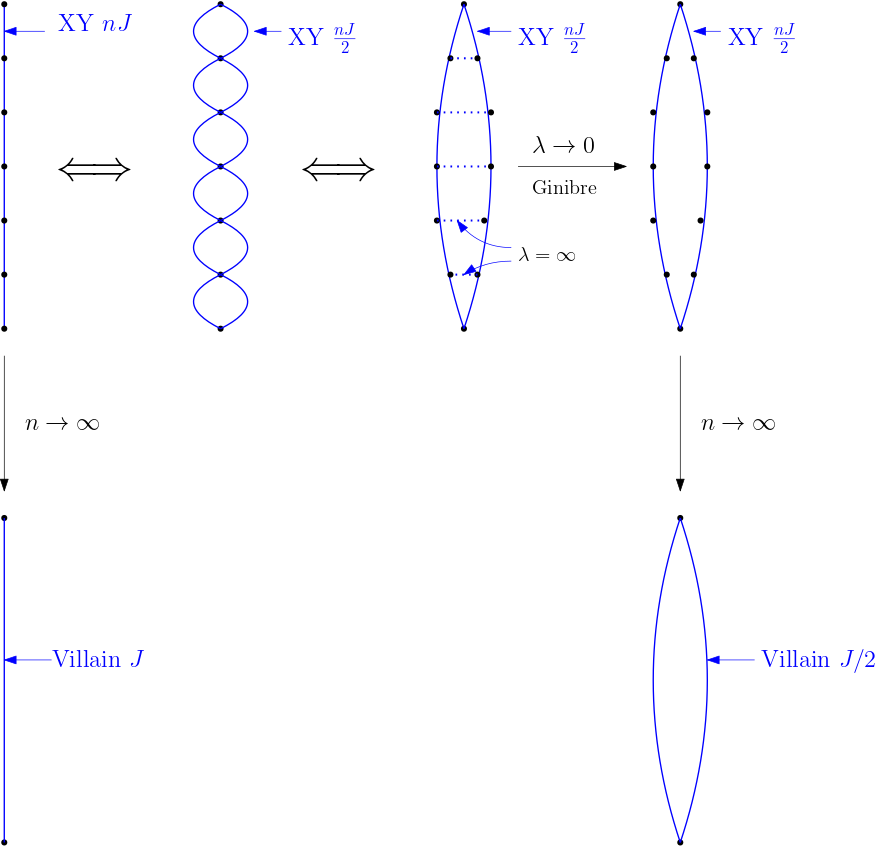}
\caption{An illustration of the proof of Lemma~\ref{cor:merging}. The Villain model is approximated by a chain of XY models. Each edge of the chain is split into $k=2$ edges with reduced conductances. Two parallel edges are then generated by reducing the value of the coupling contants from infinity to $0$. By the Ginibre inequality, this operation reduces the two-point functions.} \label{fig:proofsplittingVillain}
\end{figure}

\begin{proof}
	The proof of this lemma is obtained by approximating the Villain model by XY models and by using the Ginibre inequality. We proceed in 5 steps and refer to Figure~\ref{fig:proofsplittingVillain} for guidance:
	\begin{itemize}
		\item We first approximate the Villain model by a chain of XY models (as in Proposition~\ref{prop:prop3.6}). Specifically, we fix an integer $n \in \N^*$, we add $(n-1)$ vertices on the edge $\{x_0 , x_1\} \in E(\Lambda)$ and assign to each of the $n$ new edges the conductance $n J_{x_0x_1}$. We then consider the XY model on this graph with these conductances. Denoting by $\left\langle \cdot \right\rangle^{\mathrm{XY}}_{\Lambda, n , J}$ the expectation with respect to this measure, we have the convergence
		\begin{equation*}
			\lim_{n \to \infty} \left\langle \cos(\theta_x - \theta_y) \right\rangle^{\mathrm{XY}}_{\Lambda, n , J} = \left\langle \cos(\theta_x - \theta_y) \right\rangle^{\mathrm{Vil}}_{\Lambda, J}.
		\end{equation*}
		We refer to~\cite[Section 4]{NW} and~\cite[Appendix A.2]{AHPS} for a proof of this result.
		\item We then split the $n$ edges connecting the vertices $x_0$ and $x_1$ into $k$ parallel edges and assign to each of these edges the conductance $nJ_{x_0 x_1}/k$. This operation does not change the distribution of the corresponding XY model.
		\item We then separate the edge into $k$ parallel edges (see Figure~\ref{fig:proofsplittingVillain}) and connect the parallel vertices by a coupling constant equal to infinity. This operation does not change the distribution of the corresponding XY model (see Remark~\ref{rem:remarkJ=infty}).
		\item We then reduce the value of the coupling constants from infinity to $0$. Denoting by $\left\langle \cdot \right\rangle^{\mathrm{XY}}_{\Lambda, n , \tilde J}$ the expectation with respect to the new XY model, we have by the Ginibre inequality
		\begin{equation*}
			 \left\langle \cos(\theta_x - \theta_y) \right\rangle^{\mathrm{XY}}_{\Lambda, n , J} \geq \left\langle \cos(\theta_x - \theta_y) \right\rangle^{\mathrm{XY}}_{\Lambda, n , \tilde J}.
		\end{equation*}
		\item We finally note that, by~\cite{NW, AHPS}, we have the convergence
		\begin{equation*}
			\lim_{n \to \infty} \left\langle \cos(\theta_x - \theta_y) \right\rangle^{\mathrm{XY}}_{\Lambda, n , J}= \left\langle \cos(\theta_x - \theta_y) \right\rangle^{\mathrm{Vil}}_{\Lambda, \tilde J}.
		\end{equation*}
	\end{itemize}
A combination of the three previous displays completes the proof of Lemma~\ref{cor:merging}.
\end{proof}

\subsection{Phase transitions for spin systems with annealed Villain interaction} \label{section:annealedVillain}

This section is devoted to the proof of Theorem~\ref{Th:annealedVillain} following the outline presented in Section~\ref{sec:1.4techniquesofproof}.

\begin{proof}[Proof of Theorem~\ref{Th:annealedVillain}]
Let us fix a probability measure $\kappa$ on $(0 , \infty)$, an inverse temperature $\beta \in ( 0 , \infty)$ and an integer $L \in \N$. To simplify the argument, we only write the proof in two dimensions. We first use the definition of the annealed Villain potential to rewrite the two-point function as follows
%\margin{C: I need to check above: on the Annealed Villain, we are acting with $\beta$ as $v_{\beta*J}$ ?}
\begin{align} \label{eq:firstcompannealedVillain}
    \lefteqn{\left\langle \cos \left( \theta_0 - \theta_x  \right)   \right\rangle_{L , \kappa , \beta}^{\mathrm{Ann-Vil}}} \qquad & \\ & = \frac{1}{Z^{\mathrm{Ann-Vil}}_{L, \kappa, \beta}} \int_{[0 , 2\pi)^{\Lambda_L}} \cos \left( \theta_0 - \theta_x  \right)  \prod_{\substack{x \sim y \\ x , y \in \Lambda_L}} F_{\kappa, \beta}(\theta_x - \theta_y) \prod_{x \in \Lambda_L} d\theta_x \notag\\
    & = \frac{1}{Z^{\mathrm{Ann-Vil}}_{L, \kappa, \beta}} \int_{[0 , 2\pi)^{\Lambda_L}} \cos \left( \theta_0 - \theta_x  \right)  \prod_{\substack{x \sim y \\ x , y \in \Lambda_L}} \left( \int_0^\infty v_{\beta J}(\theta_x - \theta_y) \kappa(d J) \right) \prod_{x \in \Lambda_L} d\theta_x  \notag  \\
    & =  \frac{1}{Z^{\mathrm{Ann-Vil}}_{L, \kappa, \beta}}  \int_{(0 , \infty)^{E\left( \Lambda_L \right)}} \int_{[0 , 2\pi)^{\Lambda_L}} \cos \left( \theta_0 - \theta_x  \right)  \prod_{\substack{x \sim y \\ x , y \in \Lambda_L}} v_{\beta J_{xy}}(\theta_x - \theta_y)   \prod_{x \in \Lambda_L} d\theta_x \prod_{\substack{x \sim y \\ x , y \in \Lambda_L}} \kappa(d J_{xy}). \notag
\end{align}
Using Definition~\ref{generalXY}, we may rewrite the right-hand side of the previous display as follows
\begin{equation*}
    \left\langle \cos \left( \theta_0 - \theta_x  \right)   \right\rangle_{L , \kappa , \beta}^{\mathrm{Ann-Vil}} =  \frac{1}{Z^{\mathrm{Ann-Vil}}_{\Lambda, \kappa, \beta}}  \int_{(0 , \infty)^{E\left( \Lambda_L \right)}} Z^{\mathrm{Vil}}_{L , \beta J} \left\langle \cos \left( \theta_0 - \theta_x  \right)  \right\rangle_{L , \beta J}^{\mathrm{Vil}} \prod_{\substack{x \sim y \\ x , y \in \Lambda}} \kappa(d J_{xy}).
\end{equation*}
By the Ginibre correlation inequality (Theorem~\ref{theoremGinibreineq} and Proposition~\ref{prop:prop3.6}), we know that the function $J \mapsto \left\langle \cos \left( \theta_0 - \theta_x  \right)  \right\rangle_{L , \beta J}^{\mathrm{Vil}}$ is increasing. We next show that the function $J \mapsto Z^{\mathrm{Vil}}_{L , \beta J}$ is also increasing. To show this monotonicity property, we use the definition of the Villain interaction potential~\eqref{def:introdefVillain} to write
\begin{align*}
    Z^{\mathrm{Vil}}_{L , \beta J} & = \int_{[0 , 2\pi)^{\Lambda_L}} \prod_{\substack{x \sim y \\ x , y \in \Lambda}} v_{\beta J_{xy}}(\theta_x - \theta_y)   \prod_{x \in \Lambda} d\theta_x \\
    & = \int_{[0 , 2\pi)^{\Lambda_L}} \prod_{\substack{x \sim y \\ x , y \in \Lambda}} \left( \sum_{k \in \Z} e^{- \frac{k^2}{2\beta J_{xy}}} e^{i k (\theta_x - \theta_y)} \right) \prod_{x \in \Lambda} d\theta_x \\
    & = \sum_{\mathbf{k} \in \Z^{E\left(\Lambda_L\right)}} \underset{\mathrm{is \, increasing \, in \, } J}{\underbrace{\left( \prod_{\substack{x \sim y \\ x , y \in \Lambda}} e^{- \frac{\mathbf{k}_{xy}^2}{2 \beta J_{xy}}} \right)}} \underset{\mathrm{is \, equal \, to}\, 0 \mathrm{\, or \,} (2\pi)^{\left| \Lambda_L \right|}}{\underbrace{\left( \int_{[0 , 2\pi)^{\Lambda_L}}  \prod_{\substack{x \sim y \\ x , y \in \Lambda}}  e^{i (\theta_x- \theta_y) \mathbf{k}_{xy}} \prod_{x \in \Lambda} d\theta_x \right)}}
\end{align*}
The partition function $Z^{\mathrm{Vil}}_{L , \beta J}$ can be written as a sum of functions which are increasing in $J$, it is thus increasing in $J$. 

In the rest of this proof, we will denote by $\mathbf{E}$ the expectation with respect to the product measure $\prod_{x \sim y} \kappa (d J_{xy})$. Using the identity 
$$ Z^{\mathrm{Ann-Vil}}_{\Lambda, \kappa, \beta} =  \mathbf{E} \left[ Z^{\mathrm{Vil}}_{L , \beta J} \right],$$
which follows from the same computation as in~\eqref{eq:firstcompannealedVillain}, and applying the FKG inequality (Proposition~\ref{prop.FKGinequality}), we obtain
\begin{equation} \label{ineq:stochdomannVillain}
    \left\langle \cos \left( \theta_0 - \theta_x  \right)   \right\rangle_{L , \kappa , \beta}^{\mathrm{Ann-Vil}} \geq \mathbf{E} \left[ \left\langle \cos \left( \theta_0 - \theta_x  \right)  \right\rangle_{L , \beta J}^{\mathrm{Vil}}  \right].
\end{equation}
We let $p :=  3/4$ and $\beta_{\mathrm{KT}}^{\mathrm{Vil}}(3/4) \geq 0$ be the inverse temperature provided by Theorem~\ref{proofKTsitegeneral}. We next select $\beta_0 \in (0, \infty)$ sufficiently large such that
    \begin{equation*}
        \kappa \left( \left( \frac{\beta_{\mathrm{KT}}^{\mathrm{Vil}}(3/4)}{\beta_0} , \infty \right) \right) \geq \frac{3}{4}.
    \end{equation*}
    Given a collection of conductances $J = (J_{xy})_{ \{ x , y\} \in E \left( \Lambda_L \right)}$, we define the edge percolation configuration $\omega^J \in \{ 0,1\}^{E\left(\Lambda_L \right)}$ according to the identity, for any $\{ x, y \} \in E \left( \Lambda_L \right)$,
    \begin{equation*}
        \omega_{xy}^J := \indc_{\{ \beta_0 J_{xy} \geq \beta_{\mathrm{KT}}^{\mathrm{Vil}}(3/4)\}}.
    \end{equation*}
    We next observe that, by definition of the percolation configuration $\omega^J$ and the Ginibre inequality, we have the inequality, for any collection of conductances $J = (J_{xy})_{ \{ x , y\} \in E \left( \Lambda_L \right)}$,
    \begin{equation*}
        \left\langle \cos \left( \theta_0 - \theta_x \right) \right\rangle_{L ,\beta_0 J}^{\mathrm{Vil}}  \geq \left\langle \cos \left( \theta_0 - \theta_x \right) \right\rangle_{L ,\beta_{\mathrm{KT}}^{\mathrm{Vil}}(3/4), \omega^J}^{\mathrm{Vil}}.
    \end{equation*}
    If we assume that the conductances $J =  (J_{xy})_{ \{ x , y\} \in E \left( \Lambda_L \right)}$ are i.i.d. of law $\kappa$, then the percolation process $\omega^J \in \{0 , 1 \}^{\Lambda_L}$ stochastically dominates an i.i.d. edge percolation of probability 3/4. These observations imply that
    \begin{equation*}
    \mathbf{E}\left[ \left\langle \cos \left( \theta_0 - \theta_x  \right)  \right\rangle_{L , \beta_0 J}^{\mathrm{Vil}} \right] \geq  \mathbf{E}\left[ \left\langle \cos \left( \theta_0 - \theta_x \right) \right\rangle_{L ,\beta_{\mathrm{KT}}^{\mathrm{Vil}}(3/4), \omega^J}^{\mathrm{Vil}} \right] \geq \E_{3/4} \left[ \left\langle \cos \left( \theta_0 - \theta_x \right) \right\rangle_{L ,\beta_{\mathrm{KT}}^{\mathrm{Vil}}(3/4), \omega}^{\mathrm{Vil}} \right].
    \end{equation*}
    Combining this inequality with~\eqref{ineq:stochdomannVillain}, we eventually obtain, for any $\beta \geq \beta_0$,
    \begin{equation*}
    \left\langle \cos \left( \theta_0 - \theta_x  \right)   \right\rangle_{L , \kappa , \beta}^{\mathrm{Ann-Vil}} \geq \mathbf{E}\left[ \left\langle \cos \left( \theta_0 - \theta_x  \right)  \right\rangle_{L , \beta J}^{\mathrm{Vil}} \right] \geq \mathbf{E}\left[ \left\langle \cos \left( \theta_0 - \theta_x  \right)  \right\rangle_{L , \beta_0 J}^{\mathrm{Vil}} \right] \geq \E_{3/4} \left[ \left\langle \cos \left( \theta_0 - \theta_x \right) \right\rangle_{L ,\beta_{\mathrm{KT}}^{\mathrm{Vil}}(3/4), \omega}^{\mathrm{Vil}} \right].
    \end{equation*}
    We finally take the limit $L \to \infty$ on both sides and apply Theorem~\ref{proofKTsitegeneral} to deduce that the function $x \mapsto \left\langle \cos \left( \theta_0 - \theta_x  \right)   \right\rangle_{\kappa , \beta}^{\mathrm{Ann-Vil}}$ decays polynomially fast in two dimensions. This completes the proof of Theorem~\ref{Th:annealedVillain}.
\end{proof}

%\purple{C: Should we add a Remark here that we easily get $XY$ power law decay out of Villain Power law decay here ? (just using that XY is an annealed Villain ? (as long as we have percolation stability for BKT with Villain on the same graph ? This is quite nice). \textbf{No! Again. We need both annealed $\Z$-XY and annealed $\Z$ GFF}}

\section{Integer-valued height functions on a percolation configuration} \label{sec:sec4height}

This section is devoted to the integer-valued height functions and contains the proofs of Theorem~\ref{Thm:delocheightsupercrit} and Theorem~\ref{Thm:delocheightanealedgauss}.

We will proceed in a similar fashion as in Section~\ref{sectiondef.XY} for the Villain model. We first collect, in Section~\ref{sec:sec4.1}, some preliminary definitions and properties which are used in the proofs of Theorems~\ref{Thm:delocheightsupercrit} and~\ref{Thm:delocheightanealedgauss} (e.g., the monotonicity of the variance of the height in the conductances, the Wells' inequality, the existence of a phase transition for height functions, etc.). Section~\ref{App.A} contains the proof of the Wells' inequality for height functions, which as we mentioned earlier  is one of the main novelties of this article.
%\margin{C: I may add this somewhere in the introduction. \textbf{done.}}
 It is obtained by combining the original argument of Wells~\cite{Wellsthesis} with an identity between the XY height function and a version of the XY model originally due to~\cite{van2023duality}. Once equipped with the Wells' inequality, the rest of the argument follows the same outline as the one for the Villain model of Section~\ref{sectiondef.XY}. We first show the existence of a delocalised regime for the height functions on a high-density (site) Bernoulli percolation in Sections~\ref{sec:sec4.3},~\ref{sec:sec4.4} and~\ref{sec:sec4.4}. We then upgrade the result from a high-density (site) percolation to any supercritical (edge) percolation by using a renormalisation argument whose details can be found in Section~\ref{sec:renoarmargumentforheight}. Finally, Section~\ref{section4.8} contains the proof of Theorem~\ref{Thm:delocheightanealedgauss}.

\subsection{Preliminaries on integer-valued height functions} \label{sec:sec4.1}

\subsubsection{The integer-valued Gaussian free field and the XY height function} \label{sec:section4.1}

In this section, we introduce general versions of the two models of integer-valued height functions for which we will prove that the roughening phase transition subsists in the presence of a random disorder. We first start with a definition for the integer-valued Gaussian free field (allowing the model to have general conductances).

\begin{definition}[The integer-valued Gaussian free field with general conductances] \label{def:def4.1}
%Let $\Lambda \subseteq \Z^2$ be a finite subset of $\Z^2$ and let 
%\begin{equation*}
%    \Omega_\Lambda^\Z := \left\{ \varphi : \Lambda \to \Z \, : \, \varphi = 0 ~~\mbox{on}~~ \partial \Lambda \right\}.
%\end{equation*}
Given a finite subset $\Lambda \subseteq \Z^2$, a collection of conductances $\left( J_{xy} \right)_{xy \in E(\Lambda)} \in [0, \infty)^{E(\Z^2)}$, we define the integer-valued Gaussian free field on the set $\Lambda$ with conductances $J = (J_{xy})$ to be the probability distribution on the set $\Omega_\Lambda^\Z$ given by the identity
(recall the definition of the set $\Omega_\Lambda^\Z$ given above in~\eqref{e.ZGFF})
%\margin{C: I added a point to that definition as I was confused}
\begin{equation*} 
    \mu_{\Lambda , J}^{\Z-\mathrm{GFF}} (\left\{ \varphi \right\}) := \frac{1}{Z_{\Lambda,J}} \exp \left( - \sum_{x \sim y}  \frac{\left( \varphi(x) - \varphi(y) \right)^2}{2J_{xy}} \right).
\end{equation*}
We denote by $\mathrm{Var}_{\Lambda , J}^{\Z-\mathrm{GFF}}$ the variance with respect to the measure $\mu_{\Lambda , J}^{\Z-\mathrm{GFF}}$ and write $\mathrm{Var}_{L , J}^{\Z-\mathrm{GFF}}$ when $\Lambda = \Lambda_L$.
\end{definition}

The second model we introduce is the XY height function. 
%This model has already been studied in~\cite{frohlich1981kosterlitz, van2023elementary, van2023duality, AHPS} and its definition is motivated by the existence of a duality relation with the XY model  (see Remark~\ref{rem:remark13} below).
We mention that, as it will be an important ingredient on the proof of Theorem~\ref{Thm:delocheightsupercrit}, we will define the model on the multigraph $\Z^2_{n-\mathrm{mult}}$ (see Definition~\ref{def.multigraph} and Figure~\ref{fig:figure1.3}).

\begin{definition}[The XY height function with general conductances] \label{def:XYheightfunction}
%For $J \in [0 , \infty),$ we define the modified Bessel function according to the identity, for any $k \in \Z$,
%\begin{equation} \label{def:Besselfunction}
%    I_k(J) := \sum_{j = 0}^\infty \frac{1}{j! (j + |k|)!} \left( \frac{J}{2} \right)^{2j + |k|}.
%\end{equation}
%\purple{Equivalently, it may also be defined as the $k^{th}$ Fourier coefficient of the periodic function $\theta \mapsto e^{J \cos(\theta)}$ which explains the link with the $XY$ model. }
Given a finite subset $\Lambda \subseteq \Z^2$, an integer $n \in \N^*$, and a collection of conductances $\left( J_{xy,k} \right)_{xy \in E(\Z^2), 1 \leq k \leq n} \in [0, \infty)^{E(\Z^2) \times \{1 , \ldots, n \}}$, we define the XY height function to be the probability distribution on the set $\Omega_\Lambda^{\Z}$ given by the identity
\begin{equation*}
    \mu_{\Lambda , J}^{\Z-\mathrm{XY}} (\left\{ \varphi \right\}) := \frac{1}{Z_{\Lambda,J}^{\Z-\mathrm{XY}}} \prod_{x \sim y} \prod_{1 \leq k \leq n}  I_{\left(\varphi(x) - \varphi(y)\right)} \left( J_{xy,k} \right) .
\end{equation*}
We denote by $\mathrm{Var}_{\Lambda , J}^{\Z-\mathrm{XY}}$ the variance with respect to the measure $\mu_{\Lambda , J}^{\Z-\mathrm{XY}}$ and write $\mathrm{Var}_{L , J}^{\Z-\mathrm{XY}}$ when $\Lambda = \Lambda_L$.
\end{definition}

\begin{remark} \label{rem:remark13}
Let us make a few remarks about the previous definition:
\begin{itemize}
\item There is no gain in generality in defining the integer-valued Gaussian free field on the multigraph $\Z^d_{n-\mathrm{mult}}$ as, in that case, two parallel edges can be merged together (while suitably adjusting the values of the conductances) without changing the law of the height function.
\item 
The use of the modified Bessel function to define a height function is motivated by the identity: for any $\beta \in [0 , \infty)$ and $\theta \in [0 , 2\pi)$,
\begin{equation} \label{eq:defmodifiedBesselfunction}
    e^{\beta \cos \theta} = \sum_{k \in \Z} I_k(\beta) e^{i k \theta}.
\end{equation}
%As a matter of illustration, the identity~\eqref{eq:defmodifiedBesselfunction} implies the following identity of partition functions (see Figure~\ref{fig:dualityXY})
%\margin{C: I will modify this one. $\Lambda^*$ needs to appear.}
%\begin{equation} \label{eq:identitypartfunctoriginal}
%    (2 \pi)^{\left| \Lambda \right|} Z_{\Lambda,J}^{\Z-\mathrm{XY}} = Z_{\Lambda^*,J^*}^{\mathrm{XY}}.
%\end{equation}
%where $\Lambda^*$ is the dual set and $J^*$ is the dual collection of conductances.
There is a deep connection between the XY model and the XY height function, recently investigated by the second author and Lis~\cite{van2023duality}. This connection allows for the transfer of some analytical tools developed for the XY model to study the XY height function. Notably, the Ginibre inequality (Theorem~\ref{theoremGinibreineq}) can be employed to establish the monotonicity of the variance of the height in the conductances (see Proposition~\ref{prop:monotonicity}). Similarly, the Wells inequality also carries over in this context (see Section~\ref{App.A}).

\item Two properties of the modified Bessel function will be interesting for us. First we have that $I_k(0) = \indc_{\{ k = 0 \}}$; this means that if $J_{xy,k} = 0$ for one of the integers $k \in \{1 , \ldots, n\}$, then the two heights $\varphi(x)$ and $\varphi(y)$ are equal almost-surely (with $\varphi$ sampled according to $\mu_{\Lambda , J}^{\Z-\mathrm{XY}}$). Second, we have the asymptotic expansion as $J \to \infty$ (which is due to~\cite{kirchhoff1854ueber} and appears in~\cite[Example 2]{van2023duality})
%\margin{C: should we state also an asymptotics as $k\to \infty$ ? (Which if I am not mistaken is SOS like ?) I find it even more relevant but maybe I'm wrong. }
\begin{equation} \label{eq:expansionBessel}
    I_k(J) = \frac{e^{J}}{\sqrt{2 \pi J}} \left( 1 - \frac{4k^2-1}{8 J} + O\left( \frac{1}{J^2}\right) \right).
\end{equation}
\end{itemize}
\end{remark}

\begin{remark}
Interestingly, such an asymptotics can be used to give a different proof of the fact that cable XY model leads to Villain model. Such a proof would be more in the spirit of the standard CLT proof rather than a local CLT theorem, which is the approach used in \cite{AHPS}.
\end{remark}

%\margin{C: I added this remark after our discussion.
%D: Yes, this was in my paper with Marcin that is cited here..}

In the rest of Section~\ref{sec:sec4height}, 
three models will play an important role, 
and for this reason we introduce specific pieces of notation for them 
(These models can be compared to the ones introduced in Section~\ref{sec:sectionwith3models}).

\begin{definition}
We fix an integer $L \in \N$, an integer $n \in \N^*$ and three inverse temperatures $\beta, \beta_1, \beta_2 > 0$ and define three models of height functions (see Figures~\ref{fig:XYheight} and~\ref{fig:XYGFFheight}):
\begin{itemize}
    \item The first one is the XY height function on the multigraph $\Z^d_{n-\mathrm{mult}}$ with inverse temperature $n \beta$ on each edge, i.e.,
    \begin{equation*}
        \mu_{\Lambda_L, n, \beta}^{\Z-\mathrm{XY}}(\left\{ \varphi \right\}) := \frac{1}{Z_{\Lambda_L, n, \beta}^{\Z-\mathrm{XY}}}  \prod_{x \sim y}    \left(I_{\left(\varphi(x) - \varphi(y)\right)} \left(n \beta \right)\right)^n.
    \end{equation*}
    \item The second one is an XY height function multigraph $\Z^2_{n- \mathrm{mult}}$ with $n \geq 2$ and with two inverse temperatures $\beta_1 , n \beta_2  \geq 0$ (see Figure~\ref{fig:XYheight}):
\begin{equation*}
        \mu_{\Lambda_L, n, \beta_1 , \beta_2}^{\Z-\mathrm{XY}}(\left\{ \varphi \right\}) := \frac{1}{Z_{\Lambda_L, n, \beta_1,  \beta_2 }^{\Z-\mathrm{XY}}}\prod_{x \sim y} \left( I_{\left(\varphi(x) - \varphi(y)\right)} \left( \beta_1 \right) \right)^2   \left( I_{\left(\varphi(x) - \varphi(y)\right)} \left(  n \beta_2 \right) \right)^{n-2}.
\end{equation*}
    \item The third one is a height function on the multigraph $\Z^2_{3-\mathrm{mult}}$ where the interaction between two vertices is a product of the integer-valued Gaussian distribution and the modified Bessel function (see Figure~\ref{fig:XYGFFheight})
    \begin{equation} \label{eq:defXYGFF}
        \mu_{\Lambda_L, \beta_1 , \beta_2}^{\Z-\mathrm{XY}/\mathrm{GFF}}(\left\{ \varphi \right\}) := \frac{1}{Z_{\Lambda_L, \beta_1 , \beta_2 }^{\Z-\mathrm{XY}/\mathrm{GFF}}} \prod_{x \sim y} \left( I_{\left(\varphi(x) - \varphi(y)\right)} \left( \beta_1 \right) \right)^2 \exp \left( - \frac{\left( \varphi(x) - \varphi(y) \right)^2}{2\beta_2} \right).
    \end{equation}
    We refer to this model as the $\Z$-XY/GFF height function.
\end{itemize}
We denote by $\mathrm{Var}_{L, n,  \beta}^{\Z-\mathrm{XY}}$, $\mathrm{Var}_{L, n, \beta_1 , \beta_2}^{\Z-\mathrm{XY}}$ and $\mathrm{Var}_{L, \beta_1 , \beta_2}^{\Z-\mathrm{XY}/\mathrm{GFF}}$ the variances with respect to these three measures.
\end{definition}

\begin{figure}
\centering
\includegraphics[scale=0.55]{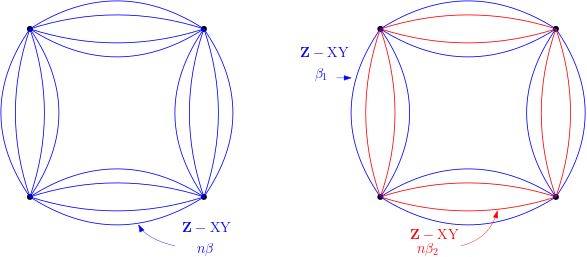}
\caption{The first two models of XY height functions introduced in Section~\ref{sec:section4.1}.} \label{fig:XYheight}
\end{figure}

\begin{figure}
\centering
\includegraphics[scale=0.5]{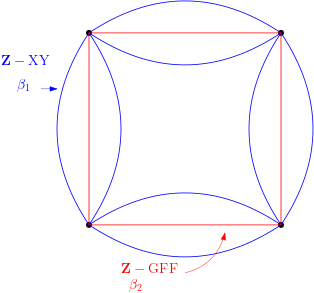}
\caption{The $\Z$-XY/GFF height function on the multigraph $\Z^2_{3-\mathrm{mult}}$.} \label{fig:XYGFFheight}
\end{figure}

\subsubsection{Monotonicity in the conductances}

In this section, we collect an important property of the integer-valued Gaussian free field and the XY height function: the monotonicity of the variance of the height in the conductances. We refer to~\cite{regev2017inequality, van2023duality, AHPS, frohlich1978correlation} for a proof of this result (and more general statements) and mention that this statement can be compared to the Ginibre inequality (Theorem~\ref{theoremGinibreineq}).

\begin{theorem}[Monotonicity in the conductances~\cite{regev2017inequality, van2023duality, AHPS, frohlich1978correlation}] \label{prop:monotonicity}
%\margin{C: maybe Frohlich Park here ?}
For any finite set $\Lambda \subseteq \Z^2$, any collection of conductances $J , \tilde J \in [0, \infty)^{E(\Lambda)}$ and any $x \in \Z^2$, we have
\begin{equation} \label{ineq:condIVGFF}
    \forall \{ y , y' \} \in E(\Lambda), ~ J_{yy'} \leq \tilde J_{yy'} ~ \implies ~ \mathrm{Var}_{\Lambda , J}^{\Z-\mathrm{GFF}} \left[ \varphi(x) \right] \leq \mathrm{Var}_{\Lambda , \tilde J}^{\Z-\mathrm{GFF}} \left[ \varphi(x) \right].
\end{equation}
Similarly, for any $n \in \N^*$ and any collections conductances $J , \tilde J \in [0, \infty)^{E(\Lambda) \times \{1 , \ldots , n\}}$, we have
\begin{equation} \label{ineq:condXYheight}
    \forall \{ x , y \} \in E(\Lambda), ~\forall k \in \{1 , \ldots, n\}, ~ J_{xy,k} \leq \tilde J_{xy,k} ~ \implies ~ \mathrm{Var}_{\Lambda , J}^{\Z-\mathrm{XY}} \left[ \varphi(x) \right] \leq \mathrm{Var}_{\Lambda , \tilde J}^{\Z-\mathrm{XY}} \left[ \varphi(x) \right].
\end{equation}
\end{theorem}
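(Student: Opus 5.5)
The plan is to reduce both statements to a comparison of characteristic functions, and then to prove that comparison on the dual spin side with the Ginibre inequality, in the form \eqref{ineq:Ginibre1}. Since $\Lambda$ is finite, each $\varphi(x)$ has Gaussian or Bessel tails and finite moments of all orders. Hence, for any conductances,
\begin{equation*}
\E\left[\cos(t\varphi(x))\right] = 1 - \frac{t^2}{2}\,\mathrm{Var}\left[\varphi(x)\right] + O(t^4) \qquad \mbox{as } t \to 0,
\end{equation*}
where we used the symmetry $\varphi \mapsto -\varphi$ of both models. It is therefore enough to show that $\E_J[\cos(t\varphi(x))] \geq \E_{\tilde J}[\cos(t\varphi(x))]$ for all small $t$ whenever $J \leq \tilde J$. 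Equivalently, $J \mapsto \E_J[\cos(t\varphi(x))]$ should be decreasing in every conductance.

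\textbf{Step 1: duality with a twist.} I first treat the XY height function. I write $\E_J[e^{it\varphi(x)}]$ as a ratio of partition functions using \eqref{eq:defmodifiedBesselfunction}. Each edge weight $I_{\varphi(x)-\varphi(y)}(J_{xy,k})$ is a Fourier coefficient of $\theta \mapsto e^{J_{xy,k}\cos\theta}$. The gradients of height functions with Dirichlet boundary condition are exactly the integer flows on the dual graph that vanish around every face. Summing over them, as in \cite{van2023duality}, turns the height measure into an XY model on the dual graph $\Lambda^*$ (with wired boundary), with conductances $J_{xy,k}$ on the dual edges $e^*$. The insertion $e^{it\varphi(x)}$ becomes a shift by $t$ of the angle differences along a fixed dual path $\gamma$ from the face at $x$ to the boundary. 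This gives
\begin{equation*}
\E_J\left[\cos(t\varphi(x))\right] = \frac{Z_t(J)}{Z_0(J)}, \qquad Z_t(J) := \int \exp\Big(\sum_{e^*} J_{e^*}\cos\big(\nabla\psi_{e^*} + t\indc_{e^*\in\gamma}\big)\Big)\, d\psi .
\end{equation*}

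\textbf{Step 2: differentiate in one conductance.} Fix a dual edge $e^*$. Then
\begin{equation*}
\partial_{J_{e^*}}\log\frac{Z_t}{Z_0} = \left\langle \cos(\nabla\psi_{e^*} + t_{e^*}) \right\rangle_t - \left\langle \cos\nabla\psi_{e^*}\right\rangle_0 ,
\end{equation*}
and I must show this is nonpositive. I duplicate the variables, taking $\psi$ under the twisted measure and $\psi'$ under the untwisted one. I then expand every Boltzmann weight in $\cos$ and $\sin$ of integer combinations of the angles, with $\cos t$ and $\sin t$ as coefficients. The difference becomes a linear combination of integrals of products of factors $\cos(m\cdot\psi) \pm \cos(m\cdot\psi')$ and $\sin$-analogues. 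This is the classical route to the Ginibre inequality, here applied to a twisted versus an untwisted system. The sign should then follow from \eqref{ineq:Ginibre1}, after the standard rotation $\psi \pm \psi'$ that converts sine terms into cosine terms.

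\textbf{Main obstacle.} I expect Step 2 to be the main obstacle: the signs produced by the twist $t$ must be organised so that \eqref{ineq:Ginibre1} applies. This requires $|t| < \pi/2$ or smaller, and a careful expansion. I would try first to shift the twist onto a single edge by a change of variables, so that only one factor carries $t$. If the direct expansion is messy, the fallback is to quote the corresponding statement from \cite{van2023duality}, namely that the twisted-to-untwisted ratio is monotone. That argument is exactly a Ginibre argument on the dual XY model.

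\textbf{Step 3: the integer-valued Gaussian free field.} I use $e^{-k^2/(2J)} = \frac{1}{2\pi}\int_0^{2\pi} v_J(\theta)e^{-ik\theta}\,d\theta$. The same duality then gives a twisted Villain model on $\Lambda^*$. I approximate it by XY models on the extended dual lattice through Proposition~\ref{prop:prop3.6}, where the twist is carried by a single subdivided edge. Step 2 applies at every finite $n$, so monotonicity in the XY conductances $nJ_{e}$ passes to the limit. This yields \eqref{ineq:condIVGFF}, while \eqref{ineq:condXYheight} follows directly from Steps 1–2.
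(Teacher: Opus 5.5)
The reduction to the characteristic function is sound, and so is the twisted duality of Step~1. One small slip there: zero boundary heights on $\Lambda$ are dual to spins with \emph{free}, not wired, boundary conditions. This is immaterial, since Step~2 is needed on an arbitrary finite graph anyway.

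\textbf{The gap is Step~2.} It carries the whole content of the theorem, and you leave it open.
\begin{itemize}
\item The mechanism you sketch does not reduce to \eqref{ineq:Ginibre1}. Once $\cos(\theta+t)$ is split as $\cos t\cos\theta-\sin t\sin\theta$, the two replicas are no longer symmetric: the sine terms and the reduced couplings $J_e\cos t$ appear only in the twisted one. So the factors are not of the form $\cos(m\cdot\psi)\pm\cos(m\cdot\psi')$.
\item The rotation $(\alpha,\beta)=((\psi+\psi')/2,(\psi-\psi')/2)$ then turns $\sin(m\cdot\psi)$ into $\sin(m\cdot\alpha)\cos(m\cdot\beta)+\cos(m\cdot\alpha)\sin(m\cdot\beta)$, which has no sign. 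The restriction $|t|<\pi/2$ is a symptom of this, not a cure.
\item The twist cannot be gauged onto a single edge unless $x$ is adjacent to $\Z^2\setminus\Lambda$. It has circulation $t$ around the dual face of $x$, and only the route of the defect line is a gauge choice.
\item What this paper takes from \cite{van2023duality} is the ghost-vertex identity of Proposition~\ref{prop:propA3}, not a monotonicity statement for twisted partition functions. So your fallback citation is not available as such.
\end{itemize}

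\textbf{How to close it.} Let the twist ride along the rotation instead of expanding it. Put $a_e=\nabla\psi_e+t_e$ and $b_e=\nabla\psi'_e$. Then $(a_e+b_e)/2=\nabla\alpha_e+t_e/2$ and $(a_e-b_e)/2=\nabla\beta_e+t_e/2$, so both rotated variables carry the \emph{same} half-twist. Set $c_e(\alpha):=\cos(\nabla\alpha_e+t_e/2)$ and $s(\alpha):=\sin(\nabla\alpha_f+t_f/2)$. These are $2\pi$-periodic in every coordinate, so the change of variables on the torus is justified exactly as in Ginibre's proof. Using $\cos a+\cos b=2\cos\frac{a+b}{2}\cos\frac{a-b}{2}$ and $\cos a-\cos b=-2\sin\frac{a+b}{2}\sin\frac{a-b}{2}$, and expanding the exponential, one gets
\begin{equation*}
\iint (\cos a_f-\cos b_f)\, e^{\sum_e J_e(\cos a_e+\cos b_e)}\, d\psi\, d\psi' = -2\sum_{\mathbf{n}}\prod_e\frac{(2J_e)^{n_e}}{n_e!}\Bigl(\int s(\alpha)\prod_e c_e(\alpha)^{n_e}\, d\alpha\Bigr)^2 \leq 0 .
\end{equation*}
This holds for every $t\in\R$ and every $J\geq 0$, and it is exactly $\langle\cos(\nabla\psi_f+t_f)\rangle_t\leq\langle\cos\nabla\psi_f\rangle_0$. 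With it, your Steps~1--3 go through. You even get the stronger statement that $J\mapsto\E_J[\cos(t\varphi(x))]$ is decreasing for every $t$, not just the variance.

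\textbf{Comparison with the cited route.} The route behind the citation avoids twists altogether. By Proposition~\ref{prop:propA3}, the variance $\mathrm{Var}_{L,J,\lambda}^{\Z-\mathrm{XY}}[\varphi(0)]$ equals $\lambda\langle\cos\theta_{0g}\rangle+\frac{\lambda^2}{2}\langle\cos 2\theta_{0g}\rangle-\frac{\lambda^2}{2}$ in an untwisted generalised XY model. Plain Ginibre monotonicity, with the nonnegative coefficients $\lambda$ and $\lambda^2/2$, then gives \eqref{ineq:condXYheight} before letting $\lambda\to\infty$. Then \eqref{ineq:condIVGFF} follows via the approximation of Proposition~\ref{prop:approxIVGFFbyXY}, or from \cite{regev2017inequality}. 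That route needs nothing beyond the standard Ginibre inequality, but it only controls the second moment.
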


\begin{remark}
Let us make three remarks about this result
\begin{itemize}
    \item This property holds when the integer-valued Gaussian free field and the XY height function are defined on more general graphs/multigraphs.
    \item If $\Lambda , \Lambda'$ are two sets such that $\Lambda' \subseteq \Lambda \subseteq \Z^2_n$ and $J$ is a collection of conductances defined on the edges of $\Lambda$, then we have the inequality (N.B. On the left-hand side, we denote by $J'$ the restrictions of $J$ to the edges of $\Lambda'$)
    \begin{equation*}
        \mathrm{Var}_{\Lambda' , J'}^{\Z-\mathrm{XY}} \left[ \varphi(0) \right] \leq \mathrm{Var}_{\Lambda , J}^{\Z-\mathrm{XY}} \left[ \varphi(0) \right] ~~\mbox{and}~~\mathrm{Var}_{\Lambda' , J'}^{\Z-\mathrm{GFF}} \left[ \varphi(0) \right] \leq \mathrm{Var}_{\Lambda , J}^{\Z-\mathrm{GFF}} \left[ \varphi(0) \right].
    \end{equation*}
    \item A similar result would hold for the $\Z$-XY/GFF height function with general conductances (see Proposition~\ref{prop:approxIVGFFbyXY} below).
\end{itemize}
\end{remark}

\subsubsection{Wells' inequality for the XY height functions} \label{sec:defXYGFFheightfunction}

We next state the Wells' inequality for height functions. This inequality is one of the main ingredients in the proof of Theorem~\ref{Thm:delocheightsupercrit}. Its proof can be found in Section~\ref{App.A} and relies on the duality between spin systems and height functions investigated by the second author and Lis in~\cite{van2023duality}. Before stating the result, we introduce a definition for the XY height function and the integer-valued Gaussian free field on a site percolation configuration.

\begin{definition}[XY height functions and integer-valued Gaussian free field on a site percolation configuration] 
Let us fix an integer $L \in \N$, a percolation configuration $r \in \{0,1\}^{\Lambda_L}$, and define four models of height functions:
\begin{itemize}
    \item Given an inverse temperature $\beta \geq 0$, we define 
    \begin{equation} \label{def:XYgeneralpercsite}
    \mu_{\Lambda_L , \beta,r}^{\Z-\mathrm{XY}} (\left\{ \varphi \right\}) := \frac{1}{Z_{\Lambda_L,\beta,r}^{\Z-\mathrm{XY}} } \prod_{x \sim y} I_{\left(\varphi(x) - \varphi(y)\right)} \left( r_x r_y \beta \right) .
\end{equation}
We denote by $\mathrm{Var}_{L , \beta , r}^{\Z-\mathrm{XY}}$ the variance with respect to the measure $\mu_{\Lambda_L , \beta, r}^{\Z-\mathrm{XY}}$.
    \item Given an inverse temperature $\beta \geq 0$, we define
\begin{equation*}
    \mu_{\Lambda_L , \beta,r}^{\Z-\mathrm{GFF}} (\left\{ \varphi \right\}) := \frac{1}{Z_{\Lambda_L,\beta,r}^{\Z-\mathrm{GFF}} } \exp \left( - \frac{1}{2\beta} \sum_{x \sim y} \frac{\left( \varphi(x) - \varphi(y) \right)^2}{r_x r_y} \right).
\end{equation*}
We denote by $\mathrm{Var}_{L , \beta , r}^{\Z-\mathrm{GFF}}$ the variance with respect to the measure $\mu_{\Lambda_L , \beta, r}^{\Z-\mathrm{GFF}}$.
    \item Given an integer $n \geq 2$ and two inverse temperatures $\beta_1 , \beta_2 \geq 0$, we define
\begin{equation} \label{def:IVGFFmultigeneral}
    \mu_{\Lambda_L , n, \beta_1, \beta_2, r}^{\Z-\mathrm{XY}} (\left\{ \varphi \right\}) := \frac{1}{Z_{\Lambda_L , n, \beta_1, \beta_2, r}^{\Z-\mathrm{XY}} }  \prod_{x \sim y}  \left[ I_{\left(\varphi(x) - \varphi(y)\right)} \left( r_x \beta_1 \right) I_{\left(\varphi(x) - \varphi(y)\right)} \left( r_y \beta_1 \right) \left(I_{\left(\varphi(x) - \varphi(y)\right)} \left(  n \beta_2 \right)\right)^{n-2} \right]. 
\end{equation}
We denote by $\mathrm{Var}_{L , n, \beta_1, \beta_2, r}^{\Z-\mathrm{XY}}$ the variance with respect to the measure $\mu_{\Lambda_L , n, \beta_1, \beta_2, r}^{\Z-\mathrm{XY}}$.
\item Given two inverse temperatures $\beta_1 , \beta_2 \geq 0$, we define
   \begin{equation} \label{eq:defXYGFFperco}
        \mu_{\Lambda_L, \beta_1 , \beta_2 ,r}^{\Z-\mathrm{XY}/\mathrm{GFF}}(\left\{ \varphi \right\}) := \frac{1}{Z_{\Lambda_L, \beta_1 , \beta_2 }^{\Z-\mathrm{XY}/\mathrm{GFF}}} \prod_{x \sim y}  I_{\left(\varphi(x) - \varphi(y)\right)} \left(r_x  \beta_1 \right)  I_{\left(\varphi(x) - \varphi(y)\right)} \left(r_y  \beta_1 \right)  \exp \left( - \frac{\left( \varphi(x) - \varphi(y) \right)^2}{2\beta_2} \right).
    \end{equation}
    We denote by $\mathrm{Var}_{L , \beta_1 , \beta_2 , r}^{\Z-\mathrm{XY/GFF}}$ the variance with respect to the measure $\mu_{\Lambda_L , \beta_1 , \beta_2, r}^{\Z-\mathrm{XY/GFF}}$.
\end{itemize} 
\end{definition}

\begin{remark}
Let us make two remarks about the previous definition:
\begin{itemize}
\item In all the cases, the effect of the percolation configuration on the measure is the same: if $r_x = 0$ for some $x \in \Lambda_L$, then the height $\varphi(x)$ is constrained to be equal to the height of all its neighbours $\{ \varphi(y) \, : \,  y \sim x \}$.
%\margin{C: this is not quite what happens for "Heisenberg" disorder.}
\item  We note that, in the case of~\eqref{def:IVGFFmultigeneral} and~\eqref{eq:defXYGFFperco}, there are many equivalent definitions for the same measure (e.g., the two terms $r_x$ and $r_y$ could be added in any of the arguments of the three Bessel functions on the right-hand side of~\eqref{def:IVGFFmultigeneral}). The choice of this specific convention (i.e., $r_x$ in the argument of the first Bessel function and $r_y$ in the argument of the second) is motivated by the statement of the Wells' inequality. Specifically, the value of the partition function in~\eqref{def:IVGFFmultigeneral}
\begin{equation*}
Z_{\Lambda_L , n, \beta_1, \beta_2, r}^{\Z-\mathrm{XY}} = \sum_{\varphi \in \Omega_L^\Z}\prod_{x \sim y}  \left[ I_{\left(\varphi(x) - \varphi(y)\right)} \left( r_x \beta_1 \right) I_{\left(\varphi(x) - \varphi(y)\right)} \left( r_y \beta_1 \right) \left(I_{\left(\varphi(x) - \varphi(y)\right)} \left(  n \beta_2 \right)\right)^n \right]
\end{equation*} 
is affected by the convention, and this quantity is needed to define the Wells' disorder and to state the Wells' inequality below.
\end{itemize}
\end{remark}

We next introduce the Wells'  disorder associated with the two measures~\eqref{def:XYgeneralpercsite} and~\eqref{def:IVGFFmultigeneral}. This definition and result can be compared to the ones for the XY model stated in Definition~\ref{def.finite-volHamiltonian}.

\begin{definition}[Wells' disorder]
Given an integer $L \in \N$, an integer $n \geq 2$, we define the two Wells' disorders: 
\begin{itemize}
    \item Given an inverse temperature $\beta \geq 0$, we define the Wells' disorder associated with the model~\eqref{def:XYgeneralpercsite}: for any $r \in \{0,1\}^{\Lambda_L},$
    \begin{equation*}
        \nu_{L , \beta} \left( \left\{ r \right\} \right) := \frac{Z_{\Lambda_L,\beta, r}^{\Z-\mathrm{XY}}}{\bar{Z}_{\Lambda_L,\beta}^{\Z-\mathrm{XY}}} ~~\mbox{with}~~ \bar{Z}_{\Lambda_L,\beta}^{\Z-\mathrm{XY}} := \sum_{r \in \{0 , 1 \}^{\Lambda_L} } Z_{\Lambda_L,\beta, r}^{\Z-\mathrm{XY}}.
    \end{equation*}
    \item Given two inverse temperatures $\beta_1 , \beta_2 \geq 0$,  we define the Wells' disorder associated with the model~\eqref{def:IVGFFmultigeneral}: for any $r \in \{0,1\}^{\Lambda_L},$
    \begin{equation*}
        \nu_{L , n, \beta_1 , \beta_2} \left( \left\{ r \right\} \right) := \frac{Z_{\Lambda_L , n, \beta_1 , \beta_2, r}^{\Z-\mathrm{XY}}}{\bar{Z}_{\Lambda_L , n, \beta_1 , \beta_2}^{\Z-\mathrm{XY}}} ~~\mbox{with}~~ \bar{Z}_{\Lambda_L , n, \beta_1 , \beta_2}^{\Z-\mathrm{XY}} := \sum_{r \in \{0 , 1 \}^{\Lambda_L} } Z_{\Lambda_L , n, \beta_1 , \beta_2, r}^{\Z-\mathrm{XY}}.
    \end{equation*}
\end{itemize}
\end{definition}

We are now able to state the main result of this section: the Wells' inequality for the XY height function.

\begin{proposition}[Wells' inequality for the XY height functions] \label{prop:Wellsforheight}
For any $L \in \N$, any $x \in \Lambda_L$ and any $\beta \geq 0$, one has the inequality
\begin{equation*}
     \E_{\nu_{L , \beta}} \left[ \var_{L, \beta , r}^{\Z-\mathrm{XY}}\left[ \varphi(x) \right] \right] \geq \var_{L, \beta/4}^{\Z-\mathrm{XY}}\left[ \varphi(x) \right].
\end{equation*}
Similarly, for any $L \in \N$, any $n \geq 2$ and any $\beta_1 , \beta_2 \geq 0$,
\begin{equation*}
    \E_{\nu_{L , n , \beta_1, \beta_2}} \left[ \var_{L, n , \beta_1, \beta_2 , r}^{\Z-\mathrm{XY}} \left[ \varphi(x) \right] \right] \geq \var_{L , n , \beta_1/2, \beta_2}^{\Z-\mathrm{XY}}\left[ \varphi(x) \right].
\end{equation*}
\end{proposition}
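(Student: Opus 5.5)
We plan to deduce the proposition from the Wells inequality for an XY-type spin system, via the duality identity~\eqref{intro:dualityXYheight} of~\cite{van2023duality}. The variance of a height is expressed as a limit of expectations in a modified XY model. Wells' argument is then run at the level of this modified model, before passing to the limit.

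\textbf{Step 1 (duality representation).} For fixed $\lambda\in(0,\infty)$ and a site configuration $r$, we introduce the variant of the XY model on the dual graph of $\Lambda_L(n)$. Each dual edge crossing a primal edge $\{x,y\}$ carries the corresponding conductance ($r_x r_y\beta$, or $r_x\beta_1$, $r_y\beta_1$, $n\beta_2$ in the multigraph case). The model also has an extra $\lambda$-weighted interaction encoding the Dirichlet boundary and the insertion of a test charge. Expanding each factor $e^{J\cos(\theta_u-\theta_v)}$ in Fourier series via~\eqref{eq:defmodifiedBesselfunction} identifies the resulting integer "currents" with gradients of height functions. This gives two identities. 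First, $Z^{\Z-\mathrm{XY}}_{\Lambda_L,\beta,r}$ equals, up to a $\lambda$-dependent but $r$-independent factor (in the limit), the partition function $Z_{\lambda,r}$ of the spin model. Second, $\var^{\Z-\mathrm{XY}}_{L,\beta,r}[\varphi(x)]=\lim_{\lambda\to\infty}\langle F_\lambda\rangle_{\lambda,r}$, where $F_\lambda$ is a finite linear combination, with nonnegative coefficients, of terms $\cos(m\cdot\theta)$ (essentially $\lambda^2(1-\cos(\theta_a-\theta_b))$-type second-derivative observables). Because of the partition-function identity, the Wells' disorder $\nu_{L,\beta}$ coincides with the spin Wells' disorder $\nu^{\lambda}(\{r\})\propto Z_{\lambda,r}$, at least asymptotically in $\lambda$.

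\textbf{Step 2 (Wells for the spin model).} We reproduce the proof of Theorem~\ref{prop.Wells} for the $\lambda$-model. We duplicate the system ($\theta,\theta'$), write the disordered weight summed over $r$ as a product over sites of $(1+\exp(\cdots))$, and use $r_x$ to switch on the conductances attached to $x$. Expanding these products and using the Ginibre inequality~\eqref{ineq:Ginibre1} term by term then compares $\sum_r Z_{\lambda,r}\langle F_\lambda\rangle_{\lambda,r}$ with the non-disordered model at reduced coupling. In the second statement only the two Bessel factors $I(r_x\beta_1)I(r_y\beta_1)$ depend on $r$, and each is attached to a single site. We therefore expect to lose only a factor $2$, and only on $\beta_1$, giving $\beta_1/2$ with $\beta_2$ unchanged. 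In the first statement the conductance $r_xr_y\beta$ involves two sites, which leads to the factor $4$, as in Theorem~\ref{prop.Wells}.

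\textbf{Step 3 (limit).} We send $\lambda\to\infty$ on both sides, and use Step 1 to rewrite the limits as the variances in the statement. The point where we expect trouble is that Ginibre's inequality controls $\langle\cos(m\cdot\theta)\rangle$, while $F_\lambda$ enters with a sign and is renormalised by $\lambda$. Wells' inequality must therefore be applied to the "$1-\cos$" part with the correct orientation. We will try to write $\langle F_\lambda\rangle$ as $c_\lambda$ minus a nonnegative combination of cosine correlations, where $c_\lambda$ is independent of $r$ and of the temperature. The required inequality then reverses into one that Ginibre (in its Wells form) does give. Justifying the interchange of the $\lambda$-limit with the finite sum over $r$ is routine, since $\Lambda_L$ is finite.
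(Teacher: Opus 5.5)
Your architecture (a $\lambda$-regularised dual XY model, Wells' inequality proved there by duplication and Ginibre, then $\lambda\to\infty$) is the paper's. However, the decisive step has the wrong sign, and as planned it would prove the reverse inequality.

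Suppress $L,n,\beta_1,\beta_2$ from the notation. The duality holds for every $\lambda>0$ with $Z_{\lambda,r}=\int e^{\lambda\cos\theta_{0g}}(\cdots)\,d\theta$. Hence $\partial_\lambda Z_{\lambda,r}/Z_{\lambda,r}=\langle\cos\theta_{0g}\rangle_{\lambda,r}$ and $\partial_\lambda^2 Z_{\lambda,r}/Z_{\lambda,r}=\langle\cos^2\theta_{0g}\rangle_{\lambda,r}$. On the height side, Bessel's equation $k^2I_k=\lambda^2I_k''+\lambda I_k'-\lambda^2 I_k$ turns $\sum_\varphi \varphi(0)^2 I_{\varphi(0)}(\lambda)(\cdots)$ into $\lambda^2\partial_\lambda^2 Z_{\lambda,r}+\lambda\partial_\lambda Z_{\lambda,r}-\lambda^2 Z_{\lambda,r}$. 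Therefore
\[
\var_{\lambda,r}[\varphi(0)]=\Bigl\langle \lambda\cos\theta_{0g}+\tfrac{\lambda^2}{2}\cos(2\theta_{0g})-\tfrac{\lambda^2}{2}\Bigr\rangle_{\lambda,r}.
\]
This is a \emph{nonnegative} combination of $\cos(m\theta_{0g})$, $m\in\{1,2\}$, plus a constant that depends neither on $r$ nor on the couplings. Wells' inequality for $m=1$ and $m=2$ then gives the required direction at once.

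Your Step~3 instead proposes writing $\langle F_\lambda\rangle$ as $c_\lambda$ \emph{minus} a nonnegative combination of cosine correlations. With that form, $\E_\nu[\langle F_\lambda\rangle_r]\ge\langle F_\lambda\rangle_{\mathrm{red}}$ is equivalent to $\E_\nu[\langle\cos(m\cdot\theta)\rangle_r]\le\langle\cos(m\cdot\theta)\rangle_{\mathrm{red}}$. That is the opposite of what Wells' inequality provides, so the ``reversal'' you anticipate does not occur. Such a representation is in fact impossible: combined with Ginibre, it would make the variance decreasing in the conductances, contradicting Theorem~\ref{prop:monotonicity}. Your Step~1 is internally inconsistent on exactly this point (``nonnegative coefficients'' versus ``$\lambda^2(1-\cos(\theta_a-\theta_b))$''). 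The explicit identification of the observable above is what is missing.

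Two smaller remarks. First, you do not need to control the $\lambda$ and $\lambda^2$ prefactors. The inequality is proved for the height variances at fixed $\lambda$, and $\lambda\to\infty$ is then taken on the height side. This uses $Z_{\lambda,r}/I_0(\lambda)\to Z_r$ (hence $\nu_\lambda\to\nu$) and convergence of the $\lambda$-weighted variances, over the finitely many configurations $r$.

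Second, the observable involves the single angle $\theta_{0g}$, not a two-point function of dual spins. The paper's dual model is the divergence-free $\mathbb{S}^1$-valued model on the oriented edges of $\Lambda_L(n)$ plus a ghost joined to $\partial\Lambda_L$ and to $0$. Ginibre's inequality \eqref{ineq:Ginibre1} applies after parametrising its Haar measure by the angles off a spanning tree, which makes each $\theta_{\vec e}$ an integer combination of independent uniform angles. If you prefer dual-vertex variables, $\theta_{0g}$ becomes a twist variable on the dual edges crossing a path from $0$ to the boundary, and you must check the same integer-linearity there.
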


\begin{remark} \label{rem:remark17}
It would be possible to generalise this inequality by considering more general underlying graphs (or mutligraphs), more general conductances and more general observables than the height $\varphi(0)$. Specifically, versions of this inequality should hold on any finite graph (or multigraph), 
 for any collection of (nonnegative) conductances and for any linear combination of the heights. To ease the presentation, we decided to only state and prove the result in the cases which are required for the proof of Theorem~\ref{Thm:delocheightsupercrit}.
\end{remark}

\subsubsection{The integer-valued Gaussian free field as a limit of XY height functions}

%\margin{C: I think I learned this fact from Piet, maybe we should make a comment about this. (Even if I agree this is pretty clear). \textbf{Piet, maybe it was rather the fact a large product of Villain in parallel gives a XY model !!}. Should we add a sentence mentioning this, maybe we should right ?}
%\margin{D: Yes, I was at this discussion and you learned that you can also approximate XY as a limit of Villain. 
%The height function convergence XY -> Villain is probably more classical, and was mentioned at least in my paper with Marcin.}
In this section, we show that the integer-valued Gaussian free field can be obtained as a limit of XY height functions. This result can be compared to Proposition~\ref{prop:prop3.6} which asserts that the Villain model can be obtained as a limit of XY models. We refer to Figure~\ref{fig:convergencetoIVGFF} for an illustration of this result.

\begin{proposition} \label{prop:approxIVGFFbyXY}
 %and any pair of inverse temperatures $\beta \in (0 , \infty)$ , one has the convergence 
    %\begin{equation*}
     %   \lim_{n \to \infty} \mathrm{Var}_{L, n, \beta}^{\Z-\mathrm{XY}} \left[ \varphi(x) \right] =  \mathrm{Var}_{L, \beta} \left[ \varphi(x) \right].
    %\end{equation*}
    For any sidelength $L \in \N$, any $x \in \Lambda_L$, any triplet of inverse temperatures $\beta, \beta_1 , \beta_2 \in (0 , \infty)$ and any percolation configuration $r \in \{ 0,1\}^{\Lambda_L}$,
    \begin{equation*}
        \lim_{n \to \infty} \mathrm{Var}_{L, n, \beta, r}^{\Z-\mathrm{XY}} \left[ \varphi(x) \right] = \mathrm{Var}_{L, \beta, r}^{\Z-\mathrm{GFF}} \left[ \varphi(x) \right]
    \end{equation*}
    and
    \begin{equation*}
        \lim_{n \to \infty}  \mathrm{Var}_{L, n, \beta_1 , \beta_2, r}^{\Z-\mathrm{XY}} \left[ \varphi(x) \right] =   \mathrm{Var}_{L, \beta_1 , \beta_2, r}^{\Z-\mathrm{XY/GFF}} \left[ \varphi(x) \right].
    \end{equation*}
\end{proposition}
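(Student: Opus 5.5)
The strategy is a local central limit computation on each edge weight, followed by dominated convergence on the finite state space. By the definition of the measures, $\mu^{\Z-\mathrm{XY}}_{\Lambda_L,n,\beta,r}$ has weight $\prod_{x\sim y} (I_{k_{xy}}(n\beta))^n$ (with the $r$-dependence placed so that a closed site forces $k_{xy}=0$), where $k_{xy}=\varphi(x)-\varphi(y)$. Since the variance is invariant under multiplying every edge weight by a constant independent of $k$, I will normalise each edge weight by its value at $k=0$. The key input is then the pointwise limit
\begin{equation*}
\lim_{n\to\infty}\left(\frac{I_k(n\beta)}{I_0(n\beta)}\right)^n = \exp\left(-\frac{k^2}{2\beta}\right), \qquad k\in\Z .
\end{equation*}
This follows from the expansion~\eqref{eq:expansionBessel}: the ratio equals $1-\frac{k^2}{2n\beta}+O(n^{-2})$ for fixed $k$, and raising it to the power $n$ gives the claim. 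Equivalently, $I_k(J)/I_0(J)$ is the Fourier coefficient of the von Mises density, and the $n$-fold product is the characteristic function of a sum of $n$ i.i.d. angles with variance $\approx 1/(n\beta)$, which converges to a Gaussian by the CLT.

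For the second statement I will treat the remaining factors the same way. The two Bessel factors $I_k(r_x\beta_1)I_k(r_y\beta_1)$ do not depend on $n$. The factor $(I_k(n\beta_2)/I_0(n\beta_2))^{n-2}$ converges to $e^{-k^2/(2\beta_2)}$, because the missing two powers tend to $1$. The closed-site constraint $I_k(0)=\indc_{\{k=0\}}$ is likewise independent of $n$. Hence, for each fixed height configuration $\varphi$, the normalised weight converges to the weight of the target measure ($\Z$-GFF or $\Z$-XY/GFF respectively).

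To pass to the limit in the partition function and in $\sum \varphi(x)^2 \cdot \mathrm{weight}$, I need a summable dominating function uniform in $n$, since $\Omega^\Z_{\Lambda_L}$ is infinite; this is the main obstacle. I would use the bound
\begin{equation*}
\frac{I_k(J)}{I_0(J)} \le \exp\left(-c\,\frac{k^2}{J}\right) \quad \text{for } |k|\le J, \qquad \frac{I_k(J)}{I_0(J)} \le e^{-c|k|} \quad \text{for } |k|\ge J,
\end{equation*}
uniformly in $J\ge 1$. One can obtain it from the integral representation $I_k(J)=\frac{1}{\pi}\int_0^\pi e^{J\cos t}\cos(kt)\,dt$ minus a small term, or more simply from the log-concavity of $k\mapsto I_k(J)$ together with the ratio bound $I_{k+1}(J)/I_k(J)\le J/(k+1+\sqrt{(k+1)^2+J^2})$ (Amos-type bounds). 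Raising to the power $n$ with $J=n\beta$ gives a dominating function $e^{-c\min(k^2/\beta,\, n|k|)}\le e^{-c'\min(k^2,|k|)}$, uniformly in $n\ge1$. The product over the finitely many edges of $\Lambda_L$ is summable against $1+\varphi(x)^2$, because $\varphi$ vanishes on the boundary and $\varphi(x)$ is bounded by a sum of gradients along a path.

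Dominated convergence then gives convergence of the normalised partition functions (to a positive limit) and of the first and second moments of $\varphi(x)$. The first moment is $0$ by the symmetry $\varphi\mapsto-\varphi$, so the variances converge.
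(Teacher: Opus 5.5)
Your proposal follows the paper's route. The paper's proof consists only of the pointwise limit $(I_k(n\beta)/I_0(n\beta))^n\to e^{-k^2/(2\beta)}$, obtained from~\eqref{eq:expansionBessel}, and leaves the passage to the limit in the infinite sums to ``standard techniques''. Your normalisation by the $k=0$ weight, your handling of the exponent $n-2$ and of closed sites, the uniform domination, and the symmetry argument for the first moment supply exactly that omitted step.

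One correction is needed in the domination step: the ratio inequality you quote points the wrong way. The bound $I_{k+1}(J)/I_k(J)\ge J/(k+1+\sqrt{(k+1)^2+J^2})$ is the classical Amos-type \emph{lower} bound. For instance, $I_1(1)/I_0(1)\approx 0.446>1/(1+\sqrt2)\approx 0.414$. So it cannot be telescoped to bound $I_k/I_0$ from above.

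There are two easy repairs.
\begin{itemize}
\item Use an upper bound such as $I_{k+1}(J)/I_k(J)\le J/(k+\tfrac12+\sqrt{(k+\tfrac12)^2+J^2})$. This telescopes to the same Gaussian-then-exponential estimate you wanted.
\item More simply, use the log-concavity of $k\mapsto I_k(J)$, which you already mention, so that $I_{j+1}(J)/I_j(J)\le I_1(J)/I_0(J)$ for all $j\ge 0$. Combine it with $I_1(J)/I_0(J)\le 1-c/J$ for $J\ge 1$. That last bound follows from~\eqref{eq:expansionBessel}, the strict inequality $I_1<I_0$, and continuity on compacts.
\end{itemize}
The second route gives $(I_k(n\beta)/I_0(n\beta))^n\le (1-\tfrac{c}{n\beta})^{n|k|}\le e^{-c|k|/\beta}$ whenever $n\beta\ge 1$. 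This is already a summable dominating function, uniform in $n$. The same works with exponent $n-2$ and $\beta_2$ once $n\ge 4$. With this fix, the rest of your argument goes through unchanged.
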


\begin{remark}
Let us make two remarks about the previous statement:
\begin{itemize}
\item To be more precise, there is a convergence of the measures $\mu_{\Lambda_L, n,  \beta_1, \beta_2,r}^{\Z-\mathrm{XY}}$ toward $\mu_{\Lambda_L , \beta_1, \beta_2 , r}^{\Z-\mathrm{XY/GFF}}$ .
\item This type of convergence is in fact quite general, and this result could be extended to models of height functions defined on more general graphs and with more general conductances.
\end{itemize}
\end{remark}

\begin{figure}
\centering
\includegraphics[scale=0.42]{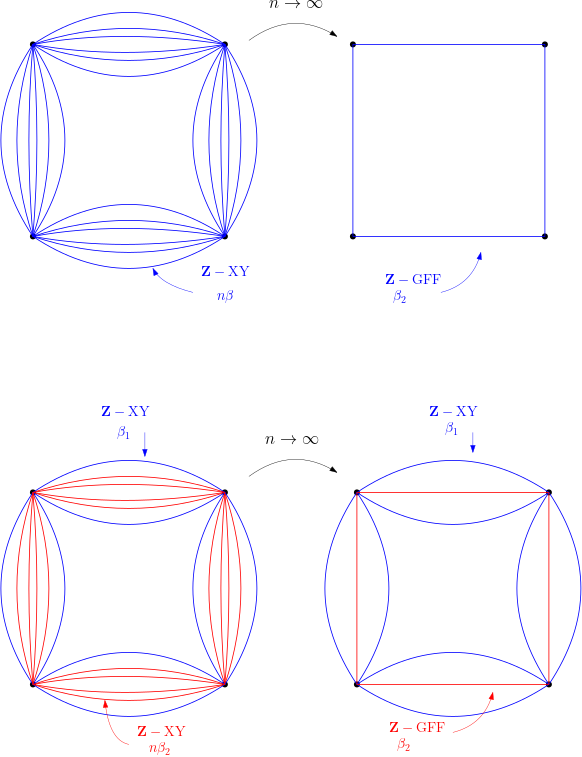}
\caption{The convergence of the XY height function toward the integer-valued Gaussian free field and the $\Z$-XY/GFF height function.} \label{fig:convergencetoIVGFF}
\end{figure}

\begin{proof}
The proof relies on standard techniques, we thus only provide a brief sketch of the argument. The result is a consequence of the following convergence: for any $\beta > 0$ and any $k \in \Z$,
\begin{equation*}
    \left( \frac{I_k(n \beta)}{I_0(n \beta)} \right)^n \underset{n \to \infty}{\longrightarrow} e^{-\frac{k^2}{2 \beta}},
\end{equation*}
which is itself a consequence of the expansion~\eqref{eq:expansionBessel} through the following computation, for any $k \in \Z$,
\begin{equation*}
   \left( \frac{I_k(n \beta)}{I_0(n \beta)} \right)^n = \frac{\left( 1 - \frac{4k^2-1}{8 n\beta} + O\left( \frac{1}{n^2}\right) \right)^n}{\left( 1 + \frac{1}{8 n\beta} + O\left( \frac{1}{n^2}\right) \right)^n} = e^{-\frac{k^2}{2 \beta}} + O\left(\frac{1}n\right).
\end{equation*}
\end{proof}

Proposition~\ref{prop:approxIVGFFbyXY} can be combined with Theorem~\ref{prop:monotonicity} to deduce the following result.

\begin{corollary} \label{cor:corollary4.9}
The variances $\mathrm{Var}_{L , \beta,r}^{\Z-\mathrm{GFF}} \left[ \varphi(x) \right],$ $\mathrm{Var}_{L,  \beta,r}^{\Z-\mathrm{XY}}\left[ \varphi(x) \right] $ and $\mathrm{Var}_{L, \beta_1 , \beta_2,r}^{\Z-\mathrm{XY}/\mathrm{GFF}}\left[ \varphi(x) \right] $ are increasing functions of:
\begin{itemize}
    \item the sidelength $L \in \N$,
    \item the inverse temperatures $\beta, \beta_1, \beta_2 \in (0 , \infty)$,
    \item the percolation configuration $r \in \{0 , 1 \}^{\Lambda_L}$.
\end{itemize}
\end{corollary}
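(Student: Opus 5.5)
The plan is to reduce every monotonicity claim to Theorem~\ref{prop:monotonicity}, applied to a suitable XY height function on a multigraph, and then pass to the limit using Proposition~\ref{prop:approxIVGFFbyXY}.

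\textbf{XY height function.} For $\mathrm{Var}_{L,\beta,r}^{\Z-\mathrm{XY}}$ no limit is needed. The model~\eqref{def:XYgeneralpercsite} is the XY height function of Definition~\ref{def:XYheightfunction} with $n=1$ and conductances $J_{xy}=r_xr_y\beta$. These conductances are nondecreasing in $\beta$ and in $r$ (coordinatewise), so~\eqref{ineq:condXYheight} gives monotonicity in both.

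\textbf{Integer-valued GFF.} Here one may use~\eqref{ineq:condIVGFF} directly with $J_{xy}=\beta r_xr_y$. This needs the convention that the conductance $0$ forces equal heights, which is exactly how a closed site acts. Alternatively, one can approximate by $\mathrm{Var}^{\Z-\mathrm{XY}}_{L,n,\beta,r}$, whose conductances $nr_xr_y\beta$ on each of the $n$ parallel edges are monotone, and pass to the limit $n\to\infty$ via Proposition~\ref{prop:approxIVGFFbyXY}.

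\textbf{$\Z$-XY/GFF model.} I approximate by~\eqref{def:IVGFFmultigeneral}. This is an XY height function on $\Z^2_{n-\mathrm{mult}}$ whose conductances on the edge $xy$ are $r_x\beta_1$, $r_y\beta_1$, and $n\beta_2$ on the remaining $n-2$ parallel edges. Each of these is nondecreasing in $\beta_1$, $\beta_2$ and $r$, so~\eqref{ineq:condXYheight} gives monotonicity for each fixed $n$. Weak inequalities are preserved under limits, so the second convergence of Proposition~\ref{prop:approxIVGFFbyXY} transfers the monotonicity to $\mathrm{Var}^{\Z-\mathrm{XY/GFF}}_{L,\beta_1,\beta_2,r}$.

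\textbf{Monotonicity in $L$.} I would use the domain-monotonicity remark after Theorem~\ref{prop:monotonicity}, or equivalently view $\Lambda_L$ as $\Lambda_{L+1}$ with conductance $0$ on every edge touching $\Lambda_{L+1}\setminus\Lambda_L$. Zero conductance forces the heights there to equal the boundary value $0$, which reproduces the Dirichlet condition on $\Lambda_L$. Increasing these conductances to their actual values then raises the variance by~\eqref{ineq:condXYheight}, first at level $n$ and then in the limit. The same argument handles all three models.

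\textbf{Main obstacle.} The one point I expect to need care is that Proposition~\ref{prop:approxIVGFFbyXY} is stated for fixed $L$ and $r$, while the comparisons in $L$ and in $r$ involve two different measures. I will therefore apply the convergence separately to each of the two measures being compared and check that the limit taken is in $n$ alone, with $L$, $\beta$, $\beta_1$, $\beta_2$ and $r$ held fixed. Since only weak inequalities between finite-$n$ quantities are used, this works. A second check is needed for~\eqref{ineq:condXYheight}: it allows zero conductances, and one must verify that the factors $I_k(0)=\indc_{\{k=0\}}$ do not make the measure ill-defined. They do not, because the configuration $\varphi\equiv0$ always has positive weight.
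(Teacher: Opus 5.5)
Your proposal is correct and is essentially the paper's argument: the paper only says to combine Theorem~\ref{prop:monotonicity} (together with the domain-monotonicity remark that follows it) with the $n\to\infty$ approximation of Proposition~\ref{prop:approxIVGFFbyXY}, which is exactly your route. There is one small slip in your alternative argument for monotonicity in $L$: under your convention that conductance $0$ forces equal heights, also zeroing the edges between $\Lambda_{L+1}\setminus\Lambda_L$ and $\Lambda_L$ pins $\partial\Lambda_L$ to $0$, which gives the Dirichlet model on $\Lambda_{L-1}$ rather than $\Lambda_L$, so you should set to $0$ only the edges joining $\Lambda_{L+1}\setminus\Lambda_L$ to $\Z^2\setminus\Lambda_{L+1}$.
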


\begin{remark}

The expansion~\eqref{eq:expansionBessel} implies the convergence, for any $k \in \Z$,
    \begin{equation*}
        \frac{I_k(J)}{I_0(J)} \underset{J \to \infty}{\longrightarrow} 1.
    \end{equation*}
    From this observation, one can deduce the following convergence, for any percolation configuration $r \in \{0 ,1 \}^{\Lambda_L},$
    \begin{equation*}
         \mathrm{Var}_{L, \beta_1 , \beta_2,r}^{\Z-\mathrm{XY}/\mathrm{GFF}}\left[ \varphi(x) \right] \underset{\beta_1 \to \infty}{\longrightarrow} \mathrm{Var}_{L, \beta_2,r}^{\Z-\mathrm{GFF}}\left[ \varphi(x) \right].
    \end{equation*}
    Combining this result with Proposition~\ref{prop:monotonicity}, we deduce that, for any pair of inverse temperatures $\beta_1 , \beta_2 \geq 0$,
\begin{equation} \label{eq:ineq4.11}
    \mathrm{Var}_{L , \beta_1 , \beta_2,r}^{\Z-\mathrm{XY/GFF}} \left[ \varphi(x) \right] \leq \mathrm{Var}_{L , \beta_2,r}^{\Z-\mathrm{GFF}} \left[ \varphi(x) \right].
\end{equation}
\end{remark}

\subsubsection{Phase transition for the integer-valued Gaussian free field}

The next ingredient we need is the following fundamental result of Fr\"{o}hlich-Spencer~\cite{frohlich1981kosterlitz} and Lammers~\cite{lammers2022height}, which shows the existence of delocalised regime for the two-dimensional integer-valued Gaussian free field and the $\Z$-XY/GFF height function.

\begin{theorem}[Phase transition for the XY integer-valued height function~\cite{frohlich1981kosterlitz, lammers2022height}] \label{prop:delocXY/IVGFFheight}
There exist an inverse temperature $\beta_{\mathrm{Deloc}} \in (0 , \infty)$ and a constant $c_0 > 0$ such that 
\begin{itemize}
    \item For any $\beta \geq \beta_{\mathrm{Deloc}}$ and any $L \in \N^*$
    \begin{equation*}
         \mathrm{Var}_{L , \beta}^{\Z-\mathrm{GFF}} \left[ \varphi(0) \right] \geq c_0 \ln L.
    \end{equation*}
    \item For any $\beta_1 , \beta_2 \geq \beta_{\mathrm{Deloc}}$ and any $L \in \N^*$
    \begin{equation*}
         \mathrm{Var}_{L , \beta_1 , \beta_2}^{\Z-\mathrm{XY}/\mathrm{GFF}} \left[ \varphi(0) \right] \geq c_0 \ln L.
    \end{equation*}
\end{itemize}
\end{theorem}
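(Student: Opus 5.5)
The first item is the classical theorem of Fr\"{o}hlich and Spencer~\cite{frohlich1981kosterlitz}. Their Coulomb gas (multiscale) expansion shows that, for $\beta$ large, the integer-valued Gaussian free field on $\Lambda_L$ with Dirichlet boundary condition satisfies $\mathrm{Var}_{L,\beta}^{\Z-\mathrm{GFF}}[\varphi(0)] \geq c\,\beta \ln L$ for a universal $c>0$. I would simply quote this, with one remark: since $\beta \mapsto \mathrm{Var}^{\Z-\mathrm{GFF}}_{L,\beta}[\varphi(0)]$ is increasing (Corollary~\ref{cor:corollary4.9}), it is enough to establish the bound at a single large value $\beta_{\mathrm{Deloc}}$ with a fixed constant $c_0$.

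For the second item I would not redo a Coulomb gas analysis. Instead I would observe that the $\Z$-XY/GFF height function is an integer-valued height function with a single nearest-neighbour potential, namely
\begin{equation*}
V_{\beta_1,\beta_2}(k) := -2\ln \frac{I_k(\beta_1)}{I_0(\beta_1)} + \frac{k^2}{2\beta_2}, \qquad k \in \Z .
\end{equation*}
The plan is then to check the hypotheses of Lammers' delocalisation theorem~\cite{lammers2022height} and apply it. Those hypotheses are: a symmetric, convex potential that is sufficiently flat near the origin.

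\textbf{Symmetry and convexity.} Symmetry is immediate because $I_k = I_{-k}$. Convexity comes from the classical log-concavity of $k \mapsto I_k(\beta_1)$ on $\Z$, namely the Tur\'an-type inequality $I_k^2 \geq I_{k-1}I_{k+1}$ (this is also the log-concavity used in~\cite[Lemma 9.6]{AHPS}). Adding the strictly convex term $k^2/(2\beta_2)$ preserves convexity.

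\textbf{Flatness.} Using the expansion~\eqref{eq:expansionBessel}, $V_{\beta_1,\beta_2}(1)-V_{\beta_1,\beta_2}(0) \leq C/\beta_1 + 1/(2\beta_2)$. This is small once $\beta_1,\beta_2 \geq \beta_{\mathrm{Deloc}}$, and it places us in Lammers' high-temperature regime, where Sheffield's non-coexistence argument rules out localisation.

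Lammers' result gives qualitative delocalisation, i.e.\ the variance tends to infinity. To upgrade this to the logarithmic bound $c_0\ln L$ with $c_0$ independent of $\beta_1,\beta_2$, I would invoke the dichotomy of~\cite{lammers2022dichotomy}: for such convex potentials the model is either localised or has logarithmic variance. Equivalently, one can use the quantitative version via the XY-model duality of~\cite{van2023duality,van2023elementary}, whose proof yields an explicit lower bound $c\ln L$ uniformly over the flat regime. Uniformity in $(\beta_1,\beta_2)$ then follows from monotonicity: by Proposition~\ref{prop:approxIVGFFbyXY} combined with Theorem~\ref{prop:monotonicity}, the variance is increasing in both $\beta_1$ and $\beta_2$. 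So it suffices to prove the bound at $\beta_1=\beta_2=\beta_{\mathrm{Deloc}}$, and choosing $\beta_{\mathrm{Deloc}}$ and $c_0$ once gives both items with the same constants.

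The main obstacle is the quantitative step: passing from qualitative delocalisation to $c_0\ln L$ with a universal constant. Lammers' argument is intrinsically qualitative, and the dichotomy result must be checked to apply to the mixed Bessel/Gaussian potential with Dirichlet boundary conditions on $\Lambda_L$. If that verification fails, the fallback is to run the Fr\"{o}hlich--Spencer expansion directly for $V_{\beta_1,\beta_2}$. Its Fourier dual is an XY/Villain mixture, a periodic Gaussian-like weight with large stiffness, for which the expansion of~\cite{frohlich1981kosterlitz} (see also~\cite{kharash2017fr}) goes through essentially verbatim.
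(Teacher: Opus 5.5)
The paper does not prove this statement; it quotes it from \cite{frohlich1981kosterlitz} and \cite{lammers2022height}. Your first item is exactly that quotation. For the second item, your skeleton is the natural way to make the quotation precise: treat the $\Z$-XY/GFF model as a single symmetric potential $V_{\beta_1,\beta_2}$, note it is convex by the Tur\'an inequality $I_k^2\geq I_{k-1}I_{k+1}$, and reduce to $\beta_1=\beta_2=\beta_{\mathrm{Deloc}}$ via Corollary~\ref{cor:corollary4.9}. However, the second item then rests entirely on the cited theorems applying to $V_{\beta_1,\beta_2}$ on $\Z^2$ with Dirichlet data on $\Lambda_L$. The hypotheses you verify are not the ones those theorems require, so as written the step producing $c_0\ln L$ is not established.

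\emph{Lammers' delocalisation theorem.} \cite{lammers2022height} is a theorem about cubic (degree-three) planar graphs, not about $\Z^2$. As the paper's introduction recalls, \cite{AHPS} transfer it to $\Z^2$ only for potentials satisfying an additional divisibility condition. So ``symmetric, convex, flat near $0$'' is not a complete list of hypotheses on the square lattice. You would need either to check divisibility for $V_{\beta_1,\beta_2}$ or to use a delocalisation statement proved directly on $\Z^2$.

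\emph{The logarithmic dichotomy.} The dichotomy of \cite{lammers2022dichotomy} is not a statement about all symmetric convex potentials. Convexity gives the FKG inequality for $\varphi$, but the dichotomy also uses an FKG inequality for $|\varphi|$ (absolute-value-FKG). This is a separate property of the potential, and it is what you must verify for $V_{\beta_1,\beta_2}$ to obtain the logarithmic bound.

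\emph{Your alternatives.} These are pointers rather than arguments. \cite{van2023duality,van2023elementary} treat the duals of the XY and Villain models directly on $\Z^2$ and are probably the most direct route. You would still have to check that their argument covers the mixed weight $I_k(\beta_1)^2e^{-k^2/(2\beta_2)}$; this is the same kind of extension the paper asserts without proof for Proposition~\ref{prop.phasetransitionextended}. Rerunning Fr\"ohlich--Spencer for the dual XY/Villain mixture would require a Coulomb-gas representation of that weight with the estimates their multiscale analysis uses. That is genuine work, not a verbatim repetition.
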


\begin{remark}
It can be proved (using a Peierls argument~\cite{peierls1936ising}) that, when the parameters $\beta, \beta_1$ and $\beta_2$ are sufficiently small, the variances of the height $\varphi(0)$ are bounded uniformly in $L$.
\end{remark}

\subsection{Wells' inequality for height functions} \label{App.A}

In this section, we prove the Wells' inequality for height functions (Proposition~\ref{prop:Wellsforheight}). The proof of the result makes use of a duality between height functions and spin systems from~\cite{van2023duality} which is presented in more details in Section~\ref{section:defgeneralXY} below. Once equipped with this duality, the proof of the Wells' inequality follows from an adaptation of the original technique of Wells~\cite{Wellsthesis, bricmont1981periodic} and the details of the argument can be found in Sections~\ref{sectionWellsineq1} and~\ref{sectionWellsineq2}.

We mention that, in order to simplify the presentation, we will only prove the result in the case of the model~\eqref{def:IVGFFmultigeneral} (i.e., we will prove the second inequality of Proposition~\ref{prop:Wellsforheight}), but that the same argument applies to a much more general class of models %\margin{C: Much more general ? (P: n'importe quel graphe fini, n'importe quelle combinaison !  }
(N.B. in particular, the proof of the first inequality of Proposition~\ref{prop:Wellsforheight} follows from the same argument). In all this section, we fix an integer $n \in \N$ and work with the multigraph $\Z^2_{n\mathrm{-mult}}$.

\subsubsection{Duality between spin systems and height functions} \label{section:defgeneralXY}

We first recall some of the results of~\cite{van2023duality}. We will only collect here the results of~\cite{van2023duality} which are needed in order to establish Wells' inequality for height functions (and we adapted the formalism of~\cite{van2023duality} to match the one of this article), but we mention that more precise information and results on the duality between height functions and spin systems can be found in this article.

We first start with the definition of a model of integer-valued height-function which is similar to the one introduced in Definition~\ref{def:XYheightfunction} with an additional weight on the height $\varphi(0)$.

\begin{definition}[XY height functions]\label{def:integervaluedXYwithlambda}
We fix a sidelength $L \in \N$, a parameter $\lambda >0$, a collection of non-negative conductances $\left( J_{xy,k} \right)_{\{x,y\} \in E(\Lambda_L), 1 \leq k \leq n}$, and define the probability distribution on the set $\Omega_{\Lambda_L}^\Z$
\begin{equation*}
    \mu_{\Lambda_L , J, \lambda}^{\Z\mathrm{-XY}} (\left\{ \varphi \right\}) := \frac{1}{Z_{\Lambda_L , n , J, \lambda}^{\Z\mathrm{-XY}}}  I_{\left(\varphi(0)\right)} \left( \lambda \right)\prod_{x \sim y} \prod_{1 \leq k \leq  n} I_{\left(\varphi(x) - \varphi(y)\right)} \left( J_{xy, k} \right) .
\end{equation*}
We denote by $\mathrm{Var}_{L , J , \lambda}^{\Z-\mathrm{XY}}$ the variance with respect to the measure $\mu_{\Lambda_L , J, \lambda}^{\Z\mathrm{-XY}}$.
\end{definition}

\begin{remark} \label{rem:remark23}
When $\lambda \to \infty$, the measure $\mu_{\Lambda_L , J, \lambda}^{\Z-\mathrm{XY}}$ converges weakly toward the measure $\mu_{\Lambda_L , J}^{\Z-\mathrm{XY}}$ introduced in Definition~\ref{def:XYheightfunction}.
\end{remark}

\begin{figure}
\centering
\includegraphics[scale=0.5]{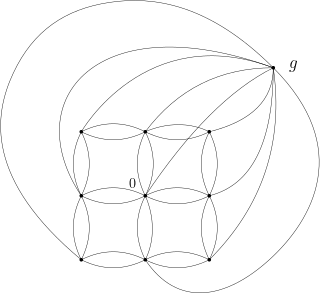}
\caption{The box $\Lambda_{L}^g(n)$ with $L = 1$ and $n = 2$. All the edges on the boundary of the box and the vertex $0$ are connected to the ghost (in the case $L=1$, all the vertices are connected to the ghost).} \label{fig:boxwithghost}
\end{figure}

We next introduce the spin system which is dual to the height function of the previous definition (see the duality formulae of Proposition~\ref{prop:propA3} below). This spin system is a generalisation of the XY model introduced in Definition~\ref{generalXY} (specifically, the angle configurations are not defined on the vertices but on the oriented edges). It is defined as follows:
\begin{itemize}
    \item Let $\Lambda_{L}^g(n)$ be the graph $\Lambda_{L}(n)$ to which a ghost vertex $g$ connected to the boundary of the box and to the vertex $0$ has been added (see Figure~\ref{fig:boxwithghost}). We denote by $E(\Lambda_{L}^g(n))$ and $\vec{E}\left( \Lambda_L^g(n) \right)$ the sets of the unoriented and oriented edges of this graph.  We refer to an oriented edge $\vec{e}$ by giving its two endpoints together with an integer of $\{1 , \ldots, n\}$ (if the ghost is not one of the endpoints). Given a vertex $x \in \Lambda_L \cup \{g\}$, we denote by $\vec{E}_x\left( \Lambda_L^g(n) \right)$ the set of edges of $\vec{E}\left( \Lambda_L^g(n) \right)$ whose first endpoint is $x$.
    \item We then introduce the following spaces of angle configurations:
\begin{equation*}
    \Omega_{\Lambda_L}^{\mathbb{S}^1}  := \left\{ \theta : \vec{E}(\Lambda_{L}^g(n)) \to [0 , 2\pi) \, : \, \forall \vec{e} \in \vec{E}(\Lambda_{L}^g(n)), \, \theta_{-\vec{e}} = - \theta_{\vec{e}}  ~ \mathrm{mod} \, 2\pi \right\}
\end{equation*}
and
\begin{equation} \label{def:divS1group}
    \Omega_{\Lambda_L}^{\mathrm{div-}\mathbb{S}^1}  := \left\{ \theta \in \Omega_{\Lambda_L}^{\mathbb{S}^1}\, : \,  \forall x \in \Lambda_L \cup \{ g \}, \, \sum_{\vec{e} \in \vec{E}_x\left( \Lambda_L^g(n) \right)} \theta_{\vec{e}} = 0 ~ \mathrm{mod} \, 2\pi \right\}.
\end{equation}
    \item We consider an (arbitrary) spanning tree $T \subseteq \Lambda_{L}^g(n)$ and set
    \begin{equation*}
        \Omega_T^{\mathbb{S}^1} := \left\{ \theta : \vec{E}(\Lambda_L^g(n)) \setminus \vec{E}(T) \to [0 , 2\pi) \, ; \, \forall \vec{e} \in \vec{E}(\Lambda_{L}^g(n)), \, \theta_{-\vec{e}} = - \theta_{\vec{e}} ~ \mathrm{mod} \, 2\pi\right\}.
    \end{equation*} 
    Note that this space can be identified with the space of functions $\{ \theta : E(\Lambda_L^g(n)) \setminus E(T) \to [0 , 2\pi) \}$. Following~\cite{van2023duality}, we note that there exists a bijection between the spaces
     \begin{equation*}
        \Omega_T^{\mathbb{S}^1} ~~\mbox{and}~~ \Omega_{\Lambda_L}^{\mathrm{div-}\mathbb{S}^1}.
    \end{equation*} 
    More specifically, any collection of angles $\theta \in \Omega_T^{\mathbb{S}^1}$ can be uniquely extended into an element of $\Omega_{\Lambda_L}^{\mathrm{div-}\mathbb{S}^1}$.
    We denote this bijection by $A_T : \Omega_T^{\mathbb{S}^1} \to \Omega_{\Lambda_L}^{\mathrm{div-}\mathbb{S}^1}$. 
    \item The two spaces $\Omega_{\Lambda_L}^{\mathrm{div-}\mathbb{S}^1}$ and $\Omega_T^{\mathbb{S}^1}$ can be equipped with an additive group structure which makes them compact Abelian groups. For later purposes, we mention that the map $A_T$ is then a group isomorphism and that it satisfies the following property: for any edge $\vec{e}_0 \in \vec{E}\left( \Lambda_L^g(n) \right)$ and any angle configuration $\theta := (\theta_{\vec{e}})_{\vec{e} \in \vec{E}(\Lambda_L^g(n)) \setminus \vec{E}(T)} \in \Omega_T^{\mathbb{S}^1}$, the angle $\left( A_T \theta \right)_{\vec{e}_0}$ can be written as a linear combination with integer-valued coefficients of the angles $(\theta_{\vec{e}})_{_{\vec{e} \in \vec{E}(\Lambda_L^g(n)) \setminus \vec{E}(T)}}.$
    \item We endow the space of functions $\{ \theta : E(\Lambda_L^g(n)) \setminus E(T) \to [0 , 2\pi) \}$, and thus the space $\Omega_T^{\mathbb{S}^1}$, with the product probability measure 
    \begin{equation}\label{def:productHaarmeasure}
        \prod_{e \in E(\Lambda_L^g(n)) \setminus E(T)} (2\pi)^{-1} d \theta_{e}.
    \end{equation}
    We then equip the space $\Omega_{\Lambda_L}^{\mathrm{div-}\mathbb{S}^1}$ with the pushforward of this product measure by the map $A_T$. We will denote this probability measure by $d\theta$ in the definitions and proofs below. This measure is invariant under translations, it is thus a Haar measure on the Abelian group~\eqref{def:divS1group}; since the Haar measure is unique (up to multiplicative constant), we deduce that the definition of the measure $d \theta$ does not depend on the choice of the spanning tree $T$.
\end{itemize}

\begin{figure}
\centering
\includegraphics[scale=0.6]{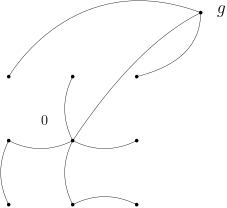}
\caption{A spanning tree of the box $\Lambda_{L}^g(n)$ (with $L = 1$ and $n = 2$).} \label{fig:spanningtree}
\end{figure}

Equipped with all these definitions, we introduce a generalised version of the XY model.
\begin{definition}[Generalised XY model] \label{def:generalversionXY}
    Given an integer $L \in \N$, a collection of non-negative conductances $\left( J_{xy,k} \right)_{\{ x, y\} \in E(\Lambda_L), 1 \leq k \leq n}$, and a parameter $\lambda > 0$, we define the generalised XY model to be the probability distribution on the set $\Omega_{\Lambda_L}^{\mathrm{div-}\mathbb{S}^1}$ given by the identity
    \begin{equation} \label{eq:defgeneralisedXY}
        \mu_{\Lambda_L , J, \lambda}^{\mathrm{XY}} (d \theta) := \frac{1}{Z_{\Lambda_L, J, \lambda}^{\mathrm{XY}}} \exp \left( \lambda \cos(\theta_{0g}) \right) \exp \left( \sum_{x \sim y} \sum_{1 \leq k \leq n} J_{xy,k} \cos(\theta_{xy, k}) \right) d \theta.
    \end{equation}
    We denote by $\left\langle \cdot \right\rangle_{L , J, \lambda}^{\mathrm{XY}}$ the expectation with respect to this measure.
\end{definition}
\begin{remark}
There is a slight abuse of notation in this definition because the conductances are assumed to be defined on non-oriented edges, the angles are defined on oriented edges and we used the same index for both terms (N.B. The orientation of the edges does not matter here because of the parity of the cosine function).
\end{remark}

Equipped with the definitions for the height functions and the generalised XY model, we are able to state a duality formula relating the two models. The proof of this identity can be found in~\cite[Lemma 2.2]{van2023duality}. % (N.B. in~\eqref{eq:idpartfunctgenera} below, there is no multiplicative factor involving $2\pi$ on the left-hand side; this is because this term has been incorporated in the measure~\eqref{def:productHaarmeasure}).

\begin{proposition}[Duality, Lemma 2.2 of~\cite{van2023duality}] \label{prop:propA3}
For any integer $L \in \N$, any parameter $\lambda > 0$ and any collection of non-negative conductances $J:= \left( J_{xy,k} \right)_{\{x,y\} \in E(\Lambda_L), 1 \leq k \leq n}$, one has the identities:
\begin{itemize}
    \item For the partition functions:
    \begin{equation} \label{eq:idpartfunctgenera}
        Z_{\Lambda_L, J, \lambda}^{\Z-\mathrm{XY}} = Z_{\Lambda_L,  J, \lambda}^{\mathrm{XY}}.
    \end{equation}
    \item For the variance of the height:
    \begin{equation*}
    \mathrm{Var}_{L  ,J , \lambda}^{\Z- \mathrm{XY}} \left[ \varphi(0) \right] = \left\langle \lambda \cos \left( \theta_{0g} \right) + \frac{\lambda^2}{2} \cos \left( 2 \theta_{0g} \right)  - \frac{\lambda^2}{2}  \right\rangle_{L , J, \lambda}^{\mathrm{XY}}.
    \end{equation*}
\end{itemize}

\begin{remark}
To be precise, the result is proved in~\cite[Lemma 2.2]{van2023duality} for general graphs, but the extension to multigraph (considered here) is only notational.
%\margin{\blue{Paul: I hope I did not miss something here}} 
%\margin{D: indeed, our result works for all graphs (in particular multigraphs). Either you just allow mutligraphs as being graphs, 
%and then our argument works, or if you don't like that, a multigraph is a ``gluing'' limit of proper graphs.}
\end{remark}

\end{proposition}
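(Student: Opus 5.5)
The plan is to expand each Bessel weight through its Fourier representation, resum the height function, and read off the resulting angle integrals on the divergence-free edge configurations.

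\textbf{Partition functions.} Write each factor using \eqref{eq:defmodifiedBesselfunction} in the inverse form
\[
I_k(J)=\int_0^{2\pi} e^{J\cos\theta}e^{-ik\theta}\,\frac{d\theta}{2\pi}.
\]
Introduce one angle variable for each edge of $\Lambda_L^g(n)$: an angle $\theta_{xy,k}$ for every parallel edge, and an angle $\theta_{0g}$ for the factor $I_{\varphi(0)}(\lambda)$. Boundary vertices are pinned to height $0$, and this is what the ghost edges to the boundary encode. Substituting into $Z^{\Z-\mathrm{XY}}_{\Lambda_L,J,\lambda}$ and exchanging the sum over $\varphi\in\Omega^\Z_{\Lambda_L}$ with the integrals (legitimate by absolute convergence, since $\sum_k I_k(J)=e^J<\infty$), the sum over each free height $\varphi(x)\in\Z$ of $e^{-i\varphi(x)\sum_{\vec e\in\vec E_x}\theta_{\vec e}}$ is a Dirac comb. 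It forces the divergence condition of \eqref{def:divS1group} at every vertex; the condition at $g$ follows automatically by summing the others.

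\textbf{Normalising the measure.} The result is an integral of $\exp(\lambda\cos\theta_{0g}+\sum J_{xy,k}\cos\theta_{xy,k})$ against the Haar measure on $\Omega^{\mathrm{div}-\mathbb S^1}_{\Lambda_L}$. To fix the normalisation, I parametrise by the non-tree edges through $A_T$. Each vertex constraint eliminates exactly one tree edge, and the Dirac comb $\sum_m e^{im\alpha}=2\pi\sum\delta$ cancels the $(2\pi)^{-1}$ of that tree edge. This yields exactly the probability measure \eqref{def:productHaarmeasure}, and hence \eqref{eq:idpartfunctgenera}.

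\textbf{Variance of $\varphi(0)$.} The natural route is to differentiate in $\lambda$. Note that $\partial_\lambda I_k(\lambda)=\tfrac12(I_{k-1}+I_{k+1})$ and $I_{k\pm1}$ are not directly expressed through $k^2$. Instead I use the Bessel ODE
\[
\lambda^2 I_k''+\lambda I_k'-(\lambda^2+k^2)I_k=0,
\]
which gives $k^2 I_k=\lambda^2 I_k''+\lambda I_k'-\lambda^2 I_k$. Hence
\[
\mathbb E[\varphi(0)^2]=\frac{\lambda^2\partial^2_\lambda Z+\lambda\partial_\lambda Z-\lambda^2 Z}{Z}.
\]
Using \eqref{eq:idpartfunctgenera}, the right-hand side becomes
\[
\big\langle \lambda^2\cos^2\theta_{0g}+\lambda\cos\theta_{0g}-\lambda^2\big\rangle^{\mathrm{XY}},
\]
and $\cos^2=\tfrac12(1+\cos 2\theta)$ gives exactly the claimed expression. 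Finally, symmetry $\varphi\mapsto-\varphi$ (since $I_k=I_{-k}$) gives $\mathbb E[\varphi(0)]=0$, so this second moment equals the variance.

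\textbf{Main obstacle.} The delicate step is the bookkeeping of $2\pi$ factors and the treatment of the ghost and boundary edges in the Haar-measure normalisation. I would check it on a single edge first, where $\sum_k I_k(J)I_k(\lambda)=\int e^{J\cos\theta+\lambda\cos\theta}\,d\theta/2\pi$, before handling the general case. Beyond that, the Poisson summation step only needs the absolute convergence noted above.
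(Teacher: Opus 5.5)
Your proposal is correct. The paper does not prove this statement; it imports it from \cite[Lemma 2.2]{van2023duality} and remarks only that passing to multigraphs is a change of notation. What you give is a self-contained derivation via the standard Fourier/Poisson-summation duality.

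Your variance step is sound. The modified Bessel equation $k^2I_k=\lambda^2I_k''+\lambda I_k'-\lambda^2I_k$, combined with \eqref{eq:idpartfunctgenera} read as an identity between smooth functions of $\lambda$, gives $\mathbb{E}[\varphi(0)^2]=\langle\lambda\cos\theta_{0g}+\lambda^2\cos^2\theta_{0g}-\lambda^2\rangle^{\mathrm{XY}}_{L,J,\lambda}$. The symmetry $I_k=I_{-k}$ then gives $\mathbb{E}[\varphi(0)]=0$.

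One justification needs repair. Swapping $\sum_\varphi$ with the angle integrals is \emph{not} licensed by absolute convergence. The phases $e^{-i\varphi(x)\alpha}$ have modulus one, so $\sum_\varphi\int\prod_e e^{J_e\cos\theta_e}\,d\theta=\infty$, and the Dirac comb only makes sense distributionally.

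The summability $\sum_{\mathbf k}\prod_e I_{k_e}(J_e)=\prod_e e^{J_e}<\infty$ should be used the other way round. Expand $F(\theta)=\exp(\lambda\cos\theta_{0g}+\sum J_{xy,k}\cos\theta_{xy,k})$ in its absolutely convergent Fourier series $\sum_{\mathbf k}\prod_e I_{k_e}(J_e)e^{i\mathbf k\cdot\theta}$. Then integrate term by term against the Haar probability measure $d\theta$ on $\Omega_{\Lambda_L}^{\mathrm{div-}\mathbb{S}^1}$:
\begin{itemize}
\item If $\mathbf k$ is the gradient of some integer $\varphi$ with $\varphi(g)=0$, the character $e^{i\mathbf k\cdot\theta}$ integrates to $1$, because $\mathbf k\cdot\theta=\sum_x\varphi(x)\sum_{\vec e\in\vec E_x}\theta_{\vec e}\in2\pi\Z$.
\item Otherwise it integrates to $0$. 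In that case $\mathbf k$ has nonzero circulation around some cycle $c$, so the character is nontrivial on the divergence-free flows $t\mathbf{1}_c$, $t\in\R$.
\end{itemize}
Since $\varphi\mapsto(\varphi(x)-\varphi(y))_{xy}$ is injective, this yields \eqref{eq:idpartfunctgenera} with the correct normalisation. It also makes your spanning-tree and $2\pi$ bookkeeping unnecessary.

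You can avoid differentiating the height-side series termwise in $\lambda$ in the same way. Termwise differentiation is legitimate, by the rapid decay of $I_k(\lambda)$ in $k$, but is not needed. Instead, rerun the identity with ghost weight $k^2I_k(\lambda)$, which is the Fourier coefficient of $-\partial_\theta^2e^{\lambda\cos\theta}=(\lambda\cos\theta-\lambda^2\sin^2\theta)e^{\lambda\cos\theta}$. This is your Bessel-equation step written in Fourier form.
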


\subsubsection{Wells' inequality for the generalised XY models} \label{sectionWellsineq1}

In this section, we extend the range of application of the Wells' inequality from the XY model (as stated in Proposition~\ref{prop.Wells}) to the generalised XY model introduced in Definition~\ref{def:generalversionXY}. As mentioned at the beginning of the section, and in order to simplify the presentation, we will only prove the result in the case of the model~\eqref{def:IVGFFmultigeneral}. We thus specify the notation of the XY height function and generalised XY model in the following definition.

 \begin{definition}[XY height function and generalised XY model on a percolation configuration]
 Given an integer $L \in \N$, an integer $n \geq 2$, a parameter $\lambda > 0$, two inverse temperatures $\beta_1 , \beta_2 \geq 0$, and a site percolation configuration $r \in \{ 0 , 1\}^{\Lambda_L}$, we define
 \begin{itemize}
    \item The XY height function
  \begin{multline*}
     \mu_{L , n ,\beta_1, \beta_2 , \lambda,r }^{\Z - \mathrm{XY}} (\left\{ \varphi \right\}) \\
     := \frac{1}{Z_{\Lambda,J}^{\Z-\mathrm{XY}} }  I_{\left(\varphi(0)\right)} \left( \lambda \right)\prod_{x \sim y}  \left[ I_{\left(\varphi(x) - \varphi(y)\right)} \left( r_x \beta_1 \right) I_{\left(\varphi(x) - \varphi(y)\right)} \left( r_y \beta_1 \right) I_{\left(\varphi(x) - \varphi(y)\right)} \left(  n \beta_2 \right)^{n-2} \right].
 \end{multline*}
 We denote the variance associated with this measure by $\mathrm{Var}_{L , n ,\beta_1, \beta_2 , \lambda,r}^{\Z - \mathrm{XY}}$.
 \item The generalised XY model
 \begin{multline} \label{eq:defXYmodellambda}
     \mu_{L , n , \beta_1, \beta_2 , \lambda, r}^{\mathrm{XY}} (d \theta) \\
     := \frac{1}{Z_{\Lambda_L , n ,\beta_1, \beta_2 , \lambda, r}^{\mathrm{XY}}} \exp \left( \lambda \cos(\theta_{0g}) \right) \exp \left( \beta_1  \sum_{ x \sim y} \left( r_x \cos(\theta_{xy,1}) + r_y \cos(\theta_{xy,n}) \right) \right)  \\ \times \exp  \left( n \beta_2 \sum_{ x \sim y}  \sum_{2 \leq k \leq  n-1} \cos(\theta_{xy,k})  \right)  d \theta.
 \end{multline}
We denote by $\left\langle \cdot \right\rangle_{L , n , \beta_1, \beta_2 , \lambda, r}^{\mathrm{XY}}$ the expectation with respect to this measure.
\end{itemize}
In both cases when the percolation configuration is $r \equiv 1$ (i.e., all the sites are open), we omit the subscript $r$ from the notation.
 \end{definition}
 
 \begin{remark} 
 Since Proposition~\ref{prop:propA3} holds for any values of the conductance, it implies the identities of partition function, for any percolation configuration $r \in \{0,1\}^{\Lambda_L}$,
 \begin{equation} \label{eq:identitypartfunctperc}
 Z_{\Lambda_L , n ,\beta_1 , \beta_2, \lambda, r}^{\mathrm{XY}} = Z_{\Lambda_L , n ,\beta_1 , \beta_2, \lambda, r}^{\Z-\mathrm{XY}},
 \end{equation}
together with the identity
 \begin{equation} \label{eq:dualityonpercolation}
     \mathrm{Var}_{L , n ,\beta_1 , \beta_2, \lambda, r}^{\Z- XY} \left[ \varphi(0) \right] = \left\langle \lambda \cos \left( \theta_{0g} \right) + \lambda^2 \cos \left( \theta_{0g} \right)^2  - \lambda^2  \right\rangle_{L , n ,\beta_1 , \beta_2, \lambda, r}^{\mathrm{XY}}.
 \end{equation}
\end{remark}

We next introduce the Wells' disorder associated with the XY model~\eqref{eq:defXYmodellambda}, then state and prove the Wells' inequality for this model.

\begin{definition}[Wells' disorder]
 Given an integer $L \in \N$, an integer $n \geq 2$, a parameter $\lambda > 0$, two inverse temperatures $\beta_1 , \beta_2 \geq 0$, we define the Wells' disorder by the identity, for $r \in \{ 0 , 1\}^{\Lambda_L}$,
 \begin{equation*}
     \nu_{L , n , \beta_1 , \beta_2 , \lambda}(\{ r \}) := \frac{Z_{\Lambda_L , n , \beta_1 , \beta_2 , \lambda, r}^{\mathrm{XY}}}{\bar{Z}_{\Lambda_L , n , \beta_1 , \beta_2 , \lambda}^{\mathrm{XY}}} ~~\mbox{with}~~ \bar{Z}_{\Lambda_L , n , \beta_1 , \beta_2 , \lambda}^{\mathrm{XY}} := \sum_{r \in \{0, 1\}^{\Lambda_L}} Z_{\Lambda_L , n , \beta_1 , \beta_2 , \lambda, r}^{\mathrm{XY}}.
 \end{equation*}
\end{definition}

\begin{proposition}[Wells inequality the generalised XY model] \label{prop:generalisedWells}
    Given an integer $L \in \N$, an integer $n \geq 3$, two inverse temperatures $\beta_1 , \beta_2 \geq 0$, a parameter $\lambda > 0$ and an integer $m \in \N$, one has the inequality
    \begin{equation} \label{eq:WellsverygeneralXY}
        \left\langle \cos \left( m  \theta_{0g} \right) \right\rangle_{L , n , \beta_1/2 , \beta_2 , \lambda}^{\mathrm{XY}} \leq \E_{\nu_{L , n , \beta_1 , \beta_2 , \lambda}} \left[ \left\langle \cos \left( m \theta_{0g} \right) \right\rangle_{L , n , \beta_1 , \beta_2 , \lambda,r }^{\mathrm{XY}} \right].
    \end{equation}
\end{proposition}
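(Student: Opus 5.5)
We follow Wells' original argument~\cite{Wellsthesis, bricmont1981periodic} (as adapted in~\cite[Section 3]{DG25}), using the duplicated system. The key structural point is that each site $x \in \Lambda_L$ carries its own private family of XY edges on which $r_x$ acts. In~\eqref{eq:defXYmodellambda}, $r_x$ multiplies $\beta_1\cos(\theta_{xy,1})$ for the edges $\{x,y\}$ oriented with $x$ first, and $\beta_1\cos(\theta_{yx,n})$ for those oriented with $x$ second. This is why the convention $r_x$, $r_y$ was chosen. For fixed $r$ and each such edge, write $e^{r_x\beta_1\cos\theta_e} = \sum_{s_e\in\{0,1\}} \mathbf 1_{\{s_e=r_x\}} e^{s_e \beta_1 \cos\theta_e}$. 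Summing over $r$ then gives $\bar Z$ and the numerator of the right-hand side of~\eqref{eq:WellsverygeneralXY}, as integrals in which each site $x$ contributes a factor
\begin{equation*}
\sum_{r_x\in\{0,1\}}\exp\Big(r_x\beta_1\sum_{e\in E_x}\cos\theta_e\Big)=1+\exp\Big(\beta_1\sum_{e\in E_x}\cos\theta_e\Big),
\end{equation*}
where $E_x$ is the set of (at most $4$) XY edges attached to $x$. Thus the right-hand side of~\eqref{eq:WellsverygeneralXY} equals $\E_{\nu}[\langle \cos(m\theta_{0g})\rangle_r] = \langle \cos(m\theta_{0g})\rangle_{\tilde\mu}$. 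Here $\tilde\mu$ is the measure on $\Omega^{\mathrm{div-}\mathbb S^1}_{\Lambda_L}$ with weight $e^{\lambda\cos\theta_{0g}}\, e^{n\beta_2\sum\cos\theta_{xy,k}}\prod_x (1+e^{\beta_1 S_x})$ and $S_x := \sum_{e\in E_x}\cos\theta_e$. This identity is the key simplification.

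Next we compare $\tilde\mu$ with the measure $\mu_{\beta_1/2}$, whose weight replaces each $1+e^{\beta_1 S_x}$ by $e^{\beta_1 S_x/2}$. We have $1+e^{\beta_1 S_x} = 2e^{\beta_1 S_x/2}\cosh(\beta_1 S_x/2)$. So $\tilde\mu$ is $\mu_{\beta_1/2}$ reweighted by $G:=\prod_x\cosh(\beta_1 S_x/2)$. The inequality then reduces to the correlation inequality
\begin{equation*}
\langle \cos(m\theta_{0g})\, G\rangle_{\beta_1/2} \ge \langle\cos(m\theta_{0g})\rangle_{\beta_1/2}\,\langle G\rangle_{\beta_1/2}.
\end{equation*}
To prove it, expand $G$ and every Boltzmann factor as power series in cosines with nonnegative coefficients. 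The function $\cosh(\beta_1 S_x/2)$ is even in $S_x$, hence a series in monomials of $\cos\theta_e$ with nonnegative coefficients. Products of cosines reduce, by $\cos a\cos b=\tfrac12(\cos(a+b)+\cos(a-b))$, to nonnegative combinations of $\cos(\text{integer combination of edge angles})$. We parametrise by the spanning tree $T$ via $A_T$. Each $\theta_e$ is then an integer linear combination of the free angles $(\theta_{e'})_{e'\notin T}$ with the product Haar measure. Writing the covariance on a duplicated configuration $(\theta,\theta')$ gives
\begin{equation*}
\tfrac12\iint \big(\cos(m\theta_{0g})-\cos(m\theta'_{0g})\big)\big(G(\theta)-G(\theta')\big)\,W(\theta)W(\theta')\,d\theta\, d\theta'.
\end{equation*}
Each term of this expansion is a sum of products of factors $\cos(m_i\cdot\theta)\pm\cos(m_i\cdot\theta')$ with nonnegative coefficients. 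This uses $W(\theta)W(\theta')=$ a nonnegative series in products $(\cos\pm\cos)$, through the identities $ab+a'b'=\tfrac12[(a+a')(b+b')+(a-a')(b-b')]$. Ginibre's inequality~\eqref{ineq:Ginibre1} then makes each term nonnegative.

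The main obstacle is precisely this last step, in the setting of divergence-free edge angles. We must check that~\eqref{ineq:Ginibre1} applies after pulling back by $A_T$. This is legitimate because $A_T$ is a group isomorphism with integer-linear coordinates, so every observable and weight becomes $\cos(m\cdot\theta)$ for free Haar-distributed angles. We must also handle the duplication identities carefully, following~\cite{bricmont1981periodic}. Note the condition $n\ge 3$: it guarantees that the edges indexed $1$ and $n$ are distinct for each $\{x,y\}$, so that the sets $E_x$ are disjoint across sites and the factorisation over $r_x$ above is exact.
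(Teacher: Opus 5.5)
Your proof is correct, and it organises the argument differently from the paper. You sum out the disorder exactly: $r$ enters the exponent linearly, so $\sum_r e^{H_r}$ factorises as $\prod_x(1+e^{\beta_1 S_x})=2^{|\Lambda_L|}e^{\frac{\beta_1}{2}\sum_x S_x}\prod_x\cosh(\beta_1 S_x/2)$. The Wells-averaged expectation is therefore exactly $\langle \cos(m\theta_{0g})\,G\rangle_{\beta_1/2}/\langle G\rangle_{\beta_1/2}$, and the statement reduces to Ginibre's second inequality for the ferromagnetic observable $G$, proved by duplication and \eqref{ineq:Ginibre1} after pulling back by $A_T$.

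The paper never sums out $r$. It keeps the duplicated system $(r,\theta;\theta')$, with the disordered model on one copy and the $\beta_1/2$ model on the other. It then rewrites each coupling via $r\cos\theta+\frac12\cos\theta'=\frac12(r+\frac12)(\cos\theta+\cos\theta')+\frac12(r-\frac12)(\cos\theta-\cos\theta')$ and expands. This reduces the claim to two separate positivity facts: $\sum_r\prod_i(r_{x_i}\pm\frac12)\ge 0$, and \eqref{ineq:Ginibre1} for the divergence-free Haar measure.

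The engine is the same in both. Writing $r_x=\frac{1+\sigma_x}{2}$, your $\cosh$ is what you get by summing out the Ising variable $\sigma_x$, which the paper's $(r\pm\frac12)$-positivity handles term by term. Your version is shorter and treats every $m$ at once (the paper writes out only $m=1$). It also makes transparent why the loss is a factor $2$ here rather than the $4$ of Theorem~\ref{prop.Wells}: each XY edge carries a single $r_x$. The paper's bookkeeping follows the classical Wells/Bricmont--Fontaine scheme and does not need the sum over $r$ to factorise over sites, so it transfers directly to disorders entering through products $r_xr_y$.

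Three small inaccuracies, none affecting validity:
\begin{itemize}
\item The factorisation over sites uses only linearity of the exponent in $r$, not disjointness of the sets $E_x$.
\item Indices $1$ and $n$ are already distinct for $n=2$, and your argument works verbatim there, so that is not the role of the hypothesis $n\ge 3$.
\item Nonnegativity of the cosine expansion of $\cosh(\beta_1 S_x/2)$ comes from its nonnegative Taylor coefficients together with $\beta_1\ge 0$, not from evenness.
\end{itemize}
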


\begin{remark}
Let us make two comments about this statement:
\begin{itemize}
    \item We will only need this result in the cases where $m = 1$ or $m=2$.
    \item As mentioned above, this inequality can be generalised in many directions: it is in particular possible to consider more general multigraphs, more general conductances and more general observables (i.e., the inequality should hold when considering the cosine of any linear combination with integer-valued coefficients of the angles).
\end{itemize}
\end{remark}

\begin{proof}
To simplify the presentation of the argument, we only prove the result in the case $m = 1$.
%\margin{C: if $m=2$ is really needed, isn't this a bit dangereous ? \textbf{Maybe we should add a small proof for $m=2$ as well. Diederik could you try or otherwise no problem I will try adding it just before uploading to Arxiv ?}}
We additionally introduce the notation:
for any $r \in \{0,1\}^{\Lambda_L}$ and any $\theta \in \Omega_{\Lambda_L}^{\mathrm{div-}\mathbb{S}^1}$,
\begin{equation*}
H_{\beta_1, \beta_2, \lambda, r} (\theta) :=
\beta_1 \sum_{\substack{ x, y \in \Lambda_L \\ x \sim y }}
\left( r_x \cos(\theta_{xy,1}) + r_y \cos(\theta_{xy,n}) \right) +
\beta_2 \sum_{\substack{ x, y \in \Lambda_L \\ x \sim y }}
\sum_{2 \leq k \leq  n-1} \cos(\theta_{xy,k}),
\end{equation*}
and omit the subscript $r$ when $r \equiv 1$.
We start from the computation
\begin{align*}
    \lefteqn{\E_{\nu_{L , n ,\beta_1 , \beta_2, \lambda}} \left[ \left\langle \cos \left( \theta_{0g} \right) \right\rangle_{L , n ,\beta_1 , \beta_2, \lambda, r}^{\mathrm{XY}} \right] } \qquad & \\ &
    = \frac{1}{\bar{Z}_{\Lambda_L , n ,\beta_1 , \beta_2, \lambda}^{\mathrm{XY}}} \sum_{r \in \{0,1\}^{\Lambda_L}} Z_{\Lambda_L , n ,\beta_1 , \beta_2, \lambda,r}^{\mathrm{XY}} \left\langle \cos \left( \theta_{0g} \right) \right\rangle_{L , n ,\beta_1 , \beta_2, \lambda, r}^{\mathrm{XY}} \\
    & = \frac{1}{\bar{Z}_{\Lambda_L , n ,\beta_1 , \beta_2, \lambda}^{\mathrm{XY}}} \sum_{r \in \{0,1\}^{\Lambda_L}} \int_{\Omega_{\Lambda_L}^{\mathrm{div-}\mathbb{S}^1}} \cos \left( \theta_{0g} \right)  \exp \left( H_{\beta_1, \beta_2, \lambda, r} (\theta) \right) d\theta.
\end{align*}
The proof is similar to the original argument of Wells~\cite{Wellsthesis} (see also~\cite{bricmont1981periodic} or \cite{DG25}) 
%\margin{C: I also added our \cite{DG25} paper here which gives a nice account I think.} 
with some adaptations to our setting; specifically, we must take into account that $ d\theta$ is not a product measure but the Haar measure on $\Omega_{\Lambda_L}^{\mathrm{div-}\mathbb{S}^1}$. We introduce duplicated variables and proceed by successive reduction. We first note that, in order to prove~\eqref{eq:WellsverygeneralXY}, it is enough to show
\begin{equation} \label{ineq:duplicatedvariables}
    \sum_{r \in \{0,1\}^{\Lambda_L}} \int_{\Omega_{\Lambda_L}^{\mathrm{div-}\mathbb{S}^1} \times \Omega_{\Lambda_L}^{\mathrm{div-}\mathbb{S}^1}} \left( \cos \left( \theta_{0g} \right) - \cos \left( \theta_{0g}' \right) \right) \exp \left(H_{\beta_1, \beta_2, \lambda, r} (\theta) + H_{\beta_1/2, \beta_2, \lambda} (\theta') \right)  d\theta d\theta' \geq 0.
\end{equation}
    The next step of the proof consists in expanding the exponentials on the left-hand side of~\eqref{ineq:duplicatedvariables} and to use the identity, for any $r \in \{0,1\}$ and any $\theta, \theta' \in [0,2\pi)$,
    \begin{equation*} 
        \left( r \cos (  \theta ) + \frac{1}{2} \cos (  \theta') \right)  = \frac{1}{2} \left( r  +  \frac{1}{2} \right) \left(  \cos ( \theta) + \cos ( \theta') \right)  + \frac{1}{2} \left( r -  \frac{1}{2} \right) \left(  \cos (\theta )  - \cos ( \theta') \right).
    \end{equation*}
    to reduce the problem to the proof of the following two inequalities (N.B. expanding the exponential in~\eqref{ineq:duplicatedvariables} involves quite a lot of terms and we decided not to write down all of them, but the argument is the same as in~\cite{Wellsthesis, bricmont1981periodic} or~\cite[Section 3]{DG25}): for any collection of vertices $x_1 , \ldots, x_k \in \Lambda_L$ (not necessarily distinct) and any collection of plus and minus signs
    \begin{equation} \label{eq:Wellswithdisroder}
        \sum_{r \in \{0,1\}^{\Lambda_L}} \prod_{i=1}^k \left( r_{x_i}\pm \frac{1}{2}  \right) \geq 0
    \end{equation}
    and, for any collection of oriented edges $\vec{e}_1, \ldots, \vec{e}_k \in \vec{E}\left( \Lambda_L^g(n) \right)$ (not necessarily distinct) and any collection of plus and minus signs
    \begin{equation} \label{eq:Wellswithangles}
         \int_{\Omega_{\Lambda_L}^{\mathrm{div-}\mathbb{S}^1} \times \Omega_{\Lambda_L}^{\mathrm{div-}\mathbb{S}^1}} \prod_{i=1}^k \left( \cos (  \theta_{\vec{e}_i} ) \pm \cos (  \theta'_{\vec{e}_i}) \right)  d\theta d\theta' \geq 0.
    \end{equation}
    We then prove the two inequalities~\eqref{eq:Wellswithdisroder} and~\eqref{eq:Wellswithangles} separately and start with~\eqref{eq:Wellswithdisroder}.
    
    \medskip
    
    \textit{Proof of the inequality~\eqref{eq:Wellswithdisroder}.}
    The inequality is a consequence of the following result: for any pair of integers $n_1, n_2 \in \N$,
    \begin{equation*}
        \sum_{r \in \{0 , 1\}} \left(r - \frac{1}{2}\right)^{n_1} \left(r + \frac{1}{2}\right)^{n_2} \geq 0.
    \end{equation*}
    This last inequality can be easily verified.
   % \margin{\bf Maybe more detail here, or at least discuss ?}
    
    \medskip
    
    \textit{Proof of the inequality~\eqref{eq:Wellswithangles}.} Regarding the inequality~\eqref{eq:Wellswithangles}, we will make use of the Ginibre inequality and the properties of the Haar measure $d \theta$ introduced in Section~\ref{section:defgeneralXY}. We let $T$ be an arbitrary spanning tree of the graph $\Lambda_L^g(n)$ and denote by $A_T : \Omega_T^{\mathbb{S}^1} \to \Omega_{\Lambda_L}^{\mathrm{div-}\mathbb{S}^1}$ the extension map. We thus have the identity
    \begin{multline} \label{eq:generalGinibreidentity}
     \int_{\Omega_{\Lambda_L}^{\mathrm{div-}\mathbb{S}^1} \times  \Omega_{\Lambda_L}^{\mathrm{div-}\mathbb{S}^1} } \prod_{i=1}^k \left( \cos (  \theta_{\vec{e}_i} ) \pm \cos (  \theta'_{\vec{e}_i}) \right) d\theta d\theta' \\
     =   \int_{\Omega_{T}^{\mathbb{S}^1} \times \Omega_{T}^{\mathbb{S}^1}}  \prod_{i=1}^k \left( \cos (  (A_T \theta)_{\vec{e}_i} ) \pm \cos (  (A_T \theta')_{\vec{e}_i}) \right)  \prod_{e \in E(\Lambda_L^g(n)) \setminus E(T)} (2 \pi)^{-2} d\theta_e d\theta'_e.
    \end{multline}
    We next make use of the following observation from~\cite{van2023duality} (already discussed in Section~\ref{section:defgeneralXY}): for any $i \in \{1, \ldots, k\}$, the angle $ \left( A_T \theta \right)_{\vec{e}_i}$ can be written as a linear combination with integer-valued coefficients of the angles $\left( \theta_{\vec{e}} \right)_{\vec{e} \in \vec{E}(\Lambda_L^g(n)) \setminus \vec{E}(T)}$. Together with the definition~\eqref{def:productHaarmeasure} of the measure $d \theta$, we deduce that there exist $m_1 , \ldots, m_{k} \in \Z^{E(\Lambda_L^g(n)) \setminus E(T)}$ such that
    \begin{multline*}
        \int_{\Omega_{\Lambda_L}^{\mathrm{div-}\mathbb{S}^1} \times  \Omega_{\Lambda_L}^{\mathrm{div-}\mathbb{S}^1}} \prod_{i=1}^k \left( \cos (  \theta_{\vec{e}_i} ) \pm \cos (  \theta'_{\vec{e}_i}) \right)  d\theta d\theta' = \\
        \int_{\Omega_{T}^{\mathbb{S}^1} \times \Omega_{T}^{\mathbb{S}^1}} \prod_{i=1}^k \left( \cos (  m_{i} \cdot \theta ) \pm \cos (  m_{i} \cdot \theta' ) \right)  \prod_{e \in E(\Lambda_L^g(n)) \setminus E(T)}  (2 \pi)^{-2}  d\theta_e d\theta'_e.
    \end{multline*}
    The right-hand side of the previous display is nonnegative due to the Ginibre inequality (specifically, the inequality~\eqref{ineq:Ginibre1} of Theorem~\ref{theoremGinibreineq}). The proof of~\eqref{eq:Wellswithangles} is thus complete.
    \end{proof}

\subsubsection{Wells' inequality for the XY height function} \label{sectionWellsineq2}

We may then combine Proposition~\ref{prop:generalisedWells} with the duality identity stated in Proposition~\ref{prop:propA3} (or specifically, the one stated in~\eqref{eq:dualityonpercolation}) to prove the Wells' inequality for height functions stated in Proposition~\ref{prop:Wellsforheight}.

We first introduce a definition for the XY height function on a percolation configuration (with the additional weight on $\varphi(0)$).

\begin{definition}[XY height function on a percolation configuration]
Given an integer $L \in \N$, an integer $n\geq 2$, a parameter $\lambda > 0$, two inverse temperatures $\beta_1 , \beta_2 \geq 0$ and a percolation configuration $r \in \{0,1\}^{\Lambda_L},$ we define
  \begin{equation*}
     \mu_{L , n ,\beta_1, \beta_2 , \lambda, r}^{\Z - \mathrm{XY}} (\left\{ \varphi \right\}) := \frac{1}{Z_{\Lambda_L,J, r}^{\Z-\mathrm{XY}} }  I_{\left(\varphi(0)\right)} \left( \lambda \right)\prod_{x \sim y}  \left[ I_{\left(\varphi(x) - \varphi(y)\right)} \left( r_x \beta_1 \right) I_{\left(\varphi(x) - \varphi(y)\right)} \left( r_y \beta_1 \right) I_{\left(\varphi(x) - \varphi(y)\right)} \left(  n \beta_2 \right)^{n-2} \right].
 \end{equation*}
 We denote the variance associated with this measure by $\mathrm{Var}_{L , n ,\beta_1, \beta_2 , \lambda, r}^{\Z - \mathrm{XY}}$.
 %\margin{Was not defined already before ? Nope}
 \end{definition}
 
 We next prove the Wells' inequality stated in Proposition~\ref{prop:Wellsforheight}.

\begin{proof}
The proof relies on Proposition~\ref{prop:generalisedWells} and on the duality identities stated in Proposition~\ref{prop:propA3}. We thus introduce the parameter $\lambda > 0$ (and will send this parameter to infinity at the end of the argument) and first prove the inequality: for any $L \in \N$ and any $\beta_1 , \beta_2, \lambda > 0$,
\begin{equation} \label{eq:A4generalWellsheightlambda}
\mathrm{Var}_{L , n , \beta_1/2, \beta_2, \lambda}^{\Z-\mathrm{XY}} \left[ \varphi(0) \right]  \leq \E_{\nu_{L , n , \beta_1, \beta_2, \lambda}} \left[ \mathrm{Var}_{L , n , \beta_1, \beta_2, \lambda, r}^{\Z-\mathrm{XY}} \left[ \varphi(0) \right] \right].
\end{equation}
We first recall the identity~\eqref{eq:dualityonpercolation}: for any percolation configuration $r \in \{ 0 , 1 \}^{\Lambda_L}$,
\begin{equation*}
    \mathrm{Var}_{L , n , \beta_1, \beta_2, \lambda,r}^{\Z-\mathrm{XY}} \left[ \varphi(0) \right] = \left\langle \lambda \cos \left( \theta_{0g} \right) + \frac{\lambda^2}{2} \cos \left( 2 \theta_{0g} \right)  - \frac{\lambda^2}{2}  \right\rangle_{L , n , \beta_1, \beta_2, \lambda, r}^{\mathrm{XY}}.
\end{equation*}
By Proposition~\ref{prop:generalisedWells} (with the values $m = 1$ and $m = 2$), we have that
\begin{equation*}
    \left\langle  \cos \left( \theta_{0g} \right)  \right\rangle_{L , n , \beta_1/2, \beta_2, \lambda}^{\mathrm{XY}} \leq \E_{\nu_{L , n , \beta_1, \beta_2, \lambda,r}} \left[  \left\langle  \cos \left( \theta_{0g} \right)  \right\rangle_{L , n , \beta_1, \beta_2, \lambda,r}^{\mathrm{XY}} \right]
\end{equation*}
and 
\begin{equation*}
    \left\langle  \cos \left( 2 \theta_{0g} \right)  \right\rangle_{L , n , \beta_1/2, \beta_2, \lambda}^{\mathrm{XY}} \leq \E_{\nu_{L , n , \beta_1, \beta_2, \lambda}} \left[  \left\langle  \cos \left( 2 \theta_{0g} \right)  \right\rangle_{L , n , \beta_1, \beta_2, \lambda,r}^{\mathrm{XY}} \right].
\end{equation*}
Combining the three previous displays, we obtain 
\begin{align*}
     \mathrm{Var}_{L , n , \beta_1/2, \beta_2, \lambda}^{\Z-\mathrm{XY}} \left[ \varphi(0) \right]  & = \left\langle \lambda \cos \left( \theta_{0g} \right) + \frac{\lambda^2}{2} \cos \left( 2 \theta_{0g} \right)  - \frac{\lambda^2}{2}  \right\rangle_{L , n , \beta_1/2, \beta_2, \lambda}^{\mathrm{XY}} \\
     & \leq \E_{\nu_{L , n , \beta_1, \beta_2, \lambda}} \left[ \left\langle \lambda \cos \left( \theta_{0g} \right) + \frac{\lambda^2}{2} \cos \left( 2 \theta_{0g} \right)  - \frac{\lambda^2}{2}  \right\rangle_{L , n , \beta_1, \beta_2, \lambda, r}^{\mathrm{XY}}  \right] \\
     & = \E_{\nu_{L , n , \beta_1, \beta_2, \lambda}} \left[ \mathrm{Var}_{L , n , \beta_1, \beta_2, \lambda, r}^{\Z-\mathrm{XY}} \left[ \varphi(0) \right] \right].
\end{align*}
The proof of the inequality~\eqref{eq:A4generalWellsheightlambda} is complete.

We next take the limit $\lambda \to \infty$ to deduce the second inequality of Proposition~\ref{prop:Wellsforheight}. To this end, we note that, for any $r \in \{0,1\}^{\Lambda_L}$, we have the convergences
\begin{equation*}
    \begin{aligned}
    \mathrm{Var}_{L , n , \beta_1, \beta_2, \lambda, r}^{\Z-\mathrm{XY}} \left[ \varphi(0) \right] & \underset{\lambda \to \infty}{\longrightarrow}  \mathrm{Var}_{L , n , \beta_1, \beta_2, r}^{\Z-\mathrm{XY}} \left[ \varphi(0) \right] \\
    \frac{Z_{\Lambda_L , n , \beta_1, \beta_2, \lambda, r}^{\Z-\mathrm{XY}}}{I_0(\lambda)} & \underset{\lambda \to \infty}{\longrightarrow} Z_{\Lambda_L , n , \beta_1, \beta_2, r}^{\Z-\mathrm{XY}}.
    \end{aligned}
\end{equation*}
Since there are finitely many percolation configurations in the box $\Lambda_L$, we deduce from these two convergences that
%\margin{C: $I_0(\lambda)$ does not converge to 1as $\lambda$ goes to infinity here, an additional normalisation here ? \textbf{Oh I see, I think I agree here!}}
\begin{equation*}
    \frac{\bar{Z}_{\Lambda_L , n , \beta_1, \beta_2, \lambda}^{\Z-\mathrm{XY}}}{I_0(\lambda)} = \sum_{r \in \{0,1\}^{\Lambda_L}} \frac{Z_{\Lambda_L , n , \beta_1, \beta_2, \lambda, r}^{\Z-\mathrm{XY}}}{I_0(\lambda)} \underset{\lambda \to \infty}{\longrightarrow} \sum_{r \in \{0,1\}^{\Lambda_L}} Z_{\Lambda_L , n , \beta_1, \beta_2,r}^{\Z-\mathrm{XY}} =  \bar{Z}_{\Lambda_L , n , \beta_1, \beta_2}^{\Z-\mathrm{XY}}.
\end{equation*}
A combination of the two previous displays with~\eqref{eq:identitypartfunctperc} implies that
%\margin{C: I missed why we use (4.15) here. Aren't we only working with Z valued functions at this stage and maybe 4.15 was relevant slightly before when going through Wells or something ? \textbf{This is because the last wells type measure is defined via XY weight instead of Z weights!}}
\begin{align*}
    \mathrm{Var}_{L , n , \beta_1/2, \beta_2}^{\Z-\mathrm{XY}} \left[ \varphi(0) \right]  = \lim_{\lambda \to \infty} \mathrm{Var}_{L , n , \beta_1/2, \beta_2, \lambda}^{\Z-\mathrm{XY}} \left[ \varphi(0) \right] & \leq \liminf_{\lambda \to \infty} \E_{\nu_{L , n , \beta_1, \beta_2, \lambda}} \left[ \mathrm{Var}_{L , \beta , \lambda, r}^{\Z-\mathrm{XY}} \left[ \varphi(0) \right] \right] \\
    & = \liminf_{\lambda \to \infty} \sum_{r \in \{0,1\}^{\Lambda_L}}  \mathrm{Var}_{L , n , \beta_1, \beta_2, \lambda, r}^{\Z-\mathrm{XY}} \left[ \varphi(0) \right] \frac{Z_{\Lambda_L , n , \beta_1, \beta_2, \lambda, r}^{\Z-\mathrm{XY}}}{\bar{Z}_{\Lambda_L , n , \beta_1, \beta_2, \lambda}^{\Z-\mathrm{XY}}} \\
    & = \sum_{r \in \{0,1\}^{\Lambda_L}}  \mathrm{Var}_{\Lambda_L , n , \beta_1, \beta_2, r}^{\Z-\mathrm{XY}} \left[ \varphi(0) \right] \frac{Z_{\Lambda_L , n , \beta_1, \beta_2, r}^{\Z-\mathrm{XY}}}{\bar{Z}_{\Lambda_L , n , \beta_1, \beta_2}^{\Z-\mathrm{XY}}} \\
    & =  \E_{\nu_{L , n , \beta_1, \beta_2}} \left[ \mathrm{Var}_{L , n , \beta_1, \beta_2, r}^{\Z-\mathrm{XY}} \left[ \varphi(0) \right] \right].
\end{align*}
This completes the proof of the second inequality of Proposition~\ref{prop:Wellsforheight}.
\end{proof}

\subsection{Stochastic domination for the Wells' disorders} \label{sec:sec4.3}

As it was the case for the XY and Villain models, a feature of the Wells' disorder is that it is stochastically dominated by an i.i.d. Bernoulli measure. In the following proposition (and below), we set, for $\beta \geq 0$,
\begin{equation*}
    p_0(\beta) := \frac{1}{1 + \exp \left( - 2d \beta \right)} \in (0 , 1).
\end{equation*}

\begin{proposition}[Stochastic domination for the Wells' disorder] \label{prop:domstochdisorder}
For any $L \in \N$ and any $\beta \geq 0$, one has the stochastic domination
\begin{equation} \label{eq:stochdomappendixmubeta}
    \nu_{L , \beta} \preceq \P_{\Lambda_L , p_0(\beta)}^{\mathrm{site}}.
\end{equation}
For any $L \in \N$, any $n \in \N$ and any $\beta_1 , \beta_2 \in (0, \infty)$, one has the stochastic domination
%\margin{C: SHOULD WE HAVE A $\lambda$ here and just above ?}
\begin{equation} \label{eq:stochdomappendixmubetabeta}
    \nu_{L , n , \beta_1 , \beta_2}  \preceq \P_{\Lambda_L , p_0(\beta_1)}^{\mathrm{site}}.
\end{equation}
\end{proposition}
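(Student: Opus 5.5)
The plan mirrors the proof of Proposition~\ref{prop:stochdomWellsVillain}, but it works on the spin side through the duality of Proposition~\ref{prop:propA3}. By Lemma~\ref{Lemma1.5}, it suffices to show the following. For every $x \in \Lambda_L$ and every $r_1 \in \{0,1\}^{\Lambda_L \setminus \{x\}}$, with $r_1^\pm$ the extensions of $r_1$ by $r_{1,x}^+ = 1$ and $r_{1,x}^- = 0$, we need
\begin{equation*}
Z_{r_1^+} \leq e^{2d\beta_1} Z_{r_1^-}.
\end{equation*}
Here $Z_r$ denotes $Z^{\Z-\mathrm{XY}}_{\Lambda_L,n,\beta_1,\beta_2,r}$ (resp. $Z^{\Z-\mathrm{XY}}_{\Lambda_L,\beta,r}$). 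This bound gives a conditional probability at most $1/(1+e^{-2d\beta_1}) = p_0(\beta_1)$. Both measures are strictly positive, since $Z_r > 0$: the configuration $\varphi \equiv 0$ has positive weight because $I_0 \geq 1$.

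To prove the ratio bound, I rewrite each $Z_r$ as an XY-type partition function using~\eqref{eq:idpartfunctgenera}. I take $\lambda$ finite and later send $\lambda \to \infty$ after dividing by $I_0(\lambda)$, as in the proof of Proposition~\ref{prop:Wellsforheight}. Alternatively, I use the $\lambda$-free version of the duality directly. Thus
\begin{equation*}
Z_r = \int_{\Omega^{\mathrm{div-}\mathbb{S}^1}_{\Lambda_L}} e^{\lambda\cos\theta_{0g}} \exp\bigl(H_{\beta_1,\beta_2,\lambda,r}(\theta)\bigr)\, d\theta,
\end{equation*}
where the Hamiltonian is a sum of terms $J\cos\theta_{\vec e}$ on edges. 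The site $x$ enters only through the terms $\beta_1 r_x \cos(\theta_{xy,1})$ (or $\beta_1 r_x\cos(\theta_{yx,n})$), one for each neighbour $y$ of $x$. There are $2d$ such terms, and each is bounded by $\beta_1$. Hence, pointwise in $\theta$,
\begin{equation*}
\exp\bigl(H_{r_1^+}(\theta)\bigr) \leq e^{2d\beta_1}\exp\bigl(H_{r_1^-}(\theta)\bigr).
\end{equation*}
Integrating against the same Haar measure $d\theta$ yields the bound. It is uniform in $n$, $\beta_2$ and $\lambda$, so it passes to the limit $\lambda\to\infty$.

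For~\eqref{eq:stochdomappendixmubeta} the same argument applies. Here $x$ enters through $\beta r_x r_y\cos(\theta_{xy})$ for its $2d$ incident edges, each bounded by $\beta$, which gives $p_0(\beta)$.

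One could instead try to argue directly on the height side, but that is not straightforward. The ratio $I_k(\beta_1)/I_k(0)$ is infinite for $k\neq 0$, so no pointwise bound in $\varphi$ is available. This is why passing to the spin representation is the key step. The only point to check carefully is that, in the duality, the percolation variable $r_x$ multiplies exactly the conductances of the $2d$ edges adjacent to $x$ on which it is placed by the convention in~\eqref{def:IVGFFmultigeneral}. This holds by construction, because the duality holds for arbitrary conductances.
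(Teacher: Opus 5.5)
Your proposal is correct and is essentially the paper's argument: you pass to the dual generalised XY model through Proposition~\ref{prop:propA3} with the auxiliary parameter $\lambda$, use the pointwise bound $\exp(H_{r_1^+})\leq e^{2d\beta_1}\exp(H_{r_1^-})$ exactly as in the proof of Proposition~\ref{prop:stochdomWellsVillain}, and then let $\lambda\to\infty$. The only cosmetic difference is that you carry the partition-function ratio bound through the limit and then apply Lemma~\ref{Lemma1.5} to $\nu_{L,n,\beta_1,\beta_2}$ directly, whereas the paper first proves $\nu_{L,n,\beta_1,\beta_2,\lambda}\preceq\P^{\mathrm{site}}_{\Lambda_L,p_0(\beta_1)}$ for each $\lambda$ and then takes the limit of the measures; you also write out the first inequality, which the paper leaves to the reader.
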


\begin{remark}
For the proof of Theorem~\ref{Thm:delocheightsupercrit}, the crucial observation is that the probability $p_0(\beta_1) \in (0,1)$ on the right-hand side of~\eqref{eq:stochdomappendixmubetabeta} does not depend on the values of $n$ and $\beta_2$ (this will allow to send the values of these parameters to infinity while still preserving a nontrivial stochastic domination).
\end{remark}

\begin{proof}
We only show the inequality~\eqref{eq:stochdomappendixmubetabeta}. There are various possible proofs and we will prove the result using Proposition~\ref{prop:propA3}. We thus reintroduce the parameter $\lambda > 0$ and use the same argument as in the proof of Proposition~\ref{prop:stochdomWellsVillain}, we deduce that, for any $\lambda > 0$,
\begin{equation*}
     \nu_{L , n , \beta_1 , \beta_2, \lambda}  \preceq \P_{\Lambda_L , p_0(\beta_1)}^{\mathrm{site}}.
\end{equation*}
We may then take the limit $\lambda \to \infty$ (and note that the right-hand side of the previous display does not depend on $\lambda$) to complete the proof.
\end{proof}

%\margin{C: isn't the fact $\Z$ non-compact a potential issue here ?? I naively thought we would need to see something at this stage.  In the $\S^1$ case, I think the key point was that if $\beta$ is large, then }
%
%\purple{\bf Stoch Domination is much easier to see with XY, as in previous than directly starring at $\Z$ ! I should check here !! and the point here is that we have a WELLS type environment on $XY$ instread of $\Z$ valued !! This is why we don't much t check here even though it is very KEY !! }

\subsection{Delocalisation for integer-valued height functions on a high-density site percolation} \label{sec:sec4.4}

This section is devoted to the proof of the delocalisation of the height functions on a high-density site percolation. In the case of the XY height function, the argument relies on a combination of four properties stated above: the monotonicity of the variance of the height in the conductances (Proposition~\ref{prop:monotonicity}), the existence of a delocalised phase for the XY height function (Proposition~\ref{prop:delocXY/IVGFFheight}), the stochastic domination for the Wells' disorder (Proposition~\ref{prop:domstochdisorder}) and the Wells' inequality for height functions (Proposition~\ref{prop:Wellsforheight}).

In the case of the integer-valued Gaussian free field, the argument has the same structure as the proof of Proposition~\ref{prop3.12} establishing the existence of a phase transition for the Villain model on a high-density percolation and relies on the introduction of the $\Z$-XY/GFF height function~\eqref{eq:defXYGFF} together with a combination of the aforementioned results. To be more specific, we will prove the following statement.

\begin{proposition} \label{prop:prop4.19}
There exist a probability $p_0 := p_0(d) < 1$, an inverse temperature $\beta_0 \in (0 , \infty)$ and a constant $c_0 > 0$ such that for any $p \geq p_0$, any $\beta \geq \beta_0$ and any $L \in \mathbb{N}^*$,
\begin{equation*}
\mathbb{E}_p^{\mathrm{site}} \left[ \mathrm{Var}_{L , \beta, r}^{\Z-\mathrm{XY}}  \left[ \varphi(0) \right] \right] \geq c_0 \ln L ~~\mbox{and}~~ \mathbb{E}_p^{\mathrm{site}} \left[ \mathrm{Var}_{L , \beta, r}^{\Z-\mathrm{GFF}}  \left[ \varphi(0) \right] \right] \geq c_0 \ln L.
\end{equation*}
\end{proposition}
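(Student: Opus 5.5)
The plan follows the proof of Proposition~\ref{prop3.12}, with the height-function analogues of each ingredient. We treat the $\Z$-GFF first, since the XY height function case is the same argument with one step removed. Let $\beta_{\mathrm{Deloc}}$ and $c_0$ be given by Theorem~\ref{prop:delocXY/IVGFFheight}. We set $\beta_1 := 2\beta_{\mathrm{Deloc}}$ and $\beta_2 := \beta_{\mathrm{Deloc}}$, and define $p_0 := p_0(\beta_1)$ as in Proposition~\ref{prop:domstochdisorder}.

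Fix $L$ and $n \geq 3$. The argument runs through a chain of inequalities.
\begin{itemize}
\item By the second inequality of Proposition~\ref{prop:Wellsforheight},
\begin{equation*}
\var_{L,n,\beta_1/2,\beta_2}^{\Z-\mathrm{XY}}[\varphi(0)] \leq \E_{\nu_{L,n,\beta_1,\beta_2}}\left[\var_{L,n,\beta_1,\beta_2,r}^{\Z-\mathrm{XY}}[\varphi(0)]\right].
\end{equation*}
\item By Theorem~\ref{prop:monotonicity}, the map $r\mapsto \var_{L,n,\beta_1,\beta_2,r}^{\Z-\mathrm{XY}}[\varphi(0)]$ is increasing, since $r$ enters only through the conductances $r_x\beta_1$. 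The integrand is nonnegative, so the stochastic domination \eqref{eq:stochdomappendixmubetabeta} lets us replace $\nu_{L,n,\beta_1,\beta_2}$ by $\P^{\mathrm{site}}_{\Lambda_L,p_0}$ on the right-hand side.
\item We send $n\to\infty$ using Proposition~\ref{prop:approxIVGFFbyXY}. Since $p_0$ does not depend on $n$, and the sum over $r$ is finite, there is no dominated-convergence issue. This gives
\begin{equation*}
\var_{L,\beta_1/2,\beta_2}^{\Z-\mathrm{XY/GFF}}[\varphi(0)] \leq \E^{\mathrm{site}}_{p_0}\left[\var_{L,\beta_1,\beta_2,r}^{\Z-\mathrm{XY/GFF}}[\varphi(0)]\right].
\end{equation*}
\item We bound the right-hand side above by $\E^{\mathrm{site}}_{p_0}[\var^{\Z-\mathrm{GFF}}_{L,\beta_2,r}[\varphi(0)]]$ using \eqref{eq:ineq4.11}.
\item The left-hand side is at least $c_0\ln L$ by Theorem~\ref{prop:delocXY/IVGFFheight}, since $\beta_1/2 = \beta_2 = \beta_{\mathrm{Deloc}}$.
\item Corollary~\ref{cor:corollary4.9} shows the expected variance is increasing in $\beta$. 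Combined with the stochastic ordering $\P_{p_0}^{\mathrm{site}}\preceq \P_p^{\mathrm{site}}$ and monotonicity in $r$, this extends the bound to all $p\geq p_0$ and all $\beta\geq\beta_0:=\beta_{\mathrm{Deloc}}$.
\end{itemize}

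For the XY height function we use the first inequality of Proposition~\ref{prop:Wellsforheight} together with \eqref{eq:stochdomappendixmubeta}. With $\beta$ fixed so that $\beta/4$ lies in a delocalised regime, we get
\begin{equation*}
\var_{L,\beta/4}^{\Z-\mathrm{XY}}[\varphi(0)]\leq \E_{p_0(\beta)}^{\mathrm{site}}\left[\var_{L,\beta,r}^{\Z-\mathrm{XY}}[\varphi(0)]\right].
\end{equation*}
Monotonicity in $p$ and $\beta$ then finishes this case. We take $p_0$ to be the maximum of the two values of $p_0$, and $c_0$ to be the minimum of the two constants.

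The main obstacle is the logarithmic lower bound for the plain $\Z$-XY height function at inverse temperature $\beta/4$, because Theorem~\ref{prop:delocXY/IVGFFheight} as stated covers only the $\Z$-GFF and the $\Z$-XY/GFF models. I would first try to compare it to the $\Z$-XY/GFF model: replace one parallel copy by the Gaussian weight, or use $(I_k(J)/I_0(J))$ monotone in $J$ together with the monotonicity Theorem~\ref{prop:monotonicity}. Failing that, I would invoke \cite{lammers2022height}, noting that the Bessel potential is convex, so Lammers' theorem applies to it. The other subtle point is that the $n\to\infty$ limit requires the stochastic domination parameter to be independent of $n$ and $\beta_2$; this is exactly what Proposition~\ref{prop:domstochdisorder} provides, and it is the reason $\beta_1$, not $\beta_2$, controls $p_0$.
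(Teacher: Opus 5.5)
Your proposal is correct and follows essentially the same route as the paper: Wells' inequality, stochastic domination with $p_0(\beta_1)$ independent of $n$ and $\beta_2$, the limit $n\to\infty$, then \eqref{eq:ineq4.11} and monotonicity in $p$ and $\beta$ (the paper takes $\beta_1=\beta_2=2\beta_{\mathrm{Deloc}}$ rather than your $\beta_2=\beta_{\mathrm{Deloc}}$, which is immaterial). The obstacle you flag is real, since the paper applies Theorem~\ref{prop:delocXY/IVGFFheight} to the plain $\Z$-XY height function at $\beta_0/4$ without comment, and your first remedy closes it: start from parallel copies with conductances $(\beta,J,\dots,J)$ with $J\to\infty$ (which recovers the plain model), lower them to $(\beta,\beta,n\beta_2,\dots,n\beta_2)$ using \eqref{ineq:condXYheight}, and let $n\to\infty$ via Proposition~\ref{prop:approxIVGFFbyXY}; this yields $\mathrm{Var}^{\Z-\mathrm{XY}}_{L,\beta}[\varphi(0)]\geq \mathrm{Var}^{\Z-\mathrm{XY/GFF}}_{L,\beta,\beta_2}[\varphi(0)]$, so the second bullet of that theorem suffices.
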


\begin{proof}
We first prove the result for the XY height function and then for the integer-valued Gaussian free field.

\textit{\underline{Proof for the XY height function:}} We let $\beta_{\mathrm{Deloc}} > 0$ be the inverse temperature which appears in the statement of Proposition~\ref{prop:delocXY/IVGFFheight} and set $\beta_0 := 4 \beta_{\mathrm{Deloc}}$ and $p_0 := p_0(\beta_0)$. Then, we may apply Proposition~\ref{prop:delocXY/IVGFFheight} together with Wells' inequality (Proposition~\ref{prop:Wellsforheight}) to deduce that there exists a constant $c_0 > 0$ such that, for any $L \in \N^*$,
\begin{equation} \label{ineq:delocinWellsdisroder}
      \E_{\nu_{L , \beta_0}} \left[ \mathrm{Var}^{\Z-\mathrm{XY}}_{L , \beta_0, r} \left[ \varphi(0) \right] \right] \geq \mathrm{Var}^{\Z-\mathrm{XY}}_{L , \beta_0/4} \left[ \varphi(0) \right] \geq   c_0 \ln L.
\end{equation}
We next note that, by Corollary~\ref{cor:corollary4.9}, the map
\begin{equation*}
    r \in \{0,1\}^{\Lambda_L} \mapsto \mathrm{Var}^{\Z-\mathrm{XY}}_{L , \beta_0, r} \left[ \varphi(0) \right] ~~\mbox{is increasing.}
\end{equation*}
Combining this observation with the stochastic domination for the Wells' disorder stated in Proposition~\ref{prop:domstochdisorder}, we deduce that
\begin{equation*}
    \E_{p_0}^{\mathrm{site}} \left[ \mathrm{Var}^{\Z-\mathrm{XY}}_{L , \beta_0, r} \left[ \varphi(0) \right] \right] \geq \E_{\nu_{L , \beta_0}} \left[ \mathrm{Var}^{\Z-\mathrm{XY}}_{L , \beta_0, r} \left[ \varphi(0) \right] \right].
\end{equation*}
Together with~\eqref{ineq:delocinWellsdisroder}, we obtain 
\begin{equation*}
    \E_{p_0}^{\mathrm{site}} \left[ \mathrm{Var}^{\Z-\mathrm{XY}}_{L , \beta_0, r} \left[ \varphi(0) \right] \right] \geq c_0 \ln L .
\end{equation*}
Increasing the values of the probability $p_0$ or of the inverse temperature $\beta_0$ increases the term on the left-hand side (due to Remark~\ref{rem:remark4} and Proposition~\ref{prop:monotonicity}). This concludes the proof of Proposition~\ref{prop:prop4.19} for the XY height function.

\medskip

\textit{\underline{Proof for the integer-valued Gaussian free field:}} 
We let $\beta_{1}= \beta_{2} = 2\beta_{\mathrm{Deloc}}$ where $\beta_{\mathrm{Deloc}}$ is the inverse temperature appearing in the statement of Proposition~\ref{prop:delocXY/IVGFFheight} and set $p_0 := p_0(2\beta_{\mathrm{Deloc}})$. 

We let $L \in \N$ be a (large) sidelength and fix a (large) integer $n \in \N$. We first combine Proposition~\ref{prop:domstochdisorder} with Corollary~\ref{cor:corollary4.9} to deduce that
%\margin{C: here is it $\beta_1/2$ or $\beta_1/4$ after Wells ?}
\begin{equation*}
    \E_{p_0}^{\mathrm{site}} \left[ \var_{L, n , \beta_{1} , \beta_{2}, r}^{\Z-\mathrm{XY}}\left[ \varphi(0) \right] \right] \geq \E_{\nu_{L , n , \beta_{1}, \beta_{2}}} \left[ \var_{L, n , \beta_{1}, \beta_{2}, r}^{\Z-\mathrm{XY}} \left[ \varphi(0) \right] \right].
\end{equation*}
Combining the previous bound with Wells' inequality (Proposition~\ref{prop:Wellsforheight}), we deduce that 
\begin{equation*}
     \E_{p_0}^{\mathrm{site}} \left[ \var_{L, n , \beta_{1} , \beta_{2}, r}^{\Z-\mathrm{XY}}\left[ \varphi(0) \right] \right] \geq \var_{L, n , \beta_{1} /2, \beta_{2}}^{\Z-\mathrm{XY}}\left[ \varphi(0) \right].
\end{equation*}
Since this inequality holds for any value of the integer $n \in \N$ (and since the probability $p_0$ does not depend on $n$), we may take the limit $n \to \infty$ and apply Proposition~\ref{prop:approxIVGFFbyXY} to deduce that 
    \begin{equation*}
     \E_{p_0}^{\mathrm{site}} \left[ \var_{L, \beta_{1} , \beta_{2}, r}^{\Z-\mathrm{XY/GFF}}\left[ \varphi(0) \right] \right] \geq \var_{L, \beta_{1}/2 ,  \beta_{2}}^{\Z-\mathrm{XY/GFF}}\left[ \varphi(0) \right] .
\end{equation*}
Applying the inequality~\eqref{eq:ineq4.11} allows to simplify the left-hand side
    \begin{equation*}
    \E_{p_0}^{\mathrm{site}} \left[ \var_{L,\beta_{2}, r}^{\Z-\mathrm{GFF}}\left[ \varphi(0) \right] \right] \geq  \var_{L, \beta_{1}/2 ,  \beta_{2}}^{\Z-\mathrm{XY/GFF}}\left[ \varphi(0) \right].
\end{equation*}
Finally, by applying Proposition~\ref{prop:delocXY/IVGFFheight}, we deduce that there exists $c _0 > 0$ such that, for any $L \in \N^*$,
\begin{equation*}
      \E_{p_0}^{\mathrm{site}} \left[ \var_{L, \beta_{2}, r}^{\Z-\mathrm{GFF}}\left[ \varphi(x) \right] \right] \geq c_0 \ln L.
\end{equation*}
Increasing the values of the probability $p_0$ or of the inverse temperature $\beta_{2}$ increases the term on the left-hand side (due to Remark~\ref{rem:remark4} and Proposition~\ref{prop:monotonicity}). We may thus conclude the proof of Proposition~\ref{prop:prop4.19} in the case of the integer-valued Gaussian free field.
\end{proof}

\subsection{Delocalisation for the integer-valued Gaussian free field on a supercritical percolation} \label{sec:renoarmargumentforheight}

In this section, we implement the renormalisation argument used to prove the existence of a phase transition for the integer-valued height functions on any supercritical percolation (and establish Theorem~\ref{Thm:delocheightsupercrit}). We will only prove the result for the integer-valued Gaussian free field (N.B. A similar -- although more involved -- argument could be used to prove the result for the XY height function; to minimise the technicality of the proof, we will proceed differently and show the result for the XY height function in Section~\ref{sec:section4.7} by making use of the tools developed there to study height functions with annealed Gaussian potential).

We first introduce some known results on the supercritical phase of Bernoulli edge percolation in Section~\ref{subsec:preliminarysuperctiBern} and then use these results in the proof of Theorem~\ref{Thm:delocheightsupercrit} in Section~\ref{subsec:preliminarysuperctiBern2}.

\subsubsection{Preliminaries: supercritical percolation} \label{subsec:preliminarysuperctiBern}

We introduce in this section two (standard) definitions in Bernoulli percolation: the notions of dual graph and dual percolation configurations. We then introduce the definition of certain ``good events" and state a result from~\cite{penrose-pisztora-1996, pisztora-percolation} asserting that, in the supercritical phase, the probability of these good events are close to $1$.

\begin{figure}
    \centering
    \begin{minipage}{.5\textwidth}
        \centering
        \includegraphics[scale= 0.7]{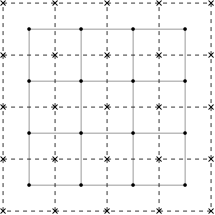}
        \caption{An illustration of the lattice $\Z^2$ (with dots and unbroken lines) together with the dual lattice (with crosses and dotted lines)}\label{fig:duallattice}
    \end{minipage}%
    \begin{minipage}{0.5\textwidth}

    \end{minipage}
\end{figure}

\begin{figure}
    \centering
        \includegraphics[scale= 0.5]{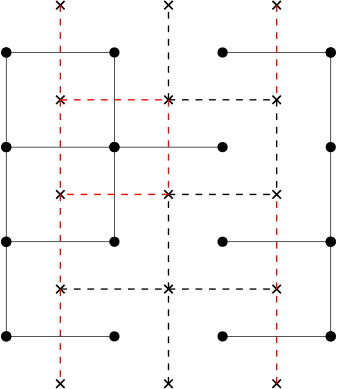}  \caption{A representation of the percolation configurations $\omega$ (with straight lines) and $\omega^*$ (with black dotted lines). We will be interested in the closed edges of the percolation configuration $\omega^*$; these edges are drawn in red dotted lines.} \label{fig:dualperco} %, $\tilde r$ and $\tilde r^*$. The percolation configuration $r$ is defined on the sites of the lattice: an open site is represented by a (full) disk and a closed site is represented by a circle. The percolation configuration $\tilde r$ is then defined on the edges of $\Z^2$; an edge is open if and only if one of its endpoints is closed in the percolation configuration $r$. The percolation configuration $\tilde r^*$ is the dual of the percolation configuration $\tilde r$; its open edges are in the dual lattice and are represented by dotted lines.}
\end{figure}

\begin{definition}[Dual graph]
    Consider the graph $(\Z^2)^*$ defined to be the copy of $\Z^2$ translated by the vector $(1/2, 1/2)$. We refer to this graph as the dual lattice and denote by $E((\Z^2)^*)$ its edge set. Each edge $e^* \in E((\Z^2)^*)$ is naturally associated with an edge of $e$ of $\Z^2$ (which is the only edge of $\Z^2$ intersecting the edge~$e^*$). We refer to Figure~\ref{fig:duallattice} for an illustration.
\end{definition}

\begin{definition}[Percolation configurations]
 Given an edge percolation configuration $\omega \in \{0, 1\}^{\Z^2}$, we define the dual percolation configuration $\omega^* \in \{0,1\}^{E((\Z^2)^*)}$ on the edges of the dual graph $(\Z^2)^*$ by setting $\omega_{e^*} =1- \omega_e$ where $e^*$ is the dual edge of $e$. We refer to Figure~\ref{fig:dualperco} for an illustration.
\end{definition}

%\begin{remark}
%These two percolations are $1$-dependent and dual to each other (see Figure~\ref{fig:dualperco}), the percolation $r$ is decreasing in the probability $p$ and the percolation $\tilde r^*$ is increasing in the probability $p$.
%\end{remark}

\begin{definition}[Good event for percolation configurations on the dual lattice]
For any $x \in (\Z^2)^*$ and any (large) integer $L \in \N$, we let
\begin{multline*}
    G_{x,L} := \left\{ \mbox{Any horizontal or vertical rectangle with short and long sidelengths equal to}~ \left\lceil \frac{L}{100} \right\rceil ~\mathrm{and} ~ \left\lceil \frac{22 L}{10} \right\rceil  \right. \\ \left. \mbox{included in } \left(x + \Lambda_{\left\lceil \frac{11L}{10} \right\rceil} \right) \mbox{ is crossed in the long direction by a cluster of \underline{closed edges} for}~ \omega^* \right\}.
\end{multline*}
We refer to Figure~\ref{fig:goodbox} for an illustration of this event.
\end{definition}

\begin{figure}
\centering
    \includegraphics[scale = 0.6]{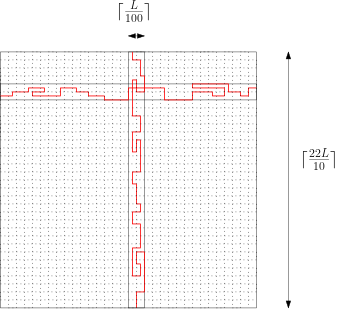}
    \caption{An illustration of a good box: on the figure, two rectangles are drawn and they are crossed in the long direction by paths of closed edges in the dual lattice for the percolation configuration $\omega^*$ (drawn in red)}
    \label{fig:goodbox}
\end{figure}

The following proposition states that, for any $p > 1/2$, the probability of the event $G_{x,L}$ is exponentially close to $1$ as the sidelength of the box tends to infinity (N.B. This is because the law of the closed edges of $\omega^*$ is the one of a Bernoulli edge percolation of probability $p$).

\begin{proposition}[\cite{penrose-pisztora-1996, pisztora-percolation}] \label{prop:prop4.15}
    For any probability $p > 1/2$, their exist two constants $C , c \in (0, \infty)$ (depending on $p$) such that, for any $x \in (\Z^2)^*$,
    \begin{equation*}
        \mathbb{P}_p \left[  G_{x,L} \right] \geq 1 - C e^{- c L}.
    \end{equation*}
\end{proposition}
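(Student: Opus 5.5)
The plan is to reduce the statement to standard crossing estimates for supercritical Bernoulli bond percolation on $\Z^2$, combined with a union bound over a polynomial number of rectangles. First I identify the law of the relevant configuration. Since $\omega^*_{e^*} = 1-\omega_e$, an edge $e^*$ is closed for $\omega^*$ exactly when $\omega_e = 1$. Hence the closed edges of $\omega^*$ form an i.i.d. Bernoulli bond percolation of parameter $p$ on the dual lattice $(\Z^2)^*$, which is isomorphic to $\Z^2$. The event $G_{x,L}$ is therefore the event that every rectangle of size $\lceil L/100\rceil \times \lceil 22L/10\rceil$ in $x+\Lambda_{\lceil 11L/10\rceil}$ has a long-direction crossing in a supercritical ($p>1/2=p_{c,\mathrm{edge}}(2)$) bond percolation. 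By translation invariance it suffices to treat $x=0$.

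The key input is the exponential bound for crossings of long thin rectangles in the supercritical phase. For $p>1/2$ there is $c=c(p)>0$ such that, for any rectangle $R$ of short side $\ell$ and long side $k\ell$ with $k \le 300$ (a fixed aspect ratio),
\[
\P_p\bigl[R \text{ is not crossed in the long direction}\bigr] \le C k \ell\, e^{-c\ell}.
\]
I will derive this from planar duality. If $R$ is not crossed in the long direction, then there is a dual path crossing $R$ in the short direction, of length at least $\ell$, made of edges which are open in the dual of the closed-$\omega^*$ configuration. That dual configuration is a subcritical Bernoulli bond percolation of parameter $1-p<1/2$. By the exponential decay of connectivities in subcritical percolation (Menshikov, Aizenman--Barsky), the probability that a given starting point is connected to distance $\ell$ is at most $e^{-c\ell}$. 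A union bound over the at most $k\ell$ possible starting points on the long side then gives the displayed bound. This is exactly the Penrose--Pisztora type estimate that is being cited.

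Next comes the union bound. With $\ell=\lceil L/100\rceil$ and long side $\lceil 22L/10\rceil$, the aspect ratio is bounded (about $220$). The number of horizontal or vertical rectangles of these dimensions contained in $\Lambda_{\lceil 11L/10\rceil}$ is at most $2(3L)^2$. Therefore
\[
\P_p[G_{0,L}^c] \le 2(3L)^2 \cdot C L\, e^{-cL/100} \le C' e^{-c' L}
\]
after absorbing the polynomial factors into the exponential, with $c'<c/100$ and $C'$ adjusted so the bound also holds trivially for small $L$.

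The main obstacle is the crossing estimate itself, specifically the duality step. One must check that the dual of the dual is the original lattice shifted, that "no long crossing of closed $\omega^*$-edges" really forces a short-direction crossing in the complementary configuration, and that this complementary configuration is i.i.d. with parameter $1-p$. The exponential decay for $1-p<1/2$ is then a standard theorem and can be taken as given.
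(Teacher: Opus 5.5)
Your proposal is correct and is essentially the paper's argument. The paper only notes that the closed edges of $\omega^*$ form an i.i.d.\ Bernoulli bond percolation of parameter $p>1/2$ on $(\Z^2)^*$ and cites Penrose--Pisztora; you additionally supply the standard planar proof of the cited crossing estimate, which is more elementary than the general-dimensional Penrose--Pisztora machinery and rests only on $p_{c,\mathrm{edge}}(2)=1/2$ and subcritical exponential decay (a missing long crossing forces a short-direction crossing by the closed edges of $\omega$, a subcritical Bernoulli$(1-p)$ percolation, and exponential decay plus a polynomial union bound over rectangles concludes).
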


\subsubsection{Preliminaries: delocalisation for the integer-valued Gaussian free field on the diagonal graph} \label{prelim2delocwithdiagonal}

For a technical reason in the renormalisation argument of the next section, we will need to use a (slight) generalisation of the result proved in Section~\ref{sec:sec4.4} to the graph $\Z^2$ with diagonal edges. To be more specific, let us denote by $E_{\times}\left(  \Z^2 \right)$ the set of horizontal, vertical and diagonal edges of the lattice $\Z^2$ and introduce the following version of the integer-valued Gaussian free field.

\begin{figure}
    \centering
    \includegraphics[scale = 0.7]{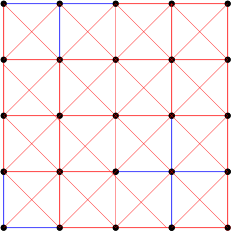}
    \caption{An illustration for the model introduced in Definition~\ref{def:ivgff+diagonal}. The graph $\Z^2$ with the diagonal edges is depicted together with a percolation configuration $r \in \{0,1\}^{\Lambda_L}$: the edges $\{x , y\}$ along which $r_x r_y = 0$ are drawn in blue.}
    \label{fig:ivgff+diagonal}
\end{figure}

\begin{definition}[Integer-valued Gaussian free field on $\Z^2$ with diagonal edges] \label{def:ivgff+diagonal}
    Given an integer $L \in \N$, an inverse temperature $\beta > 0$ and a site percolation configuration $r \in \{0,1\}^{\Lambda_L},$ we define
    \begin{multline*}
        \mu^{\Z-\mathrm{GFF}, \times}_{L , \beta, r}(\left\{ \varphi \right\}) \\
        := \frac{1}{Z^{\Z-\mathrm{GFF}, \times}_{L , \beta , r}}\exp \left( - \frac{1}{2\beta}\sum_{\{ x , y\} \in E(\Z^2)} \frac{\left( \varphi(x) - \varphi(y)\right)^2}{r_{x} r_y } - \frac{1}{2\beta} \sum_{\{ x , y\} \in E_\times(\Z^2) \setminus E(\Z^2)} \left( \varphi(x) - \varphi(y)\right)^2 \right).
    \end{multline*}
    We denote by $\mathrm{Var}^{\Z-\mathrm{GFF}, \times}_{L , \beta, r}$ the variance with respect to this measure.
\end{definition}

\begin{remark}
The percolation configuration only affects the nearest-neighbour edges (as depicted in Figure~\ref{fig:ivgff+diagonal}).
\end{remark}

The result we will need is an extension of the one of Section~\ref{sec:sec4.4} to this model.

\begin{proposition} \label{prop:delocwithdiagonal}
There exist a probability $p_0 \in (0,1)$, an inverse temperature $\beta_{\mathrm{Deloc}, \times} \in (0 , \infty)$ and a constant $c_0 \in (0, \infty)$ such that, for any $p \geq p_0$, any $\beta \geq \beta_{\mathrm{Deloc}, \times}$ and any $L \in \N^*$,
\begin{equation*}
    \E_p^{\mathrm{site}} \left[ \mathrm{Var}^{\Z-\mathrm{GFF}, \times}_{L , \beta, r} \left[ \varphi(0) \right] \right] \geq c_0 \ln L.
\end{equation*}
\end{proposition}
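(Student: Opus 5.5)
The plan is to repeat, essentially verbatim, the proof of Proposition~\ref{prop:prop4.19} (the part for the integer-valued Gaussian free field), now on the graph $E_\times(\Z^2)$. The only difference is that the diagonal edges carry no disorder. The argument has four ingredients: a delocalisation statement without disorder on the diagonal graph, a Wells' inequality, a stochastic domination of the Wells' disorder, and monotonicity in the conductances. We then chain them exactly as before.

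First, we introduce the diagonal analogues of the three models. On each nearest-neighbour edge $\{x,y\}\in E(\Z^2)$ we put the weight $I_{\varphi(x)-\varphi(y)}(r_x\beta_1)I_{\varphi(x)-\varphi(y)}(r_y\beta_1)\,(I_{\varphi(x)-\varphi(y)}(n\beta_2))^{n-2}$. On each diagonal edge we put an undisordered weight $(I_{\varphi(x)-\varphi(y)}(n\beta_2))^{n}$. Its $n\to\infty$ limit (after normalising by $I_0$) is the Gaussian weight $e^{-(\varphi(x)-\varphi(y))^2/(2\beta_2)}$, by the same Bessel asymptotics~\eqref{eq:expansionBessel} used in Proposition~\ref{prop:approxIVGFFbyXY}. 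The limit model is thus a $\Z$-XY/GFF height function with Gaussian diagonal edges.

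The Wells' inequality of Proposition~\ref{prop:Wellsforheight} extends without change. As stressed in Remark~\ref{rem:remark17}, the duality of Proposition~\ref{prop:propA3} holds on any finite (multi)graph, and the proof of Proposition~\ref{prop:generalisedWells} only uses the Ginibre inequality~\eqref{ineq:Global1}-type bound~\eqref{ineq:Ginibre1} and the elementary bound~\eqref{eq:Wellswithdisroder}. Edges that do not depend on $r$ simply enter the Hamiltonian identically in $\theta$ and $\theta'$, so the expansion argument goes through. Similarly, the stochastic domination of Proposition~\ref{prop:domstochdisorder} still gives $p_0(\beta_1)=1/(1+e^{-2d\beta_1})$. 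The bound on the ratio of partition functions only involves the $r_x$-dependent edges at $x$, of which there are $2d$ (indeed at most $4$ here). Monotonicity in $r$ and in $\beta$ follows from Theorem~\ref{prop:monotonicity} on general graphs together with the limit.

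With these tools, the chain is as before. Take $\beta_1=\beta_2=2\beta$ with $\beta$ large. Combining the stochastic domination with Wells' inequality and letting $n\to\infty$ gives
\[
\E^{\mathrm{site}}_{p_0}\bigl[\mathrm{Var}^{\times}_{L,\beta_1,\beta_2,r}[\varphi(0)]\bigr]\geq \mathrm{Var}^{\times}_{L,\beta_1/2,\beta_2}[\varphi(0)].
\]
Sending $\beta_1\to\infty$ on the left, as in~\eqref{eq:ineq4.11}, replaces the left-hand side by $\E^{\mathrm{site}}_{p_0}[\mathrm{Var}^{\Z-\mathrm{GFF},\times}_{L,\beta_2,r}[\varphi(0)]]$. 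Monotonicity in $p$ and $\beta$ then finishes the argument.

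The main obstacle is the remaining input: a logarithmic lower bound for the undisordered mixed XY/GFF model on the diagonal graph, i.e.\ the analogue of Theorem~\ref{prop:delocXY/IVGFFheight}. The graph $E_\times(\Z^2)$ is not planar, but it is a two-dimensional shift-invariant graph. To get around this, I would first try the cheap route of monotonicity in the conductances. Delete the diagonal edges entirely? That lowers the variance in the wrong direction, so it does not help. Instead, observe that each diagonal edge $\{x,x+e_1+e_2\}$ can be dominated via the split-edge idea. Replacing the diagonal Gaussian weight by a path of two nearest-neighbour edges would correspond to a series combination, and by Theorem~\ref{prop:monotonicity} adding conductance increases the variance. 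So the lower bound should follow from the model on $\Z^2$ with doubled conductances, which is covered by Theorem~\ref{prop:delocXY/IVGFFheight}, provided $\beta$ is large. Precisely: the diagonal model dominates the model with diagonal conductances set to $0$, which is exactly the $\Z^2$ model. Hence
\[
\mathrm{Var}^{\times}_{L,\beta_1/2,\beta_2}[\varphi(0)]\geq \mathrm{Var}^{\Z-\mathrm{XY/GFF}}_{L,\beta_1/2,\beta_2}[\varphi(0)]\geq c_0\ln L
\]
for $\beta_1/2,\beta_2\geq\beta_{\mathrm{Deloc}}$, and monotonicity runs in the correct direction after all. The step to check carefully is that the conductance-monotonicity of Theorem~\ref{prop:monotonicity} indeed applies to the mixed XY/GFF model with diagonal edges. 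This follows by the $n\to\infty$ approximation from the XY-height-function statement~\eqref{ineq:condXYheight} on a general multigraph.
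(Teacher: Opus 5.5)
Your reduction of the disordered statement to an undisordered lower bound on the diagonal graph is fine in outline. That reduction consists of Wells' inequality and stochastic domination on $E_\times(\Z^2)$, followed by $n\to\infty$ and $\beta_1\to\infty$, and it is essentially the first route the paper sketches. The gap is in the remaining input, which you correctly flag as the main obstacle but then resolve with a monotonicity argument that runs the wrong way.

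In this paper's convention an edge carries the weight $\exp(-(\varphi(x)-\varphi(y))^2/(2J_{xy}))$. So setting a diagonal conductance to $0$ does not delete the edge; it forces $\varphi(x)=\varphi(y)$. Every vertex of $\Lambda_L$ is joined by a chain of diagonal edges to a vertex outside $\Lambda_L$. Hence the model with all diagonal conductances equal to $0$ has $\varphi\equiv 0$, and the inequality you invoke is true but has right-hand side $0$. Deleting the diagonal edges corresponds instead to $J_{xy}\to\infty$. By Theorem~\ref{prop:monotonicity} this increases the variance, so it only gives $\mathrm{Var}^{\times}_{L,\beta_1/2,\beta_2}[\varphi(0)]\leq\mathrm{Var}^{\Z-\mathrm{XY/GFF}}_{L,\beta_1/2,\beta_2}[\varphi(0)]$. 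Adding edges can only reduce fluctuations, so conductance monotonicity alone can never bound the diagonal model from below by the $\Z^2$ model. Your remark about ``doubled conductances'' is also off: parallel Gaussian edges combine as $(J_1^{-1}+J_2^{-1})^{-1}$, so conductances go down, not up.

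The missing idea is the graph surgery of Section~\ref{subsec521}. First subdivide each diagonal edge by one new vertex, giving conductance $\beta/2$ on each half (Proposition~\ref{prop.prop2.9}). Then identify that vertex with a common nearest neighbour of the two endpoints (Proposition~\ref{corollary.identification}). Both operations decrease the variance. With a fixed routing rule, each nearest-neighbour edge receives exactly two extra parallel edges of conductance $\beta/2$. Merging gives $J^{-1}=(\beta r_xr_y)^{-1}+4\beta^{-1}$, i.e.\ conductance $\beta/5$ when $r_xr_y=1$ and $0$ otherwise. Hence $\mathrm{Var}^{\Z-\mathrm{GFF},\times}_{L,\beta,r}[\varphi(0)]\geq\mathrm{Var}^{\Z-\mathrm{GFF}}_{L,\beta/5,r}[\varphi(0)]$ for every $r$.

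Proposition~\ref{prop:prop4.19} at $\beta/5$ then finishes directly, with no need to redo Wells' inequality on the diagonal graph; this is the paper's second route. The same surgery applied only to the Gaussian diagonal edges of your mixed model would also repair your argument, replacing $\beta_2$ by $\beta_2/5$. That, however, requires the surgery monotonicities for the $\Z$-XY/GFF model, which the paper states only for the $\Z$-GFF.
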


\begin{proof}
We provide a brief outline of two arguments that may be used to establish his result. The first option would be to rewrite the proof written in Section~\ref{sec:sec4.4} for this model (the modifications required are mostly notational). A second option would be to use the graph surgery technique introduced in Section~\ref{section4.8} to show the following inequality: for any percolation configuration $r \in \{0 , 1 \}^{\Lambda_L}$,
\begin{equation*}
    \mathrm{Var}^{\Z-\mathrm{GFF}, \times}_{L , \beta, r} \left[ \varphi(0) \right] \geq \mathrm{Var}^{\Z-\mathrm{GFF}}_{L , \beta/5, r} \left[ \varphi(0) \right].
\end{equation*}
Using this inequality, Proposition~\ref{prop:delocwithdiagonal} follows from the result of Section~\ref{sec:sec4.4}.
\end{proof}

\subsubsection{Proof of the delocalisation for the integer-valued Gaussian free field on a supercritical percolation} \label{subsec:preliminarysuperctiBern2}

\begin{figure}
 \centering
    \includegraphics[scale = 0.55]{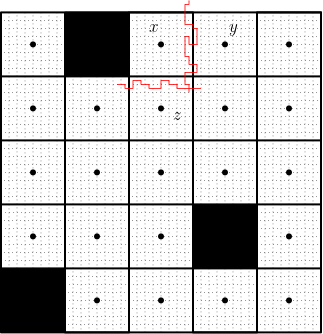}
    \caption{An example of the site percolation on the renormalised lattice (closed sites are drawn in black). Between pairs of open sites (here the boxes $(x + \Lambda_{L_0}), (y + \Lambda_{L_0})$ and $(z + \Lambda_{L_0})$) a path of closed edges for the percolation configuration $\omega^*$ in the dual lattice is drawn (in red).}
    \label{fig:twocontours}
\end{figure}

\begin{figure}
    \centering
    \includegraphics[scale = 0.6]{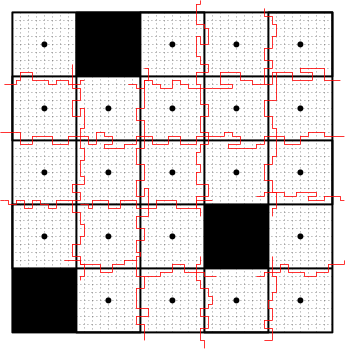}
    \caption{The collection of edges $\mathcal{C}^*$ of the dual lattice $(\Z^2)^*$.}
    \label{fig:manycontours}
\end{figure}

\begin{figure}
    \centering
    \includegraphics[scale = 0.4]{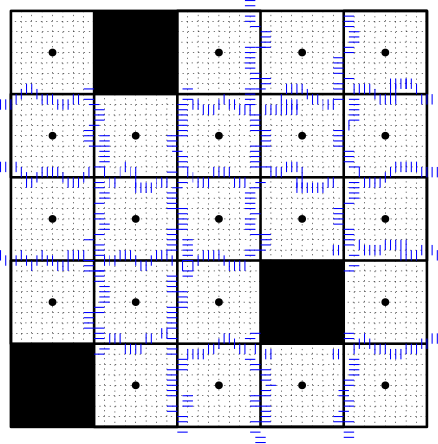}
    \caption{The collection of edges of $\Z^2$ which are dual to the edges of $\mathcal{C}^*$.}
    \label{fig:bluecontours}
\end{figure}

\begin{figure}
    \centering
    \includegraphics[scale = 0.5]{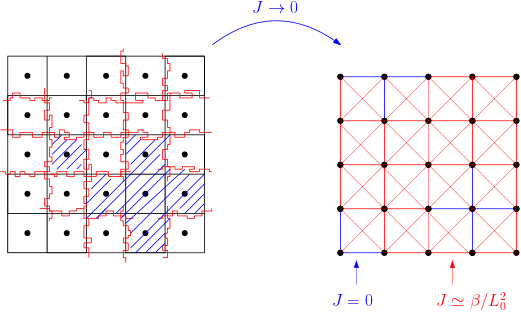}
    \caption{Reducing the values of the conductance of the edges which are not in $\mathcal{C}$ to $0$ has the effect of constraining all the sites which are in the same connected component of the graph $\left(\Z^2 , E \left( \Z^2 \right) \setminus \mathcal{C} \right)$ to take the same values (two of these connected components are represented in blue on the figure). This operation maps the model to an integer-valued Gaussian free field on the lattice $\Z^2$ on a high-density and $1$-dependent percolation (whose variance can be lower bounded using Theorem~\ref{th.Liggett} and the result of Section~\ref{prelim2delocwithdiagonal}).}
    \label{fig:freezingfaces}
\end{figure}

\begin{proof}[Proof of Theorem~\ref{Thm:delocheightsupercrit} for the integer-valued Gaussian free field]
We fix a probability $p > 1/2$, let $\beta_{\mathrm{Deloc}, \times}$ and $p_{0}$ be the inverse temperature and the probability appearing in the statement of Proposition~\ref{prop:delocwithdiagonal}. We next let $\bar p \in (0,1)$ be the probability provided by Theorem~\ref{th.Liggett} for $1$-dependent percolation with probability $p = p_0$ and let $L_0 \in \N$ be a (large) integer chosen such that
\begin{equation} \label{eq:defL0goodbox}
    \forall x \in (\Z^2)^*, ~~ \mathbb{P}_p \left[  G_{x,L_0} \right] \geq \bar p.
\end{equation}
The existence of the integer $L_0$ is guaranteed by Proposition~\ref{prop:prop4.15}. Using these definitions, we set
$$\beta_{\mathrm{Deloc}}(p) := L_0^2 \beta_{\mathrm{Deloc}, \times} \in (0,\infty).$$
We then partition the dual lattice $(\Z^2)^*$ into boxes of sidelength $L_0$ according to the identity
    \begin{equation*}
        (\Z^2)^* := \bigcup_{z \in (2L_0 +1) \Z^2 + \left(\frac12 , \frac12 \right)} (z + \Lambda_{L_0}).
    \end{equation*}
Using this partition, we may define a renormalised graph as follows. We denote by $\mathcal{G}_{L_0}$ the graph whose vertices are the boxes of the form $(z + \Lambda_{L_0}) \subseteq  (\Z^2)^*$ with $z \in  (2L_0 +1) \Z^2 + \left(\frac12 , \frac12 \right)$ and the edges are the pairs of boxes $\{ z + \Lambda_{L_0} , z' + \Lambda_{L_0} \}$ which have adjacent side. We note that this graph is isomorphic to $\Z^2$ and refer to Figure~\ref{fig:twocontours} for an illustration.
    
Given an edge percolation $\omega \in \{0,1\}^{E \left(\Z^2 \right)}$, we define a site percolation configuration $r_1^* \in \{ 0 , 1 \}^{\mathcal{G}_{L_0}}$ on the renormalised lattice by declaring a site $(z + \Lambda_{L_0}) \in \mathcal{G}_{L_0}$ open if and only if the event $G_{z , L_0}$ is realised. We note that this percolation is $1$-dependent and that, by the inequality~\eqref{eq:defL0goodbox}, the probability of a site to be open is larger $\bar p$. By Theorem~\ref{th.Liggett}, the percolation $r_1^*$ stochastically dominates and i.i.d. percolation of probability $p_0$ on the sites of $\mathcal{G}_{L_0}$.
    
Given an edge percolation configuration $\omega \in \{0,1\}^{E \left( \Z^2 \right)}$, we further define a collection of paths in the dual lattice as follows. For each pair of neighbouring sites $(x_0 + \Lambda_{L_0}), (x_1 + \Lambda_{L_0}) \in \mathcal{G}_{L_0}$ which are both open in the percolation configuration~$r_1^*$, we select a path of closed edges in the percolation configuration $\omega^*$ which crosses in the long direction a rectangle $\mathcal{R}$ satisfying the following properties (see Figure~\ref{fig:twocontours}):
\begin{itemize}
    \item $\mathcal{R}$ has short and long sidelengths equal to $\left\lfloor \frac{L}{100} \right\rfloor$ and $\left\lfloor \frac{22 L}{10} \right\rfloor$.
    \item The long side of $\mathcal{R}$ is parallel to the common side of the squares $(x_0 + \Lambda_{L_0})$ and $(x_1 + \Lambda_{L_0})$.
    \item $\mathcal{R}$ intersects both $(x_0 + \Lambda_{L_0})$ and $(x_1 + \Lambda_{L_0})$.
\end{itemize}
We then denote by $\mathcal{C}^*$ the union of these paths over the pairs of neightbouring sites $(x_0 + \Lambda_{L_0}), (x_1 + \Lambda_{L_0}) \in \mathcal{G}_{L_0}$ which are both open in the percolation configuration $r_1^*$ and by $\mathcal{C}$ the collection of the edges of $\Z^2$ which are dual to an edge of $\mathcal{C}^*$ (see Figures~\ref{fig:manycontours} and~\ref{fig:bluecontours}). 

We remark that all the edges of $\mathcal{C}$ are open in the percolation configuration $\omega$ and that there may be more than one choice for these paths (or even the rectangles which they cross). We only impose two conditions on their selection: we assume that each connected component of the graph $\left(\Z^2 , E \left( \Z^2 \right) \setminus \mathcal{C} \right)$ (i.e., the graph with vertex set $\Z^2$ and edge set $E \left( \Z^2 \right) \setminus \mathcal{C}$) contains a vertex of $(2L_0 +1) \Z^2$ (i.e., we rule out the existence of a small loop in the set $\mathcal{C}^*$ which does not surround a box of the form $(z + \Lambda_{L_0})$) and maximise the number of intersections between these paths.

Equipped with these definitions, we let $\omega_{\mathcal{C}} \in \{0 , 1\}^{E(\Z^2)}$ be the percolation configuration on the edges of $\Z^2$ defined by setting $\omega_{\mathcal{C}, xy} := 1$ for $\{ x, y\} \in \mathcal{C}$ and $\omega_{\mathcal{C} , xy} := 0$ otherwise. This definition ensures that
\begin{equation} \label{eq:randrC}
\forall \{x,y\} \in E \left(\Z^2 \right), ~~ \omega_{xy} \geq \omega_{\mathcal{C},xy}.
\end{equation}
Let us fix an inverse temperature $\beta$ satisfying $\beta \geq \beta_{\mathrm{Deloc}}(p)$ and a sidelength $L \in \N$ which is assumed to be divisible by $(2L_0 + 1)$ (N.B. This assumption will be lifted at the end of the proof). By Proposition~\ref{prop:monotonicity}, the inequality~\eqref{eq:randrC} implies
\begin{equation} \label{eq:ineq4.18}
    \mathrm{Var}^{\Z-\mathrm{GFF}}_{L , \beta, \omega} \left[ \varphi(0) \right] \geq \mathrm{Var}^{\Z-\mathrm{GFF}}_{L , \beta, \omega_\mathcal{C}} \left[ \varphi(0) \right].
\end{equation}
We next show that the variance of the height $\varphi(0)$ of the integer-valued Gaussian free field on the percolation configuration $\omega_\mathcal{C}$ can be lower bounded by the one of a Gaussian free field on a high-density percolation of the box $\Lambda_{L / (2L_0+1)}$. More specifically, we proceed with the following identifications (and refer to Figure~\ref{fig:freezingfaces} for guidance):
\begin{itemize}
    \item As mentioned above, we may identify the renormalised lattice $\mathcal{G}_0$ with the lattice $\Z^2$ by identifying the vertex $(z + \Lambda_{L_0}) \in \mathcal{G}_0$ with the vertex $\left( z - (1/2,1/2) \right) / (2L_0 + 1) \in \Z^2$. We can thus see $r_1^*$ as a site percolation on $\Z^2$. This percolation stochastically dominates an i.i.d. site percolation of probability $p_0$ on $\Z^2$.
    \item The integer-valued Gaussian free field can then be identified with an integer-valued Gaussian free field on $\Z^2$ (with diagonal edges) where the conductance of an edge $\{ x , y \} \in E_\times (\Z^2)$ is given by the formula:
    \begin{itemize}
        \item If $\{ x , y \} \in E (\Z^2)$ (i.e., it is an horizontal or a vertical edge) and either $r_{1, x}^* = 0$ or $r_{1,y}^* = 0$ (i.e., one of the two sites is closed), then the conductance is equal to $0$.
        \item Otherwise we denote by $\mathcal{C}_x$ (resp. $\mathcal{C}_y$) the connected component of the vertex $(2L_0 + 1)x \in \Z^2$ (resp. $(2L_0 + 1)y \in \Z^2$) in the graph $\left(\Z^2 , E \left( \Z^2 \right) \setminus \mathcal{C} \right)$, and assign to the edge $\{ x , y \}$ a conductance which is equal to the parameter $\beta$ divided by the number of edges of $\mathcal{C}$ which have one endpoint in $\mathcal{C}_x$ and one endpoint in $\mathcal{C}_y$.
    \end{itemize}
    Let us make two comments about these identifications. First the reason one has to consider the integer-valued Gaussian free field on the graph $\Z^2$ with diagonal edges is that there may be edges of $\mathcal{C}$ whose endpoints are in two connected components of diagonal neighbours. Second, for any pair of vertices $x , y \in \Z^2$, the number of edges which have one endpoint in $\mathcal{C}_x$ and one endpoint in $\mathcal{C}_y$ is smaller than~$L_0^2$ (this is because these edges must lie in a rectangle with small and long sidelength equal to $\lfloor \frac{L_0}{100} \rfloor$ and $\lfloor \frac{11 L_0}{10} \rfloor$ whose cardinality is smaller than $L_0^2$)
\end{itemize} 
Combining these observations with Proposition~\ref{prop:monotonicity}, we deduce that
\begin{equation*}
    \E_p \left[ \mathrm{Var}^{\Z-\mathrm{GFF}}_{L , \beta, \omega_\mathcal{C}} \left[ \varphi(0) \right] \right] \geq \E_p \left[ \mathrm{Var}^{\Z-\mathrm{GFF}, \times}_{L/(2L_0+1) , \beta / L_0^2 , r_1^*} \left[ \varphi(0) \right] \right].
\end{equation*}
We next use that the percolation $r_1^*$ stochastically dominates an i.i.d. percolation of probability $p_0$ together with Proposition~\ref{prop:monotonicity} to deduce that
\begin{equation*}
    \E_p \left[ \mathrm{Var}^{\Z-\mathrm{GFF}}_{L , \beta, \omega_\mathcal{C}} \left[ \varphi(0) \right] \right] \geq \E_{p_0}^{\mathrm{site}} \left[ \mathrm{Var}^{\Z-\mathrm{GFF}, \times}_{L/(2L_0+1) , \beta / L_0^2 , r} \left[ \varphi(0) \right] \right].
\end{equation*}
Under the assumption that $\beta \geq \beta_{\mathrm{Deloc}}(p)$, we can apply the result of Proposition~\ref{prop:delocwithdiagonal} to deduce that
\begin{equation*}
      \E_{p_0}^{\mathrm{site}} \left[ \mathrm{Var}^{\Z-\mathrm{GFF}, \times}_{L/(2L_0+1) , \beta / L_0^2 , r} \left[ \varphi(0) \right] \right] \geq c_0 \ln \left( \frac{L}{2L_0+1} \right).
\end{equation*}
Combining the three previous inequalities with~\eqref{eq:ineq4.18}, we deduce that
\begin{equation*}
    \E_{p} \left[ \mathrm{Var}^{\Z-\mathrm{GFF}}_{L , \beta, \omega} \left[ \varphi(0) \right] \right] \geq c_0 \ln \left( \frac{L}{2L_0+1} \right). 
\end{equation*}
The factor $1/(2L_0+1)$ can then be removed from the right-hand side by reducing the value of the constant $c_0$. The assumption that $L$ is divisible by $(2L_0+1)$ can be similarly removed by reducing the value of $c_0$ and using Corollary~\ref{cor:corollary4.9}. This completes the proof of Theorem~\ref{Thm:delocheightsupercrit} for the integer-valued Gaussian free field.

\end{proof}

\subsection{Delocalisation for integer-valued height functions with annealed Gaussian potential and for the XY height function on a percolation cluster} \label{sec:section4.7}

This section is devoted to the proof of Theorem~\ref{Thm:delocheightanealedgauss} and to the proof of Theorem~\ref{Thm:delocheightsupercrit} in the case of the XY height function following the outline presented in Section~\ref{sec:1.4techniquesofproof}. %The argument relies on the observation that, by an application of the FKG inequality, the variance of the height $\varphi(0)$ for the integer-valued height function with annealed Gaussian potential can be lower bounded by the variance of the height $\varphi(0)$ of an integer-valued Gaussian free field with i.i.d. random conductances. This second model can be analysed using the results of Theorem~\ref{Thm:delocheightsupercrit}.

\begin{proof}[Proof of Theorem~\ref{Thm:delocheightanealedgauss}]
Let $V$ be an annealed Gaussian potential. We first write
    \begin{equation*}
        \mathrm{Var}_{L , \beta}^{\Z-V} \left[ \varphi(0) \right] = \frac{\sum_{\varphi \in \Omega_{\Lambda_L}^\Z} \left| \varphi(0) \right|^2 \exp \left( - \sum_{x \sim y} V \left(\frac{ \varphi(x) - \varphi(y)}{\sqrt{\beta}}\right) \right)}{Z_{\Lambda_L,\beta}^{\Z-V}}.
    \end{equation*}
    We next use the identity~\eqref{eq:defannealedgaussian} to rewrite the numerator of the previous display as follows
    \begin{align} \label{eq:computannealedgauss}
        \lefteqn{\sum_{\varphi \in \Omega_{\Lambda_L}^\Z} \left| \varphi(0) \right|^2 \exp \left( - \sum_{x \sim y} V \left(  \frac{ \varphi(x) - \varphi(y)}{\sqrt{\beta}} \right)\right) } \qquad & \\ 
        & = \sum_{\varphi \in \Omega_{\Lambda_L}^\Z} \left| \varphi(0) \right|^2 \prod_{x \sim y} \int_{(0 , \infty)} \exp \left( - \frac{ \left| \varphi(x) - \varphi(y) \right|^2}{2 \beta J} \right) \mu_V(d J) \notag \\
        & =  \int_{(0 , \infty)^{E \left( \Lambda_L \right)}} \sum_{\varphi \in  \Omega_{\Lambda_L}^\Z} \left| \varphi(0) \right|^2 \exp \left( - \sum_{x \sim y} \frac{\left| \varphi(x) - \varphi(y) \right|^2}{2 \beta J_{xy}} \right) \prod_{x \sim y} \mu_V(d J_{xy}). \notag
    \end{align}
    In the rest of this proof, we will denote by $\mathbf{E}$ the expectation with respect to the product measure $\prod_{x \sim y} \mu_V(d J_{xy})$ (and, to ease the notation, we will denote this product measure by $\mathbf{P}$). We next use the identity, which follows from Definition~\ref{def:def4.1},
    \begin{equation*}
        \sum_{\varphi \in  \Omega_{\Lambda_L}^\Z} \left| \varphi(0) \right|^2  \exp \left( - \sum_{x \sim y}  \frac{\left| \varphi(x) - \varphi(y) \right|^2}{2 \beta J_{xy}} \right) =  \mathrm{Var}_{L , \beta J}^{\Z-\mathrm{GFF}} \left[ \varphi(0) \right] \times  Z_{\Lambda_L , \beta  J}^{\Z-\mathrm{GFF}},
    \end{equation*}
    which, together with~\eqref{eq:computannealedgauss}, implies the identity
    \begin{equation*}
        \sum_{\varphi \in \Omega_{\Lambda_L}^\Z} \left| \varphi(0) \right|^2 \exp \left( - \sum_{x \sim y} V \left( \frac{\varphi(x) - \varphi(y)}{\sqrt{\beta}} \right) \right) = \mathbf{E} \left[ \mathrm{Var}_{L , \beta J}^{\Z-\mathrm{GFF}} \left[ \varphi(0) \right] \times  Z_{\Lambda_L , \beta J}^{\Z-\mathrm{GFF}} \right].
    \end{equation*}
    The same computation yields the identity for the partition function
    \begin{equation*}
        Z_{\Lambda_L,\beta}^{\Z-V} = \sum_{\varphi \in \Omega_{\Lambda_L}^\Z} \exp \left( - \sum_{x \sim y} V \left( \frac{ \varphi(x) - \varphi(y)}{\sqrt{\beta}} \right) \right) = \mathbf{E} \left[  Z_{\Lambda_L , \beta J}^{\Z-\mathrm{GFF}} \right].
    \end{equation*}
    Combining the few previous results, we have obtained
    \begin{equation*}
       \mathrm{Var}_{L , \beta}^{\Z-V} \left[ \varphi(0) \right] =  \frac{\mathbf{E} \left[   \mathrm{Var}_{L , \beta J}^{\Z-\mathrm{GFF}} \left[ \varphi(0) \right] \times  Z_{\Lambda_L , \beta J}^{\Z-\mathrm{GFF}} \right]}{\mathbf{E} \left[ Z_{\Lambda_L , \beta J}^{\Z-\mathrm{GFF}} \right]}.
    \end{equation*}
    We finally note that the two functions $J \mapsto \mathrm{Var}_{L , \beta J}^{\Z-\mathrm{GFF}} \left[ \varphi(0) \right]$ and $J \mapsto Z_{\Lambda_L , \beta J}^{\Z-\mathrm{GFF}}$ are both increasing in the conductances $J = (J_{xy})_{ \{ x , y\} \in E \left( \Lambda_L \right)}$. Indeed the monotonicity for the first function is a consequence of Theorem~\ref{prop:monotonicity} and the monotonicity of the second function is a consequence of the identity
    \begin{equation*}
         Z_{\Lambda_L , \beta J}^{\Z-\mathrm{GFF}} :=  \sum_{\varphi \in  \Omega_{\Lambda_L}^\Z} \exp \left( - \sum_{x \sim y}  \frac{\left| \varphi(x) - \varphi(y) \right|^2}{2 \beta J_{xy}} \right),
    \end{equation*}

    together with the observation that each term inside the sum is an increasing function of $J = (J_{xy})_{ \{ x , y\} \in E \left( \Lambda_L \right)}$. We may thus apply the FKG-inequality stated in Proposition~\ref{prop.FKGinequality} to obtain the inequality
    \begin{equation} \label{eq:04041220}
        \mathrm{Var}_{L , \beta }^{\Z-V} \left[ \varphi(0) \right] =  \frac{\mathbf{E} \left[  \mathrm{Var}_{L , \beta J}^{\Z-\mathrm{GFF}} \left[ \varphi(0) \right] \times  Z_{\Lambda_L , \beta J} \right]}{\mathbf{E} \left[  Z_{\Lambda_L , \beta J}^{\Z-\mathrm{GFF}} \right]} \geq \mathbf{E} \left[  \mathrm{Var}_{L ,\beta J}^{\Z-\mathrm{GFF}} \left[ \varphi(0) \right]  \right].
    \end{equation}
    We next let $p_0 = 3/4$ and $\beta_{\mathrm{Deloc}}(3/4) \geq 0$ be the probability and inverse temperature provided by Theorem~\ref{Thm:delocheightsupercrit} (for the integer-valued Gaussian free field) and select $\beta_V \in (0, \infty)$ such that %(N.B. the left-hand side tends to $1$ as $\beta_V \to \infty$)
    \begin{equation*} 
        \mu_V \left( \left( \frac{\beta_{\mathrm{Deloc}}(3/4)}{\beta_V} , \infty \right) \right) \geq 3/4.
    \end{equation*}
    Given a collection of conductances $J = (J_{xy})_{ \{ x , y\} \in E \left( \Lambda_L \right)}$, we define the edge percolation configuration $\omega^J \in \{ 0,1\}^{E \left( \Lambda_L \right)}$ according to the identity, for any $\{ x , y\} \in E \left( \Lambda_L \right)$,
    \begin{equation*}
        \omega_{xy}^J := \indc_{\{ \beta_V J_{xy} \geq \beta_{\mathrm{Deloc}(3/4)}\}}.
    \end{equation*}
    From the monotonicity of the variance of the height in the conductances (Proposition~\ref{prop:monotonicity}), we have the inequality, for any $J = (J_{xy})_{ \{ x , y\} \in E \left( \Lambda_L \right)}$,
    \begin{equation} \label{eq:stochdompercoprocess}
        \mathrm{Var}_{L , \beta_V J}^{\Z-\mathrm{GFF}} \left[ \varphi(0) \right]  \geq \mathrm{Var}_{L ,\beta_{\mathrm{Deloc}}(3/4) ,\omega^J}^{\Z-\mathrm{GFF}} \left[ \varphi(0) \right].
    \end{equation}
    If we now assume that the conductances $J =  (J_{xy})_{ \{ x , y\} \in E \left( \Lambda_L \right)}$ are i.i.d. of law $\mu_V$, then $\omega^J$ stochastically dominates the i.i.d. Bernoulli edge percolation of probability $3/4$. Combining this result with Proposition~\ref{prop:monotonicity} and the inequality~\eqref{eq:stochdompercoprocess}, we obtain the inequality
    \begin{equation*}
        \mathbf{E} \left[  \mathrm{Var}_{L ,\beta J}^{\Z-\mathrm{GFF}} \left[ \varphi(0) \right]  \right] \geq \mathbf{E} \left[  \mathrm{Var}_{L ,\beta_{\mathrm{Deloc}}(3/4) ,\omega^J}^{\Z-\mathrm{GFF}} \left[ \varphi(0) \right]  \right] \geq \mathbb{E}_{3/4} \left[  \mathrm{Var}_{L ,\beta_{\mathrm{Deloc}}(3/4) ,\omega}^{\Z-\mathrm{GFF}} \left[ \varphi(0) \right]  \right].
    \end{equation*}
    We finally use the inequality~\eqref{eq:04041220} and Theorem~\ref{Thm:delocheightsupercrit} to deduce that there exists a constant $c_0 > 0$ such that
    \begin{equation*}
        \mathrm{Var}_{L , \beta }^{\Z-V} \left[ \varphi(0) \right] \geq \mathbf{E} \left[  \mathrm{Var}_{L ,\beta J}^{\Z-\mathrm{GFF}} \left[ \varphi(0) \right]  \right] \geq \mathbb{E}_{3/4} \left[  \mathrm{Var}_{L ,\beta_{\mathrm{Deloc}}(3/4) ,\omega}^{\Z-\mathrm{GFF}} \left[ \varphi(0) \right]  \right] \geq c_0 \ln L.
    \end{equation*}
    The proof of Theorem~\ref{Thm:delocheightanealedgauss} is complete.
\end{proof}

Using the same strategy, we will show Theorem~\ref{Thm:delocheightsupercrit} for the XY height function.

\begin{proof}[Proof of Theorem~\ref{Thm:delocheightsupercrit} for the XY height function]
We first use that, by~\cite[Lemma 9.6]{AHPS}, the Bessel function is an annealed Gaussian interaction and, for any $\beta \geq 0$, there exists a probability measure $\kappa_\beta$ on $(0 , \infty)$ such that, for any $k \in \Z$,
\begin{equation*}
    \frac{I_k(\beta)}{I_0(\beta)} = \int_0^\infty e^{- \frac{k^2}{2 J}} \kappa_\beta(d J).
\end{equation*}
We next note that the measures $(\kappa_\beta)_{\beta >0}$ satisfy the following property (as otherwise there would be a contradiction with~\eqref{eq:expansionBessel}): for any $K \in (0 , \infty),$
\begin{equation} \label{eq:convergencekappabeta}
     \kappa_\beta \left( \left[ K , \infty \right) \right) \underset{\beta \to \infty}{\longrightarrow} 1. 
\end{equation}
Using the same computation as in the proof of Theorem~\ref{Thm:delocheightanealedgauss} above (and using the FKG inequality), we obtain the following result. For any $L \in \N$, any $\beta \geq 0$ and any percolation configuration $\omega \in \{ 0 , 1 \}^{E(\Lambda_L)}$, if we let $(J_{xy})_{xy \in E \left( \Lambda_L \right)}$ be a collection of i.i.d. conductances distributed according to the law $\kappa_\beta$ (and denoting by $\mathbf{E}_{\kappa_\beta}$ the corresponding expectation) then we have the inequality
\begin{equation} \label{eq:eqXYforallomega}
    \mathrm{Var}_{L , \beta , \omega}^{\Z-\mathrm{XY}} \left[ \varphi(0) \right] \geq \mathbf{E}_{\kappa_\beta} \left[ \mathrm{Var}_{L , J \omega}^{\Z-\mathrm{GFF}} \left[ \varphi(0) \right]  \right].
\end{equation}
Let us now fix a (supercritical) probability $p > 1/2$, set $\tilde p = (p + 1)/2$ and let $\beta_{\mathrm{Deloc}}(\tilde p)$ be the inverse temperature provided by Theorem~\ref{Thm:delocheightsupercrit} in the case of the integer-valued Gaussian free field on a percolation of probability $\tilde p > 1/2$. Assuming that $\omega \in \{  0,1 \}^{E \left( \Lambda_L \right)}$ is random and sampled according to an i.i.d. edge percolation of probability $p$, we deduce from~\eqref{eq:eqXYforallomega} (by taking the expectation on both sides of the inequality) that
\begin{equation*}
    \E_p \left[ \mathrm{Var}_{L , \beta , \omega}^{\Z-\mathrm{XY}} \left[ \varphi(0) \right] \right] \geq \E_p \left[ \mathbf{E}_{\kappa_\beta} \left[ \mathrm{Var}_{L , J \omega}^{\Z-\mathrm{GFF}} \left[ \varphi(0) \right] \right] \right].
\end{equation*}
The random variables $J = (J_{xy})$ and $\omega = (\omega_{xy})$ on the right-hand side are all independent with $(J_{xy})$ distributed according to the law $\kappa_\beta$ and $(\omega_{xy})$ following the Bernoulli distribution of probability $p$. Combining this observation with the convergence~\eqref{eq:convergencekappabeta}, we obtain the following result: there exists an inverse temperature $\beta_0 := \beta_0(p) \geq 0$ such that if we set, for $\{ x , y \} \in E \left( \Lambda_L \right)$,
\begin{equation*}
    \omega^J_{xy} := \mathbf{1}_{\left\{ J_{xy} \omega_{xy} \geq \beta_{\mathrm{Deloc}}(\tilde p) \right\}},
\end{equation*}
then, for any $\beta \geq \beta_0$, the collection of random variables $\omega^J := \left( \omega^J_{xy} \right)$ stochastically dominates an i.i.d. edge percolation of probability $\tilde p$. Combining this result with Theorem~\ref{Thm:delocheightsupercrit} (in the case of the integer-valued Gaussian free field) and Theorem~\ref{prop:monotonicity}, we deduce that
\begin{align*}
     \E_p \left[ \mathrm{Var}_{L , \beta , \omega}^{\Z-\mathrm{XY}} \left[ \varphi(0) \right] \right] \geq \E_p \left[ \mathbf{E}_{\kappa_\beta} \left[ \mathrm{Var}_{L , J \omega}^{\Z-\mathrm{GFF}} \left[ \varphi(0) \right] \right] \right] & \geq \E_p \left[ \mathbf{E}_{\kappa_\beta} \left[ \mathrm{Var}_{L , \beta_{\mathrm{Deloc}}(\tilde p) , \omega^J}^{\Z-\mathrm{GFF}} \left[ \varphi(0) \right] \right] \right] \\
     & \geq \E_{\tilde p} \left[  \mathrm{Var}_{L , \beta_{\mathrm{Deloc}}(\tilde p), \omega}^{\Z-\mathrm{GFF}} \left[ \varphi(0) \right] \right] \\
     & \geq c_0 \ln L.
\end{align*}
This completes the proof of Theorem~\ref{Thm:delocheightsupercrit} in the case of the XY height function.
\end{proof}

%\begin{remark}
%This proof can be generalised to show that any integer-valued height function with an annealed Gaussian potential on a supercritical percolation cluster is delocalised at sufficiently high temperature. \textcolor{blue}{Rewrite this}
%\end{remark}

\subsection{Delocalisation for integer-valued height functions with annealed Gaussian potential on two-dimensional shift-invariant graphs} \label{section4.8}

This section is devoted to the proof of Theorem~\ref{Thm:delocheightanealedgaussgeneralgraph} on the delocalisation of integer-valued height functions with annealed Gaussian potential on two-dimensional shit-invariant graphs. It is decomposed into three subsections and is structured as follows:
\begin{itemize}
    \item In Section~\ref{subsec521}, we collect two additional properties of the integer-valued Gaussian free field: the monotonicities under addition and identification of vertices.
    \item In Section~\ref{sec:sec531delocZ2longrange}, we prove Theorem~\ref{Thm:delocheightanealedgaussgeneralgraph} in the case where the underlying graph is $\Z^2$ with long-range edges. The proof is similar to the one of Theorem~\ref{Thm:delocheightanealedgauss} but an additional step (which makes use of the monotonicities of Section~\ref{subsec521} and Theorem~\ref{th.Liggett}) is required to tackle the long-range edges.
    \item In Section~\ref{subsec533}, we show Theorem~\ref{Thm:delocheightanealedgaussgeneralgraph} in the general case in the case of two-dimensional shift-invariant graphs.
\end{itemize}

\subsubsection{Preliminaries: additional results on the integer-valued Gaussian free field} \label{subsec521}

We collect in this section two additional monotonicity properties of the integer-valued Gaussian free field: the monotonicity under identification of vertices (Proposition~\ref{corollary.identification}) and the monotonicity under addition of vertices (Proposition~\ref{prop.prop2.9}). We will need to state the results in the case where the integer-valued Gaussian free field is defined on a (general) finite connected graph equipped with conductances and we thus introduce the following definition.

\begin{figure}
\centering
\includegraphics[scale=0.65]{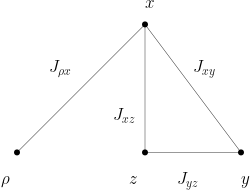}
\caption{An example of rooted graph equipped with conductances} \label{fig:rootedgraph}
\end{figure}

\begin{definition}[Integer-valued Gaussian free field on a general graph]
	To each finite connected rooted graph $G := (W , E , \rho)$ (with vertex set $W$, edge set $E$, root $\rho \in W$) equipped with conductances $J := (J_{yz})_{\{y,z\} \in E} \in (0 , \infty)^E$, we associate the space of functions $\Omega_W := \left\{ \varphi : W \to \Z \, : \, \varphi(\rho) = 0 \right\}$ and equip it with the probability distribution
	\begin{equation*}
	\mathbb{P}_{G , J}^{\Z\mathrm{-GFF}}( \{ \varphi \}) := \frac{1}{Z_{G , J}} \exp \left( - \sum_{yz \in E}  \frac{\left( \varphi(y) - \varphi(z) \right)^2}{J_{yz}} \right).
	\end{equation*}
	We denote by $\mathrm{Var}_{G , J}^{\Z\mathrm{-GFF}}$ the variance with respect to the measure $\mathbb{P}^{\Z\mathrm{-GFF}}_{G , J}$.
\end{definition}

The first result of this section is the monotonicity under identification of vertices.

\begin{proposition}[Monotonicity under identification of vertices] \label{corollary.identification}
Let $y , z \in W$ be two vertices of $G$ and $J \in (0 , \infty)^E$ be a collection of conductances. Denote by $\bar G$ the graph $G$ in which the vertices $y$ and $z$ have been identified and by $\bar J$ the induced collection of conductances (see Figure~\ref{fig:identificationvertices}). Then
\begin{equation*}
    \forall x \in  V,~\mathrm{Var}_{\bar G , \bar J}^{\Z\mathrm{-GFF}}\left[  \varphi(x) \right] \leq \mathrm{Var}_{G , J}^{\Z\mathrm{-GFF}}\left[ \varphi(x) \right].
\end{equation*}
\end{proposition}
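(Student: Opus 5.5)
We realise the identification of $y$ and $z$ as the limit of an infinite conductance between them, and then apply the monotonicity of the variance in the conductances (Theorem~\ref{prop:monotonicity}, which the remark after it says holds on general finite graphs and multigraphs).

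First, we add a new edge $\{y,z\}$ to $G$ (or a parallel edge if one already exists), giving it conductance $K \in (0,\infty)$. Write $G_K$ for the resulting multigraph with conductances $J^K$. In the measure $\mathbb{P}^{\Z\mathrm{-GFF}}_{G,J}$ the weight is $\exp(-\sum (\varphi(y)-\varphi(z))^2/J_{yz})$. Multigraphs cause no trouble: parallel edges merge into one edge whose reciprocal weight is the sum of the reciprocals, as in Remark~\ref{rem:remark13}. The GFF on $G_K$ carries the extra factor $\exp(-(\varphi(y)-\varphi(z))^2/K)$.

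Next, we compare $G$ with $G_K$. The graph $G$ is the case of conductance $0$ on the new edge, with the convention $k^2/0=\infty$ for $k\neq 0$. Under that convention, conductance $0$ forces $\varphi(y)=\varphi(z)$, so conductance $0$ actually gives $\bar G$. The correct identification is therefore:
\begin{itemize}
\item $K\to\infty$ recovers $G$, since the extra factor tends to $1$ pointwise;
\item $K\to 0$ recovers $\bar G$, since the extra factor tends to $\indc_{\{\varphi(y)=\varphi(z)\}}$.
\end{itemize}
Since Theorem~\ref{prop:monotonicity} says the variance is increasing in the conductances, we get for every $K\in(0,\infty)$
\begin{equation*}
\mathrm{Var}_{\bar G,\bar J}^{\Z\mathrm{-GFF}}[\varphi(x)] \le \mathrm{Var}_{G_K,J^K}^{\Z\mathrm{-GFF}}[\varphi(x)] \le \mathrm{Var}_{G,J}^{\Z\mathrm{-GFF}}[\varphi(x)].
\end{equation*}
If the monotonicity is only stated for strictly positive conductances, we get the outer inequalities as limits $K\to 0$ and $K\to\infty$ of the middle term, which is monotone in $K$. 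One small point is the measure on $\bar G$: it should be exactly the $K\to 0$ limit. Edges of $G$ that become loops at the merged vertex contribute the constant factor $1$, and edges from $y$ and $z$ to a common neighbour become parallel edges. This is consistent with the induced conductances $\bar J$.

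The main obstacle is justifying the two limits, i.e.\ convergence of the variances and not only weak convergence of the measures. The plan is dominated convergence on the sums $\sum_\varphi \varphi(x)^2 w_K(\varphi)$ and $\sum_\varphi \varphi(x) w_K(\varphi)$ and on the partition function $\sum_\varphi w_K(\varphi)$. The weights $w_K$ are monotone in $K$ and bounded by the weight of $G$. That weight is summable against $\varphi(x)^2$, because $G$ is finite and connected with the root pinned at $0$: the quadratic form is then positive definite on $\Omega_W$, so the weights decay like a discrete Gaussian. The partition functions converge to the positive limits $Z_{G,J}$ and $Z_{\bar G,\bar J}$, so the ratios converge and both inequalities pass to the limit.
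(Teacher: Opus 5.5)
Your proposal is correct and is the paper's argument: add an auxiliary edge $\{y,z\}$ whose conductance $K=\infty$ reproduces $G$ and $K=0$ forces $\varphi(y)=\varphi(z)$, i.e.\ gives $\bar G$, and then invoke the monotonicity in the conductances of Theorem~\ref{prop:monotonicity}. Your dominated-convergence justification of the two limits supplies a detail the paper leaves implicit. The opening sentence of your second paragraph, which calls $G$ the conductance-$0$ case before you correct it, should simply be deleted.
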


\begin{remark}
This property is a direct consequence of Proposition~\ref{prop:monotonicity} by adding (if necessary) a new edge to the graph between the vertices $y$ and $z$ with a conductance equal to infinity, and then by sending the value of this conductance to $0$ (N.B. the result of Proposition~\ref{prop:monotonicity} holds in this general setup, and a similar result would hold for the XY height function).
\end{remark}

\begin{figure}
\centering
\includegraphics[scale=0.65]{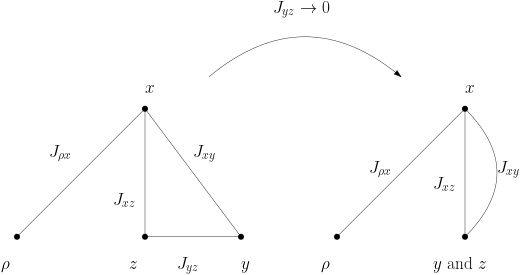}
\caption{Identification of vertices: on this graph, the two vertices $y$ and $z$ are identified. This operation reduces the variances of the heights. On the picture on the right, the two edges carrying the conductances $J_{xz}$ and $J_{xy}$ could be merged together and replaced by one edge with conductance $\left( J_{xz}^{-1} + J_{xy}^{-1} \right)^{-1}$.} \label{fig:identificationvertices}
\end{figure}

\begin{figure}
\centering
\includegraphics[scale=0.65]{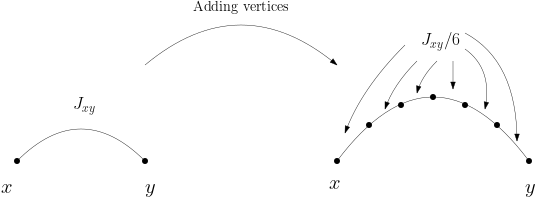}
\caption{Addition of vertices on an edge: this operation reduces the variance of the height of any vertex.} \label{fig:addingvertices}
\end{figure}

The second monotonicity property allows to add vertices to a graph while reducing the variances of the heights upon suitable modification of the conductances (see Figure~\ref{fig:addingvertices}). We refer to the article~\cite[Proof of Theorem 1.2]{AHPS} for a proof of this result.

\begin{proposition}[Monotonicity under addition of vertices] \label{prop.prop2.9}
Let $k \in \N$. Given an edge $\{ y, z\} \in E$, we denote by $\tilde G$ the graph obtained by adding $k$ new vertices on the edge $yz$ (see Figure~\ref{fig:addingvertices}). We denote by $\tilde J$ the collection of conductances on the graph $\tilde G$ defined as follows:
\begin{itemize}
\item On all the edges of $\tilde G$ which are not incident to one of the new vertices, we set $\tilde J = J$,
\item On the edges which are incident to one of the new vertices, we set $\tilde J = J/(k+1)$.
\end{itemize}
Then
\begin{equation*}
    \forall ~ x \in  V,~\mathrm{Var}_{\tilde G , \tilde J}^{\Z\mathrm{-GFF}}\left[ \varphi(x) \right] \leq \mathrm{Var}_{G , J}^{\Z\mathrm{-GFF}}\left[\varphi(x) \right].
\end{equation*}
\end{proposition}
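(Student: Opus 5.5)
The plan is to reduce the statement to the monotonicity in the conductances (Theorem~\ref{prop:monotonicity}) by integrating out the $k$ new vertices, and, if that fails, to fall back on the duality route the paper already uses. Since the new vertices $w_1,\dots,w_k$ on the edge $yz$ only interact with their neighbours on the path $y,w_1,\dots,w_k,z$, summing over $\varphi(w_1),\dots,\varphi(w_k)\in\Z$ gives the marginal of $\mathbb{P}^{\Z\mathrm{-GFF}}_{\tilde G,\tilde J}$ on $W$. This marginal is a height function on $G$ with the original weights on every edge except $yz$, which now carries the effective weight
\begin{equation*}
w(m) := \sum_{m_1+\cdots+m_{k+1}=m}\ \prod_{i=1}^{k+1}\exp\Big(-\frac{(k+1)\,m_i^2}{J_{yz}}\Big), \qquad m=\varphi(y)-\varphi(z).
\end{equation*}
For real-valued heights this sum would return exactly $c\,e^{-m^2/J_{yz}}$; the discreteness is the whole point. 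Since $x\in W$, it suffices to compare $\mathrm{Var}[\varphi(x)]$ under this marginal with the original model.

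The first route is to compare the effective weight directly with a Gaussian weight. Poisson summation over the $k$ free integer variables writes
\begin{equation*}
w(m)=e^{-m^2/J_{yz}}\,\Theta(m),
\end{equation*}
where $\Theta$ is $(k+1)$-periodic and has a cosine expansion with nonnegative coefficients. The natural target is to write $w$ as a mixture $w(m)=\int e^{-m^2/J'}\,\nu(dJ')$ with $\nu$ supported on $(0,J_{yz}]$. If that holds, the argument closes like the proof of Theorem~\ref{Thm:delocheightanealedgauss}:
\begin{itemize}
\item the variance of $\varphi(x)$ becomes a $Z$-weighted average of $\mathrm{Var}^{\Z\mathrm{-GFF}}_{G,J'}[\varphi(x)]$, where only the conductance on $yz$ is replaced by $J'$;
\item each term is at most $\mathrm{Var}^{\Z\mathrm{-GFF}}_{G,J}[\varphi(x)]$ by Theorem~\ref{prop:monotonicity}, because $J'\le J_{yz}$;
\item no FKG inequality is needed, since every term is bounded by the same quantity.
\end{itemize}
I am not confident that such a mixture representation holds for a general periodic $\Theta$, so I will not rely on it.

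The fallback is to argue on the dual side, which is the route I would actually try to make rigorous.
\begin{enumerate}
\item Approximate the $\Z$-GFF on both $G$ and $\tilde G$ by XY height functions on multigraphs, using Proposition~\ref{prop:approxIVGFFbyXY} extended to general graphs as the remark after it allows. This reduces the claim to an inequality between XY height functions.
\item Use the duality with the generalised XY model of Proposition~\ref{prop:propA3}. In that model the variance of $\varphi(x)$ is an expectation of $\lambda\cos\theta_{xg}+\frac{\lambda^2}{2}\cos(2\theta_{xg})$, and these observables are monotone under Ginibre-type modifications by~\eqref{ineq:Ginibre1}.
\item Interpret the subdivision on the dual side. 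Placing $k$ vertices in series on $yz$ replaces the dual edge by a dual configuration with more degrees of freedom. Once the angle weights are written through the XY approximation, the extra degrees of freedom are obtained by lowering coupling constants from $\infty$ to finite values, exactly as in the proof of Lemma~\ref{cor:merging}. By Ginibre this can only decrease the relevant cosine expectations.
\item Pass to the limits: $\lambda\to\infty$ as in Section~\ref{sectionWellsineq2}, then $n\to\infty$ through Proposition~\ref{prop:approxIVGFFbyXY}. This yields the inequality for the $\Z$-GFF.
\end{enumerate}

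I expect step 3 to be the main obstacle. It requires identifying precisely which couplings in the divergence-free generalised XY model change when the primal edge is subdivided. It also requires checking that the rescaling of conductances by $1/(k+1)$ corresponds exactly to the splitting of the dual edge weight into $k+1$ pieces, so that the comparison goes in the right direction. If this bookkeeping becomes unwieldy, I would return to the first route and try to prove the Gaussian-mixture representation of $w$ via an explicit heat-kernel identity for $\Theta$.
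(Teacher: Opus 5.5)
The paper gives no argument of its own for this proposition; it refers to \cite{AHPS}. Your fallback route is therefore a genuinely different, self-contained proof that uses only the paper's tools. It is the height-function mirror of Lemma~\ref{cor:merging}, and it does close. However, as written its decisive step~3 is only described, not carried out. The contingency you name for that step (returning to the first route) is a dead end for every $k\geq 1$. Since $\sum_i m_i^2=\frac{m^2}{k+1}+\sum_i\bigl(m_i-\frac{m}{k+1}\bigr)^2$, one has $w(m)=e^{-m^2/J_{yz}}\,\Theta(m)$ with $\Theta(m+k+1)=\Theta(m)$ (shift every $m_i$ by $1$). A representation $w(m)=\int e^{-m^2/J'}\nu(dJ')$ with $\nu$ carried by $(0,J_{yz}]$ would make $\Theta(m)\propto\int e^{-m^2(1/J'-1/J_{yz})}\nu(dJ')$ nonincreasing in $|m|$. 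Then $\Theta(k+1)=\Theta(0)$ forces $\nu$ to be a multiple of $\delta_{J_{yz}}$, i.e.\ $\Theta$ constant. That is false: for $k=1$, $\Theta(0)-\Theta(1)=\sum_{j\in\Z}(-1)^j e^{-j^2/J_{yz}}>0$. So the effective weight is never a mixture of weaker Gaussians, and the comparison has to come from a correlation inequality.

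Step~3 closes as follows. Use approximants in which every edge $e$ carries $n$ parallel Bessel edges of coupling $nJ_e/2$, so that $(I_m(nJ_e/2)/I_0(nJ_e/2))^n\to e^{-m^2/J_e}$, matching the normalisation of $\mathbb{P}^{\Z\mathrm{-GFF}}_{G,J}$.
\begin{itemize}
\item In the divergence-free model for $\tilde G$, write $\theta^{(i)}_j$ ($0\le i\le k$, $1\le j\le n$) for the angle on the $j$-th parallel edge of the segment $\{w_i,w_{i+1}\}$ of the path $y=w_0,w_1,\dots,w_k,w_{k+1}=z$, oriented towards $z$.
\item Add the term $K\sum_{0\le i<k}\sum_{j}\cos\bigl(\theta^{(i)}_j-\theta^{(i+1)}_j\bigr)$. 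At $K=0$ this is the model for $(\tilde G,\tilde J)$.
\item As $K\to\infty$, the measure converges to Haar measure on the subtorus $\{\theta^{(i)}_j=\theta_j\ \forall i\}$. This subtorus is canonically the divergence-free group of the approximant of $(G,J)$, and on it $\prod_{i=0}^{k}e^{\frac{nJ_{yz}}{2(k+1)}\cos\theta_j}=e^{\frac{nJ_{yz}}{2}\cos\theta_j}$.
\item The added terms are cosines of integer combinations of the spanning-tree coordinates. Ginibre therefore makes $\langle\cos(m\theta_{xg})\rangle$, $m=1,2$, nondecreasing in $K$.
\item The duality formula for the variance is increasing in both expectations, which gives the inequality at fixed $n,\lambda$. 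Then let $\lambda\to\infty$ and $n\to\infty$.
\end{itemize}

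There is an even shorter primal version that avoids both the duality and the $\lambda$-limit. Neumann's addition formula $I_m(a+b)=\sum_{\ell\in\Z}I_\ell(a)I_{m-\ell}(b)$, which follows from $e^{(a+b)\cos\theta}=e^{a\cos\theta}e^{b\cos\theta}$, lets you replace each of the $n$ parallel Bessel edges on $yz$ by a path of $k+1$ Bessel edges of coupling $\frac{nJ_{yz}}{2(k+1)}$. These paths run through private vertices $u^{(1)}_j,\dots,u^{(k)}_j$, and the replacement is exact: the law on $W$ does not change. Now, for each $i$, identify $u^{(i)}_1,\dots,u^{(i)}_n$ into a single vertex $w_i$. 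This yields exactly the approximant of $(\tilde G,\tilde J)$. Identification lowers $\mathrm{Var}[\varphi(x)]$ by Theorem~\ref{prop:monotonicity} for the XY height function: introduce an auxiliary Bessel edge and lower its coupling from $\infty$ to $0$, as in the remark after Proposition~\ref{corollary.identification}. Finally let $n\to\infty$ using Proposition~\ref{prop:approxIVGFFbyXY} on general graphs. This version also shows where the inequality comes from. Bessel weights are exactly divisible in series, discrete Gaussians are not, and the only lossy step is the identification, which is the primal counterpart of un-gluing the parallel chains in Lemma~\ref{cor:merging}.
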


\subsubsection{Delocalisation for integer-valued height functions with annealed Gaussian potential on $\Z^2$ with finite-range interactions} \label{sec:sec531delocZ2longrange}

\begin{figure}
\centering
\includegraphics[scale=0.5]{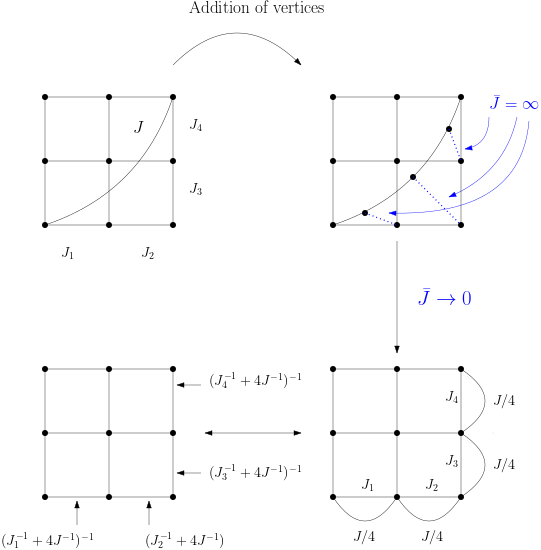}
\caption{The graph surgery used in the proof of Theorem~\ref{Thm:delocheightanealedgaussgeneralgraph}. Vertices are added on the long-range edge and then identified with the vertices of the underlying lattice. This procedure removes the long-range edge and yields an integer-valued Gaussian free field with nearest neighbour interactions.} \label{fig:graphsurgery}
\end{figure}

\begin{proof}[Proof of Theorem~\ref{Thm:delocheightanealedgaussgeneralgraph} for $\Z^2$ with finite-range interaction]
We consider a height function with potential $V$ at inverse temperature $\beta$ when the underlying graph is $\Z^2$ with long-range (but finite-range) edges. We will denote by $E^{\mathrm{LR}} (\Lambda_L)$ the collection of all the edges with at least one endpoint in $\Lambda_L$ and will make use of the pieces of notation
\begin{equation*}
    \mathrm{Var}_{L , \beta }^{\Z-V,  \mathrm{LR}} \left[ \varphi(0) \right] ~~\mbox{and}~~ \mathrm{Var}_{L ,\beta J}^{\Z-\mathrm{GFF},\mathrm{LR}} \left[ \varphi(0) \right]  
\end{equation*}
to refer to the variance of $\varphi(0)$ when the height function is defined in the box $\Lambda_L$ with boundary conditions $0$ outside the box $\Lambda_L$ in the first case for the height function with potential $V$ and inverse temperature $\beta$, and in the second case for the integer-valued Gaussian free field with conductances $(\beta J_{xy})_{\{x , y \} \in E^{\mathrm{LR}} (\Lambda_L)}$.

The first part of the proof is identical to the proof of Theorem~\ref{Thm:delocheightanealedgauss}: we first use that the potential $V$ is annealed Gaussian together with the FKG inequality to lower bound the variance of the height $\varphi(0)$ by the variance of the height $\varphi(0)$ of an integer-valued Gaussian free field with i.i.d. conductances. Specifically, we have the inequality
\begin{equation*}
    \mathrm{Var}_{L , \beta }^{\Z-V,  \mathrm{LR}} \left[ \varphi(0) \right] \geq \mathbf{E} \left[  \mathrm{Var}_{L ,\beta J}^{\Z-\mathrm{GFF},\mathrm{LR}} \left[ \varphi(0) \right]  \right],
\end{equation*}
where we denote by $\mathbf{E}$ the expectation with respect to the product measure $\mathbf{P}:= \prod_{xy\in E^{\mathrm{LR}}(\Lambda_L)} \mu_V(d J_{xy})$. We then make use of Proposition~\ref{corollary.identification} and Proposition~\ref{prop.prop2.9} to perform a series of operations which will all reduce the variance of the height $\varphi(0)$ and map the graph $\Lambda_L$ with long-range edges to the graph $\Lambda_L$ with only nearest neighbour-interactions. Specifically, we proceed as follows (and refer to Figure~\ref{fig:graphsurgery} for guidance):
\begin{itemize}
    \item We first consider a long-range edge connecting the vertices $x , y \in \Lambda_L$, use Proposition~\ref{prop.prop2.9} to add $|x-y|_1-1$ vertices on the edge connecting $x$ and $y$ and assign to each new edge the conductance $\beta J_{xy}/|x-y|_1$ (see Figure~\ref{fig:graphsurgery}).
    \item We next consider a path going from $x$ to $y$ of length $|x-y|_1$ in $\Z^2$ (selected according to some deterministic procedure) and identify the vertices added to the long-range edge $\{ x, y\}$ in the previous step to the vertices of this path (see Figure~\ref{fig:graphsurgery}). 
\end{itemize}
We next perform these two operations on all the long-range edges, and note that, after this procedure, we obtain a nearest neighbour integer-valued Gaussian free field with random conductances denoted by~$\tilde J$. Since all these operations reduce the variance of $\varphi(0)$, we deduce that
\begin{equation} \label{eq:04041220bis}
    \mathrm{Var}_{L , \beta }^{\Z-V,  \mathrm{LR}} \left[ \varphi(0) \right] \geq \mathbf{E} \left[  \mathrm{Var}_{L ,\beta J}^{\Z-\mathrm{GFF},\mathrm{LR}} \left[ \varphi(0) \right]  \right] \geq \mathbf{E} \left[  \mathrm{Var}_{L ,\beta  \tilde J}^{\Z-\mathrm{GFF}} \left[ \varphi(0) \right]  \right].
\end{equation}
The conductances $\tilde J = (\tilde J_{xy})_{xy \in E(\Lambda_L)}$ are (deterministic) functions of the i.i.d. conductances $J = (J_{xy})_{xy \in E^{\mathrm{LR}}(\Lambda_L)}$, they are not independent nor identically distributed but they satisfy the following properties: they have a finite-range of dependence, and there is a finite number of possibilities for the laws of these conductances. In both cases, the range of dependency and the number of laws depend only on the structure of the underlying graph (i.e., the periodic structure of the graph and the length of the longest edges), but they do not depend on the sidelength $L$. We denote the range of dependency by $k$, i.e., we assume that the conductances are $k$-dependent.

We next let $p_0 \in (0,1)$ and $\beta_{0} \geq 0$ be the inverse temperature provided by Proposition~\ref{prop:prop4.19}. We then let $\bar p\in (0,1)$ be the probability provided by Theorem~\ref{th.Liggett} for $k$-dependent percolation with probability $p = p_0$. From the observations of the previous paragraph, we may deduce that:
\begin{itemize}
    \item There exists an inverse temperature $\beta_V \geq 0$ (depending only on the potential $V$, the probability $\bar p$ and the structure of the underlying graph) such that
\begin{equation} \label{eq:betaVpineq}
    \forall  x \in \Lambda_L, ~  \mathbf{P} \left( \forall y \sim x , \, \beta_V \tilde J_{xy} \geq \beta_{0} \right) \geq \bar p
\end{equation}
    (N.B. This is because there is a finite number of laws for the conductances~$(\tilde J_{xy})_{\{x , y\} \in E(\Lambda_L)}$).
    \item If we define the site percolation configuration $r^{\tilde J} \in \{ 0,1\}^{\Lambda_L}$ according to the identity, for any $x \in \Lambda_L$,
    \begin{equation*}
        r_x^{\tilde J} := \indc_{\{ \forall y \sim x, \, \beta_V \tilde J_{xy} \geq \beta_{0}\}}.
    \end{equation*}
    Then this site percolation is $k$-dependent and the probability for a site to be open is larger than~$\bar p$. We may thus apply Theorem~\ref{th.Liggett} to deduce that this site percolation stochastically dominates an i.i.d. Bernoulli site percolation of parameter $p_0$.
\end{itemize}
Combining these two results with Proposition~\ref{prop:monotonicity}, we deduce that
\begin{equation*}
    \mathbf{E} \left[  \mathrm{Var}_{L ,\beta_V   \tilde J}^{\Z-\mathrm{GFF}} \left[ \varphi(0) \right]  \right] \geq \mathbf{E} \left[  \mathrm{Var}_{L ,\beta_{0},  r^{\tilde J}}^{\Z-\mathrm{GFF}} \left[ \varphi(0) \right] \right] \geq \mathbb{E}_{p_0}^{\mathrm{site}} \left[  \mathrm{Var}_{L ,\beta_{0}, r}^{\Z-\mathrm{GFF}} \left[ \varphi(0) \right] \right].
\end{equation*}
We finally use the inequality~\eqref{eq:04041220bis} and Proposition~\ref{prop:prop4.19} to deduce that there exists a constant $c_0 > 0$ such that, for any $\beta \geq \beta_V$,
    \begin{equation*}
        \mathrm{Var}_{L , \beta }^{\Z-V, \mathrm{LR}} \left[ \varphi(0) \right] \geq \mathbf{E} \left[ \mathrm{Var}_{L ,\beta  \tilde J}^{\Z-\mathrm{GFF}} \left[ \varphi(0) \right] \right] \geq \mathbf{E} \left[ \mathrm{Var}_{L ,\beta_V  \tilde J}^{\Z-\mathrm{GFF}} \left[ \varphi(0) \right]   \right] \geq \mathbb{E}_{p_0}^{\mathrm{site}} \left[  \mathrm{Var}_{L ,\beta_{0} ,r} \left[ \varphi(0) \right]  \right] \geq c_0 \ln L.
    \end{equation*}
    This completes the proof of Theorem~\ref{Thm:delocheightanealedgaussgeneralgraph} when the underlying graph is $\Z^2$ with finite-range edges.
\end{proof}

\subsubsection{Delocalisation for integer-valued height functions on two-dimensional shift-invariant graphs} \label{subsec533}

\begin{figure}
\centering
\includegraphics[scale=0.5]{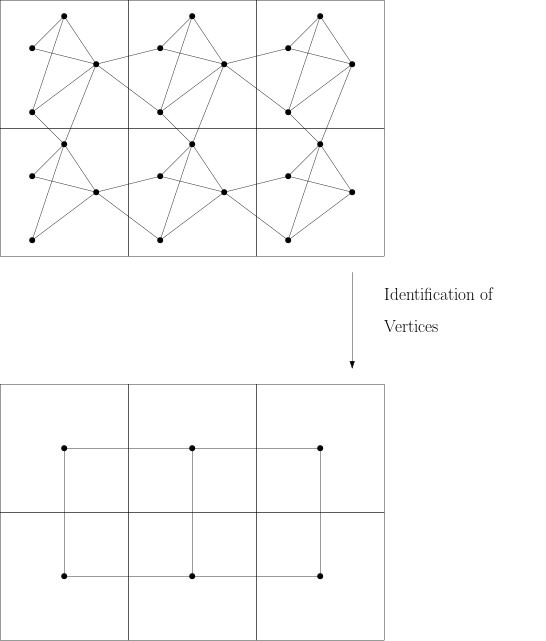}
\caption{The graph surgery used in the proof of Theorem~\ref{Thm:delocheightanealedgauss}. All the vertices in the same square are identified together to reduce the problem to the case of the graph $\Z^2$ with finite-range interactions.} \label{fig:graphsurgery2}
\end{figure}

\begin{proof}[Proof of Theorem~\ref{Thm:delocheightanealedgaussgeneralgraph}]
We start with the same computation as in the beginning of the proof of Theorem~\ref{Thm:delocheightanealedgauss}: we use that the potential $V$ is annealed Gaussian together with the FKG inequality to derive the inequality
\begin{equation*}
    \mathrm{Var}_{L , G , \beta }^{\Z-V} \left[ \varphi(0) \right] \geq \mathbf{E} \left[  \mathrm{Var}_{L , G ,\beta J}^{\Z-\mathrm{GFF}} \left[ \varphi(0) \right]  \right],
\end{equation*}
where $\mathbf{E}$ denotes the expectation under the product measure $\prod_{xy\in E} \mu_V(d J_{xy})$. We then let $R_G \in \N$ be an integer chosen sufficiently large (depending on the graph $G$) such that,
\begin{equation*}
    \forall k \in  R_G \Z^2, ~~ W \cap \left( k + \left[- \frac{R_G}{2} , \frac{R_G}{2} \right]^2 \right) \neq \emptyset.
\end{equation*}
The existence of such an integer is guaranteed by the assumptions on $G$ (in particular, that it is invariant under the action of a lattice). We then perform the following operations: for each $k \in R_G \Z^2$, we identify all the vertices of the set $\W \cap \left( k + \left[- \frac{R_G}{2} , \frac{R_G}{2} \right]^2 \right)$ together. This operation reduces the variances of the height, and the resulting model is an integer-valued Gaussian free field on the lattice $\Z^2$ with some long-range edges and random conductances (see Figure~\ref{fig:graphsurgery2}). The edges have a finite-range, the conductances are sampled according to a finite number of distributions and have a finite-range of dependence, we may thus conclude the proof using the same arguments as in Section~\ref{sec:sec531delocZ2longrange}.
\end{proof}

\bibliographystyle{alpha}
\bibliography{bibliography}

\end{document}